\newenvironment{myabstract}{\par\noindent
{\bf Abstract . } \small }
{\par\vskip8pt minus3pt\rm}
\newcounter{item}[section]
\newcounter{kirshr}
\newcounter{kirsha}
\newcounter{kirshb}
\newenvironment{enumroman}{\setcounter{kirshr}{1}
\begin{list}{(\roman{kirshr})}{\usecounter{kirshr}} }{\end{list}}
\newenvironment{enumarab}{\setcounter{kirshb}{1}

\begin{list}{(\arabic{kirshb})}{\usecounter{kirshb}} }{\end{list}}
\newtheorem{theorem}{Theorem}[section]
\newtheorem{lemma}[theorem]{Lemma}
\newtheorem{corollary}[theorem]{Corollary}
\newenvironment{demo}[1]{\noindent{\bf #1.}\upshape\mdseries}
{\nopagebreak{\hfill\rule{2mm}{2mm}\nopagebreak}\par\normalfont}
\theoremstyle{definition}
\newtheorem{remark}[theorem]{Remark}
\newtheorem{definition}[theorem]{Definition}
\def\Nr{{\sf{Nr}}}
\def\Sg{{\mathfrak{Sg}}}
\def\Fm{{\mathfrak{Fm}}}
\def\K{{\mathfrak{K}}}
\def\CA{{\bf CA}}
\def\RCA{{\bf RCA}}
\def\QA{{\bf QA}}
\def\QEA{{\bf QEA}}
\def\Rd{{\ Rd}}
\def\QEA{{\bf PEA}}
\def\(R)RA{{\bf (R)RA}}
\def\RA{{\bf RA}}
\def\R{\mathbb{R}}
\def\N{\mathbb{N}}
\def\Q{\mathbb{Q}}
\def\C{\mathbb{C}}
\def\A{{\mathfrak{A}}}
\def\B{{\mathfrak{B}}}
\def\C{{\mathfrak{C}}}
\def\D{{\mathfrak{D}}}
\def\R{{\mathfrak{R}}}
\def\L{{\mathfrak{L}}}
\def\Rd{{\mathfrak{Rd}}}
\def\T{{\bf T}}
\def\T{{\bf T}}
\def\Ra{{\sf Ra}}
\def\set#1{ \{#1\}}
\def\Mo{{\sf Mo}}
\def\restr #1{{\restriction_{#1}}}
\def\ws{winning strategy}
 \def\CA{{\sf CA}}
\def\B{{\sf B}}
\def\G{{\sf G}}
\def\w{{\sf w}}
\def\y{{\sf y}}
\def\g{{\sf g}}
\def\r{{\sf r}}
\def\K{{\sf K}}
\def\pa{$\forall$}
\def\pe{$\exists$}
\def\ef{Ehren\-feucht--Fra\"\i ss\'e}
\def\tp{{\sf tp}}
\def\M{{\mathfrak{M}}}
\def\Ca{{\mathfrak{Ca}}}
\def\M{{\mathfrak{M}}}
\def\A{{\mathfrak{A}}}
\def\B{{\mathfrak{B}}}
\def\C{{\mathfrak{C}}}
\def\D{{\mathfrak{D}}}
\def\Ig{{\mathfrak{Ig}}}
\def\M{{\mathfrak{M}}}
\def\dom{{\sf dom}}
\def\rng{{\sf rng}}
\def\Rd{{\sf{Rd}}}
\def\At{{\sf At}}
\def\Ra{{\mathfrak{Ra}}}
\def\Tm{{\mathfrak{Tm}}}
\def\Cm{{\mathfrak{Cm}}}
\def\VT{{\sf VT}}
\def\ef{Ehren\-feucht--Fra\"\i ss\'e}
\def\Id{{\sf Id}}
\def\Rl{{\mathfrak{Rl}}}
\def\F{{\mathfrak{F}}}
\def\Sc{{\mathfrak{Sc}}}
\def\QEA{{\sf QEA}}
\def\RQEA{{\sf RQEA}}
\def\RCA{{\sf RCA}}
\def\QEA{{\bf PEA}}
\def\PA{{\bf PA}}
\def\R{{\sf R}}
\def\L{{\sf L}}
\def\Df{{\sf Df}}
\def\Uf{{\sf Uf}}
\def\Rd{{\mathfrak{Rd}}}
\def\PEA{{\sf QEA}}
\def\s{{\sf s}}
\def\Sc{{\sf Sc}}
\def\nodes{{\sf nodes}}
\def\Str{{\sf Str}}
\def\T{{\sf T}}
\def\QEA{{\sf PEA}}
\def\cyl#1{{\sf c}_{#1}}
\def\cyl#1{{\sf c}_{#1}}
\def\sub#1#2{{\sf s}^{#1}_{#2}}
\def\diag#1#2{{\sf d}_{#1#2}}
\def\V{{\sf V}}
\def\de{Dedekind-MacNeille}
\def\PA{{\sf PA}}
\def\QA{{\sf QA}}
\def\Z{{\mathbb{Z}}}
\def\Lf{{\sf Lf}}
\def\Bb{{\sf Bb}}
\def\Cs{{\sf Cs}}
\def\Ra{{\sf Ra}}
\def\Mo{{\sf M}}
\def\QEA{{\sf QEA}}\def\Mat{{\sf Mat}}
\def\RA{{\sf RA}}
\def\CRCA{{\sf CRCA}}
\def\G{{\sf G}}
\def\T{{\cal T}}
\def\Bb{\mathfrak{Bb}}
\def\w{{\sf w}}
\def\g{{\sf g}}
\def\y{{\sf y}}
\def\r{{\sf r}}
\def\L{\mathfrak{L}}
\def\G{{\cal G}}
\def\PEA{{\sf PEA}}
\def\R{\mathfrak{R}}
\title{Atom--canonicity in algebraic logic with applications to omitting types in modal fragments of $L_{\omega,\omega}$}
\author{Tarek Sayed Ahmed\\
Department of Mathematics, Faculty of Science,\\ 
Cairo University, Giza, Egypt. }
\begin{document}
\maketitle

\begin{myabstract}
Fix $2<n<\omega$. Let $L_n$ denote first order logic restricted to the first $n$ variables.  
$\CA_n$ denotes the class of cylindric algebras of dimension $n$
and for $m>n$, $\Nr_n\CA_{m}(\subseteq \CA_n$) 
denotes the class of $n$--neat reducts
of $\CA_{m}$s. 
The existence of certain finite relation algebras 
and  finite $\CA_n$s lacking relativized complete representations 
is shown to imply that the omitting types theorem ($\sf OTT$) fails for $L_n$ with respect to clique guarded semantics (which is an equivalent formalism of its packed fragments), 
and for the multi-dimensional modal logic ${\bf S5}^n$. 
Several such relation and cylindric algebras are explicitly exhibited using rainbow constructions and Monk--like algebras; two player zero sum games are used to show that
they are as required. 
Certain $\CA_n$s constructed to show non--atom canonicity of the variety $\bold S\Nr_n\CA_{n+3}$, where $\bold S$ stands for the operation of forming subalgebras,
are used to show that Vaught's theorem ($\sf VT$) for $L_{\omega, \omega}$, looked upon as a special case of $\sf OTT$ for $L_{\omega, \omega}$, fails
almost everywhere (a notion to be defined below) when restricted to $L_n$. That $\sf VT$ fails everywhere for $L_n$, which is stronger than failing almost everywhere as the name suggests, 
is reduced to the existence, for each $n<m<\omega$, of a finite relation algebra $\R_m$ having a so--called $m-1$ strong blur, 
but $\R_m$ has no $m$--dimensional relational basis. 
$\sf VT$ for other modal fragments and expansions of $L_n$, 
like its guarded fragments, $n$--products of uni-modal logics like $\bold K^n$, and first order definable expansions, is approached.  
It is shown that any multi-modal canonical logic $\L$, such that $\bold K^n\subseteq {\L\subseteq \bf S5}^n$, 
$\L$ cannot be axiomatized by canonical equations.  In particular, $\L$ is not Sahlqvist. Elementary generation and 
di--completeness for $L_n$ and its clique guarded fragments are proved. 
Positive omitting types theorems are proved for $L_n$ with respect to standard semantics 
by imposing extra conditions on theories that are not necessarily countable like quantifier elimination,  and/or types considered
like maximality.\footnote{Keywords: Omitting types, Vaught's theorem, 
multi-modal logic,  clique guarded fragments, packed fragments, cylindric algebras, relation algebras, 
combinatorial game theory. Mathematics subject classification: 03B45, 03G15.}
\end{myabstract}

\section{Introduction}

There is a strong tradition in logic research, that dates back to the fifties of the last century,
of applying algebraic methods to deepen our understanding of logical concepts.
This kind of research, initiated mostly by Alfred Tarski, is now a huge subject better known as {\it Algebraic Logic}. 
Algebraic Logic is the natural interface between universal algebra and logic;  for example, 
the consequence relation of a logic translates to the quasi--equational theory of the corresponding class of algebras.

In this paper we apply Tarskian Algebraic Logic to multi-modal logic. 
More precisely, we use well-developed algebraic machinery in the theory of relation and cylindric algebras to obtain results on modal fragments of first order logic, 
such as its packed fragments, clique guarded fragments, and modal logics between $\bold K^n$ and ${\bf S5}^n$.
Such applications are not only possible, but also illuminating in the sense that it makes one delve deeper into the analysis of the 
problem at hand. 
The metalogical property we are primarily concerned with is the {\it omitting types theorem}, briefly $\sf OTT$,  
in such fragments, though occasionally, as the paper unfolds, we make digressions to other related metalogical properties.
The technical notion of a modal logic corresponds to the one of a variety of {\it Boolean algebras with operators}, of which relation and cylindric algebras are prominent examples.  
A central notion for proving completeness in modal logic is the notion of canonicity, which has an equally important algebraic expression. 
In this paper, we discuss the connection
between the logical and the algebraic perspective on the properties of $\sf OTT$ and {\it atom-canonicity} to be defined in a moment. 

The purpose of the paper is twofold.
Apart from presenting some novel ideas of applying algebra to logic, we intend to present our hitherto obtained results in 
both algebraic and modal logic in an integrated format.

{\bf Omitting types:} Let $\L$ be an extension or reduct or variant of first order logic,  like first logic itself, $L_n$ as defined in the abstract with $2<n<\omega$,  
$L_{\omega_1, \omega}$, $L_{\omega}$ as defined in \cite[\S 4.3]{HMT2}, a modal fragment of $L_n$, $\ldots$,  etc. 
An $\sf OTT$ for $\L$ is typically of the form  `A countable family of non--isolated types in a countable $\L$ theory 
$T$ can be omitted in a countable model of $T$'.  From this it directly follows, that if a  type is realizable in every model of a countable theory $T$, 
then there should be a formula consistent with $T$ that isolates this
type. A type is simply a set of formulas $\Gamma$ say.  The type $\Gamma$ is realizable in a model 
if there is an assignment that satisfies (uniformly) all formulas in $\Gamma$. 
Finally, $\phi$ isolates $\Gamma$ means that $T\vdash \phi\to \psi$ 
for all $\psi\in \Gamma$. What Orey and Henkin proved is that  the $\sf OTT$ 
holds for $L_{\omega, \omega}$ when such types are {\it finitary} meaning that they all consist of $n$-variable 
formulas for some $n<\omega$.

For $L_n$, as defined in the abstract, the situation is drastically different. It is known \cite{ANT} that the $\sf OTT$ fails in the following  (strong) sense. 
For every $2<n\leq l<\omega$, there is a countable and complete $L_n$ theory $T$, and a single
type  that is realizable in every model of $T$, but cannot be isolated by a formula 
using $l$ variables.
Here we prove stronger negative $\sf OTT$s for $L_n$ when types are required to be omitted  with respect to 
certain (much wider) generalized semantics, called {\it $m$ flat} and {\it $m$--square} with $2<n<m<\omega$. 
(Ordinary models are $\omega$--square.)
This is equivalent to the  failure of the $\sf OTT$ in the 
$m$-clique guarded fragment
of $L_n$ and its packed fragment (to be defined below). In fact, we prove that the single type consisting of co--atoms in a countable  atomic theory $T$ 
cannot be omitted in $n+3$--square models; by atomic we mean that the Tarski--Lindenbaum quotient algebra $\Fm_T$ is atomic (as a Boolean algebra).
This violates a famous theorem of Vaught that is a consequence of $\sf OTT$ for $L_{\omega, \omega}$, 
namely, that atomic theories have atomic models.

{\bf Algebraic machinery; blow up and blur constructions:} Fix $2<n<\omega$.
We assume familiarity with the basic notions of the (duality) theory of Boolean algebras with  operators $\sf BAO$s,  like {\it atom structures} and  {\it  complex algebras}.
A good reference is \cite[\S 2.5, \S 2.6,  \S 2.7]{HHbook}.
Classes of algebras considered throughout this paper, like relation algebras ($\RA$) and cylindric algebras of dimension $n$ ($\CA_n$), $n$ any ordinal,
are $\sf BAO$s endowed with  a {\it semantical notion of representability} \cite{HMT2}. 
For any such class $\sf L$ we write $\sf RL$ for the class of representable algebras in $\sf L$.
For example $\sf RRA$ is the class of representable $\RA$s, 
and $\RCA_n$ is the class of 
representable $\CA_n$s.

In this paper, we present, so--called {\it blow up and blur constructions}, an indicative  term introduced in  \cite{ANT}, 
as {\it splitting arguments}, as adopted in \cite{Andreka}, at heart involving splitting (atoms) in 
finite Monk--like algebras and  rainbow algebras \cite{HHbook, HHbook2}.  This method proves useful in obtaining results 
on non-atom--canonicity of several varieties of relation and cylindric algebras containing (and including) the varieties $\sf RRA$s and $\RCA_n$.
We recall that a class $\sf L$ of $\sf BAO$s is {\it atom--canonical} if whenever $\A\in \sf L$ is completey additive, then its \de\ completion, 
namely, the complex algebra of its atom structure (in symbols $\Cm\At\A$) is also in $\sf L$.
Atom-canonicity in completely additive varieties correspond in modal logic to the notion of a 
formula being {\it dipersistent}. A formula is {\it dipersistent} if whenever it is valid in
some {\it general discrete} frame $(\F, P)$,  that is, $P$ contains all singletions, 
then is valid in the Kripke frame $\F$ \cite[\S5.6]{modal}. Discrete frames form a natural generalization of Kripke frames and they provide a rather well--behaved semantics for modal logic.
For example, unlike Kripke frames, the notions of completeness, strong completeness, and strong global completeness 
coincide for discrete frames.

The notion of atom--canonicity plays an equally fundamental role in the theory of modal logic as in the algebraic theory 
of $\sf BAO$s. In this connection, discrete frames correpond to atomic completely additive $\sf BAO$s.

The  simple though quite sophisticated method of blow up and blur constructions  is applicable to any two classes $\bold L\subseteq \bold K$, where $\bold K$ is a class of $\sf BAO$s.
One takes  an atomic $\A\notin \bold K$ (usually but not always finite), blows it up, by splitting one or more of its atoms each to infinitely many subatoms,
obtaining an (infinite) atomic $\Bb(\A)\in \bold L$,  such that $\A$ {\it is blurred} in $\Bb(\A)$ meaning that  $\A$ {\it does not} embed in $\Bb(\A)$, but $\A$ embeds in the \de\ completion of $\Bb(\A)$,
namely, $\Cm\At\Bb(\A)$. (The notation $\Bb(\A)$ is short for {\it blowing up and blurring $\A$}).
Then any class $\bold M$ say, between $\bold L$ and $\bold K$ that is closed under forming subalgebras  
will not be atom--canonical, because $\Bb(\A)\in \bold M$, but $\Cm\At\Bb(\A)\notin \bold M$, lest $\A$ will be in $\bold M\subseteq \bold K$ 
since $\bold M$ is closed under forming subalgebras.  We say, in this case, that $\bold L$ is not atom--canonical {\it with respect to $\bold K$}.
This method will be applied to $\bold K$, when $\bold K$ is any  of the varieties $\bold S\Ra\CA_l$, $l\geq 5$ and $\bold S\Nr_n\CA_{n+k}$ and $k\geq 3$
and $\bold M=\bold L$ is the variety $\sf RRA$ and $\RCA_n$, respectively. Here (and elsewhere throughout the paper) $\Ra$ denotes the operator of forming 
relation algebra reducts as defined in \cite{HMT2},

Also the method is applied by taking, for each $k\in \omega$, $\bold K=\Ra\CA_{n+k}\cap \sf RRA$,
and $\bold L=\sf RRA$, and blowing up and blurring a non-representable finite relation algebra $\sf R$ (known as a Maddux algebra).  
This last construction  is lifted to $\CA_n$
showing that  for any $k\in \omega$, there is an atomic
$\A\in \Nr_n\CA_{n+k}\cap \RCA_n$ 
such that $\Cm\At\A\notin \RCA_n$. The lifting to $\CA_n$ is feasable because the relation algebra 
$\Bb({\sf R}) (\in \Ra\CA_{n+k}\cap \sf RRA)$
obtained after blowing up and blurring $\sf R$, 
possesses  an {\it $n$-dimensional cylindric basis} (as defined by Maddux \cite{Maddux}). 

{\bf Omitting types for the clique guarded fragments; negative results:} 
Applying the hitherto obtained algebraic results, using the machinery of algebraic logic, 
we obtain negative results of the form:

{\it There exists a countable atomic $L_n$ theory $T$ such that  the type $\Gamma$ consisting of co--atoms is realizable in every {\it $m$--square} model, 
but $\Gamma$ cannot be isolated using $\leq l$ variables, where $l, m$ are finite ordinals $>n$.}

Call it $\Psi(l, m)$, short for {\it Vaught's Theorem fails at (the parameters) $l$ and $m$.}
Let ${\sf VT}(l, m)$ stand for {\it Vaught's Theorem holds at $l$ and $m$}, so that by definition $\Psi(l, m)\iff \neg {\sf VT}(l, m)$. We also include $l=\omega$ in the equation: 
Define ${\sf VT}(\omega, \omega)$ as Vaught's Theorem holds for $L_{\omega, \omega}$: Atomic first order countable theories have atomic countable models. 

In this paper,  we provide strong evidence that Vaught's Theorem, briefly $\sf VT$ {\it fails everywhere} in the sense that for the permitted values $n\leq l, m\leq \omega$, namely, 
for $n\leq l<m\leq \omega$  and $l=m=\omega$, 
$\VT(l, m)\iff l=m=\omega.$ Failure of $\sf VT$ everywhere is reduced to finding, then blowing up a finite relation algebra having certain properties.  
From known algebraic results like non-atom--canonicity of $\RCA_n$ \cite{Hodkinson} and non-first order definability of the class 
of completely representable $\CA_n$s \cite{HH},  it can be easily inferred that
$\VT(n, \omega)$ is false, that is, Vaught's Theorem fails for $L_n$ with respect to square Tarskian semantics. One can find countable atomic $L_n$ 
theories having no 
atomic models.  In  both cases, one can easily infer that there exists a countable, simple (has no proper ideals) and atomic $\A\in \RCA_n$ with no complete representation.
Assuming that $\A\cong \Fm_T$, where $\Fm_T$ is the corresponding Tarski--Lindebaum (cylindric) algebra of formulas \cite[\S 4.3]{HMT2}, 
then $T$ will be an atomic complete theory having no atomic model, for any such model induces a complete 
representation of $\A$.
But we can (and will) go further.
From sharper algebraic results,    we prove  many other special cases for specific values of $l$ and $m$, with $l<m$,  that support the last equivalence. 
For example from non--atom canonicity of $\bold S\Nr_n\CA_{n+3}$, we prove $\Psi(n, n+k)$ for $k\geq 3$ and from non--atom canoicity of $\Nr_n\CA_{n+k}\cap \RCA_n$ with respect  to $\RCA_n$
for all $k\in \omega$, 
we prove $\Psi(l, \omega)$ for all $l\geq n$. 
In this case, we say (and prove) that   $\sf VT$ fails {\it almost} everywhere.

{\bf On the results obtained:} Though the results in this paper mainly  
address $\sf VT$ for various modal fragments of $L_n$, 
some results hitherto obtained are purely algebraic and could be of interest in themselves regardless of any connection with (multi-modal) 
logic. We mention, in this connection,  Lemmata \ref{dfb} and  \ref{df}. 
On the other hand, the proofs of the main metalogical results, namely, Theorems \ref{main}, \ref{can}, 
\ref{S5}, \ref{complexity0},  and  \ref{i},  
addressing $\sf VT$ (and some more metalogical results), 
use somewhat sophisticated machinery of algebraic logic like 
rainbow constructions described next,  as done, more specifically, 
in Theorems \ref{b2}, \ref{can}.

{\bf On the techniques used to obtain the aforementioned results:} Games and so--called rainbow constructions, as introduced in algebraic logic by Hirsch and Hodkinson \cite{HHbook},
will be 
used extensively. Rainbow constructions are based on two player (zero-sum) games and  as the name suggests they involve `colours'. 
Such games happen to be simple \ef\ forth games 
where the two players \pe lloise and \pa belard,  between them, use pebble pairs outside the board, each player pebbling one of the two structures which 
she/he sticks to it during the whole play. In the number of rounds played (that can be transfinite),  \pe\ tries 
to show that  
two simple relational structures $\sf G$ (the greens) and $\sf R$ (the reds) have similar structures  
while \pa\ tries to show that they are essentially distinct. 
Such structures may include ordered structures and complete
irreflexive graphs, such as finite ordinals, $\omega_1$, $\N$, $\Z$ or $\mathbb{R}.$    
A \ws\ for either player in the \ef\ game 
can be lifted to \ws\ in a {\it rainbow game} played on so--called atomic networks on a rainbow atom structure (for both $\CA$s and $\RA$s ) based also 
on $\sf G$ and $\sf R$. Once $\sf G$ and $\sf R$ are specified, 
the rainbow atom structure is uniquely defined. 
 Though more (rainbow) colours (like whites and shades of yellow) are involved in the rainbow 
atom structure,  the crucial thing here is that the number of rounds and nodes in networks used in the rainbow game,  
depend recursively on the number of rounds and pebble pairs in the 
simple \ef\ forth two player game played on (and only depending on) $\sf G$ and $\sf R$.\\ 
Due to the control on \ws's in terms of the relational structures ($\sf G$ and $\sf R$) chosen in advance, 
and the number of pebble pairs used outside the board,  rainbow constructions have proved highly efficient in providing delicate counterexamples 
for both $\CA$s and $\RA$s,  cf. \cite{HH, HHbook, HHbook2, Hodkinson, mlq}.

A deep result of Shelah's and 
topological arguments, such as 
the Baire Category Theorem, are used in Theorem \ref{i} on $\sf OTT$ for $L_n$.

{\bf Notation:} Throughout the paper, we use standard or/and self-explanatory notation
following mainly  the notation of \cite{1} which is in conformity with the notation of \cite{HMT2}. 
Any less common notation will be 
explained at its first occurence in the text.

We deal with the following cylindric--like algebras $\Sc$ (Pinter's substitution algebras), $\CA$ (cylindric algebras), 
$\QA$($\QEA$) quasi-polyadic (equality) algebras. For $\K$ any of these classes and $\alpha$ any ordinal, 
we write $\K_{\alpha}$ for the variety of $\alpha$--dimensional $\K$ algebras, 
and $\sf (C)RK_{\alpha}$ for the class of (completely) representable $\K_{\alpha}$s.
By the same token, while $\RA$ denotes the class of relation algebras, $\sf (C)RRA$ will denote the class of (completely) representable $\RA$s.

For a class $\sf L$, we write ${\bf El}\sf L$ for the elementary closure of 
$\sf L$, and $\At\sf L$ for the class $\{\At\A: \A\in \bold K\cap \bf At\}$ (of first order structures).
For a $\sf BAO$ $\A$, $\A^+$ denotes its canonical extension.
For algebras $\A$ and $\B$ having a Boolen reduct, we write $\A\subseteq_c \B$ $\iff$ for all $X\subseteq \A$, $\sum ^{\A}X=1\implies \sum^{\B}X=1$, and we say that $\A$ is a {\it complete subalgebra} of
$\B$. 
For a class $\bold K$ having a Booleon reduct $\bold S_c\bold K=\{\A: (\exists \B\in \bold K)(\A\subseteq_c \B)\}$. This notation will be used several times below without further notice.
{\it In what follows $\sf VT$ abbreviates `Vaught's theorem'.}\\

{\bf Layout:} Fix $2<n<\omega$.

(1) In $\S2$ several blow up and blur constructions for relation and cylindric algebras are presented to show non-atom--canonicity of the varieties 
$\bold S\Ra\CA_m$ and $\bold S\Nr_n\CA_{n+k}$, $m\geq 5$ and $k\geq 3$,  
cf. Theorems \ref{ANT}, \ref{b2}, and \ref{can}. The constructions in Theorems \ref{b2} and \ref{can} are presented 
in the framework of almost identical blow up and blur constructions
expressed via splitting atoms in finite (extremely simple) rainbow algebras.  In Corollary \ref{sahlqvist} we show, among other things, that   
there are $\RA$s and $\CA_n$s whose \de\ completions, often referred to in the literature as {\it the minimal completions},
do not embed 
into their canonical extensions which are completions of the given algebras.\\

(2) For $n\leq l<m\leq \omega$, in $\S3$ we present a chain of implications starting from the existence of certain finite relation algebras
having so-called strong $l$--blur and no $m$--square representations (equivalently no $m$--dimensional relational basis) leading up to $\Psi(l, m)$ 
which is equivalent to $\neg {\sf VT}(l, m)$, 
as defined in the introduction,   
cf. Theorem \ref{main}. Using Theorems \ref{ANT}, \ref{can} and \ref{main}, 
we prove $\Psi(n, n+3)$ and $\Psi(l, \omega)$ for every $n\leq l<\omega$, and 
that $\sf VT$ fails for any finite first order definable expansion of $L_n$ as defined in \cite{Basim,Biro}.\\

(3) In $\S4$ we prove non-atom canonicity of ${\sf RDf}_n$, the class of representable 
diagonal free $\CA_n$s, from which we conclude that $\sf VT$ fails for ${\bf S5}^n$, cf. Theorem \ref{S5} and that ${\bf S5}^n$ 
is not Sahlqvist. 
 Going further, we show that any axiomatization {\it of any multi-modal canonical  logic $\L$} between $\bold K^n$ and ${\bf S5}^n$ must contain 
infinitely many non-canonical formulas. 
In particular, such logics are not Sahlqvist. It is shown that 
any axiomatization of ${\bf S5}^n$ must 
contain modal formulas with no first order 
correspondants, cf. Theorems \ref{complexity0} and  \ref{complexity}. 
Elementary generation and di--completeness for $L_n$ and its clique guarded fragments are obtained, cf. Theorem \ref{square}.\\

(4) In $\S5$ positive results on $\sf OTT$ for $L_n$ are proved and 
counterexamples are provided to mark the boundaries of these results, cf. Theorems \ref{i} and  \ref{mark}.\\

(5) Due to its intimacy to $\sf VT$ for (variants of) $L_n$, in $\S6$ the algebraic notion of complete representations for $\CA_n$s
is characterized using neat embeddings and games.  In this connection, 
we show that each of the classes ${\bf El}{\sf CRCA}_n$ and ${\bf El}\bold S_c\Nr_n\CA_{\omega}$ 
complete subalgebras, coincides with the class 
of $\CA_n$s satsfying the so--called Lyndon conditions, cf. Theorem \ref{iii}. Finally, we show that for any class $\sf K$, 
such that $\Nr_n\CA_{\omega}\subseteq {\sf K}\subseteq \bold S_c\Nr_n\CA_{n+3}$, ${\bf El}\sf K\neq \sf K$, that is to say, $\sf K$ is not 
definable in $L_{\omega, \omega}$, cf. Theorem \ref{iiii}.

\section{Blow up and blur constructions}

\subsection {Blowing up and blurring a finite Maddux algebra}

Here we review and elaborate on the flexible construction in \cite{ANT} as our 
first instance of a blow up and blur construction. Some new consequences of the construction are obtained.
The following definition to be used in the sequel is taken from \cite{ANT}:
\begin{definition}\label{strongblur}
Let $\R$ be a relation algebra, with non--identity atoms $I$ and $2<n<\omega$. Assume that  
$J\subseteq \wp(I)$ and $E\subseteq {}^3\omega$.
\begin{enumerate}
\item We say that $(J, E)$  is an {\it $n$--blur} for $\R$, if $J$ is a {\it complex $n$--blur} as defined in 
\cite[Definition 3.1]{ANT}  
and the tenary relation $E$ is an {\it index blur} defined  as 
in item (ii) of \cite[Definition 3.1]{ANT}.

\item We say that $(J, E)$ is a {\it strong $n$--blur}, if it $(J, E)$ is an $n$--blur,  such that the complex 
$n$--blur  satisfies (with notation as in \cite{ANT}):
  $$(\forall V_1,\ldots V_n, W_2,\ldots W_n\in J)(\forall T\in J)(\forall 2\leq i\leq n)
{\sf safe}(V_i,W_i,T).$$ 
\end{enumerate}
\end{definition}
For the next Lemma, we refer the reader to \cite[Definition 12.11]{HHbook} 
for the definition of hyperbasis for relation algebras.
For a relation algebra $\R$, recall that  $\R^+$ denote its canonical extension.

\begin{lemma}\label{i} Let $\R$ be  a relation algebra and $3<n<\omega$.  Then 
$\R^+$ has an $n$--dimensional infinite relational basis $\iff\ \R$ has an infinite $n$--square representation.
$\R^+$ has an $n$--dimensional infinite hyperbasis $\iff\ \R$ has an infinite $n$--flat representation.
\end{lemma}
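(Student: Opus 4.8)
The plan is to prove both biconditionals through a single two--way correspondence between the purely combinatorial basis/hyperbasis structures living on the \emph{atomic} algebra $\R^+$ and the relativized square/flat representations of $\R$ itself. The reason one must pass to the canonical extension is exactly that basic matrices and hyperlabels are defined in terms of atoms, so they only make sense over an atomic algebra; since $\R^+$ is complete and atomic, with $\At\R^+$ being the set of ultrafilters of $\R$, and since the relation--algebraic operations of $\R^+$ extend those of $\R$, the matrix conditions (identity, converse, and the triangle inequality $\lambda(i,j)\le \lambda(i,k);\lambda(k,j)$) computed in $\R^+$ are governed by the operations of $\R$. I will treat the relational basis versus $n$--square case in full and then indicate the changes for the hyperbasis versus $n$--flat case, since the second biconditional is the cylindric refinement of the first: a hyperbasis is a relational basis carrying coherent labels on all tuples of length $<n$, and flatness is squareness together with commutativity of the clique--guarded cylindrifiers, which is precisely what those extra labels enforce.

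For the implication from a basis to a representation I would run a step--by--step (equivalently game--theoretic) construction in the style of \cite{HHbook}. Beginning from a single matrix of the given $n$--dimensional relational basis $M$ of $\R^+$, I build an increasing $\omega$--chain of finite networks whose nodes are points of the eventual base and whose edges are labelled by atoms of $\R^+$ so that every clique of size $\le n$ reads off a matrix in $M$. At each stage a scheduled ``defect'' --- a pair whose label $\lambda(i,j)$ must be split as $a;b$ through a fresh node, or more generally a failure of the $n$--square condition --- is cured by invoking the amalgamation (cylindricity) property of $M$, which supplies exactly the matrix needed to adjoin the witnessing node (the range $3<n$ leaving room to host it). Fair scheduling over $\omega$ disposes of all defects, and the union of the chain is an $n$--square relativized representation of $\R^+$ on an infinite base; restricting the labelling from $\At\R^+$ down to $\R$ (each $a\in\R$ being represented as $\{(x,y):\text{label}(x,y)\le a\}$, i.e.\ the union of the relations assigned to the atoms below it) yields the required infinite $n$--square representation of $\R$. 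For the hyperbasis the same construction additionally maintains hyperlabels on short tuples; the stronger amalgamation of a hyperbasis lets the construction keep these labels coherent as nodes are added, and coherence is exactly what upgrades the limit from $n$--square to $n$--flat.

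For the converse I read a basis off a representation. Given an infinite $n$--square representation $h$ of $\R$ on a base $V$, I associate to each clique $(x_0,\dots,x_{n-1})$ of points the $n\times n$ matrix $\lambda$ with $\lambda(i,j)$ equal to the ultrafilter $\{a\in\R:(x_i,x_j)\in h(a)\}$, which is an atom of $\R^+$. The collection of all such matrices is an $n$--dimensional relational basis of $\R^+$: the identity, converse, and triangle conditions hold because $h$ is a homomorphism into a relativized set algebra, while the amalgamation property is a direct transcription of the $n$--square condition, which guarantees that any required splitting $\lambda(i,j)\le a;b$ is witnessed by some further point of $V$. Reading hyperlabels off the short tuples of an $n$--flat representation in the same way produces an $n$--dimensional hyperbasis, the flatness condition delivering precisely the coherence demanded of hyperlabels.

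The main obstacle is the interface between $\R$ and $\R^+$ together with the bookkeeping of the infinite construction. On the algebraic side I must verify that each $a\in\R$ is faithfully recovered in the limit as the union of its atoms' relations, so that $h$ genuinely represents $\R$ (and hence $\R^+$, using that $\R^+$ is atomic and each atom is realized), and that composition computed on ultrafilters of $\R^+$ matches the triangle inequality demanded of matrices; this is where the definition of the canonical extension does the work. On the combinatorial side the delicate point is the hyperbasis/flat case: maintaining globally coherent hyperlabels on all tuples of length $<n$ through an $\omega$--length construction, while simultaneously curing every square defect, requires a disciplined scheduling argument, and it is exactly the commutativity of cylindrifiers encoded by the hyperlabels --- as opposed to mere squareness --- that is easy to state but fiddly to preserve in the limit.
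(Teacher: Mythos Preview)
The paper does not prove this lemma at all; it simply cites \cite[Theorem 13.46]{HHbook}, pointing to the equivalences $(1)\Leftrightarrow(5)$ for relational bases and $(7)\Leftrightarrow(11)$ for hyperbases. Your sketch is essentially a reconstruction of the argument behind that theorem, so in spirit you are doing exactly what the cited reference does: step--by--step construction from a basis to a relativized representation in one direction, and reading ultrafilter--labelled matrices off cliques in the other.

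There is, however, a genuine gap in your converse direction (representation $\Rightarrow$ basis). A relational basis for $\R^+$ must satisfy the covering condition: for \emph{every} atom $\alpha\in\At\R^+$ there is a matrix $N$ in the basis with $N(0,1)=\alpha$. In your construction the matrices are exactly those arising from cliques of the given representation, so the atoms you cover are precisely the ultrafilters of $\R$ that are \emph{realized} in that representation. An arbitrary $n$--square representation of $\R$ need not realize every ultrafilter of $\R$, so the set of matrices you produce may fail the covering axiom and hence fail to be a basis. The standard repair, which the paper itself invokes in the proof of Theorem~\ref{flat} and again in Theorem~\ref{square}, is to first replace the given representation by an $\omega$--saturated elementary extension; in such a model every ultrafilter of $\R$ is realized (each is a finitely satisfiable type), and then your matrix--reading argument goes through. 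The same issue and the same fix apply verbatim to the hyperbasis/flat case. Once you insert this saturation step, your outline is correct and matches the route taken in \cite{HHbook}.
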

\begin{proof} \cite[Theorem 13.46, the equivalence $(1)\iff (5)$ for relational basis, and the equivalence $(7)\iff (11)$ for hyperbasis]{HHbook}.
\end{proof}

Fix $2<n<\omega$. We assume familiarity with the notion of an $n$--dimensional basic matrix defined on atomic relation algebra $\R$ \cite{Maddux}. 
For a relation atom structure $\alpha$, ${\sf Mat}_n(\alpha)$ denotes the set of all $n$ by $n$
basic matrices on $\alpha$. An atom structure $\sf At$ of $\CA_n$ type is 
{\it strongly representable} if every atomic algebra having this atom structure is representable; $\sf At$ is {\it weakly representable} if at least one atomic 
algebra having this atom structure
is representable. These two notions are distinct \cite{Hodkinson}, cf. Theorem \ref{can}. If $\A\subseteq \mathfrak{Nr}_n\B$, $\B\in \CA_m$, $n<m\leq \omega$, we say that 
$\B$ is an $m$--dilation of $\A$, or simply a dilation of $\A$ 
if $m$ is clear from context.

The following Theorem generalizes
the construction in \cite{ANT} and says some more facts. We denote the relation algebra ${\Bb}(\R, J, E)$ with atom structure $\bf At$ obtained by blowing up and blurring $\R$ 
(with underlying set is denoted by $At$ on \cite[p.73]{ANT}) by ${\sf split}(\R, J, E)$).  
By the same token,  we denote the algebra ${\Bb}_l(\R, J, E)$ as defined in \cite[Top of p. 78]{ANT} by 
${\sf split}_l(\R, J, E)$. This switch of notation is motivated by the fact that we wish to emphasize the role of {\it splitting some (possibly all) atoms into infinitely subatoms 
during blowing up and blurring a finite algebra thereby presenting a multiplicity of such (blow up and blur) constructions as a variation on a single theme.}  

%In Andr\'eka's splitting one typically splits an atom in a given representable (set) algebra to 
%finitely many subatoms $m$ say, obtaining a bigger non--representable superalgebra
%$\A_m$ whose subalgebras generated by $<m$ elements are representable, and various proper reducts of $\A_m$ remain representable.
%Here, in blow up and blow constructions,  we do the exact opposite using also the method of splitting. 
%By splitting atoms in a finite non--representable algebra $\R$, we pass from a non--representable algebra to a representable infinite one $\A$, 
%whose \de\ completion $\C=\Cm\At\A$ is representable and furthermore, $\R$ embeds in $\C$.
The reader is referred to \cite[Chapter 13, Definitions 13.4, 13.6]{HHbook} for the notions 
of $m$--flat and $m$--square representations for  relation algebras ($m>2$) to be generalized below to cylindric algebras, cf. Definition \ref{relrep}.

%The word {\it splitting} refers to splitting some of the atoms in a given relation or  cylindric algebra 
%each into one or more subatom obtaining a bigger algebra, where the number of subatoms (atoms in the bigger algebra) 
%obtained after  splitting is adjusted to serve a certain combinatorial purpose.  
%The idea originates with Leon Henkin. The most famous splitting,  where the number of subatoms obtained after splitting is finite,
%is `Andr\'eka's splitting' reinvented by And\'reka \cite{Andreka},  proving 
%several relative non-finite axiomatizability results for classes
%of representable cylindric algebras.

\begin{theorem}\label{ANT} Let $2<n\leq l<m\leq \omega$.
\begin{enumerate}
\item  Let $\R$ be a finite relation algebra with an $l$--blur $(J, E)$ where $J$ is the $l$--complex blur and $E$ is the index blur.  

(a) Let $\bf At$ be the relation algebra atom structure obtained by blowing up and blurring $\R$ as specified above.
Then the set of $l$ by $l$--dimensional matrices 
${\bf At}_{ca}={\sf Mat}_l({\bf At})$ is an $l$--dimensional cylindric basis, that is a weakly representable atom structure \cite[Theorem 3.2]{ANT}. 
The algebra  ${\sf split}_l(\R, J, E)$ with 
atom structure ${\bf At}_{ra}$  is in $\RCA_l$. Furthermore, 
$\R$ embeds into $\Cm{\bf At}$ which embeds into $\Ra\Cm({\bf At}_{ca}).$ 

(b) If $(J, E)$ is a strong $m$--blur for $\R$,
then $(J, E)$ is a strong $l$--blur for $\R$.  Furthermore, ${{{\sf split}}}_l(\R, J, E)\cong {\Nr}_l{{\sf split}}_m(\R, J, E)$ and 
for any $l\leq j\leq m$, ${\sf split}(\R, J, E)$ having atom structure $\bf At$, is isomorphic to $\Ra({{\sf split}}_j(\R, J, E))$.

\item For every $n<l$, 
there is an $\R$ having a strong $l$--blur $(J, E)$
but no infinite representations (representations on an infinite base). 
Hence the atom structures defined in (a) of the previous item  (denoted by $\bf At$ and ${\bf At}_{ca}$)
for this specific $\R$ are not strongly representable.  

\item    Let $m<\omega$. If $\R$ is  a finite relation algebra having  
a strong $l$--blur, and no $m$--dimensional hyperbasis, 
then $l<m$.

\item If $n=l<m<\omega$ and $\R$ is a finite relation algebra with an $n$ blur $(J, E)$ (not necessarily strong) 
and no infinite $m$--dimensional hyperbasis, then the algebras $\Cm\At({\sf spli}t(\R, J, E))$ and $\Cm\At({\sf split}_l(\R, J, E))$ are outside
$\bold S\Ra\CA_m$ and  $\bold S{\sf Nr}_n\CA_m$, respectively, 
and the latter two varieties are not atom--canonical. 
\end{enumerate}
\end{theorem}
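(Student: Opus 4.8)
The plan is to obtain items (1) and (2) directly from the blow up and blur construction of \cite{ANT}, and then to read items (3) and (4) off item (1) using the Hirsch--Hodkinson correspondence between membership in $\bold S\Ra\CA_m$ and the existence of $m$--dimensional (hyper)bases recorded in Lemma \ref{i} and \cite[Theorem 13.46]{HHbook}. For (1)(a) I would cite \cite[Theorem 3.2]{ANT}: blowing up and blurring $\R$ along the complex $l$--blur yields an atomic atom structure ${\bf At}$ whose basic $l\times l$ matrices ${\sf Mat}_l({\bf At})={\bf At}_{ca}$ form an $l$--dimensional cylindric basis, so that ${\sf split}_l(\R,J,E)\in \RCA_l$ and ${\bf At}_{ca}$ is weakly representable. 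The embedding $\R\hookrightarrow \Cm{\bf At}$ encodes the word ``blur'': although $\R$ does not embed into the blown up algebra, each atom of $\R$ reappears in the completion as the supremum of the subatoms into which it was split, and this assignment extends to a relation algebra embedding; composing with $\Cm{\bf At}\hookrightarrow \Ra\Cm({\bf At}_{ca})$ gives the last clause. For (2) I would take the explicit finite Maddux--type algebra of \cite{ANT}, which carries a strong $l$--blur yet has no infinite representation (and, since any representable relation algebra has a representation on an infinite base, is not representable at all); then strong representability of ${\bf At}$ (resp. ${\bf At}_{ca}$) would force $\Cm{\bf At}\in \sf RRA$ (resp. $\Ra\Cm({\bf At}_{ca})\in \sf RRA$), whence $\R$, being a subalgebra of a representable algebra by (1)(a), would be representable --- a contradiction.

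For (1)(b), that a strong $m$--blur is a strong $l$--blur is immediate from Definition \ref{strongblur}: the complex blur conditions and the safety clause ${\sf safe}(V_i,W_i,T)$ are quantified over tuples from $J$ whose length and the range of $i$ are governed by the dimension, so passing from $m$ to $l\leq m$ only drops constraints. The two isomorphisms are the matrix--theoretic content of strength. I would prove ${\sf split}_l(\R,J,E)\cong \Nr_l{\sf split}_m(\R,J,E)$ by identifying the neat $l$--reduct of the algebra on ${\sf Mat}_m({\bf At})$ with the algebra on ${\sf Mat}_l({\bf At})$; the one place strength is needed is that ${\sf safe}(V_i,W_i,T)$ guarantees that every $l$--dimensional basic matrix amalgamates to an $m$--dimensional one, so that the $l$--dimensional elements of the $m$--dilation are exactly ${\sf Mat}_l({\bf At})$ and nothing more. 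The isomorphism ${\sf split}(\R,J,E)\cong \Ra({\sf split}_j(\R,J,E))$ for $l\leq j\leq m$ is obtained in the same spirit, identifying the relation algebra reduct (the two--dimensional elements) of the algebra on ${\sf Mat}_j({\bf At})$ with $\Cm{\bf At}$, following the corresponding lemmas of \cite{ANT}.

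I would prove (3) by contraposition. Assuming $l\geq m$, the strong $l$--blur restricts to a strong $m$--blur as in (1)(b), and (1)(a) applied in dimension $m$ gives $\R\hookrightarrow \Cm{\bf At}\hookrightarrow \Ra\Cm({\sf Mat}_m({\bf At}))$ with $\Cm({\sf Mat}_m({\bf At}))\in \CA_m$. Thus $\R\in \bold S\Ra\CA_m$, and since $\R$ is finite, \cite[Theorem 13.46]{HHbook} supplies an $m$--dimensional hyperbasis for $\R$, contradicting the hypothesis; hence $l<m$.

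Item (4) is the main point and the place where I expect the real work. Here $l=n$ and only an $n$--blur is assumed, so no higher blur is available and I must work in dimension $n$ throughout. By (1)(a), ${\sf split}_n(\R,J,E)\in \RCA_n\subseteq \bold S\Nr_n\CA_m$ and $\Bb(\R)={\sf split}(\R,J,E)\in \sf RRA\subseteq \bold S\Ra\CA_m$ are atomic with $\R\hookrightarrow \Cm{\bf At}\hookrightarrow \Ra\Cm({\bf At}_{ca})$, and $\Cm\At({\sf split}(\R,J,E))=\Cm{\bf At}$, $\Cm\At({\sf split}_n(\R,J,E))=\Cm({\bf At}_{ca})$. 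If $\Cm{\bf At}\in \bold S\Ra\CA_m$ then $\R\in \bold S\Ra\CA_m$ as a subalgebra. If instead $\Cm({\bf At}_{ca})\subseteq \Nr_n\D$ with $\D\in \CA_m$, then, using that for $n\geq 3$ one has $\Ra\Nr_n\D=\Ra\D$ (both equal the set of elements of $\D$ fixed by $\cyl{2},\dots,\cyl{m-1}$), we get $\Ra\Cm({\bf At}_{ca})\subseteq \Ra\D\in \Ra\CA_m$, so again $\R\hookrightarrow \Ra\Cm({\bf At}_{ca})$ forces $\R\in \bold S\Ra\CA_m$. The hard part is then to convert $\R\in \bold S\Ra\CA_m$ into a contradiction with the hypothesis, which forbids only an \emph{infinite} $m$--dimensional hyperbasis, whereas \cite[Theorem 13.46]{HHbook} a priori yields only some hyperbasis. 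I would bridge this gap by exploiting infiniteness of the completion: $\Ra\Cm({\bf At}_{ca})$ is infinite, atomic and in $\bold S\Ra\CA_m$, hence carries an infinite $m$--dimensional hyperbasis, and since each atom of $\R$ is the join of the disjoint subatoms it was split into, every hypernetwork can be pushed forward along the embedding $\R\hookrightarrow \Ra\Cm({\bf At}_{ca})$ by replacing each atomic label with the unique atom of $\R$ above it; verifying that this reduction preserves the hyperbasis axioms and yields an infinite $m$--dimensional hyperbasis of $\R$ (equivalently, via Lemma \ref{i} and $\R^{+}=\R$, an infinite $m$--flat representation of $\R$) contradicts the hypothesis. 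Therefore $\Cm{\bf At}\notin \bold S\Ra\CA_m$ and $\Cm({\bf At}_{ca})\notin \bold S\Nr_n\CA_m$, and non--atom--canonicity of the two varieties is then immediate, since $\Bb(\R)$ and ${\sf split}_n(\R,J,E)$ are members whose \de\ completions $\Cm{\bf At}$ and $\Cm({\bf At}_{ca})$ fall outside them.
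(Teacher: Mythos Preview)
Your proposal is essentially correct and tracks the paper's approach closely, with two places worth noting.

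For item (3), the paper takes a slightly different route. Rather than arguing directly that $\R\in\bold S\Ra\CA_m$ and invoking \cite[Theorem 13.46]{HHbook}, it uses the strong $l$--blur to realize ${\sf split}_n(\R,J,E)\cong\Nr_n{\sf split}_l(\R,J,E)$ with ${\sf split}_l(\R,J,E)$ \emph{atomic}; this gives $\R\subseteq_c\Ra\D$ for an atomic $\D\in\CA_l$, and then \cite[Theorem 13.45, (6)$\iff$(9)]{HHbook} produces a \emph{complete} $l$--flat (hence $m$--flat) representation of $\R$, contradicting the absence of an $m$--dimensional hyperbasis. Your argument is more direct and suffices for the stated conclusion; the paper's detour through complete embeddings and atomic dilations is not needed here but foreshadows the machinery used later.

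For item (4), you are right to flag the ``infinite'' qualifier: the paper simply asserts $\R\notin\bold S\Ra\CA_m$ and moves on. Your proposed pushforward of hypernetworks along $\R\hookrightarrow\Ra\Cm({\bf At}_{ca})$ is in the right spirit but needs care, since atoms of $\Ra\Cm({\bf At}_{ca})$ need not coincide with atoms of $\Cm{\bf At}$, so the ``unique atom of $\R$ above a given label'' is not immediate. The cleaner route---made explicit in the paper's proof of the later implication $(1)\Rightarrow(2)$ in Theorem~\ref{main}---is via representations: if $\Cm({\bf At}_{ca})\in\bold S\Nr_n\CA_m$ then it has an $m$--flat representation, necessarily on an infinite base because the algebra has infinitely many atoms; this restricts along the embedding to an infinite $m$--flat representation of $\R$, and since $\R$ is finite, $\R=\R^+$ yields an infinite $m$--dimensional hyperbasis, the required contradiction. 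The same reasoning handles $\Cm{\bf At}$ and $\bold S\Ra\CA_m$.
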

\begin{proof} \cite[Lemmata 3.2, 4.2, 4.3]{ANT}. 
We start by an outline of (a) of item (1).  Let $\R$ be as in the hypothesis. 
Let $3<n\leq l$. We blow up and blur $\R$. $\R$ is blown up by splitting all of the atoms each to infinitely many
defining an (infinite atoms) structure $\bf At$.
$\R$ is blurred by using a finite set of blurs (or colours) $J$. 
The term algebra denoted in \cite{ANT} by ${{\Bb}}(\R, J, E)$) over $\bf At$, denoted here by ${\sf split}(\R, J, E)$, 
 is representable using the finite number of blurs.\footnote{Such blurs are basically non--principal ultrafilters; they are used as colours together 
with the principal ultrafilters (the atoms) to represent ${\sf split}(\R, J, E)$. 
This representation is implemented in  step-by-step manner, and in fact this step by step construction 
adopted in \cite{ANT} {\it completely represents the canonical
extension} of ${\sf split}(\R, J, E)$.}

Because $(J, E)$ is a complex set of $l$--blurs, this atom structure has an $l$--dimensional cylindric basis, 
namely, ${\bf At}_{ca}={\sf Mat}_l(\bf At)$. The resulting $l$--dimensional cylindric term algebra $\Tm{\sf Mat}_l(\bf At)$, 
and an algebra $\C$ having atom structure ${\bf At}_{ca}$ (denoted in \cite{ANT} by 
$\Bb_l(\R, J, E)$) and denoted now by ${\sf split}_l(\R, J, E)$ such that $\Tm{\sf Mat}_l({\bf At})\subseteq \C\ \subseteq \Cm{\sf Mat}_l(\bf At)$ 
is shown to be  representable.  

We prove (b) of item (1): Assume that the $m$--blur $(J, E)$ is strong, then by definition $(J, E)$ is a strong  $j$ blur for all $n\leq j\leq m$.
Furthermore,  by \cite[item (3) pp. 80]{ANT},  
${\sf split}(\R,J, E)=\Ra({{\sf split}}_j(\R, J, E))$ 
and ${\sf split}_j(\R, J, E)\cong \mathfrak{Nr}_j{\sf split}_m(\R, J, E)$. 

We prove  (2):   Like in \cite[Lemma 5.1]{ANT},  one takes $l\geq 2n-1$, $k\geq (2n-1)l$, $k\in \omega$.
The Maddux integral relation algebra ${\mathfrak E}_k(2, 3)$ 
where $k$ is the number of non-identity atoms is the required $\R$. 
In this algebra a triple $(a, b, c)$ of non--identity atoms is consistent $\iff$ $|\{a, b, c\}|\neq 1$, i.e 
only monochromatic triangles are forbidden. 

We prove (3). Let $(J, E)$ be the strong $l$--blur of $\R$. Assume  for contradiction that $m\leq l$. Then we get by \cite[item (3), p.80]{ANT},  
that  $\A={{\sf split}}_n(\R, J, E)\cong \mathfrak{Nr}_n{\sf split}_l(\R, J, E)$.  But the cylindric $l$--dimensional algebra ${{\sf split}}_l(\R, J, E)$ is atomic,  having atom structure  
${\sf Mat}_l \At({{\sf split}}(\R, J, E))$, so $\A$ has an atomic $l$--dilation.
So $\A=\mathfrak{Nr}_n\D$ where $\D\in \CA_l$ is atomic.
But $\R\subseteq_c \mathfrak{Ra}\mathfrak{Nr}_n\D\subseteq_c \mathfrak{Ra}\D$. 
By \cite[Theorem 13.45 $(6)\iff (9)$]{HHbook},  $\R$ has a complete $l$--flat representation, 
thus it has a complete $m$--flat representation, because $m<l$ and $l\in \omega$. 
This is a contradiction. 

We prove (4):   Let $\B={{\sf split}}_n(\R, J, E)$. Then, since $(J, E)$ is an $n$ blur, $\B\in \RCA_n$. But  
$\C=\Cm\At\B\notin \bold S{\sf Nr}_n\CA_{m}$, because $\R\notin \bold S\Ra\CA_m$, 
$\R$ embeds into ${\sf split}(\R, J, E)$ which, in turn, embeds into 
$\Ra\Cm\At\B$. Similarly, ${\sf split}(\R, J, E)\in \sf RRA$ and 
$\Cm(\At{\sf split}(\R, J, E))\notin \bold S\Ra\CA_m$.   
Hence the alledged varieties are not atom --canonical. 
\end{proof}

\subsection{ Blowing up and blurring 
finite rainbow algebras}

In Theorem \ref{ANT}, we used a single blow up and blur construction to prove non-atom--canonicity of $\sf RRA$ and ${\sf RCA}_n$ for $2<n<\omega$.
To obtain finer results, we use {\it two blow up and blur constructions} applied to rainbow algebras.
For the $\RA$ case, following Hirsch and Hodkinson,  we blow up and blur 
the finite rainbow relation algebra (denoted below by) $\bold R_{4,3}$. For the $\CA$ case we blow up and blur 
the finite rainbow $\CA_n$  (denoted below by) $\A_{n+1, n}$. 
To put things into a unified perspective, we formulate a definition: 

\begin{definition}\label{blow} Let $\bold M$ be a variety 
of completely additive $\sf BAO$s.

\begin{enumarab}

\item Let $\A\in \bold M$ be a finite algebra. We say that {\it $\D\in \bold M$ is obtained by blowing up and blurring $\A$} if $\D$ is atomic,  
$\A$ does not embed in $\D$, but $\A$ embeds into $\Cm\At\D$. 

\item Assume that $\sf K\subseteq L\subseteq \bold M$, such that $\bold S\sf L=\sf L$.

(a) We say that {\it $\sf K$ is {\bf not} atom-canonical with respect to $\sf L$} if there exists an atomic $\D\in \sf K$ 
such that $\Cm\At\D\notin \sf L$.  In particular, $\sf K$ is not atom--canonical.

(b)  We say that a finite algebra $\A\in \bold M$ {\it detects} that 
$\bold K$ is not atom--canonical with respect to $\sf L$, if $\A\notin \sf L$, 
and there is a(n atomic)  $\D\in \sf K$ 
obtained by blowing up and blurring $\A$.
\end{enumarab}
\end{definition}

\begin{corollary}\label{cp} Let $2<n<\omega$. Then for any finite $j>0$, ${\sf RRA}\cap \Ra\CA_{2+j}$ is not atom-canonical with respect to $\sf RRA$, 
and $\RCA_n\cap \Nr_n\CA_{n+j}$ is not atom--canonical with respect to $\RCA_n.$
\end{corollary}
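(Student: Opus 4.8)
The plan is to read off both statements directly from Theorem~\ref{ANT}, applied to a single finite non--representable Maddux algebra, after one reduction: it suffices to prove each statement for arbitrarily large index, since the witnessing algebra then falls into the smaller classes $\Ra\CA_{2+j}$ and $\Nr_n\CA_{n+j}$ automatically. Indeed $\Ra\CA_{m'}\subseteq\Ra\CA_m$ for $m'\geq m\geq 3$ and $\Nr_n\CA_{m'}\subseteq\Nr_n\CA_m$ for $m'\geq m\geq n$, because an $m'$--dilation passes, through its own $m$--neat reduct, to an $m$--dilation (and the $\Ra$--reduct only sees the first three dimensions). So fix $2<n<\omega$ and a finite $j>0$, choose $M\geq\max(2n-1,\,n+j)$, and use Theorem~\ref{ANT}(2) to fix a finite relation algebra $\R$ carrying a strong $M$--blur $(J,E)$ with no representation on an infinite base.

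For the relation algebra part put $\D={\sf split}(\R,J,E)$. By Theorem~\ref{ANT}(1)(a) the algebra $\D$ is atomic and representable, so $\D\in{\sf RRA}$; by the isomorphism ${\sf split}(\R,J,E)\cong\Ra({\sf split}_M(\R,J,E))$ of Theorem~\ref{ANT}(1)(b) (the top index) we get $\D\in\Ra\CA_M\subseteq\Ra\CA_{2+j}$. It remains to see $\Cm\At\D\notin{\sf RRA}$. By Theorem~\ref{ANT}(1)(a), $\R$ embeds into $\Cm{\bf At}=\Cm\At\D$. If $\Cm\At\D$ were representable then, being infinite, it would be representable on an infinite base $X$; restricting that representation to the subalgebra $\R$ would represent $\R$ on the infinite base $X$, contradicting the choice of $\R$. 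Hence $\Cm\At\D\notin{\sf RRA}$, which is precisely non--atom--canonicity of ${\sf RRA}\cap\Ra\CA_{2+j}$ with respect to ${\sf RRA}$.

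For the cylindric part put $\D={\sf split}_n(\R,J,E)$. Since $(J,E)$ is in particular an $n$--blur, Theorem~\ref{ANT}(1)(a) gives $\D\in\RCA_n$, atomic with $\At\D={\sf Mat}_n({\bf At})$; since $(J,E)$ is a strong $M$--blur, Theorem~\ref{ANT}(1)(b) (with $l=n$) gives $\D\cong\Nr_n{\sf split}_M(\R,J,E)\in\Nr_n\CA_M\subseteq\Nr_n\CA_{n+j}$. For the completion, Theorem~\ref{ANT}(1)(a) supplies embeddings $\R\hookrightarrow\Cm{\bf At}\hookrightarrow\Ra\Cm{\sf Mat}_n({\bf At})=\Ra\Cm\At\D$. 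Were $\Cm\At\D\in\RCA_n$, then being infinite it would be representable on an infinite base $X$, whence its relation algebra reduct $\Ra\Cm\At\D$ is a representable relation algebra on the same base $X$; restricting to $\R$ would once more represent $\R$ on the infinite base $X$, a contradiction. Thus $\Cm\At\D\notin\RCA_n$, giving non--atom--canonicity of $\RCA_n\cap\Nr_n\CA_{n+j}$ with respect to $\RCA_n$.

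The step I would guard most carefully is the passage, in the cylindric part, from representability of $\Cm\At\D$ as a $\CA_n$ to representability of its $\Ra$--reduct on the same base: this transfer of representability from a cylindric algebra to its relation algebra reduct is standard for $n>2$ but should be cited explicitly, and it is what lets the non--representability of $\R$ propagate to the completion. Everything else is bookkeeping with the blur indices, the only substantive point being the monotonicity reduction above, which lets one large--$M$ Maddux construction settle all finite $j>0$ at once, including the small dimensions $2+j\in\{3,4,\dots\}$ and the values $n+j$ just above $n$ that Theorem~\ref{ANT}(2) cannot produce directly.
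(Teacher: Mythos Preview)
Your proof is correct and follows essentially the same route as the paper's. Both arguments pick a single sufficiently large blur index (your $M$, the paper's $l$), invoke the Maddux algebra $\mathfrak{E}_k(2,3)$ from Theorem~\ref{ANT}(2), form the algebras ${\sf split}(\R,J,E)$ and ${\sf split}_n(\R,J,E)$, and read off membership in $\Ra\CA_M$ and $\Nr_n\CA_M$ from Theorem~\ref{ANT}(1)(b). Your explicit monotonicity reduction $\Ra\CA_{M}\subseteq\Ra\CA_{2+j}$, $\Nr_n\CA_{M}\subseteq\Nr_n\CA_{n+j}$ and your direct argument for non--representability of the completions (via the embedding $\R\hookrightarrow\Cm{\bf At}\hookrightarrow\Ra\Cm\At\D$ together with $\R$ having no infinite representation) spell out details the paper leaves implicit, the paper instead simply citing the ``not strongly representable'' clause of Theorem~\ref{ANT}(2).
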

\begin{proof} Follows from Theorem \ref{ANT}, cf. \cite{ANT}. In more detail, let $2<n\leq l<\omega$. Choose $k\in \omega$ so that the finite Maddux algebra 
$\R=\mathfrak{E}_k(2, 3)$, with $k$ non--identity atoms, has a strong $l$ --blur
$(J, E)$. Let ${\cal R}={\sf split}(\R, J, E)\in {\sf  RRA}$ and ${\cal C}={\sf split}_n(\R, J, E)\in \RCA_n$. 
Then both $\Cm\At(\cal R)$ and $\Cm\At(\cal C)$ are not representable,  
${\cal R}=\Ra{\cal C}=\Nr_n({\sf split}_l (\R, J, E))=\Ra({\sf split}_l (\R, J, E))$, 
and ${\cal C}=\Nr_n{\sf split}_l(\R, J, E)$. In particular,  ${\cal R}\in \Ra\CA_l\cap \sf RRA$ and ${\cal C}\in \Nr_n\CA_l\cap \RCA_n$; 
and their \de\ completions are not representable. 
\end{proof}

Fix $2<n<\omega$. We use finite algebras that {\it detect} that $\sf RRA$, $\RCA_n$ are not atom--canonical 
with respect to $\bold S\Ra\CA_6$  and $\bold S\Nr_n\CA_{n+3}$, respectively. 
The first result is due to Hirsch and Hodkinson \cite{HHbook}, the second result is proved for $\CA$s
in \cite{mlq}.  Below we generalize the proof to other $\CA$--like algebras of relations.

In all cases we use a rainbow construction. For both $\RA$s and $\CA$s (and its likes) 
rainbow constructions, as mentioned in the introduction, hinge on a two player zero sum \ef\ forth game 
between two players \pa\ and \pe. This game is  
played on two  relational structures (that are usually surprisingly simple, like finite ordinals viewed as complete irreflexive graphs)
$\sf G$  (the greens) and $\sf R$ (the reds), which, together with other colours, uniquely determines a rainbow atom structure,  be it of 
an $\RA$ or a $\CA_n$.
 
A novelty that occurs here is that the presentation of both constructions for $\RA$s and $\CA$s, 
is implemented in the framework of entirely analagous blow up and blur constructions applied to strikingly similar finite rainbow atom structures.
In both cases, the relational structures 
$\sf G$ and $\sf R$ used
satisfy $|\sf G|=|\sf R|+1$. For $\sf RA$, ${\sf R}=3$ and for $\CA_n$s, ${\sf R}=n$ (the dimension), where
the finite ordinals $3$ and $n$ are viewed as complete irreflexive graphs. 
Worthy of note, is that it is commonly accepted that relation algebras have dimension three being a natural habitat for three variable first order 
logic.  Nevertheless, sometimes it is argued that the dimension should be three and a half in the somewhat loose sense that $\sf RA$s 
lie `halfway' between $\CA_3$ and $\CA_4$ manifesting behaviour of each. \\

{\bf Relation algebras:} 
We briefly review the construction in \cite[17.32, 17.34, 17.36]{HHbook} for relation algebras, presenting it in the framework of blowing up and blurring a finite rainbow relation 
algebra in the sense of the first item of Definition \ref{blow}.
Let $2\leq n\leq \omega$ and $r\leq \omega$.
Let $\R$ be an atomic relation algebra. Then the $r$--rounded game $G^n_r(\At\R)$ \cite[Definition 12.24]{HHbook} 
is the (usual) atomic game played on networks of an atomic relation algebra 
$\R$  using $n$ nodes.
  
Let $L$ be a relational signature. Let $\sf G$ (the greens) and  $\sf R$ (the reds)  be $L$ structures and $p,r\leq \omega$.
The game ${\sf EF}_r^p(\sf G, R)$,  defined in \cite[Definition 16.1.2]{HHbook}, is an \ef\ forth `pebble game'
with $r$ rounds and $p$ pairs of pebbles.  In \cite[16.2]{HHbook}, a relation algebra {\it rainbow atom structure} is associated 
for relational structures $\sf G$ and $\sf R$. We denote by  $\bold R_{A,B}$ the (full) complex algebra
over this atom structure.  The {\bf Rainbow Theorem} \cite[Theorem 16.5]{HHbook} states that: 
{\it If $\sf G, R$ are relational structures and $p,r\leq \omega$, then \pe\ has a \ws\ 
in $G_{1+r}^{2+p}(\bold R_{\sf G, \R})$
$\iff$ she has a winning strategy in ${\sf EF}_r^p(\sf G,R)$.}\\
For $5\leq l<\omega$, $\RA_l$  is the class of relation algebras whose canonical extensions have an $l$--dimensional relational basis \cite[Definition 12.30]{HHbook}.
$\RA_l$ is a variety containing properly the variety $\bold S\Ra\CA_l$.  Furthermore, $\R\in \RA_l$ $\iff$ 
\pe\ has a \ws\ in $G_{\omega}^n(\At\R).$
Cf. \cite[Proposition 12.31]{HHbook} and  \cite[Remark 15.13]{HHbook}. We now show:

\begin{theorem} \label{b2} (Hirsch-Hodkinson) For any $k\geq 6$, the varieties $\RA_k$ and $\bold S\Ra\CA_k$ are not atom--canonical. In fact, $\sf RRA$ is 
not atom--canonical with respect  to any of 
the aforementioned varieties. 
\end{theorem}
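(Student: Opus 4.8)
The plan is to exhibit this as an instance of the detecting mechanism of Definition \ref{blow}. I take the finite rainbow relation algebra $\A=\bold R_{4,3}$, built from the greens ${\sf G}=4$ and the reds ${\sf R}=3$ viewed as complete irreflexive graphs, and produce (following \cite{HHbook}) an atomic $\D\in {\sf RRA}$ obtained by blowing up and blurring $\A$ in the sense of item (1) of Definition \ref{blow}: $\A$ does not embed into $\D$, but $\A$ embeds as a subalgebra into $\Cm\At\D$. Granting this, the three assertions follow at once from the inclusions ${\sf RRA}\subseteq \bold S\Ra\CA_k\subseteq \RA_k$ together with closure of these classes under subalgebras.

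The first step is to check $\A=\bold R_{4,3}\notin \RA_6$. Since $\R\in \RA_l$ iff \pe\ has a \ws\ in $G_\omega^l(\At\R)$, it suffices to show \pa\ has a \ws\ in $G_\omega^6(\At\bold R_{4,3})$. By the Rainbow Theorem, \pe\ has a \ws\ in $G_{1+r}^{2+p}(\bold R_{4,3})$ iff she has one in ${\sf EF}_r^p({\sf G},{\sf R})$; setting $2+p=6$ and $1+r=\omega$, she wins $G_\omega^6(\bold R_{4,3})$ iff she wins ${\sf EF}_\omega^4(4,3)$. But \pa\ wins the latter by a pigeonhole argument: with his four pebble pairs he occupies, within four rounds, all four vertices of the greens $K_4$, and \pe\ must answer inside the reds $K_3$ with four distinct vertices to preserve a partial isomorphism of complete irreflexive graphs, which is impossible. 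Hence $\bold R_{4,3}\notin \RA_6$; and since $\RA_k\subseteq \RA_6$ and $\bold S\Ra\CA_k\subseteq \RA_k$ for every $k\geq 6$, also $\bold R_{4,3}\notin \RA_k$ and $\bold R_{4,3}\notin \bold S\Ra\CA_k$.

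The second, and main, ingredient is that the blow-up $\D$ is representable, as in \cite[17.32, 17.34, 17.36]{HHbook}. Here I split each red atom of $\bold R_{4,3}$ into infinitely many indexed copies and adjoin the finite set of blurs, then represent $\D$ by a step-by-step argument in the associated atomic-network (representation) game. The point is that splitting furnishes \pe\ with unboundedly many reds, so the pigeonhole that defeated her on the finite $\bold R_{4,3}$ no longer applies and she wins the $\omega$-node game, giving $\D\in {\sf RRA}$. One must simultaneously verify that blurring keeps $\A$ blurred (so $\A$ does not embed in $\D$) while the completion recovers it, i.e.\ $\bold R_{4,3}\hookrightarrow \Cm\At\D$, each original red atom being sent to the join of its split copies (which exists since $\Cm\At\D$ is complete).

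Finally I assemble the conclusion. As $\RA_k$ and $\bold S\Ra\CA_k$ are closed under subalgebras and $\bold R_{4,3}\hookrightarrow \Cm\At\D$, membership $\Cm\At\D\in \RA_k$ would force $\bold R_{4,3}\in \RA_k$, contradicting the second paragraph; hence $\Cm\At\D\notin \RA_k\supseteq \bold S\Ra\CA_k\supseteq {\sf RRA}$ for every $k\geq 6$. Taking $\D$ as the atomic witness ($\D\in {\sf RRA}\subseteq \bold S\Ra\CA_k\subseteq \RA_k$) then yields all three statements: $\RA_k$ and $\bold S\Ra\CA_k$ are not atom-canonical, and ${\sf RRA}$ is not atom-canonical with respect to either of them. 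I expect the genuine difficulty to lie in the step-by-step representation of the blurred algebra $\D$ and in verifying that $\A$ re-embeds into its completion; the non-membership $\bold R_{4,3}\notin \RA_6$ is comparatively routine once the Rainbow Theorem is invoked.
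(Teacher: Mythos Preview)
Your proposal is correct and follows essentially the same route as the paper: both take $\bold R_{4,3}$, show via the Rainbow Theorem and a pigeonhole in ${\sf EF}^4(4,3)$ that it lies outside $\RA_6\supseteq\bold S\Ra\CA_6$, split each red atom into $\omega$ copies to obtain a representable term algebra (the paper's ${\sf split}(\R,\r,\omega)$), and re-embed $\bold R_{4,3}$ into the completion by sending each red to the join of its copies. One minor terminological slip: your phrase ``adjoin the finite set of blurs'' imports language from the Andr\'eka--N\'emeti--Sayed Ahmed construction of Theorem~\ref{ANT}; in the Hirsch--Hodkinson rainbow split no blurs in the sense of Definition~\ref{strongblur} are added --- the reds are simply split and the resulting term algebra is represented directly.
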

\begin{proof} We follow the notation in \cite[Lemmata 17.32, 17.34, 17.35, 17.36]{HHbook2} with the sole exception that we denote by 
$m$ (instead of $\bold K_m$) 
the complete irreflexive graph on $m$ defined the obvious way; that is we identify this graph with its set of vertices. 
Fix  $2<n<m<\omega$. Let $\R=\bold R_{m, n}$. Then by the Rainbow Theorem  \pe\ has a \ws\ in $G_{m+1}^{m+2}(\At\R)$, since it clealy 
has a \ws\ 
in the \ef\ game ${\sf EF}_m^m(m, n)$ because $m$ is `longer' than $n$. Then $\R\notin \RA_{m+2}$ by \cite[Propsition 12.25, Theorem 13.46 $(4)\iff (5)$]{HHbook}, so 
$\R\notin  \bold S\Ra\CA_{m+2}$.  Next one `splits'  every red atom  to $\omega$--many copies obtaining the infinite atomic countable 
(term) relation algebra denoted in {\it op.cit} by $\T$, which we denote by ${\sf split}(\R, \r, \omega)$ (blowing up the reds by splitting each into $\omega$--many subatoms) 
with atom structure $\alpha$, cf. \cite[item (4) top of p. 532]{HHbook}. 
Then $\mathfrak{Cm}\alpha\notin \bold S\Ra\CA_{m+2}$ 
because $\R$ embeds into $\Cm\alpha$ by mapping every red
to the join of its copies, and $\bold S\Ra\CA_{m+2}$ is closed under $\bold S$. 
Finally, one (completely) represents 
(the canonical extension of) 
${\sf split}(\R, \r, \omega)$ like in \cite{HHbook}. By taking $m=4$ and $n=3$ the required follows.
\end{proof}
{\bf Cylindric--like algebras:} From now on,  unless otherwise indicated, $n$ is fixed to be  a finite ordinal $>2$.
To define certain deterministic games   
we recall the notions of {\it atomic networks basis} and {\it atomic games} \cite{HHbook, HHbook2}. 
Let $i<n$. For $n$--ary sequences $\bar{x}$ and $\bar{y}$, we write $\bar{x}\equiv _i \bar{y}\iff \bar{y}(j)=\bar{x}(j)$ for all $j\neq i.$  

\begin{definition}\label{game} Fix finite $n>2$ and assume that $\A\in \CA_n$ is atomic.

(1) An {\it $n$--dimensional atomic network} on $\A$ is a map $N: {}^n\Delta\to  \At\A$, where
$\Delta$ is a non--empty set of {\it nodes}, denoted by $\nodes(N)$, satisfying the following consistency conditions for all $i<j<n$: 
\begin{itemize}
\item If $\bar{x}\in {}^n\nodes(N)$  then $N(\bar{x})\leq {\sf d}_{ij}\iff \bar{x}_i=\bar{x}_j$,
\item If $\bar{x}, \bar{y}\in {}^n\nodes(N)$, $i<n$ and $\bar{x}\equiv_i \bar{y}$, then  $N(\bar{x})\leq {\sf c}_iN(\bar{y})$.
\end{itemize}
For $n$--dimensional atomic networks $M$ and $N$,  we write $M\equiv_i N\iff M(\bar{y})=N(\bar{y})$ for all $\bar{y}\in {}^{n}(n\sim \{i\})$.

(2)   Assume that $m, k\leq \omega$. 
The {\it atomic game $G^m_k(\At\A)$, or simply $G^m_k$}, is the game played on atomic networks
of $\A$ using $m$ nodes and having $k$ rounds \cite[Definition 3.3.2]{HHbook2}, where
\pa\ is offered only one move, namely, {\it a cylindrifier move}: 

Suppose that we are at round $t>0$. Then \pa\ picks a previously played network $N_t$ $(\nodes(N_t)\subseteq m$), 
$i<n,$ $a\in \At\A$, $x\in {}^n\nodes(N_t)$, such that $N_t(\bar{x})\leq {\sf c}_ia$. For her response, \pe\ has to deliver a network $M$
such that $\nodes(M)\subseteq m$,  $M\equiv _i N$, and there is $\bar{y}\in {}^n\nodes(M)$
that satisfies $\bar{y}\equiv _i \bar{x}$ and $M(\bar{y})=a$.  

We write $G_k(\At\A)$, or simply $G_k$, for $G_k^m(\At\A)$ if $m\geq \omega$.

(3) The $\omega$--rounded game $\bold G^m(\At\A)$ or simply $\bold G^m$ is like the game $G_{\omega}^m(\At\A)$ 
except that \pa\ has the advantage 
to reuse the $m$ nodes in play.
\end{definition}
For a class $\bold K$ of $\sf BAO$s, $\bold K^{\sf ad}$ denotes the class of {\it completely additive} algebras in $\bold K$.
 \begin{lemma}\label{n}
Let $\K$ be any class having between $\Sc$ and $\QEA$, 
$\A\in \K_n$ and $\A\in \bold S_c\Nr_n\K_m^{\sf ad}$. Then \pe\ has a \ws\ in $\bold G^m(\At\A).$ 
In particular, If $\A$ is finite and \pa\ has a \ws\ in $\bold G^m(\At\A)$, then $\A\notin \bold S\Nr_n\K_m^{\sf ad}$. 
\end{lemma}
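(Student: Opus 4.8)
The plan is to give \pe\ an explicit \ws\ in $\bold G^m(\At\A)$ that is read off from a witnessing dilation, and then to deduce the ``in particular'' clause from this together with the fact that subalgebras of finite algebras are automatically complete.

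Fix $\B\in\K_m^{\sf ad}$ with $\A\subseteq_c\Nr_n\B$. The guiding idea is that the extra $m-n$ dimensions of $\B$ furnish the room to meet any cylindrifier demand, while the hypotheses that $\A$ is a \emph{complete} subalgebra of $\Nr_n\B$ and that $\B$ is completely additive ensure that the atoms of $\A$, seen inside $\B$, still sum to $1^{\B}$ and that cylindrifications distribute over the relevant suprema. To each network $N$ with $\nodes(N)\subseteq m$ I attach the element
$$
\widehat{N}=\prod_{\bar x\in {}^n\nodes(N)}{\sf s}_{\bar x}\,N(\bar x)\in\B,
$$
where ${\sf s}_{\bar x}$ denotes the (clash-free composite of the) substitutions available already in $\Sc$ that place the $j$-th coordinate of an $n$-dimensional element at dimension $\bar x(j)$. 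The invariant \pe\ maintains after each round is that her current network $N$ is genuinely an atomic network in the sense of Definition \ref{game} and that $\widehat N\neq 0$.

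First I would seed the play: since $1^{\A}=1^{\B}$ and $\A$ is atomic, some atom lies below the unit, giving an initial one-node network with nonzero witness. For the typical round, suppose \pa\ plays a cylindrifier move $(N,i,a,\bar x)$ with $N(\bar x)\leq{\sf c}_i a$. Because $m>n$ and $\bar x$ mentions at most $n-1$ nodes at its coordinates $\neq i$, \pe\ may pick a node $p\in m$ with $p\notin\{\bar x(j):j\neq i\}$; set $\bar y=\bar x[i\mapsto p]$ and $w'={\sf c}_p\widehat N\cdot {\sf s}_{\bar y}a$. Using the $\K_m$ laws relating substitution and cylindrification one gets $\widehat N\leq {\sf c}_p{\sf s}_{\bar y}a$, and then the cylindric axiom ${\sf c}_p({\sf c}_p x\cdot y)={\sf c}_p x\cdot {\sf c}_p y$ yields ${\sf c}_p w'\geq\widehat N\neq 0$, so $w'\neq 0$. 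Complete additivity of $\B$ together with $\A\subseteq_c\Nr_n\B$ now lets \pe\ read off, for every tuple $\bar z$ over $\nodes(N)\cup\{p\}$, the unique atom $M(\bar z)\in\At\A$ with $w'\leq {\sf s}_{\bar z}M(\bar z)$; this defines her response $M\equiv_i N$ with $M(\bar y)=a$ and $\widehat M\geq w'\neq 0$, re-establishing the invariant. Since $p$ is only required to avoid the coordinates of $\bar x$ and not all of $\nodes(N)$, the node-reuse option of $\bold G^m$ is absorbed by the same move (cylindrifying on $p$ erases its old content). Iterating over all $\omega$ rounds gives the \ws.

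The step I expect to be the main obstacle is the bookkeeping for ${\sf s}_{\bar x}$ under only the $\Sc$ axioms: since unary substitutions need not commute, I must fix a clash-free definition of the composite ${\sf s}_{\bar x}$ and verify that $\widehat N$, the products above, and the identities $w'\leq {\sf s}_{\bar z}M(\bar z)$ are independent of the chosen order and mesh with the diagonal condition $N(\bar x)\leq\d_{ij}\iff\bar x(i)=\bar x(j)$; this is precisely where the weaker signature (as opposed to full $\QEA$) requires care. Finally, for the ``in particular'' clause, suppose $\A$ is finite and $\A\in\bold S\Nr_n\K_m^{\sf ad}$, say $\A\subseteq\Nr_n\B$ with $\B\in\K_m^{\sf ad}$. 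A supremum equal to $1$ in a finite algebra is a finite join and is therefore preserved in any superalgebra, so the inclusion is complete and $\A\in\bold S_c\Nr_n\K_m^{\sf ad}$; by the first part \pe\ then has a \ws\ in $\bold G^m(\At\A)$. As no two-player game of perfect information can be won by both players, this contradicts \pa\ having a \ws, and hence $\A\notin\bold S\Nr_n\K_m^{\sf ad}$.
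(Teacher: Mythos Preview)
Your approach is essentially the same as the paper's: associate to each network $N$ the element $\widehat N=\prod_{\bar x}{\sf s}_{\bar x}N(\bar x)$ in the dilation and maintain the invariant $\widehat N\neq 0$. The paper organizes this into three preliminary facts (existence of an atom with ${\sf s}_{\bar x}a\cdot x\neq 0$; extension of any nonzero $x$ to a network $N$ with $x\cdot N^+\neq 0$; behaviour of $N^+$ under partial maps), and your argument is a compressed version of the same computation. Your handling of node reuse via ${\sf c}_p\widehat N$ is correct, and your derivation that $w'={\sf c}_p\widehat N\cdot{\sf s}_{\bar y}a\neq 0$ is fine.

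There is, however, a genuine slip in the next step. You write that \pe\ ``reads off, for every tuple $\bar z$, the \emph{unique} atom $M(\bar z)$ with $w'\leq{\sf s}_{\bar z}M(\bar z)$'' and then conclude $\widehat M\geq w'$. Neither claim holds as stated: a nonzero $w'\in\B$ need not lie below any single ${\sf s}_{\bar z}(\text{atom})$, and even when it does the atom is not unique. What complete additivity and $\A\subseteq_c\Nr_n\B$ actually give you is $\sum_{a\in\At\A}{\sf s}_{\bar z}a=1$ in $\B$, so for each $\bar z$ there is \emph{some} atom $a$ with $w'\cdot{\sf s}_{\bar z}a\neq 0$. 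You must then iterate over the finitely many tuples $\bar z$, replacing the current nonzero element by its meet with the chosen ${\sf s}_{\bar z}a$ --- exactly the paper's fact~(2). The resulting network $M$ satisfies $\widehat M\geq w''$ for the final refinement $w''$, hence $\widehat M\neq 0$; but $\widehat M\geq w'$ is generally false. This is easy to fix and does not affect the strategy, but as written the step does not go through.

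For the ``in particular'' clause your argument (finite joins are preserved in any superalgebra, so $\bold S$ and $\bold S_c$ coincide on finite algebras) is correct and slightly more direct than the paper's, which instead invokes $\C\in\bold S\Nr_n\CA_m\implies\C^+\in\bold S_c\Nr_n\CA_m$ together with $\C=\C^+$ for finite $\C$.
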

\begin{proof} We prove the result only for $\CA$s. The proof  lifts the ideas
in \cite[Lemma 29, 26, 27]{r} formulated for relation algebras.

First a piece of notation. Let $m$ be a finite ordinal $>0$. An $\sf s$ word is a finite string of substitutions $({\sf s}_i^j)$ $(i, j<m)$,
a $\sf c$ word is a finite string of cylindrifications $({\sf c}_i), i<m$;
an $\sf sc$ word $w$, is a finite string of both, namely, of substitutions and cylindrifications.
An $\sf sc$ word
induces a partial map $\hat{w}:m\to m$:
$\hat{\epsilon}=Id,$ $\widehat{w_j^i}=\hat{w}\circ [i|j]$
and $\widehat{w{\sf c}_i}= \hat{w}\upharpoonright(m\smallsetminus \{i\}).$
If $\bar a\in {}^{<m-1}m$, we write ${\sf s}_{\bar a}$, or
${\sf s}_{a_0\ldots a_{k-1}}$, where $k=|\bar a|$,
for an  arbitrary chosen $\sf sc$ word $w$
such that $\hat{w}=\bar a.$
Such a $w$  exists by \cite[Definition~5.23 ~Lemma 13.29]{HHbook}.

Fix $2<n<m$. Assume that $\C\in\CA_m$, $\A\subseteq_c\Nr_n\C$ is an
atomic $\CA_n$ and $N$ is an $\A$--network with $\nodes(N)\subseteq m$. Define
$N^+\in\C$ by
\[N^+ =
 \prod_{i_0,\ldots, i_{n-1}\in\nodes(N)}{\sf s}_{i_0, \ldots, i_{n-1}}{}N(i_0,\ldots, i_{n-1}).\]
For a network $N$ and  function $\theta$,  the network
$N\theta$ is the complete labelled graph with nodes
$\theta^{-1}(\nodes(N))=\set{x\in\dom(\theta):\theta(x)\in\nodes(N)}$,
and labelling defined by
$$(N\theta)(i_0,\ldots, i_{n-1}) = N(\theta(i_0), \theta(i_1), \ldots,  \theta(i_{n-1})),$$
for $i_0, \ldots, i_{n-1}\in\theta^{-1}(\nodes(N))$.
Then the following hold:
\begin{enumerate}
\item for all $x\in\C\setminus\set0$ and all $i_0, \ldots, i_{n-1} < m$, there is $a\in\At\A$, such that
${\sf s}_{i_0,\ldots, i_{n-1}}a\;.\; x\neq 0$,

\item for any $x\in\C\setminus\set0$ and any
finite set $I\subseteq m$, there is a network $N$ such that
$\nodes(N)=I$ and $x\cdot N^+\neq 0$. Furthermore, for any networks $M, N$ if
$M^+\cdot N^+\neq 0$, then
$M\restr {\nodes(M)\cap\nodes(N)}=N\restr {\nodes(M)\cap\nodes(N)},$

\item if $\theta$ is any partial, finite map $m\to m$
and if $\nodes(N)$ is a proper subset of $m$,
then $N^+\neq 0\rightarrow {(N\theta)^+}\neq 0$. If $i\not\in\nodes(N),$ then ${\sf c}_iN^+=N^+$.

\end{enumerate}

Since $\A\subseteq _c\Nr_n \C$, then $\sum^{\C}\At\A=1$. For (1), we have, {\bf ${\sf s}^i_j$ is a
completely additive operator (any $i, j<m$)}, hence ${\sf s}_{i_0,\ldots, i_{n-1}}$
is, too.
So $\sum^{\C}\set{{\sf s}_{i_0\ldots, i_{n-1}}a:a\in\At(\A)}={\sf s}_{i_0\ldots i_{n-1}}
\sum^{\C}\At\A={\sf s}_{i_0\ldots, i_{n-1}}1=1$ for any $i_0,\ldots, i_{n-1}<m$.  Let $x\in\C\setminus\set0$.  Assume for contradiction
that  ${\sf s}_{i_0\ldots, i_{n-1}}a\cdot x=0$ for all $a\in\At\A$. Then  $1-x$ will be
an upper bound for $\set{{\sf s}_{i_0\ldots i_{n-1}}a: a\in\At\A}.$
But this is impossible
because $\sum^{\C}\set{{\sf s}_{i_0\ldots, i_{n-1}}a :a\in\At\A}=1.$
To prove the first part of (2), we repeatedly use (1).
We define the edge labelling of $N$ one edge
at a time. Initially, no hyperedges are labelled.  Suppose
$E\subseteq\nodes(N)\times\nodes(N)\ldots  \times\nodes(N)$ is the set of labelled hyperedges of $
N$ (initially $E=\emptyset$) and
$x\;.\;\prod_{\bar c \in E}{\sf s}_{\bar c}N(\bar c)\neq 0$.  Pick $\bar d$ such that $\bar d\not\in E$.
Then by (1) there is $a\in\At(\A)$ such that
$x\;.\;\prod_{\bar c\in E}{\sf s}_{\bar c}N(\bar c)\;.\;{\sf s}_{\bar d}a\neq 0$.
Include the hyperedge $\bar d$ in $E$.  We keep on doing this until eventually  all hyperedges will be
labelled, so we obtain a completely labelled graph $N$ with $N^+\neq 0$.
it is easily checked that $N$ is a network.

For the second part of $(2)$, we proceed contrapositively. Assume that there is
$\bar c \in{}\nodes(M)\cap\nodes(N)$ such that $M(\bar c )\neq N(\bar c)$.
Since edges are labelled by atoms, we have $M(\bar c)\cdot N(\bar c)=0,$
so
$0={\sf s}_{\bar c}0={\sf s}_{\bar c}M(\bar c)\;.\; {\sf s}_{\bar c}N(\bar c)\geq M^+\cdot N^+$.
A piece of notation. For $i<m$, let $Id_{-i}$ be the partial map $\{(k,k): k\in m\smallsetminus\{i\}\}.$
For the first part of (3)
(cf. \cite[Lemma~13.29]{HHbook} using the notation in {\it op.cit}), since there is
$k\in m\setminus\nodes(N)$, \/ $\theta$ can be
expressed as a product $\sigma_0\sigma_1\ldots\sigma_t$ of maps such
that, for $s\leq t$, we have either $\sigma_s=Id_{-i}$ for some $i<m$
or $\sigma_s=[i/j]$ for some $i, j<m$ and where
$i\not\in\nodes(N\sigma_0\ldots\sigma_{s-1})$.
But clearly  $(N Id_{-j})^+\geq N^+$ and if $i\not\in\nodes(N)$ and $j\in\nodes(N)$, then
$N^+\neq 0 \rightarrow {(N[i/j])}^+\neq 0$.
The required now follows.  The last part is straightforward.
Using the above proven facts,  we are now ready to show that \pe\  has a \ws\ in $F^m$. She can always
play a network $N$ with $\nodes(N)\subseteq m,$ such that
$N^+\neq 0$.\\
In the initial round, let \pa\ play $a\in \At\A$.
\pe\ plays a network $N$ with $N(0, \ldots, n-1)=a$. Then $N^+=a\neq 0$.
Recall that here \pa\ is offered only one (cylindrifier) move.
At a later stage, suppose \pa\ plays the cylindrifier move, which we denote by
$(N, \langle f_0, \ldots, f_{n-2}\rangle, k, b, l).$
He picks a previously played network $N$,  $f_i\in \nodes(N), \;l<n,  k\notin \{f_i: i<n-2\}$,
such that $b\leq {\sf c}_l N(f_0,\ldots,  f_{i-1}, x, f_{i+1}, \ldots, f_{n-2})$ and $N^+\neq 0$.
Let $\bar a=\langle f_0\ldots f_{i-1}, k, f_{i+1}, \ldots f_{n-2}\rangle.$
Then by  second part of  (3)  we have that ${\sf c}_lN^+\cdot {\sf s}_{\bar a}b\neq 0$
and so  by first part of (2), there is a network  $M$ such that
$M^+\cdot{\sf c}_{l}N^+\cdot {\sf s}_{\bar a}b\neq 0$.
Hence $M(f_0,\dots, f_{i-1}, k, f_{i-2}, \ldots$ $, f_{n-2})=b$,
$\nodes(M)=\nodes(N)\cup\set k$, and $M^+\neq 0$, so this property is maintained.
The last part follows by observing that for any $\C\in \CA_n$, if $\C\in \bold S{\sf Nr}_n\CA_m\implies \C^+\in \bold S_c{\sf Nr}_n\CA_m$
(where $\C^+$ is the canonical extension of $\C$) and if  $\C$ is finite,  then of course $\C=\C^+$.
\end{proof}

The previous proof depends on the complete additivity of the ${\sf s}_i^j$s $(i<j<m)$, 
which is not necessarily true for the diagonal free reducts of $\sf QEA$s, namely, 
$\sf Sc$s and  ${\sf QA}$s. Examples can be found in \cite{AGMNS}.

For rainbow constructions for $\CA$s,
we follow \cite{HH,HHbook2}. For notions, like {\it coloured graphs}, {\it cones} which are special coloured graphs, we refer to
\cite{HH} from which we (explicitly) recall:

{\it Let $i\in {\sf G}$, and let $M$ be a coloured graph consisting of $n$ nodes
$x_0,\ldots,  x_{n-2}, z$. We call $M$ {\it an $i$ - cone} if $M(x_0, z)=\g_0^i$
and for every $1\leq j\leq m-2$, $M(x_j, z)=\g_j$,
and no other edge of $M$
is coloured green.
$(x_0,\ldots, x_{n-2})$
is called  the base of the cone, $z$ the apex of the cone
and $i$ the tint of the cone}.

Given relational structures 
$\sf G$ (the greens) and $\sf R$ (the reds) the rainbow 
atom structure of a $\QEA_n$  consists of equivalence classes of surjection from $n$ to 
coloured graphs for the relation defined in \cite {HH}.

The accessibilty (binary relations) corresponding to 
cylindric operations are like in \cite{HH}.  For transpositions ($[i, j]$, $i<j<n$) they are defined on the atoms of the rainbow atom structure via
$[a]S_{[i, j]}[b]\iff\ a=b\circ [i,j]$ where $a$ and $b$ are 
surjections into coloured graphs, and $[a]$ is the equivalent class containing $a$, and similary for $[b]$.
The hitherto constructed ${\sf QEA}_n$ atom structure depends uniquely on $\sf G$ 
and $\sf R$. 

For $2<n<\omega$, we use the graph version of the games $G_{\omega}^m(\beta)$ and $\bold G^m(\beta)$ where $\beta$ is a $\QEA_n$ 
rainbow atom structure, cf. \cite[4.3.3]{HH}. 
The typical \ws\ of \pa\ in the rainbow game played on coloured graphs played between \pe\ and \pa\  
is bombarding \pe\ with $i$--cones, $i\in \sf G$, having the 
same base 
and distinct green tints.  To respect the rules of the game \pe\ has to choose a red label for appexes of two succesive cones.  
Eventually, running out of `suitable reds',  \pe\ is forced to play an inconsistent triple of reds where indices do not match.
Thus \pa\ wins on a red clique (a graph all of whose edges are labelled by a red) with 
the \ws\ for ether player  dictated by her(his) \ws\ in a simple private \ef\ forth  
game played on the relational structures $\sf G$ and $\sf R$ with $r\leq \omega$ rounds and $p\leq \omega$ pairs of pebbles (recalled and denoted above by ${\sf EF}_r^p(\sf G, \sf R)$). 

The (complex) rainbow algebra based on $\sf G$ and $\sf R$ is denoted by $\A_{\sf G, R}$.
The dimension $n$ will always be clear from context.

We are now prepared to prove the following Theorem:
\begin{theorem}\label{can} 
Let $n$ be a finite ordinal $>2$ and $\K$ a class between $\Sc$ and $\QEA$.
Assume that $m\geq n+3$. Then ${\sf RK}_n$ is not-atom canonical 
with respect to $\bold S{\sf Nr}_n\K_m$.     
 \end{theorem}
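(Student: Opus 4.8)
The plan is to prove non-atom-canonicity of $\sf{RK}_n$ with respect to $\bold S\Nr_n\K_m$ by exhibiting a concrete witness via a blow up and blur construction applied to a finite rainbow algebra, following exactly the template laid out in Definition \ref{blow} and mirroring the relation-algebra argument of Theorem \ref{b2}. The natural candidate finite algebra is the rainbow $\K_n$ algebra $\A_{n+1,n}$ built from the greens $\sf G = n+1$ and reds $\sf R = n$ (viewed as complete irreflexive graphs), and the plan is to show that this finite algebra \emph{detects} (in the sense of Definition \ref{blow}(2b)) that $\sf{RK}_n$ is not atom-canonical with respect to $\bold S\Nr_n\K_m$. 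Concretely, I would first establish that $\A_{n+1,n}\notin \bold S\Nr_n\K_m$ for $m\geq n+3$, and then blow up and blur it to obtain an atomic $\D\in \sf{RK}_n$ whose Dedekind--MacNeille completion $\Cm\At\D$ contains an embedded copy of $\A_{n+1,n}$, hence lies outside $\bold S\Nr_n\K_m$ (which is closed under $\bold S$).

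The key steps, in order, are as follows. First I would invoke the graph-game characterization: using the games $\bold G^m(\At\A)$ of Definition \ref{game}(3) together with Lemma \ref{n}, it suffices to show that \pa\ has a winning strategy in $\bold G^m(\At \A_{n+1,n})$, since then the finite algebra $\A_{n+1,n}\notin \bold S\Nr_n\K_m^{\sf ad}$, and as the rainbow operators are completely additive this gives $\A_{n+1,n}\notin \bold S\Nr_n\K_m$. The winning strategy for \pa\ is the standard rainbow one recalled just before the theorem: he bombards \pe\ with $i$-cones ($i\in\sf G$) sharing a common base and distinct green tints; since $|\sf G| = |\sf R|+1$, \pe\ runs out of suitable reds and is forced into an inconsistent red clique, and the bookkeeping reduces to \pa's winning strategy in the private \ef\ game ${\sf EF}_r^p(n+1,n)$, which he wins because $n+1$ is "longer" than $n$ and $m\geq n+3$ supplies enough nodes and rounds. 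Second, I would blow up and blur $\A_{n+1,n}$ by splitting each red atom into $\omega$-many copies, exactly as the reds were split in Theorem \ref{b2}, obtaining a countable atomic term algebra $\D = {\sf split}(\A_{n+1,n}, \r, \omega)$. Third, I would verify that $\A_{n+1,n}$ embeds into $\Cm\At\D$ via the map sending each red to the join of its copies, so that $\Cm\At\D\notin \bold S\Nr_n\K_m$; and finally I would show $\D$ (or its canonical extension) is representable by the step-by-step construction, placing $\D\in \sf{RK}_n$ and completing the witness.

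The main obstacle I expect is the representability step, that is, showing $\D\in \sf{RK}_n$ after the reds have been split. This is where the "blur" must do real work: one has to run a step-by-step / back-and-forth argument building a representation of $\D$ (in fact of its canonical extension, as in the footnote to Theorem \ref{ANT}) using the infinitely many red copies to defeat \pa's cone strategy in the \emph{representation} game, even though \pa\ wins the \emph{finite} game on $\A_{n+1,n}$ itself. The delicate point is that splitting the reds into $\omega$-many copies gives \pe\ unboundedly many fresh red labels, so she can always answer a new cone apex with a previously-unused red and never be forced into an index clash; verifying that the resulting network system satisfies all the $\K_n$ representability requirements (and that the splitting is done compatibly with substitutions and diagonals for each of the classes between $\Sc$ and $\QEA$) is the technical heart. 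A secondary subtlety, since $\K$ ranges over all classes between $\Sc$ and $\QEA$, is ensuring the complete-additivity hypothesis needed for Lemma \ref{n} — for the $\Sc$ and $\QA$ reducts the ${\sf s}_i^j$ need not be completely additive, but on a rainbow atom structure the relevant operators are completely additive, so Lemma \ref{n} still applies; I would flag this uniformity explicitly rather than treat each class separately.
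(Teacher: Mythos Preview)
Your proposal follows essentially the same blow-up-and-blur route as the paper: detect non-atom-canonicity via the finite rainbow algebra $\A_{n+1,n}$, show $\forall$ wins $\bold G^{n+3}(\At\A_{n+1,n})$ by the cone strategy lifted from ${\sf EF}_{n+1}^{n+1}(n+1,n)$, split the reds into $\omega$-many copies to get ${\sf split}(\A_{n+1,n},\r,\omega)\in{\sf RK}_n$, and embed $\A_{n+1,n}$ into the complex algebra of the split atom structure by sending each atom to the join of its copies.

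Two technical points where the paper's execution differs from your description. First, the representability of the split term algebra is \emph{not} argued via ``unboundedly many fresh red labels so $\exists$ never clashes''; the red-triangle consistency conditions persist after splitting and fresh copies alone do not defuse them. Instead the paper invokes Hodkinson's construction from \cite{Hodkinson}: $\exists$ wins the $\omega$-rounded game by using an extra shade of red $\rho$ \emph{outside} the rainbow signature whenever she is forced to play a red, building an $n$-homogeneous model $\Mo$ on which the term algebra is represented as a set algebra with base $^n\Mo$. Second, the removal of the complete-additivity hypothesis ${\sf ad}$ is not handled by appealing to rainbow-specific additivity of the substitutions, but by finiteness: if $\Rd_{sc}\A_{n+1,n}\subseteq\Nr_n\D'$ with $\D'\in\Sc_{n+3}$ generated by the reduct, then $\D'$ is finite (the operations are finitary), hence automatically completely additive, contradicting $\Rd_{sc}\A_{n+1,n}\notin\bold S_c\Nr_n\Sc_{n+3}^{\sf ad}$.
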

\begin{proof}
The idea for $\CA$s is like that for $\RA$s by blowing up and blurring (the $\CA$ reduct of) $\A_{n+1, n}$ in place of $\bold R_{4,3}$ blown up and blurred in the $\RA$ case.
We work with $m=n+3$ and any $\K$ between $\Sc$ and $\QEA$. 
This gives the result for any larger $m$. 

{\bf Blowing up and blurring  $\A_{n+1, n}$ forming a weakly representable atom structure $\At$}:
Take the finite rainbow ${\sf QEA}_n$,  $\A_{n+1, n}$
where the reds $\sf R$ is the complete irreflexive graph $n$, and the greens
are  $\{\g_i:1\leq i<n-1\}
\cup \{\g_0^{i}: 1\leq i\leq n+1\}$, so that ${\sf G}=n+1$, endowed with the quasi--polyadic equality operations.
Denote its finite atom structure by ${\bf At}_f$; 
so that ${\bf At}_f=\At(\A_{n+1, n})$.
One  then replaces the red colours 
of the finite rainbow algebra of $\A_{n+1, n}$ each by  infinitely many countable reds (getting their superscripts from $\omega$), obtaining this way a weakly representable atom structure $\bf At$.
The cylindric reduct of the resulting atom structure after `splitting the reds', namely, $\bf At$,  is 
like the weakly (but not strongly) representable 
atom structure of the atomic, countable and simple algebra $\A$ as defined in \cite[Definition 4.1]{Hodkinson}; the sole difference is that we have $n+1$ greens
and not $\omega$--many as is the case in \cite{Hodkinson}; also we count it the polyadic operations of subtitutions. 
We denote the resulting term $\QEA_n$,  $\Tm\bf At$ by ${\sf split}(\A_{n+1, n}, \r, \omega)$ short hand for  blowing 
up $\A_{n+1,n}$ by splitting each {\it red graphs (atoms)} into $\omega$ many. 
By a  red graph is meant (an equivalence class of) a surjection $a:n\to \Delta$, where
$\Delta$ is a coloured graph in the rainbow signature of $\A_{n+1, n}$ with at least one edge labelled by a red label 
(some $\r_{ij}$, $i<j<n)$. 
It can be shown exactly like in \cite{Hodkinson} that \pe\ can win the rainbow $\omega$--rounded game
and build an $n$--homogeneous model $\Mo$ by using a shade of red $\rho$ {\it outside} the rainbow signature, when
she is forced a red;  \cite[Proposition 2.6, Lemma 2.7]{Hodkinson}.  Using this, one proves like in {\it op.cit} 
that  ${\sf split}(\A_{n+1, n}, \r, \omega)$ is representable as a set algebra having top element $^n\Mo$.
(The term algebra in \cite{Hodkinson}; which is the subalgebra generated by the atoms of $\A$ as defined in \cite[Definition 4.1]{Hodkinson}
is just ${\sf split}(\A_{\omega, n}, \r, \omega)$.)

{\bf Embedding $\A_{n+1, n}$ into $\Cm(\At({\sf split}(\A_{n+1, n}, \r, \omega)))$:} Let ${\sf CRG}_f$ be the class of coloured graphs on 
${\bf At}_f$ and $\sf CRG$ be the class of coloured graph on $\bf At$. We 
can assume that  ${\sf CRG}_f\subseteq \sf CRG$.
Write $M_a$ for the atom that is the (equivalence class of the) surjection $a:n\to M$, $M\in \sf CGR$.
Here we identify $a$ with $[a]$; no harm will ensue.
We define the (equivalence) relation $\sim$ on $\bf At$ by
$M_a\sim N_b$, $(M, N\in {\sf CGR})$
$\iff$ they are identical everywhere except at possibly at red edges:\\
$M_a(a(i), a(j))=\r^l\iff N_b(b(i), b(j))=\r^k,  \text { for some $l,k$}\in \omega.$\\
We say that $M_a$ is a {\it copy of $N_b$} if $M_a\sim N_b$ (by symmetry $N_b$ is a copy of $M_a$.) 
Indeed, the relation `copy of' is an equivalence relation on $\bf At$.  An atom $M_a$ is called a {\it red atom}, if $M$ has at least one red edge. 

Any red atom has $\omega$ many copies that are {\it cylindrically equivalent}, in the sense that, if $N_a\sim M_b$ with one (equivalently both) red,
with $a:n\to N$ and  $b:n\to M$, then we can assume that $\nodes(N) =\nodes(M)$ 
and that for all $i<n$, $a\upharpoonright n\sim\{i\}=b\upharpoonright n\sim \{i\}$.
In $\Cm\bf At$, we write $M_a$ for $\{M_a\}$ 
and we denote suprema taken in $\Cm\bf At$, possibly finite, by $\sum$.
Define the map $\Theta$ from $\A_{n+1, n}=\Cm{\bf At}_f$ to $\Cm\bf At$,
by specifing first its values on ${\bf At}_f$,
via $M_a\mapsto \sum_jM_a^{(j)}$ where $M_a^{(j)}$ is a copy of $M_a$. 
So each atom maps to the suprema of its  copies.  
This map is well-defined because $\Cm\bf At$ is complete. 
%It can be checked that $\Theta$ is 
%an injective a homomorphism, 
We check that $\Theta$ is an injective homomorphim. Injectivity follows from $M_a\leq \Theta(M_a)$, hence $\Theta(x)\neq 0$ 
for every atom $x\in \At(\A_{n+1, n})$.
Now we check the presevation of operations. 
The Boolean join is obvious.

\begin{itemize}
\item For complementation: It suffices to check preservation of  complementation `at atoms' of ${\bf At}_f$. 
So let $M_a\in {\bf At}_f$ with $a:n\to M$, $M\in \sf CGR_f\subseteq \sf CGR$. Then: 

$$\Theta(\sim M_a)=\Theta(\bigcup_{[b]\neq [a]} M_b)
=\bigcup_{[b]\neq [a]} \Theta(M_b)
=\bigcup_{[b]\neq [a]}\sum_j M_b^{(j)}$$
$$=\bigcup_{[b]\neq [a]}\sim \sum_j[\sim (M_a)^{(j)}]
=\bigcup_{[b]\neq [a]}\sim \sum_j[(\sim M_b)^j]
=\bigcup_{[b]\neq [a]}\bigwedge_j M_b^{(j)}$$
$$=\bigwedge_j\bigcup_{[b]\neq [a]}M_b^{(j)}
=\bigwedge_j(\sim M_a)^{j}
=\sim (\sum M_a^j)
=\sim \Theta(a)$$

\item Diagonal elements. Let $l<k<n$. Then:
\begin{align*}
M_x\leq \Theta({\sf d}_{lk}^{\Cm{\bf At}_f})&\iff\ M_x\leq \sum_j\bigcup_{a_l=a_k}M_a^{(j)}\\
&\iff M_x\leq \bigcup_{a_l=a_k}\sum_j M_a^{(j)}\\
&\iff  M_x=M_a^{(j)}  \text { for some $a: n\to M$ such that $a(l)=a(k)$}\\
&\iff M_x\in {\sf d}_{lk}^{\Cm\bf At}.
\end{align*}

\item Cylindrifiers. Let $i<n$. By additivity of cylindrifiers, we restrict our attention to atoms 
$M_a\in {\bf At}_f$ with $a:n\to M$, and $M\in {\sf CRG}_f\subseteq \sf CRG$. Then: 
\begin{align*}
\Theta({\sf c}_i^{\Cm{\bf At}_f}M_a)&=f (\bigcup_{[c]\equiv_i[a]} M_c)\\
&=\bigcup_{[c]\equiv_i [a]}\Theta(M_c)\\
&=\bigcup_{[c]\equiv_i [a]}\sum_j M_c^{(j)}\\
&=\sum_j \bigcup_{[c]\equiv_i [a]}M_c^{(j)}\\
&=\sum _j{\sf c}_i^{\Cm\bf At}M_a^{(j)}\\
&={\sf c}_i^{\Cm\bf At}(\sum_j M_a^{(j)})\\
&={\sf c}_i^{\Cm\bf At}\Theta(M_a).
\end{align*}

\item Substitutions: Let $i, k<n$. By additivity of the ${\sf s}_{[i,k]}$s, we again restrict ourselves to atoms of the form $M_a$ as specified in the previous items.
Now computing we get:
$\Theta({\sf s}_{[i,k]}^{\Cm{\bf At}_f} M_a)= \Theta(M_{a\circ [i,k]})=  \sum_j^{\Cm\bf At}(M_{a\circ [i,k]}^{(j)})=\sum_j {\sf s}_{[i,k]}^{\Cm\bf At}{} M_a^{(j)}={\sf s}_{[i,k]}^{\Cm \bf At}{}(\sum_{j}M_a^{(j)})=
{\sf s}_{[i,k]}^{\Cm\bf At}{}\Theta(M_a).$
\end{itemize}
We have proved that $\A_{n+1, n}$ embeds into $\Cm \bf At$, so that it is not blurred at the level of the last 
complex algebra, that is to say, $\Theta$ is the required embedding.

{\bf \pa\ has  a  \ws\ in $G^{n+3}\At(\A_{n+1, n})$:} It is straightforward to show that 
\pa\ has \ws\ first in the  \ef\ forth  private game played between \pe\ and \pa\ on the complete
irreflexive graphs $n+1$ and $n$ in 
$n+1$ rounds, namely, the game ${\sf EF}_{n+1}^{n+1}(n+1, n)$ \cite [Definition 16.2]{HHbook2}.
 \pa\  lifts his \ws\ from the private \ef\ forth game, to the graph game on ${\bf At}_f=\At(\A_{n+1,n})$ 
\cite[pp. 841]{HH} forcing a
win using $n+3$ nodes. 
He bombards \pe\ with cones
having  common
base and distinct green 
tints until \pe\ is forced to play an inconsistent red triangle (where indicies of reds do not match).
By lemma \ref{n}, $\Rd_{sc}\A_{n+1, n}\notin
\bold S_c{\sf Nr}_n\Sc_{n+3}^{\sf ad}$. Since $\A_{n+1, n}$ is finite, then $\Rd_{sc}\A_{n+1, n}$
is not in $\bold S\Nr_n\Sc_{n+3}^{\sf ad}$. Using that $\A_{n+1, n}$ is finite again, one infers that $\Rd_{sc}\A_{n+1, n}\notin \bold S\Nr_n\Sc_{n+3}$. To prove the last part,
assume for contradiction that $\Rd_{sc}\A_{n+1, n}\in \bold S\Nr_n\Sc_{n+3}$. Then $(\Rd_{sc}\A_{n+1, n})^+=\Rd_{sc}\A_{n+1, n}\in \bold S_c\Nr_n\Sc_{n+3}$ 
which is impossible.
Obeserve that we removed the condition of additivity obtaining a bigger class.
To see why, assume for contradiction,  that $\D=\Rd_{sc}\A_{n+1, n}\in \bold S\Nr_n\Sc_{n+3}$. 
Then we can assume that $\D\subseteq \Nr_n\D'$, where $\D'\in \Sc_{n+3}$, and also without loss, that $\D'$ 
is generated by 
$\D$. Since the operations are finite, so $\D'$ is finite from which it readily follows that $\D'$ 
is completely additive. We can now conclude that $\D\in \bold S_c\Nr_n\Sc_{n+3}^{\sf ad}$,
but this is a contradiction. We have proved that 
$\Rd_{sc}\A_{n+1, n}\notin \bold S\Nr_n\Sc_{n+3}$.

{\bf Non--atom canonicity:} But $\A_{n+1,n}$ embeds into $\Cm\At\A$,
hence $\Rd_{sc}\Cm\At\A=\Cm\Rd_{sc}\At\A$
is outside the variety $\bold S{\sf Nr}_n\Sc_{n+3}$, as well. We have proved that ${\sf RK}_n$ is not atom--canonical with 
respect to $\bold S\Nr_n\K_{n+3}$.
Thus  any class $\sf K$ (like $\CA$ and $\sf QA$) between $\Sc$ and $\QEA$, 
and any $k\geq 3$, 
${\sf RK}_n$ is not atom--canonical. 
\end{proof} It is known that $\A\in \CA_n$ is representable $\iff$ it has an $\omega$--dilation. 
By adjusting the number of greens in the proof of Theorem \ref{can} to be $n+1$ 
we got a result finer than Hodkinson's \cite{Hodkinson}, where Hodkinson uses an `overkill' of infinitely many greens excluding $\omega$--dilations of 
$\Cm\At\A$ with $\A$ as defined in \cite[Definition 4.1]{HHbook}. In the above proof we excluded $m$ dilations fof any $m\geq n+3$. \\

We find the following question stressing:
{\bf Is there a completely additive variety $\V$ 
of $\sf BAO$s that is atom-canonical, but not closed under \de\ completions?}.

To formulate the next Corollary we need some preparation. For $2<n<\omega$, and $l$ any ordinal, 
let $\RCA_{n, l}=\{\A\in \RCA_n\cap \Nr_n\CA_{n+l}: \Cm\At\A\notin \RCA_n\}$
and ${\sf RRA}_l=\{\A\in {\sf RRA}\cap \Ra\CA_l: \Cm\At\A\notin \sf RRA\}$.
For a class $\bold K$ of $\sf BAO$s, let $\bold K\cap{\sf Cat}$ denote the class of algebras in 
$\bold K$ with countably many atoms. We refer the reader to \cite{g} for the definition 
of single--persistence. 
From the constructions in Theorems, \ref{ANT}, \ref{b2} and \ref{can}, using known Theorems, we get the following Corollary.

\begin{corollary}\label{sahlqvist} Let $2<n<\omega$ and $k\geq 3$. Then the following hold:

\begin{enumerate}

\item  ${\sf RRA}_l\cap {\sf Cat}\neq \emptyset \iff l<\omega$ and ${\sf RCA}_{n, l}\cap {\sf Cat}\neq \emptyset \iff l<\omega$,

\item For any ordinal $k$, $\Ra\CA_{3+k}\cap \sf RRA\cap \bf At\nsubseteq {\sf CRRA}$ 
and $\Nr_n\CA_{n+k}\cap \RCA_n\cap {\bf At}\nsubseteq {\sf CRCA}_n$,

\item $\Cm\At{\sf RRA}\nsubseteq \bold S\Ra\CA_{3+k}$
and $\Cm\At\RCA_n\nsubseteq \bold S\Nr_n\CA_{n+k},$
\item There exist two atomic 
algebras in $\sf RA$ with the same atom structure, only one of which is 
representable and the other is outside $\bold S\Ra\CA_{3+k}$. An analagous result holds for $\CA$s by replacing 
$\bold S\Ra\CA_{3+k}$ by  $\bold S\Nr_n\CA_{n+k}$,

\item Any variety between 
$\sf RRA$ and $\bold S\Ra\CA_{3+k}$, as well as any variety between ${\sf RCA}_n$ and $\bold S\Nr_n\CA_{n+k}$,  
are not atom--canonical, not single--persistent,  and not closed under \de\ completions. 
 \item There exist  complete algebras outside each of the the varieties $\bold S\Ra\CA_{n+k}$ and $\bold S\Nr_n\CA_{n+k}$ with a dense representable
subalgebra,

\item There exists an atomic ${\cal T}\in \sf RRA$ and an atomic 
$\A\in \RCA_n$ such that their \de\ completions do not  embed into their canonical 
extensions.
\end{enumerate}
\end{corollary}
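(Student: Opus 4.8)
The plan is to run all seven items off the two \emph{master} witnesses produced by the blow up and blur machinery of \S2: the atomic representable relation algebra ${\cal T}={\sf split}(\bold R_{4,3},\r,\omega)$ of Theorem \ref{b2} (or equivalently the Maddux witness ${\sf split}(\R,J,E)$ of Corollary \ref{cp}), and the atomic representable $\A={\sf split}(\A_{n+1,n},\r,\omega)$ of Theorem \ref{can}. Both are countable, since only finitely many atoms are split, each into $\omega$ copies; both are representable; and for both the \de\ completion $\Cm\At$ fails to be representable, as $\Cm\At{\cal T}\notin\bold S\Ra\CA_6\supseteq\sf RRA$ by Theorem \ref{b2} and $\Cm\At\A\notin\bold S\Nr_n\CA_{n+3}\supseteq\RCA_n$ by Theorem \ref{can}. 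Every item exploits the contrast between these representable algebras and their non-representable minimal completions, together with the facts that $\sf RRA$ and $\RCA_n$ are closed under $\bold S$ and are canonical. Throughout I use that for an atomic completely additive algebra the \de\ completion is $\Cm\At$, and that a complete representation of an algebra induces a representation of its \de\ completion.

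For item (1) the $\Leftarrow$ directions are immediate: by Corollary \ref{cp}, ${\cal T}\in\sf RRA\cap\Ra\CA_l$ and $\A\in\RCA_n\cap\Nr_n\CA_{n+l}$ for every finite $l$, both are countably atomic, and their completions are non-representable, so they witness ${\sf RRA}_l\cap{\sf Cat}\neq\emptyset$ and ${\sf RCA}_{n,l}\cap{\sf Cat}\neq\emptyset$. For the $\Rightarrow$ directions I would argue contrapositively: a countable atomic algebra lying in $\Ra\CA_\omega$ (resp.\ $\Nr_n\CA_\omega$) is completely representable, whence its \de\ completion is already representable, so for $l=\omega$ both sets are empty. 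This is where I import the known complete representation theorem for countable atomic full neat reducts of $\omega$--dilations. Item (2) splits on the size of $k$: for finite $k$ the master witnesses, being atomic with non-representable completion, cannot be completely representable; for infinite $k$ I instead invoke the known existence of atomic full neat reducts in $\Ra\CA_\omega$ (resp.\ $\Nr_n\CA_\omega$) that are representable but not completely representable, necessarily on an uncountable base.

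Items (3)--(6) are direct repackagings. Item (3) is exactly the non--atom--canonicity statements of Theorems \ref{b2} and \ref{can} read for $k\geq 3$. For item (4) the term algebra $\Tm\bf At$ (the relevant ${\sf split}$ algebra) is representable while the complex algebra $\Cm\bf At$ over the same atom structure $\bf At$ lies outside $\bold S\Ra\CA_{3+k}$ (resp.\ $\bold S\Nr_n\CA_{n+k}$); these are two atomic algebras sharing an atom structure, exactly one representable. Item (5) is the generic sandwich argument: any variety $\bold M$ with $\sf RRA\subseteq\bold M\subseteq\bold S\Ra\CA_{3+k}$ contains the representable witness but not its completion (which is outside $\bold S\Ra\CA_{3+k}\supseteq\bold M$), so $\bold M$ is neither atom--canonical nor closed under \de\ completions, and the non--single--persistence is read off the same counterexample once the definition of \cite{g} is unwound; likewise for $\RCA_n$. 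Item (6) observes that $\Cm\bf At$ is complete and has the representable dense subalgebra $\Tm\bf At$ while lying outside $\bold S\Ra\CA_{n+k}$ (resp.\ $\bold S\Nr_n\CA_{n+k}$), using $n+k\geq 6$.

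Finally, item (7), the headline statement, is the cleanest. I would use canonicity: since $\sf RRA$ and $\RCA_n$ are canonical varieties, ${\cal T}^+\in\sf RRA$ and $\A^+\in\RCA_n$. If the \de\ completion $\Cm\At{\cal T}$ embedded into the canonical extension ${\cal T}^+$, then, $\sf RRA$ being closed under subalgebras, we would get $\Cm\At{\cal T}\in\sf RRA$, contradicting $\Cm\At{\cal T}\notin\bold S\Ra\CA_6$; the identical argument with $\RCA_n$ and $\bold S\Nr_n\CA_{n+3}$ handles $\A$. Hence neither minimal completion embeds into its canonical extension. The only genuinely delicate inputs in the whole corollary are the two complete--representation facts feeding the $\Rightarrow$ half of item (1) and the infinite--$k$ half of item (2), and the unwinding of single--persistence in item (5); everything else, and in particular item (7), reduces to closure of $\sf RRA$ and $\RCA_n$ under $\bold S$ and under canonical extensions, combined with the non--representability of the blown--up completions.
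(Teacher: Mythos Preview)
Your overall architecture matches the paper's: items (3)--(7) are run off the rainbow term algebra $\Tm{\bf At}$ versus its completion $\Cm{\bf At}$, and item (7) in particular is argued exactly as the paper does---canonicity of $\sf RRA$ and $\RCA_n$ puts ${\cal T}^+,\A^+$ back in the variety, closure under $\bold S$ then forbids the non-representable completion from embedding.

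There is, however, a real conflation in your setup that damages items (1) and (2). You declare the rainbow witness ${\cal T}={\sf split}(\bold R_{4,3},\r,\omega)$ to be ``equivalently'' the Maddux witness of Corollary~\ref{cp}, and then assert ``${\cal T}\in{\sf RRA}\cap\Ra\CA_l$ for every finite $l$''. That is nowhere established: the rainbow ${\cal T}$ is only shown to lie in ${\sf RRA}=\bold S\Ra\CA_\omega$, not in the \emph{full} reduct class $\Ra\CA_l$; likewise ${\sf split}(\A_{n+1,n},\r,\omega)\in\RCA_n$ is not known to sit in $\Nr_n\CA_{n+l}$ for arbitrary $l$. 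But the definitions of ${\sf RRA}_l$, ${\sf RCA}_{n,l}$ and the statement of item (2) require membership in the full classes $\Ra\CA_{3+k}$ and $\Nr_n\CA_{n+k}$. The paper therefore does \emph{not} use a single master pair here: for items (1) (direction $\Leftarrow$) and (2) (finite $k$) it invokes the Maddux/ANT family of Corollary~\ref{cp}, where for each $l$ a different $\mathfrak{E}_{k(l)}(2,3)$ is blown up to produce ${\cal R}_l\in\Ra\CA_l\cap{\sf RRA}$ and ${\cal C}_l\in\Nr_n\CA_l\cap\RCA_n$ with non-representable completions. Once you separate the two constructions---the $l$-indexed Maddux family for items (1)--(2), the fixed rainbow pair for items (3)--(7)---your argument goes through and coincides with the paper's.
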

\begin{proof} 

1. If $\B\in \RCA_n\cap \Nr_n\CA_{\omega}=\Nr_n\CA_{\omega}$ has countably many atoms, 
then by \cite[Theorem 5.3.6]{Sayedneat}, $\B$ is completely representable,
so $\Cm\At\B\in \RCA_n$.  Strictly speaking, 
in \cite{Sayedneat} it is shown that the two classes $\CRCA_n$ and $\bold S_c\Nr_n\CA_{\omega}$ 
coincide on countable atomic algebras. One can show that they coincide on the larger class of atomic agebras having countably many atoms by observing that if
$\A$ is an atomic algebra having countably many atoms, then $\Tm\At\A$ is countable 
and $\Tm\At\A\in {\sf CRCA}_n\iff \A\in {\sf CRCA}_n$ because an algebra is completely representable $\iff$ it is atomic and its atom structure is completely representable.
 
The analogous result holds for relation algebras, because
if $\R\in \Ra\CA_{\omega}$ is atomic with 
countably many atoms, then it is completely representable \cite[Theorem 29]{r}. 
The other implication (for both $\RA$s and $\CA$s) follows from corollary \ref{cp}.

2. If $k<\omega$, then the atomic algebras ${\cal R}\in ({\sf RRA}\cap \Ra\CA_l)\sim {\sf CRRA}$ and ${\cal C}\in (\RCA_n\cap \Nr_n\CA_l)\sim {\sf CRCA}_n$ as defined in the proof of 
Theorem \ref{ANT} witness the non-inclusions for $\RA$ and $\CA$, respectively, by taking $l=n+k(<\omega)$. 
For $k\geq \omega$, the required result follows from the construction in \cite{bsl2},
where an atomic  $\R\in \Ra\CA_{\omega}(\subseteq \sf RRA)$ and an atomic $\A\in \Nr_n\CA_{\omega}(\subseteq \RCA_n$) such that $\R$ and $\A$
are not completely representable.  By the fact that $\Ra\CA_{\omega}=\Ra\CA_{\kappa}$ and $\Nr_n\CA_{\omega}=\Nr_n\CA_{\kappa}$ for any infinite ordinal 
$\kappa$, we are through with this case, 
as well. Here by the previous item,  such atomic algebras cannot have only countably many atoms. 

Now we prove the required result for $\CA$s only. 
The proof for $\RA$s is practically the same (modulo the obvious modifications in notation and Theorems used, e.g one uses Theorem \ref{b2} in place of 
Theorem \ref{can}). For item (3)--(6) one uses the algebra $\Tm\bf At$ and its \de\ completion 
$\Cm\bf At$.  For item (7), since $\RCA_n$ is canonical; a classical result of Monk's  \cite{HMT2} (see the paragraph after the next table) and $\A\in \RCA_n$, then $\A^+\in \RCA_n$. But  
$\Cm \At\A\notin \RCA_n$, so it does not embed into $\A^+$, because $\RCA_n$ is a variety, {\it a fortiori} closed under $\bold S$.
\end{proof}
In our next table,  result on atom--canonicity for various varieties of 
$\RA$s and $\CA_n$s are summarized. 
For $\CA$s the dimension $n$ is finite $>2$. 
For the ordinals $k$ and $m$, appearing in the table,  
$k\geq 3$ and $m\geq 6$. 
\vskip3mm
%\begin{table}[ht]
%\caption{}
\begin{tabular}{|l|c|c|c|c|c|c|}    \hline
					Algebras	                                &{\sf Atom--canonical}                         & {\sf Citation}  \\

                                                               \hline
                                                                          $\sf RCA_n, \sf RRA$&no& yes,  \cite{Hodkinson}, \cite{HHbook}               \\

                                                               \hline
                                                                          $\bold S{\sf Nr}_n\CA_{n+1}$, $\bold S\Ra\CA_3$,  &yes& yes, \cite{HHbook}\\

                                                               \hline
                                                                          $\bold S{\sf Nr}_n\CA_{n+2}$, $\bold S\Ra\CA_{4}$, $\bold S\Ra\CA_5$&? &\\

                                                               \hline
                                                                          $\bold S{\sf Nr}_n\CA_{n+k}$, $\bold S\Ra\CA_{m}$&no& no, thms \ref{can}, \ref{b2} \\

\hline

\end{tabular}
%\end{table}

\vskip3mm
\section{Applications in some  modal fragments of $L_{\omega, \omega}$:}

\subsection{Clique guarded and the packed fragments}

Fix $2<n<\omega$. Here we approach omitting types 
for $L_n$  with respect to {\it clique guarded semantics}, equivalently,
the packed fragment of $L_n$. Like before our approach is algebraic via cylindric algebras.

{\it Clique guarded semantics} for $\CA_n$s can be defined similarly to relation algebras.
We consider  (the locally well--behaved) 
$m$--square \cite[Definition 13.4]{HHbook} and $m$--flat representations of 
$\A\in \CA_n$ with $2<n<m\leq \omega$ \cite[Chapter 13]{HHbook}.
We address algebraically a restricted version of the 
Omitting Types Theorems, namely, Vaught's Theorem on existence of atomic models for atomic theories, in the framework of 
the  clique guarded $n$--variable fragments
of first order logic.  We start by defining {\it clique--guarded semantics.}

\begin{definition}\label{relrep} Assume that $2<n<m<\omega$. Let $\Mo$ be the base of a relativized representation of $\A\in \CA_n$ witnessed by an injective
homomorphism $f:\A\to \wp(V)$, where $V\subseteq {}^n\Mo$ and $\bigcup_{s\in V} \rng(s)=\Mo$. 
We write $\Mo\models a(s)$ for $s\in f(a)$. Let  $\L(\A)^m$ be the first order signature using $m$ variables
and one $n$--ary relation symbol for each element in $A$. 
Let $\L(\A)^m_{\infty, \omega}$ be the infinitary extension of $\L(\A)^m$ allowing infinite conjunctions. 
Then {\it an $n$--clique} is a set $C\subseteq \Mo$ such that
$(a_1,\ldots, a_{n-1})\in V=1^{\Mo}$
for distinct $a_1, \ldots, a_{n}\in C.$
Let
${\sf C}^m(\Mo)=\{s\in {}^m\Mo :\rng(s) \text { is an $n$--clique}\}.$
${\sf C}^m(\Mo)$ is called the {\it $n$--Gaifman hypergraph of $\Mo$}, with the $n$--hyperedge relation $1^{\Mo}$.

The {\it clique guarded semantics $\models_c$} are defined inductively. For atomic formulas and Boolean connectives they are defined
like the classical case and for existential quantifiers
(cylindrifiers) they are defined as follows:
for $\bar{s}\in {}^m\Mo$, $i<m$, $\Mo, \bar{s}\models_c \exists x_i\phi$ $\iff$ there is a $\bar{t}\in {\sf C}^m(\Mo)$, $\bar{t}\equiv_i \bar{s}$ such that
$\Mo, \bar{t}\models \phi$.

\begin{itemize}

\item We say that $\Mo$ is  {\it an $m$--square representation} of $\A$,
if  for all $\bar{s}\in {\sf C}^m(\Mo), a\in \A$, $i<n$,
and   injective map $l:n\to m$, whenever $\Mo\models {\sf c}_ia(s_{l(0)},\ldots, s_{l(n-1)})$, then there is a $\bar{t}\in {\sf C}^m(\Mo)$ with $\bar{t}\equiv _i \bar{s}$,
and $\Mo\models a(t_{l(0)}, \ldots, t_{l(n-1)})$.
We say that $\M$ is {\it a complete $m$--square representation of $\A$ via $f$}, or simply a complete representation of $\A$ if 
$f(\sum X)=\bigcup_{x\in X} f(x)$, for all 
$X\subseteq \A$ for which $\sum X$ exists.

\item We say that $\Mo$ is an {\it (infinitary) $m$--flat representation} of $\A$ if  it is $m$--square and
for all $\phi\in (\L(\A)_{\infty, \omega}^m) \L(\A)^m$, 
for all $\bar{s}\in {\sf C}^m(\Mo)$, for all distinct $i,j<m$,
$\Mo\models_c [\exists x_i\exists x_j\phi\longleftrightarrow \exists x_j\exists x_i\phi](\bar{s})$.
Complete representability is defined like for squareness.
\end{itemize}
\end{definition}
It is straightforward to show, like in the classical case, 
that $\A$ has a complete $m$--square complete representation $\Mo$ via $f$ 
$\iff$ $\A$ is atomic and $f$ is {\it atomic} in the sense 
that $\bigcup_{x\in \At\A} f(x)=1^{\Mo}$.
One can define $m$--smooth representations of an algebra $\A\in \CA_n$ as in \cite[Definition 13.12] {HHbook}.

The main ideas used in the next Theorem needed to perform a transfer from results relating neat embedding properties to relativized representations from 
$\sf RA$s to $\CA$s can be found in 
\cite[Definitions 12.1, 12.9, 12.10, 12.25, Propositions 12.25, 12.27]{HHbook}.
In all cases, the $m$--dimensional dilation stipulated in the statement of the Theorem, will have
top element ${\sf C}^m(\Mo)$, where $\Mo$ is 
the $m$--relativized representation of the given algebra, and the (completely additive) operations of the dilation
are induced by the $n$-clique--guarded semantics. For $2<n<m<\omega$ and an atomic $\A\in \CA_n$, an $m$--dimensional (hyper)basis for $\A$  
can be defined similarly to the $\RA$ case.
We formulate (explicitly) the definition of a basis:

\begin{definition}\label{basis} Let $2<n<m<\omega$ and $\A\in \CA_n$ be atomic.

An {\it $m$--dimensional basis $B$} for $\A$ consists of a set of $n$--dimensional networks whose nodes $\subseteq m$, satisfying 
the following properties: 

(a) For all  $a\in \At\A$, there is an $N\in B$ such that $N(0,1,\ldots, n-1)=a,$

(b) The {\it cylindrifier property}: For all $N\in B$, all $i<n$,  all $\bar{x}\in {}^n\nodes(N)(\subseteq {}^nm)$, all $a\in\At\A$, such that
$N(\bar{x})\leq {\sf c}_ia$,  there exists $M\in B$, $M\equiv_i N$, $\bar{y}\in {}^n\nodes(M)$ such 
that $\bar{y}\equiv_i\bar{x}$ and $M(\bar{y})=a.$  We can always assume that $\bar{y}_i$ is a new node else one takes $M=N$. 
\end{definition}
Next  we recall the definition of certain non--commutative set algebras. 

\begin{definition} Let $n<\omega$. The class ${\sf D}_n({\sf G}_n)$ is a class of set algebras having the same signature as $\CA_n$. If $\A\in {\sf D}_n({\sf G}_n)$, then  the 
top element of $\A$ is a set  $V\subseteq {}^n U$ (some non--empty set $U$),
such that if $s\in V$,  and $i<j<n$ ($\tau: n\to n$), then $s\circ [i|j] (s\circ \tau)\in V$.
\end{definition}
Let $\sf K$ be a class of $\sf BAO$s.
We write ${\sf K}\cap \bf At$ for  the class of atomic algebras in $\sf K$. Recall that $\bold S_c$ denotes the operation of foirming complete 
subalgebras.
\begin{theorem}\label{flat}\cite[Theorems 13.45, 13.36]{HHbook}.
Assume that $2<n<m<\omega$ and let $\A\in \CA_n$. Then the following hold:
\begin{enumerate}
 \item $\A\in \bold S\Nr_n\CA_m\iff \A$ has an $m$--smooth representation $\iff \A$ has an  infinitary $m$--flat representation
$\iff \A$ has an $m$--flat representation $\iff \A\in \bold S{\Nr}_n(\CA_m\cap {\sf G}_m)\iff \A\in \bold S\Nr_n(\CA_m\cap {\sf D}_m)\iff \A^+$ has an $m$-dimensional hyperbasis,

\item $\A\in \bold S\Nr_n{\sf D}_m\iff \A\in \bold S\Nr_n{\sf G}_m \iff \A$ has an $m$--square  representation $\iff$ $\A$ has an $m$--dimensional basis, 

\item If $\A$ is atomic, then: $\A$ has a complete infinitary $m$--flat representation $\iff$ $\A$ has a complete $m$--smooth representation
$\iff$ $\A\in \bold S_c\Nr_n(\CA_m\cap \bf At)\iff \A$ has an $m$--dimensional hyperbasis. 
\end{enumerate}
\end{theorem}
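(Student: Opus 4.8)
The plan is to mirror the relation-algebra development of \cite[Chapter 13]{HHbook}, replacing $\RA$-networks by the $n$-dimensional atomic networks of Definition \ref{game} and relational/hyperbases by their $\CA_n$-analogues in Definition \ref{basis}. The backbone is a dictionary between $m$-dimensional dilations and $m$-relativized representations: given a relativized representation $\Mo$ of $\A$ witnessed by $f$, one forms the dilation whose top element is the $n$-Gaifman hypergraph ${\sf C}^m(\Mo)$ and whose operations are induced by the clique-guarded semantics $\models_c$ of Definition \ref{relrep}; conversely, given $\C\in\CA_m$ with $\A\subseteq\Nr_n\C$, one reads a representation of $\A$ off a representation of $\C$ by restricting $m$-ary assignments to their $n$-ary sub-assignments. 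Establishing that this passage is well defined and inverse up to the relevant notion of representation is the routine-but-load-bearing first step.

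First I would settle item (2), the square case, since no commutativity constraint is present there. From an $m$-square representation $\Mo$, the set ${\sf C}^m(\Mo)$ with the clique-guarded operations is an algebra in $\CA_m\cap{\sf G}_m$ (respectively $\CA_m\cap{\sf D}_m$) whose $n$-neat reduct contains a copy of $\A$: the square clause of Definition \ref{relrep} is exactly what makes the neat-reduct cylindrifications agree with those of $\A$ on $n$-cliques. Conversely, if $\A\subseteq\Nr_n\C$ with $\C\in\CA_m\cap{\sf G}_m$, one takes the base of a genuine set representation of $\C$ (which exists since ${\sf G}_m$-algebras are set algebras by definition) and verifies the square clause directly. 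The equivalence with an $m$-dimensional basis then follows because a basis is precisely a set of networks witnessing the cylindrifier property: one patches networks together into a square representation by a step-by-step amalgamation, using condition (a) of Definition \ref{basis} to start and condition (b) to extend.

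For item (1) the extra ingredient is commutativity of cylindrifiers, which is what separates $\CA_m$ from ${\sf D}_m$ and ${\sf G}_m$. An $m$-flat representation imposes precisely $\models_c[\exists x_i\exists x_j\phi\leftrightarrow\exists x_j\exists x_i\phi]$, so the induced dilation lands in $\CA_m$ rather than merely ${\sf D}_m$, giving $\A\in\bold S\Nr_n\CA_m$. The deeper equivalences---that $m$-flat, infinitary $m$-flat and $m$-smooth representations coincide, and correspond to $\A^+$ having an $m$-dimensional hyperbasis---I would obtain through the canonical extension, exactly as Lemma \ref{i} does for $\RA$s. Since $\bold S\Nr_n\CA_m$ is a canonical variety, $\A\in\bold S\Nr_n\CA_m\iff\A^+\in\bold S\Nr_n\CA_m$; and as $\A^+$ is complete, atomic and completely additive, a dilation of $\A^+$ yields an $m$-dimensional hyperbasis by reading hyperlabels off the atoms, while conversely a hyperbasis drives a step-by-step construction producing an $m$-smooth representation, which is \emph{a fortiori} infinitary-flat and hence flat. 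The smoothness---a back-and-forth system of partial isomorphisms between cliques---is what upgrades a bare flat representation to the infinitary level and guarantees the hyperbasis can be saturated.

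Finally, for item (3) I would run the same dictionary \emph{completely}: a complete $m$-square/flat representation is one that is atomic in the sense explained after Definition \ref{relrep}, corresponding to $\A$ being a \emph{complete} subalgebra of $\Nr_n(\CA_m\cap{\bf At})$ rather than merely a subalgebra. The link to hyperbases here is the game $\bold G^m(\At\A)$ of Definition \ref{game}: by Lemma \ref{n}, membership in $\bold S_c\Nr_n\CA_m^{\sf ad}$ yields a winning strategy for \pe, and the networks \pe\ plays, organized over all $m$-node configurations, assemble into an $m$-dimensional hyperbasis; the converse builds the complete representation from the hyperbasis along a saturated step-by-step chain. The main obstacle throughout is the flat/smooth/hyperbasis triangle of item (1): making the induced dilation genuinely commutative and verifying that the hyperbasis-driven construction terminates in a single representation that is simultaneously $m$-smooth and infinitary-flat is where the technical weight lies, whereas the square case and the dilation/representation dictionary are comparatively mechanical.
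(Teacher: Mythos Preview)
Your proposal follows essentially the same strategy as the paper's proof: both lift the $\RA$ development of \cite[Chapter 13]{HHbook} to $\CA_n$ via the dilation--representation dictionary, with the dilation built on the Gaifman hypergraph ${\sf C}^m(\Mo)$ and operations induced by clique-guarded semantics. The paper explicitly constructs the dilation as $\{\phi^{\Mo}:\phi\in\L(\A)^m\}$ and checks that the neat embedding $r\mapsto r(\bar{x})^{\Mo}$ works, which is exactly your ``dictionary.''

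Two technical sub-steps are handled differently. For upgrading $m$--flat to infinitary $m$--flat (part of item (1)), the paper does \emph{not} go through the canonical extension and hyperbasis as you suggest; instead it passes to an $\omega$--saturated model of the first-order theory asserting the existence of an $m$--flat representation, and reads the infinitary-flat representation off that saturated model directly (cf.\ \cite[Proposition 13.17]{HHbook}). For item (3), the paper again avoids games: it works in $\L(\A)^m_{\infty,\omega}$, builds the dilation $\D$ with universe $\{\phi^{\Mo}:\phi\in\L(\A)^m_{\infty,\omega}\}$, and proves atomicity of $\D$ by forming, for any $\bar{a}\in\phi^{\Mo}$, the infinitary conjunction $\tau=\bigwedge\{\psi:\Mo\models_c\psi(\bar{a})\}$, so that $\tau^{\Mo}$ is an atom below $\phi^{\Mo}$. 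Your route through Lemma \ref{n} and the game $\bold G^m$ is a legitimate alternative, but note that Lemma \ref{n} only gives one direction (membership in $\bold S_c\Nr_n\CA_m^{\sf ad}$ implies \pe\ wins), so you would still need the hyperbasis-to-complete-representation step separately.

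One slip to fix: in your treatment of item (2) you write that the dilation from an $m$--square representation lands in $\CA_m\cap{\sf G}_m$. It does not---squareness alone does not force commutativity of cylindrifiers, so the dilation is only in ${\sf G}_m$ (or ${\sf D}_m$). You correctly make this very point when discussing item (1), so this is presumably a typo, but as written it would collapse the square/flat distinction that the theorem is designed to record.
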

\begin{proof} We give a sample of how to prove some (but not all) the statements in the Theorem.
Some of the implications are highly technical, but fairly straightforward to lift from the $\RA$ case.
We start by proving that the existence of $m$--flat representations, implies the existence of $m$--dilations.
Let $\Mo$ be an $m$--flat representation of $\A$. We show that $\A\subseteq \Nr_n\D$, for some $\D\in \CA_m$,
For $\phi\in \L(\A)^m$ (as defined above),
let $\phi^{\Mo}=\{\bar{a}\in {\sf C}^m(\Mo):\Mo\models_c \phi(\bar{a})\}$, where ${\sf C}^m(\Mo)$ is the $n$--Gaifman hypergraph.
Let $\D$ be the algebra with universe $\{\phi^{M}: \phi\in \L(\A)^m\}$ and with  cylindric
operations induced by the $n$-clique--guarded (flat) semantics. 

For $r\in \A$, and $\bar{x}\in {\sf C}^m(\Mo)$, we identify $r$ with the formula it defines in $\L(\A)^m$, and 
we write $r(\bar{x})^{\Mo}\iff \Mo, \bar{x}\models_c r$.
Then certainly $\D$ is a subalgebra of the ${\sf Crs}_m$ (the class
of algebras whose units are arbitrary sets of $m$--ary sequences)
with domain $\wp({\sf C}^m(\M))$, so $\D\in {\sf Crs}_m$ with unit $1^{\D}={\sf C}^m(\Mo)$.
Since $\Mo$ is $m$--flat, then cylindrifiers in $\D$ commute, and so $\D\in \CA_m$.
Now define $\theta:\A\to \D$, via $r\mapsto r(\bar{x})^{\Mo}$. Then exactly like in the proof of \cite[Theorem 13.20]{HHbook},
$\theta$ is a neat embedding, that is, $\theta(\A)\subseteq \Nr_n\D$.
It is straightforward to check that $\theta$ is a homomorphism.  We show that $\theta$ is injective.
Let $r\in A$ be non--zero. Then $\Mo$ is a relativized representation, so there is $\bar{a}\in M$
with $r(\bar{a})$, hence $\bar{a}$ is a clique in $\Mo$,
and so $\Mo\models r(\bar{x})(\bar{a})$, and $\bar{a}\in \theta(r)$, proving the required. 

The relativized model $\Mo$ itself might not be  infinitary $m$--flat, but one can build an infinitary $m$--flat representation of $\A$, whose base is an $\omega$--saturated model
of the consistent first order theory, stipulating the existence of an $m$--flat representation, cf. \cite[Proposition 13.17, Theorem 13.46 items (6) and (7)]{HHbook}.

The inverse implication from dilations to representations harder. One constructs from the given 
$m$--dilation, an $m$--dimensional 
hyperbasis from which
the required $m$-- relativized representation is built.  
This can be done in a step--by step manner treating the hyperbasis 
as a `saturated set of mosaics', cf. \cite[Proposition 13.37]{HHbook}.

For results on {\it complete} $m$--flat representations, one works in $L_{\infty, \omega}^m$ instead of first order logic.
Assume that $\A$ has a complete $m$--flat representation $\Mo$. This time we construct an {\it atomic} $m$-dilation $\D$ of $\A$. 
The $m$-- dilation $\D$ will have again top element the Gaifman hypergraph ${\sf C}^m(\Mo)$.
The universe will be bigger; $D=\{\phi^M: \phi\in \L(A)^m_{\infty, \omega}\}$ with operations also induced by the $n$-clique--guarded semantics extended to
$L_{\infty, \omega}^m$. The $m$--dilation $\D$, formed this way,  will be a $\CA_m$ as before, but this time, it will be {\it an atomic} one.  
To prove atomicity, let $\phi^{\Mo}$ be a non--zero element in $\D$.
Choose $\bar{a}\in \phi^{\Mo}$, and consider the following infinitary conjunction (which we did not have before when working in $L_m$)
\footnote{There are set--theoretic subtleties involved here, that we prefer to ignore.}:
$\tau=\bigwedge \{\psi\in \L(\A)_{\infty,\omega}^m: \Mo\models_c \psi(\bar{a})\}.$
Then $\tau\in \L(\A)_{\infty,\omega}^m$, and $\tau^{\Mo}$ is an atom below $\phi^{\Mo}$. 
The neat embedding, defined like before, will be an atomic one, hence
it will be a complete neat embedding \cite[p. 411]{HHbook}.  
\end{proof}

A direct  consequence of the Omitting Types Theorem ($\sf OTT$) 
for $L_{\omega, \omega}$ is that if $T$ is a countable theory and $\Gamma$ is a set of formulas 
using only finitely many variables (a type)
such that $\Gamma$ is realizable in every model of $T$, 
then  $\Gamma$ is isolated, that is to say, there is a formula $\psi$ consistent with $T$, such that $T\models \psi\to \phi$ for all $\phi$ in $\Gamma$.
As mentioned in introduction $\sf OTT$ fails for $L_n$ for $2<n<\omega$ dramatically
in the following  (strong) sense \cite{ANT}: 
For every $2<n\leq l<\omega$, there is a countable and complete $L_n$ theory $T$, and a 
type that is realizable in every (ordinary) model of $T$, but cannot be isolated by a formula 
using $l$ variables (called an $l$- witness). In fact, $T$ can be chosen to be a complete and atomic theory, meaning that the Tarski--Lindenbaum algebra $\Fm_T$ is  simple (as an $\RCA_n$)
and (its Boolean reduct) is an atomic algebra. Furthermore, the non--isolated type is the non--principal type of co--atoms.
In the last result {\it only} ordinary, that is to say, $\omega$--square models are considered. In other words, even the weaker Vaught's theorem $\sf VT$ fails. 

Suppose  that we consider instead $m$--square models for some finite $m$, 
in this way substantially broadening the class of models in which types are realized.
Is it then reasonable to expect that perhaps for some value of (finite) $m$, we regain $\sf OTT$, or if we are less ambitious, we regain $\sf VT$ (upon considering atomic theories and the single 
type of co-atoms) for $L_n$ with respect to the aforementioned generalized semantics? In other words, in the case we can find an $l$--witness, for some $l\in \omega$. 
In what follows we ponder on this possibilty. 

Fix $2<n\leq l<m\leq \omega$. The restriction $l<m$ is dictated by item (3) of Theorem \ref{ANT}. Consider the statement $\Psi(l, m)$: 

{\it There is an atomic, countable and complete $L_n$ theory $T$, such that the type $\Gamma$ consisting of co--atoms is realizable 
in every $m$-- square model, but any formula isolating this type has
to contain more than $l$ variables.}  

By an $m$--square model $\Mo$ of $T$ we understand an $m$--square representation of the algebra $\Fm_T$ with base $\Mo$.

Let ${\sf VT}(l, m))=\neg \Psi(l, m)$, short for {\bf Vaught's Theorem holds `at the parameters $l$ and $m$'} where
by definition, we stipulate that ${\sf VT}(\omega, \omega)$ is just Vaught's Theorem for $L_{\omega, \omega}$: Countable 
atomic theories  have countable atomic models.

For $2<n\leq l<m\leq \omega$  and $l=m=\omega$,  it is likely and plausible that (**) $\VT(l, m)\iff l=m=\omega$.  
In other words:  {\it Vaught's Theorem holds only in the limiting  case when $l\to \infty$ and $m=\omega$ and not `before'. 
This will be  proved on  the `paths' $(l, \omega)$, $n\leq l<\omega$ ($x$ axis)
and $(n, n+k)$, $k\geq n+3$ ($y$ axis)  
using the two different blow up and blur constructions, 
given in Theorems \ref{ANT}, \ref{can}, respectively, cf. Corollary \ref{fl}.} 

In the next Theorem several conditions are given implying $\Psi(l, m)$ for various values of $l$ and $m$.
$\Psi(l, m)_f$ is the formula obtained from $\Psi(l, m)$ be replacing square by flat.
In the first item by no infinite 
$\omega$--dimensional hyperbasis (basis), we understand no representation on an 
infinite base.  By $\omega$--flat (square) representation, we mean an ordinary representation, and by complete $\omega$--flat (square) representation, we mean 
a complete representation.
For an atomic  relation algebra $\R$ and $l>3$, recall that 
${\sf Mat}_n(\At\R)$ denotes the set of all $n$--dimensional basic matrices on $\R$.

\begin{theorem} \label{main} Let $2<n\leq l<m\leq \omega$. Then
every item implies the immediately following one. 
\begin{enumerate}
\item There exists a finite relation algebra $\R$ algebra with a strong $l$--blur and no infinite $m$--dimensional hyperbasis, 
\item There is a countable atomic $\A\in \Nr_n\CA_l\cap \RCA_n$ such that $\Cm\At\A$ does not have an 
$m$--flat representation,
\item There is a countable atomic $\A\in \Nr_n\CA_l\cap \RCA_n$ such that $\Cm\At\A\notin \bold S\Nr_n\CA_m$, 
\item There is a countable atomic $\A\in \Nr_n\CA_l\cap \RCA_n$ such that $\A$ has no complete $m$--flat representation,
\item There is a countable atomic $\A\in \Nr_n\CA_l\cap \RCA_n$ such that $\A\notin \bold S_c\Nr_n\CA_m$,
\item $\Psi(l, m)_f$ is true,
\item $\Psi(l', m')_f$ is true for any $l'\leq l$ and $m'\geq m$.
\end{enumerate}
The same implications hold upon replacing infinite $m$--dimensional hyperbasis by $m$--dimensional relational basis (not necessarily infinite), 
$m$--flat by $m$--square and $\bold S\Nr_n\CA_m$ by $\bold S\Nr_n{\sf D}_m$. Furthermore, in the new chain of implications every item implies the corresponding item in 
Theorem \ref{main}. In particular, 
$\Psi(l, m)\implies \Psi(l, m)_f$.  
\end{theorem}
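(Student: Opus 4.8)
The plan is to run the entire chain on a single algebra produced by the blow up and blur construction of Theorem \ref{ANT}, and to cross from algebra to logic only at the step $(5)\Rightarrow(6)$. So assume item (1) and fix a finite relation algebra $\R$ with a strong $l$--blur $(J,E)$ and no infinite $m$--dimensional hyperbasis. Put $\A={\sf split}_n(\R,J,E)$ and write ${\bf At}=\At({\sf split}(\R,J,E))$, so that $\At\A={\sf Mat}_n({\bf At})$. Since a strong $l$--blur is a strong (hence ordinary) $j$--blur for every $n\le j\le l$, Theorem \ref{ANT}(1) gives at once that $\A$ is countable, atomic, representable (as $(J,E)$ is an $n$--blur), that $\A\cong \Nr_n{\sf split}_l(\R,J,E)$ so $\A\in \Nr_n\CA_l$, and that $\R$ embeds completely into $\Cm{\bf At}$, which in turn embeds into $\Ra\Cm\At\A$. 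This settles the membership $\A\in \Nr_n\CA_l\cap \RCA_n$ shared by items (2)--(5). For $(1)\Rightarrow(2)$ I argue contrapositively: an $m$--flat representation of $\Cm\At\A$ lives on an infinite base $\Mo$ (as $\Cm\At\A$ is infinite), and pushing it through the complete embedding $\R\subseteq_c\Ra\Cm\At\A$ (the $\Ra$--reduct inherits $m$--flatness and a complete subalgebra inherits the relativized representation) yields an infinite $m$--flat representation of $\R$; by Lemma \ref{i}, applied to $\R=\R^+$, this is an infinite $m$--dimensional hyperbasis of $\R$, contradicting (1).

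The middle implications are read off from Theorem \ref{flat}. Applying Theorem \ref{flat}(1) to the algebra $\Cm\At\A$ gives $(2)\iff(3)$: $\Cm\At\A$ has an $m$--flat representation iff $\Cm\At\A\in \bold S\Nr_n\CA_m$ (for $m=\omega$ this reads: $\Cm\At\A$ is representable iff $\Cm\At\A\in \RCA_n=\bold S\Nr_n\CA_\omega$). For $(3)\Rightarrow(4)$ I again pass to the contrapositive: if $\A$ has a complete $m$--flat representation, then by Theorem \ref{flat}(3) one has $\A\subseteq_c\Nr_n\D$ with $\D\in\CA_m$ atomic, and taking Dedekind--MacNeille completions and using that $\CA_m$ is closed under completions one obtains $\Cm\At\A\subseteq \Nr_n\overline{\D}\in \bold S\Nr_n\CA_m$, contradicting (3). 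For $(4)\Rightarrow(5)$ note that cylindric algebras are completely additive, so $\bold S_c\Nr_n\CA_m=\bold S_c\Nr_n\CA_m^{\sf ad}$; hence if $\A\in \bold S_c\Nr_n\CA_m$ then by Lemma \ref{n} \pe\ has a \ws\ in $\bold G^m(\At\A)$, and such a strategy manufactures an $m$--dimensional hyperbasis for $\A$, so that by Theorem \ref{flat}(3) $\A$ has a complete $m$--flat representation. Contrapositively, (4) yields (5).

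For $(5)\Rightarrow(6)$ I switch to logic. Since the algebras delivered by these constructions are simple, $\A\cong \Fm_T$ for a countable, complete and atomic $L_n$ theory $T$, and an $m$--flat model of $T$ is exactly an $m$--flat representation of $\A$ under the clique guarded semantics of Definition \ref{relrep}. Let $\Gamma$ be the type of co--atoms. Omitting $\Gamma$ in an $m$--flat model means exactly that the representation is complete (atomic); thus by the easy direction of Theorem \ref{flat}(3), namely that a complete $m$--flat representation forces $\A\in \bold S_c\Nr_n(\CA_m\cap{\bf At})\subseteq \bold S_c\Nr_n\CA_m$, hypothesis (5) forces every $m$--flat model to realise $\Gamma$; as $\A\in\RCA_n$ has ordinary, hence $m$--flat, representations, this is non--vacuous. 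Finally $\A=\Nr_n{\sf split}_l(\R,J,E)$ with an atomic $l$--dilation guarantees, exactly as in \cite{ANT}, that no formula in $\le l$ variables isolates $\Gamma$, since such a formula would be a nonzero element of the $l$--dilation lying below no atom of $\A$. This is $\Psi(l,m)_f$. The step $(6)\Rightarrow(7)$ is pure monotonicity: decreasing $l$ only makes isolation harder, while increasing $m$ shrinks the class of models (an $m'$--flat representation is $m$--flat whenever $m'\ge m$), so the same $T$ witnesses $\Psi(l',m')_f$ for all $l'\le l$ and $m'\ge m$.

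The square chain $(1')$--$(7')$ is proved verbatim with Theorem \ref{flat}(2) in place of \ref{flat}(1), reading $m$--square for $m$--flat, ${\sf D}_m$ for $\CA_m$, and relational basis for hyperbasis. Each cross implication $(i')\Rightarrow(i)$ is an inclusion: every $m$--flat representation is $m$--square and $\bold S\Nr_n\CA_m\subseteq \bold S\Nr_n{\sf D}_m$ (likewise for $\bold S_c$), every $m$--dimensional hyperbasis is an $m$--dimensional relational basis, and an $m$--square model realising $\Gamma$ everywhere does so a fortiori over the $m$--flat models; in particular $\Psi(l,m)\Rightarrow \Psi(l,m)_f$. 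The main obstacle is the pair of purely algebraic steps $(3)\Rightarrow(4)$ and $(4)\Rightarrow(5)$: the first needs the completion lemma for neat reducts, that a complete neat embedding into an \emph{atomic} $m$--dilation survives Dedekind--MacNeille completion, which rests on the closure of $\CA_m$ under completions; the second needs the non--trivial direction that a winning strategy for \pe\ in $\bold G^m$ actually builds an $m$--dimensional hyperbasis, and getting the quantifier ``atomic dilation versus arbitrary dilation'' right, via the complete additivity of the cylindrifiers, is the delicate point.
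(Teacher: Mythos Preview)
Your overall architecture—building $\A={\sf split}_n(\R,J,E)$ once and carrying it through the entire chain—is exactly the paper's, and your treatments of $(1)\Rightarrow(2)$, $(2)\Rightarrow(3)$, $(5)\Rightarrow(6)$, $(6)\Rightarrow(7)$, and the square variant coincide with the paper's arguments.

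The two middle steps are where you diverge, and in both cases you make the argument harder than necessary and introduce steps that are not justified.

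For $(3)\Rightarrow(4)$ you invoke Theorem~\ref{flat}(3) to get $\A\subseteq_c\Nr_n\D$ with $\D\in\CA_m$ atomic, and then try to push this through Dedekind--MacNeille completions to land $\Cm\At\A$ inside $\Nr_n\overline{\D}$. But the ``completion lemma for neat reducts'' you flag as delicate is genuinely so: that $\A\subseteq_c\Nr_n\D$ survives completion \emph{in a way compatible with $\Nr_n$} is not automatic, and you do not prove it. The paper sidesteps this completely. It argues the contrapositive at the level of representations: given a complete $m$--flat representation $f:\A\to\wp(V)$ on base $\Mo$, define $g:\Cm\At\A\to\wp(V)$ by $g(c)=\bigcup\{f(x):x\in\At\A,\ x\le c\}$. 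This is well defined (the sup exists in $\wp(V)$), is easily checked to be a homomorphism since $f$ is, and yields an $m$--flat representation of $\Cm\At\A$ on the same base; hence $\Cm\At\A\in\bold S\Nr_n\CA_m$ by Theorem~\ref{flat}(1). No completions of dilations, no commutation of $\Nr_n$ with completions.

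For $(4)\Rightarrow(5)$ you route through the game: from $\A\in\bold S_c\Nr_n\CA_m$ deduce via Lemma~\ref{n} a \ws\ for \pe\ in $\bold G^m(\At\A)$, and then assert that ``such a strategy manufactures an $m$--dimensional hyperbasis''. Lemma~\ref{n} gives only the direction $\bold S_c\Nr_n\K_m^{\sf ad}\Rightarrow$ \ws\ in $\bold G^m$; the passage from a \ws\ in $\bold G^m$ back to a hyperbasis is nowhere established in the paper, so your chain has a gap at exactly the point you call ``the delicate point''. The paper's argument is a one--liner: invoke Theorem~\ref{flat}(3) directly, which gives the equivalence of complete $m$--flat representability with membership in $\bold S_c\Nr_n(\CA_m\cap{\bf At})$, and read off the contrapositive.
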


\begin{proof}

We proceed like $(1)\implies (2)$ in the proof of Theorem \ref{ANT} giving more details. Let $\R$ be as in the hypothesis with strong $l$--blur $(J, E)$. 
The idea is to `blow up and blur' $\R$ in place of the Maddux algebra 
$\mathfrak{E}_k(2, 3)$ dealt with in \cite[Lemma 5.1]{ANT}, where $k<\omega$ is the number of non--identity atoms 
and  $l$ depends recursively on $k$. 

Let $2<n\leq l<\omega$.  The relation algebra $\R$ is blown up by splitting all of the atoms each to infinitely many.
$\R$ is blurred by using a finite set of blurs (or colours) $J$. 
Then two partitions are defined on $\bf At$, call them $P_1$ and $P_2$.
Composition is re-defined on this new infinite atom structure; it is induced by the composition in $\R$, and a ternary relation $E$
on $\omega$, that `synchronizes' which three rectangles sitting on the $i,j,k$ $E$--related rows compose like the original algebra $\R$.
This relation is definable in the first order structure $(\omega, <)$ \cite{ANT}.

The first partition $P_1$ is used to show that $\R$ embeds in the complex algebra of this new atom structure, namely, $\Cm \bf At$. 
The second partition $P_2$ divides $\bf At$ into {\it finitely many (infinite) rectangles}, each with base $W\in J$,
and the term algebra denoted in \cite{ANT} by  ${\sf split}(\R, J, E)$,  and here by ${\sf split}(\R, J, E)$ over $\bf At$ (where $(J, E)$ is the strong $l$--blur for $\R$ 
assumed to exist by hypothesis)  consists of the sets that intersect co--finitely with every member of this partition.
One proves that ${\sf split}(\R, J, E)$ with atom structure $\bf At$ is representable using the finite number of blurs in $J$. 
Because $(J, E)$ is a strong $l$--blur, then, by definition, it is a strong $j$--blur for all $n\leq j\leq l$, so the atom structure $\bf At$ has a $j$--dimensional cylindric basis for all $n\leq j\leq l$, 
namely, ${\sf Mat}_j(\bf At)$.
For all such $j$, there is an $\RCA_j$ denoted on \cite[Top of p. 78]{ANT} by ${\Bb}_j(\R, J, E)$, which we denote here in conformity with the notation in Theorem \ref{ANT}
by ${\sf split}_j(\R, J, E)$, such 
that $\Tm{\sf Mat}_j({\bf At})\subseteq {\sf split}_j(\R, J, E)\subseteq \Cm{\sf Mat}_j(\bf At)$ 
and $\At{\sf split}_j(\R, J, E)$ is a weakly representable atom structure of dimension $j$.

Now take $\A={\sf split} _n(\R, J, E)$. We claim that $\A$ is as required.  Since $\R$ has a strong $j$--blur $(J, E)$ for all $n\leq j\leq l$, then 
$\A\cong \mathfrak{Nr}_n{\sf split}_j(\R, J, E)$ for all $n\leq j\leq l$ as proved in \cite[item (3) p. 80]{ANT}. In particular, 
taking $j=l$, $\A\in \RCA_n\cap \Nr_n\CA_l$. We show that $\Cm\At\A$  
does not have an $m$--flat representation.  
Assume for contradicton that $\Cm\At\A$ does have an $m$--flat representation $\Mo$.
Then $\Mo$ is infinite of course.  Since $\R$ embeds into ${\sf spli}(\R, J, E)$ which in turn embeds into $\mathfrak{Ra}\Cm\At\A$, then 
$\R$ has an $m$--flat representation with base $\Mo$. But since $\R$ is finite, $\R=\R^+$, and consequently $\R$ has an  
infinite $m$--dimensional hyperbasis.  This is contrary to our assumption and we are done.

$(2)\implies (3)$: By item (1) of Theorem \ref{flat}.

$(3)\implies (4)$: A complete $m$--flat representation of (any) 
$\B\in \CA_n$ induces an $m$--flat representation of $\Cm\At\B$ which implies by Theorem \ref{flat} that $\Cm\At\B\in \bold S\Nr_n\CA_m$. 
To see why, assume that $\B$ has an $m$--flat  complete representable via $f:\B\to \D$, where $\D=\wp(V)$ and 
the base of the representation $\Mo=\bigcup_{s\in V} \rng(s)$ is $m$--flat. Let $\C=\Cm\At\B$.
For $c\in C$, let $c\downarrow=\{a\in \At\C: a\leq c\}=\{a\in \At\B: a\leq c\}$; the last equality holds because 
$\At\B=\At\C$. Define, representing $\C$,  
$g:\C\to \D$ by $g(c)=\sum_{x\in c\downarrow}f(x).$ The map $g$ is well defined because $\C$ is complete so arbitrary suprema exist in $\C$. 
Furthermore, it can be easily checked that $g$ is a homomorphism into $\wp(V)$ having base $\Mo$ (basically because by assumption $f$ is a homomorphism).

$(4)\implies (5)$: By item (2) of Theorem \ref{flat}.

$(5)\implies (6)$: By \cite[\S 4.3]{HMT2}, we can (and will) assume that $\A= \Fm_T$ for a countable, atomic theory $L_n$ theory $T$.  
Let $\Gamma$ be the $n$--type consisting of co--atoms of $T$. Then $\Gamma$ is realizable in every $m$--flat model, for if $\Mo$ is an $m$--flat model omitting 
$\Gamma$, then $\Mo$ would be the base of a complete $m$--flat  representation of $\A$, and so $\A\in \bold S_c\Nr_n\CA_m$ which is impossible.
But $\A\in {\sf Nr}_n\CA_l$, so  using exactly the same  (terminology and) argument in \cite[Theorem 3.1]{ANT} we get that  
any witness isolating $\Gamma$  needs more 
than $l$--variables (see also the proof of item (2) of corollary \ref{OTT2}.) 

$(6)\implies (7)$: follows from the definitions.

For squareness the proofs are essentially the same undergoing the obvious modifications. In the first implication `infinite' in the hypothesis is not needed because any finite 
relation algebra having an infinite $m$--dimensional relational basis has a finite one, cf. \cite[Theorem 19.18]{HHbook} and item (1) of Lemma 
\ref{i}.
\end{proof}
\begin{theorem}\label{can1}
Assume that $2<n<\omega$. Then  $\Psi(n, n+3)_f$ is true. Thus, for any $m\geq n+3$,  $\Psi(n, m)_f$ is true.
\end{theorem}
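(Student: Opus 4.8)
\emph{The plan.} The statement $\Psi(n,n+3)_f$ is precisely item (6) of Theorem \ref{main} read at the parameters $l=n$ and $m=n+3$, so the whole strategy is to verify one of the earlier items of that implication chain at these parameters and let the implications carry us down to item (6). The most economical entry point is item (3): I would produce a countable atomic $\A\in \Nr_n\CA_n\cap \RCA_n$ whose \de\ completion $\Cm\At\A$ lies outside $\bold S\Nr_n\CA_{n+3}$. The observation that makes the degenerate choice $l=n$ admissible is that $\Nr_n\CA_n=\CA_n$, so the neat--reduct clause $\A\in \Nr_n\CA_n$ imposes nothing beyond $\A\in\CA_n$; item (3) then collapses to the bare assertion that there is a countable atomic $\A\in\RCA_n$ with $\Cm\At\A\notin \bold S\Nr_n\CA_{n+3}$.

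\emph{Supplying the witness.} This is exactly what Theorem \ref{can} delivers, taken with $\K=\CA$ and $m=n+3$. The algebra $\A={\sf split}(\A_{n+1,n},\r,\omega)$ obtained by blowing up and blurring the finite rainbow algebra $\A_{n+1,n}$ is countable, atomic and representable, hence lies in $\RCA_n=\Nr_n\CA_n\cap\RCA_n$; moreover $\A_{n+1,n}$ embeds into $\Cm\At\A$. Since \pa\ wins the atomic game $\bold G^{n+3}(\At(\A_{n+1,n}))$, Lemma \ref{n} together with finiteness of $\A_{n+1,n}$ gives $\A_{n+1,n}\notin \bold S\Nr_n\CA_{n+3}$; as this variety is closed under $\bold S$ and $\A_{n+1,n}$ embeds into $\Cm\At\A$, we obtain $\Cm\At\A\notin\bold S\Nr_n\CA_{n+3}$. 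Thus item (3) of Theorem \ref{main} holds at $l=n$, $m=n+3$, and nothing new needs to be built.

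\emph{Running the chain.} I would then invoke the implications $(3)\implies(4)\implies(5)\implies(6)$ of Theorem \ref{main}: $(3)\implies(4)$ holds because a complete $(n+3)$--flat representation of $\A$ would induce an $(n+3)$--flat representation of $\Cm\At\A$ and force $\Cm\At\A\in\bold S\Nr_n\CA_{n+3}$, contradicting the previous paragraph; $(4)\implies(5)$ is the contrapositive of the complete neat--embedding characterisation in item (3) of Theorem \ref{flat}; and $(5)\implies(6)$ reads $\A$ as the Tarski--Lindenbaum algebra $\Fm_T$ of a countable atomic complete $L_n$ theory $T$, realises the co--atom type $\Gamma$ in every $(n+3)$--flat model (any omitting model would be the base of a complete $(n+3)$--flat representation, giving $\A\in\bold S_c\Nr_n\CA_{n+3}$, which is impossible), and rules out an isolating witness. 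This yields $\Psi(n,n+3)_f$. The concluding clause $\Psi(n,m)_f$ for all $m\geq n+3$ is then immediate from the monotonicity item (7) of Theorem \ref{main}, taking $l'=n$ and $m'=m\geq n+3$.

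\emph{Where the difficulty lies.} There is essentially no new obstacle: the genuine technical content---\pa's winning strategy in the $(n+3)$--node rainbow game and the embedding $\A_{n+1,n}\hookrightarrow\Cm\At\A$---has already been discharged in Theorem \ref{can}, and the flat--representation bookkeeping in Theorems \ref{main} and \ref{flat}. The one point I would state explicitly is the degenerate parameter $l=n$: since every $L_n$ formula already uses at most $n$ variables, the phrase ``no isolating formula with more than $n$ variables'' must be read as the strongest non--isolation statement, namely that the co--atom type admits no isolating $L_n$ formula at all under $(n+3)$--flat semantics. This is exactly what the $(5)\implies(6)$ argument produces once $\A\in\Nr_n\CA_n$, and flagging it guards the reader against mistaking the conclusion for a vacuous one.
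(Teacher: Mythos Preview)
Your proof is correct, but it takes a genuinely different route from the paper's own proof of Theorem~\ref{can1}. The paper does \emph{not} recycle the blown-up algebra ${\sf split}(\A_{n+1,n},\r,\omega)$ from Theorem~\ref{can}; instead it introduces a new rainbow-like algebra $\C_{\N^{-1},\N}$, built on the ordered structures $\N^{-1}$ (greens) and $\N$ (reds) with an additional forbidden triple $(\g_0^i,\g_0^j,\r_{kl})$ enforcing that $\{(i,k),(j,l)\}$ be order-preserving, and then exhibits a \ws\ for \pa\ in $\bold G^{n+3}(\At\C_{\N^{-1},\N})$ by forcing a descending sequence of red indices in $\N$. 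This verifies item (5) of Theorem~\ref{main} directly (rather than item (3), as you do).

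Your approach is the more economical one for the stated goal: everything is already in hand from Theorem~\ref{can}, no new construction or game analysis is needed, and you simply enter the implication chain of Theorem~\ref{main} at item (3). In fact the paper itself follows essentially your route---using the split algebra from Theorem~\ref{can} and Theorem~\ref{main}---in the very next result, Corollary~\ref{fl}, to prove the stronger square statement $\Psi(n,n+3)$ (which implies $\Psi(n,n+3)_f$). What your approach does \emph{not} buy, and the paper's does, is the algebra $\C_{\N^{-1},\N}$ itself: this algebra has the extra feature that \pe\ wins every finite-round game $G_k$ on its atom structure, and the paper exploits this later in Theorem~\ref{iiii} to show that no class between $\Nr_n\CA_\omega\cap{\sf CRCA}_n$ and $\bold S_c\Nr_n\CA_{n+3}$ is elementary. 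So the paper's detour through $\C_{\N^{-1},\N}$ is an investment that pays off elsewhere, whereas your shortcut is cleaner if Theorem~\ref{can1} is the only target.
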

\begin{proof}
By Theorem \ref{main} 
it suffices to show that there is an atomic algebra with countably many atoms 
outside $\bold S_c\Nr_n\CA_{n+3}$.

Fix finite $n>2$. Let $\N^{-1}=\{-n: n\in \N\}$. We look at $\N^{-1}$ as an ordered structure with usual order: 
$-m<-n\iff m>n$ ($m, n\in \N$). Let $\A$ be the $n$--dimensional
rainbow cylindric algebra $R(\Gamma)$  \cite[Definition 3.6.9]{HHbook2}
where $\Gamma=\sf R=\omega$, so that the reds is the set $\{\r_{ij}: i<j<\omega\}$ and the greens
constitute the set $\{\g_i:1\leq i <n-1\}\cup \{\g_0^i: i\in \N^{-1}\}$, so that $\sf G=\N^{-1}$. 
In complete coloured graphs the forbidden triples are like 
in usual rainbow constructions \cite{HH}  
but now we impose a new
forbidden triple in coloured graphs connecting two greens and one red. We stipulate that 
the triple  $(\g^i_0, \g^j_0, \r_{kl})$ is forbidden if $\{(i, k), (j, l)\}$ is not an order preserving partial function from
$\N^{-1}\to\N$. Here we identify $\omega$ with $\N$. 
The $n$--dimensional complex algebra of this atom structure, which we denote by $\C_{\N^{-1}, \N}$ is based 
on the two ordered structure $\N^{-1}$ (greens) and $\N$ (reds).

We show that \pa\ has a \ws\ in $\bold G^{n+3}(\At\C_{\N^{-1}, \N})$, implying  
by Lemma \ref{n}, that $\C_{\N^{-1}, \N}\notin \bold S_c{\sf Nr}_n\CA_{n+3}$.
The idea here is that the newly added triple
forces \pe\ to play reds $\r_{ij}$ with one of the indices forming a decreasing sequence in $\N$
in response to \pa\ playing cones having a common 
base and distinct green tints (demanding a red label for edges between 
appexes of two succesive cones.) Having the option to reuse 
the $n+3$ nodes is crucial for \pa\ to implement his \ws\ because he uses {\it finitely many} nodes to win an {\it infinite} $\omega$--rounded game.

In the initial round \pa\ plays a graph $M$ with nodes $0,1,\ldots, n-1$ such that $M(i,j)=\w_0$
for $i<j<n-1$
and $M(i, n-1)=\g_i$
$(i=1, \ldots, n-2)$, $M(0, n-1)=\g_0^0$ and $M(0,1,\ldots, n-2)=\y_{\N^{-1}}$. This is a $0$ cone.
In the following move \pa\ chooses the base  of the cone $(0,\ldots, n-2)$ and demands a node $n$
with $M_2(i,n)=\g_i$ $(i=1,\ldots, n-2)$, and $M_2(0,n)=\g_0^{-1}.$
\pe\ must choose a label for the edge $(n+1,n)$ of $M_2$. It must be a red atom $r_{mk}$, $m, k\in \N$. Since $-1<0$, then by the `order preserving' condition 
we have $m<k$.
In the next move \pa\ plays the face $(0, \ldots, n-2)$ and demands a node $n+1$, with $M_3(i,n)=\g_i$ $(i=1,\ldots, n-2)$,
such that  $M_3(0, n+2)=\g_0^{-2}$.
Then $M_3(n+1,n)$ and $M_3(n+1, n-1)$ both being red, the indices must match, so $M_3(n+1,n)=r_{lk}$ and $M_3(n+1, n-1)=r_{km}$ with $l<m\in \N$.
In the next round \pa\ plays $(0,1,\ldots n-2)$ and re-uses the node $2$ such that $M_4(0,2)=\g_0^{-3}$.
This time we have $M_4(n,n-1)=\r_{jl}$ for some $j<l<m\in \N$.
Continuing in this manner leads to a decreasing 
sequence in $\N$. Now that \pa\ has a \ws\ in $\bold G^{n+3}$, 
by Lemma \ref{n}, $\C_{\N^{-1}, \N}\notin \bold S_c{\sf Nr}_n\CA_{n+3}$.
\end{proof}
But we can go further addressing squareness, getting closer to (**) formulated above:

\begin{corollary}\label{fl} For $2<n<\omega$ and $n\leq l<\omega$, $\Psi(n, n+3)$ and $\Psi(l, \omega)$ hold.
\end{corollary}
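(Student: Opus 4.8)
The plan is to prove the two assertions by separate routes, both funnelling into the implication chain of Theorem~\ref{main}. For $\Psi(l,\omega)$ with $n<l<\omega$ I would invoke item~(2) of Theorem~\ref{ANT}, which hands us a finite relation algebra $\R$ carrying a strong $l$--blur $(J,E)$ but admitting no representation on an infinite base. By the conventions fixed just before Theorem~\ref{main}, ``no representation on an infinite base'' means precisely ``no infinite $\omega$--dimensional hyperbasis'', so this $\R$ is a witness for item~(1) of Theorem~\ref{main} at the value $m=\omega$. Running $(1)\Rightarrow\cdots\Rightarrow(6)$ then yields $\Psi(l,\omega)_f$, and since an $\omega$--flat (equivalently $\omega$--square) representation is just an ordinary one, $\Psi(l,\omega)_f$ and $\Psi(l,\omega)$ coincide. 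This disposes of every $l$ with $n<l<\omega$; the boundary value $l=n$ comes for free by feeding $\Psi(n+1,\omega)$ into the monotonicity clause, item~(7), to obtain $\Psi(n,\omega)$.

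The statement $\Psi(n,n+3)$ is the real content, because Theorem~\ref{can1} supplies only the flat form $\Psi(n,n+3)_f$, and by the last line of Theorem~\ref{main} one has $\Psi(n,n+3)\Rightarrow\Psi(n,n+3)_f$ but not the converse. By the square version of the chain recorded in the final paragraph of Theorem~\ref{main}, it suffices to produce a countable, atomic, representable $\A\in\RCA_n\,(=\Nr_n\CA_n\cap\RCA_n)$ with $\A\notin\bold S_c\Nr_n{\sf D}_{n+3}$; for then the square analogue $(5')\Rightarrow(6')$ of that chain is exactly $\Psi(n,n+3)$. I would take $\A$ to be the rainbow algebra $\C_{\N^{-1},\N}$ of Theorem~\ref{can1} (or the term subalgebra generated by its atoms): it is atomic with countably many atoms, and it is representable because \pe\ wins the ordinary $\omega$--node representation game --- the ``order preserving'' forbidden triple only bites when nodes have to be reused, and with infinitely many nodes on the board she is never forced to reuse one. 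The crux is the non-membership $\A\notin\bold S_c\Nr_n{\sf D}_{n+3}$, which is strictly stronger than $\A\notin\bold S_c\Nr_n\CA_{n+3}$, the latter being what Lemma~\ref{n} already extracts from the game in Theorem~\ref{can1}.

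To reach the stronger statement I would re-read the winning strategy for \pa\ in $\bold G^{n+3}(\At\C_{\N^{-1},\N})$ and argue that it also wins the corresponding $(n+3)$--square (basis) game, a loss for \pe\ in which certifies $\A\notin\bold S_c\Nr_n{\sf D}_{n+3}$ rather than merely $\A\notin\bold S_c\Nr_n\CA_{n+3}$. The decisive observation is that the play --- bombarding \pe\ with cones sharing a common base and having pairwise distinct green tints, while reusing the $n+3$ available nodes --- forces her into a red clique whose consecutive labels $\r_{ij}$ must, by the order--preserving constraint, realise a strictly descending sequence in $\N$, which is impossible. This collision is witnessed inside a single coloured graph (a single atom), so it in no way appeals to the commutativity of cylindrifiers; hence it defeats \pe\ even when she is allowed the extra latitude of building a merely square, non--commuting dilation in ${\sf D}_{n+3}$ in place of a flat one in $\CA_{n+3}$. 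Once $\A\notin\bold S_c\Nr_n{\sf D}_{n+3}$ is secured, the square chain of Theorem~\ref{main} closes the argument.

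The main obstacle is exactly this square upgrade: one must check that the \emph{local} character of the red--index clash lets the strategy for \pa\ survive the passage from the flat game, controlled by $\CA_{n+3}$--dilations via Lemma~\ref{n}, to the genuinely weaker square game, controlled by ${\sf D}_{n+3}$--dilations, in which \pe\ commands strictly more defensive resources. A secondary but indispensable point is to confirm that the chosen witness is honestly representable, so that it may be taken as $\Fm_T$ for a bona fide countable, atomic, complete $L_n$ theory $T$, as the statement $\Psi(n,n+3)$ requires.
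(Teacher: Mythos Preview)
Your treatment of $\Psi(l,\omega)$ matches the paper's: both invoke the Maddux algebra $\mathfrak{E}_k(2,3)$ from Theorem~\ref{ANT}(2) and feed it into the chain of Theorem~\ref{main} at $m=\omega$, where flat and square coincide.

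For $\Psi(n,n+3)$ you take a genuinely different route. The paper does \emph{not} use $\C_{\N^{-1},\N}$ from Theorem~\ref{can1}; it uses the blow--up--and--blur algebra $\B=\Rd_{ca}{\sf split}(\A_{n+1,n},\r,\omega)$ from Theorem~\ref{can}. The point is that $\A_{n+1,n}$ is \emph{finite}, and \pa's \ws\ on $\At\A_{n+1,n}$ needs only finitely many rounds (lifted from ${\sf EF}^{n+1}_{n+1}(n+1,n)$) and no node reuse. For a finite algebra this immediately rules out an $(n+3)$--dimensional basis, hence $\A_{n+1,n}\notin\bold S\Nr_n{\sf D}_{n+3}$; since $\A_{n+1,n}$ embeds into $\Cm\At\B$ and the latter class is closed under $\bold S$, one gets $\Cm\At\B\notin\bold S\Nr_n{\sf D}_{n+3}$, landing at item~(3) of the square chain. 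Representability of $\B$ is already secured in Theorem~\ref{can}.

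Your route via $\C_{\N^{-1},\N}$ can be made to work, but two points need tightening. First, your square upgrade is phrased as ``the collision does not appeal to commutativity of cylindrifiers'', which is the right intuition but not an argument: Lemma~\ref{n} is stated for $\K$ between $\Sc$ and $\QEA$, not for ${\sf D}_m$. The clean justification is that any $(n+3)$--dimensional basis for $\At\C_{\N^{-1},\N}$ would hand \pe\ a \ws\ in $\bold G^{n+3}$ (she plays from the basis), so \pa's \ws\ \emph{directly} rules out a basis, hence rules out any complete $(n+3)$--square representation---landing you at item~(4) of the square chain, one step later than the paper. Second, your representability claim for (the term algebra of) $\C_{\N^{-1},\N}$ rests on \pe\ winning every $G_k$; this is true, but is only established later in the paper (Theorem~\ref{iiii}), so you are borrowing against a forward reference. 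The paper's route avoids both issues by working with the finite seed $\A_{n+1,n}$ from the outset.
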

\begin{proof} From Theorems \ref{can}, \ref{main} (by taking $l=n$ and $m=n+3$), and Theorem \ref{ANT}.

In the first case, using the notation in Theorem \ref{can}, we have $\B=\Rd_{ca}\A$, where $\A={\sf split}(\A_{n, n+1}, \r, \omega)$, 
satisfies that $\B\in {\sf Cs}_n$ is simple, 
countable and atomic, and $\Cm\At\B\notin \bold S\Nr_n{\sf D}_{n+3}$. 

For the second case, 
it suffices by Theorem \ref{main} (by taking $m=\omega$) to find a countable algebra $\C\in \Nr_n\CA_l\cap \RCA_n$
such that $\Cm\At\C\notin \RCA_n$. Let $l\geq 2n-1$, $k\geq (2n-1)l$, $l, k\in \omega$. 
One takes the finite
integral relation algebra $\R_l={\mathfrak E}_k(2, 3)$ as defined in \cite[Lemma 5.1]{ANT} 
where $k$ is the number of non-identity atoms in
$\R_l$. Then $\R_l$ has a strong $l$--blur, $(J, E)$ say, and it can only be represented 
on a finite base \cite{ANT}.  The rest follows using the same reasoning in the third item of 
theorem \ref{main}. In this case, using the notation in {\it op.cit}, ${\sf split} _n(\R_l, J, E)=\mathfrak{Nr}_n{\sf split}_l(\R_l, J, E)$, 
so ${\sf split}_n(\R, J, E)\in \Nr_n\CA_l\cap \RCA_n$ and $\Cm\At({\sf split}_n(\R, J, E))\notin \RCA_n$. 
 \end{proof}

It is timely that we tie a number of threads together.
Summarizing our main results on $\sf VT$, we need a definition:
\begin{definition} \label{everywhere}Let $2<n<\omega$.
\begin{enumerate}
\item We say that ${\sf VT}$ {\it fails for $L_n$ almost everywhere} if there exist  positive $l,m\geq n$ such that ${\sf V}(k, \omega)$ and
${\sf V}(n, t)$ are false for all finite $k\geq l$ and all $t\geq m$. 
\item We say that ${\sf VT}$ {\it fails  for $L_n$ everywhere} if for $3\leq l<m\leq \omega$ and $l=m=\omega$, ${\sf V}(l, m)$ holds $\iff$ $l=m=\omega$.
\end{enumerate}
\end{definition}
\begin{theorem}\label{OTT2}
\begin{enumerate}
\item $\sf VT$ fails for $L_n$ almost everywhere. 
\item If for each $n<m<\omega$, there exists a finite relation algebra $\R_m$ having $m-1$ strong blur and no
$m$--dimensional relational basis (equivalently $m$--square representation), then
${\sf VT}$ fails for $L_n$ everywhere, that is to say $(**)$ formulated above holds.
\end{enumerate}
\end{theorem}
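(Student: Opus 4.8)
The plan is to treat the two items separately, drawing on the machinery already assembled. For item (1), the statement is that $\sf VT$ fails for $L_n$ almost everywhere in the sense of Definition \ref{everywhere}, i.e. that there exist $l, m\geq n$ such that $\VT(k,\omega)$ fails for all finite $k\geq l$ and $\VT(n,t)$ fails for all $t\geq m$. I would simply collect the two families of counterexamples already produced in Corollary \ref{fl}. On the $x$--axis, Corollary \ref{fl} gives $\Psi(l,\omega)$ for every finite $l\geq n$, which is precisely $\neg\VT(l,\omega)$; so the first clause holds already with the witness $l=n$. On the $y$--axis, Corollary \ref{fl} gives $\Psi(n,n+3)$, and by the monotonicity clause in Theorem \ref{main} (item (7): $\Psi(l',m')$ follows from $\Psi(l,m)$ whenever $l'\leq l$ and $m'\geq m$) this upgrades to $\Psi(n,t)$ for every $t\geq n+3$, i.e. $\neg\VT(n,t)$ for all $t\geq n+3$. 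Taking $m=n+3$ then verifies the second clause, so item (1) is immediate.

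For item (2), I would run the hypothesized relation algebras $\R_m$ through the implication chain of Theorem \ref{main}. The goal is to establish $\VT(l,m)\iff l=m=\omega$ over the admissible range $n\leq l<m\leq\omega$ together with $l=m=\omega$. By definition $\VT(\omega,\omega)$ is just Vaught's theorem for $L_{\omega,\omega}$, which holds; this pins down the right-to-left direction. For the left-to-right direction I must show $\neg\VT(l,m)$, equivalently $\Psi(l,m)$, whenever $n\leq l<m\leq\omega$ is not the limiting pair. Fix such $l<m$ with $m<\omega$ first. Here the hypothesis supplies, for this very $m$, a finite relation algebra $\R_m$ with an $(m-1)$--strong blur and no $m$--dimensional relational basis. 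Since $l<m$ means $l\leq m-1$, and an $(m-1)$--strong blur is by Definition \ref{strongblur} a strong $j$--blur for every $n\leq j\leq m-1$, in particular a strong $l$--blur, $\R_m$ meets the hypothesis of item (1) of the square version of Theorem \ref{main}: a finite relation algebra with a strong $l$--blur and no $m$--dimensional relational basis. The chain then delivers $\Psi(l,m)$ (the square form), so $\neg\VT(l,m)$.

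The remaining case is $m=\omega$ with $n\leq l<\omega$, which is exactly $\Psi(l,\omega)$, and this is already secured by item (1) via Corollary \ref{fl}; so I would simply cite it rather than reprove it. Assembling these pieces, $\Psi(l,m)$ holds at every admissible $(l,m)$ with $(l,m)\neq(\omega,\omega)$, while $\VT(\omega,\omega)$ holds, which is precisely the biconditional $(**)$, i.e. $\sf VT$ fails for $L_n$ everywhere. The one point deserving care, and the likeliest obstacle, is the bookkeeping on blur strength: I must check that ``$(m-1)$--strong blur'' in the hypothesis genuinely yields a \emph{strong} $l$--blur for the relevant $l\leq m-1$ (not merely an $l$--blur), since Theorem \ref{main} in its flat form requires strength, and confirm that the square version of the chain needs only a (possibly non-strong) relational-basis obstruction on the $m$ side — matching exactly the ``no $m$--dimensional relational basis'' wording of the hypothesis. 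Provided the strength propagates downward as Definition \ref{strongblur} asserts, the argument closes with no genuine computation beyond invoking the established implications.
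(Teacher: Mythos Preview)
Your proposal is correct and follows essentially the same route as the paper. Item (1) is handled identically by citing Corollary \ref{fl}. For item (2) the paper argues by contradiction and, rather than invoking Theorem \ref{main} as a black box, spells out the blow--up--and--blur argument explicitly: given a hypothetical pair $(l,k)$ with ${\sf VT}(l,k)$, it picks $m$ with $l\leq m-1<k$, builds $\A={\sf split}_n(\R_m,J,E)\cong\mathfrak{Nr}_n{\sf split}_{m-1}(\R_m,J,E)$, and then shows directly that the co--atom type is realized in every $m$--square model while no $(m-1)$--witness exists, concluding $\Psi(m-1,m)$ and hence $\Psi(l,k)$ by monotonicity. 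Your direct approach --- feed $\R_m$ into the square chain of Theorem \ref{main} using that a strong $(m-1)$--blur is a strong $l$--blur for every $l\leq m-1$ --- is the same argument compressed; the downward propagation of blur strength you flag as the delicate point is exactly what is used (and is stated in Theorem \ref{ANT}(1)(b), not Definition \ref{strongblur} itself).
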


\begin{proof}

Item (1) is already proved in Corollary \ref{fl}.

Now we prove the second item.  
The proof is similar to the proof of the implication $(5)\implies (6)$ in Theorem \ref{main} replacing flatness by squareness.
Assume for contradiction that there is $n\leq l<k$ such that ${\sf VT}(l, k)$ holds. 
Choose $m$ such that $l\leq m-1<k$. 
Since $\R_m$ has an 
$m-1$ blur $(J, E)$, say, then ${\sf split}(\R_m, J, E)$ has an $m-1$-dimensional cylindric basis, 
but  $\Cm\At{\sf split}(\R_m, J, E)$
does not have an 
$m$--dimensional relational basis, seeing as how $\R_m$ does not have an infinite relational basis, and $\R_m$ embeds into
$\Cm\At{\sf split}(\R_m, J, E).$ We know that $\R_m$ does not have an infinite relational basis because any relation algebra having an infinite relational basis 
has a finite one \cite{HHbook}. 

Let 
$\A={\sf split}_n(\R_m, J, E)\in \RCA_n$. Then $\A$ is simple and $\A\cong {\mathfrak Nr}_n{\sf split}_{m-1}(\R_m, J, E)$.
Take the atomic, complete and countable (consistent) theory  
$T$ such that $\Fm_T\cong \A$. 
Then in $T$ the type of co-atoms $\Gamma$ will be realizable in every $m$--square model, for if $\Mo$ is an $m$--square model omitting
$\Gamma$, then $\Mo$ would be the base of a complete $m$--square  representation of $\A$ inducing an $m$--square representation of $\Cm\At\A$.
But this impossible,
because $\R_m$ embeds into $\Ra\Cm\At\A$, so an $m$-square representation of $\Cm\At\A$ 
necessarily induces an $m$-square representation of $\R_m$, which means that $\R_m$ has an $m$-dimensional relational basis, which is contrary to assumption.

We show that $\Gamma$ has no $m-1$ witness. Suppose for contradiction that $\phi$ is an $m-1$ witness, so that $T\models \phi\to \alpha$, for
all $\alpha\in \Gamma$, where recall that $\Gamma$ is the set of coatoms.
Then since $\A$ is simple, we can assume
without loss of generality, that $\A$ is a set algebra with 
base $M$ say.
Let $\Mo=(M,R_i)_{i\in \omega}$  be the corresponding model (in a relational signature)
to this set algebra in the sense of \cite[\S 4.3]{HMT2}. Let $\phi^{\Mo}$ denote the set of all assignments satisfying $\phi$ in $\Mo$.
We have  $\Mo\models T$ and $\phi^{\Mo}\in \A$, because $\A\in \Nr_n\CA_{m-1}$.
But $T\models \exists x\phi$, hence $\phi^{\Mo}\neq 0,$
from which it follows that  $\phi^{\Mo}$ must intersect an atom $\alpha\in \A$ (recall that the latter is atomic).

Let $\psi$ be the formula, such that $\psi^{\Mo}=\alpha$. Then it cannot
be the case
that $T\models \phi\to \neg \psi$,
hence $\phi$ is not a  witness,
contradiction and we are done.
We have shown that ${\sf VT}(m-1, m)$ is false. But $l\leq m-1<k$ hence by (the `square version' of) the last implication 
of Theorem \ref{main}, ${\sf VT}(l, k)$ is false.
\end{proof}

Fix $2<n<\omega$. Then 
Theorem \ref{can} says that Vaught's Theorem fails for the {\it packed fragment of $L_n$}:

\begin{theorem}\label{packed} Assume that $2<n<m<\omega$. Let $\A\in \CA_n$ and 
$\Mo$ be an $m$--flat representation of $\A$.
Then  
$$\Mo, s\models_c \phi  \iff\  \Mo,s\models {\sf packed}(\phi),$$
for all $s\in {\sf C}^n(\Mo)$ and every $\phi\in \L(\A)^n$, where $\sf packed(\phi$) 
denotes the translation of $\phi$ to the packed fragment \cite[Definition 19.3]{HHbook}.
\end{theorem}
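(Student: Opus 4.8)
The plan is to argue by induction on the structure of $\phi\in \L(\A)^n$, matching the inductive clauses of the clique guarded semantics $\models_c$ against the standard semantics $\models$ of the packed translation, clause by clause. The base and Boolean cases are the easy ones. For atomic $\phi$ the translation ${\sf packed}$ acts as the identity and the two semantics agree by definition, so the equivalence is immediate; here the hypothesis $s\in {\sf C}^n(\Mo)$ is exactly what licenses reading off the atomic value honestly. For $\neg$ and $\wedge$ both $\models_c$ and the standard semantics of the packed formula treat the connectives classically \emph{at the fixed clique} $s$, and ${\sf packed}$ commutes with the Booleans, so the claim follows from the induction hypothesis. The only thing to keep in mind is that negation is evaluated at the same clique $s$, so no new tuples enter and the bookkeeping stays trivial.

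The heart of the matter is the existential (cylindrifier) step $\phi=\exists x_i\psi$. By Definition \ref{relrep}, $\Mo,s\models_c \exists x_i\psi$ holds iff there is $t\equiv_i s$ with $\rng(t)$ an $n$--clique, i.e. $t\in {\sf C}^n(\Mo)$, and $\Mo,t\models_c \psi$. On the packed side, the translation of $\exists x_i\psi$ has the shape $\exists x_i(\gamma_i\wedge {\sf packed}(\psi))$, where $\gamma_i$ is the guard built from the $n$--ary top relation $1^{\Mo}$ (a conjunction of atoms in $x_0,\dots,x_{n-1}$, with substitutions ${\sf s}_i^j$ threaded in to absorb repeated nodes). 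The two points I must pin down are: (a) for every $t\equiv_i s$ one has $\Mo,t\models \gamma_i$ iff $t\in {\sf C}^n(\Mo)$, i.e. the guard defines exactly the $n$--cliques through $s$; and (b) $\gamma_i$ is a legitimate pack in the sense of \cite[Definition 19.3]{HHbook}, which in dimension $n$ is automatic, since a single $n$--ary guard atom already contains all $n$ free variables together. Granting (a) and (b), the standard existential of the packed formula ranges over all $t\equiv_i s$, but $\gamma_i$ cuts this down precisely to the cliques $t\in {\sf C}^n(\Mo)$; combined with the induction hypothesis $\Mo,t\models_c\psi\iff \Mo,t\models {\sf packed}(\psi)$, this reproduces the clique guarded clause verbatim and closes the induction.

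The main obstacle is this quantifier step, and within it the verification of (a). The equivalence in (a) is where I would use that $\Mo$ is a genuine relativized representation and that $1^{\Mo}$ is the $n$--hyperedge relation of the Gaifman hypergraph; the fiddly part is the bookkeeping for repeated variables and diagonals, namely threading the substitution and diagonal conditions ($N(\bar x)\le {\sf d}_{ij}\iff \bar x_i=\bar x_j$) through $\gamma_i$ so that the guard neither over-- nor under--shoots the clique condition. The hypothesis that $\Mo$ is $m$--flat enters to guarantee that the clique guarded semantics is coherent under the rearrangements tacitly performed by ${\sf packed}$: because $\Mo$ is $m$--flat, cylindrifiers commute on cliques (the condition $\exists x_i\exists x_j\phi\longleftrightarrow \exists x_j\exists x_i\phi$ of Definition \ref{relrep}), so peeling quantifiers off a guarded block in any order yields the same truth value and the induction is insensitive to the order in which the guard rebuilds the clique. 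I expect (b) and the Boolean cases to be routine, with essentially all the genuine work concentrated in (a) and in this flatness--driven coherence.
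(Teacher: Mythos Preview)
The paper does not supply its own proof of this statement: Theorem~\ref{packed} is stated and immediately followed by a remark pointing to \cite[\S 19.2.3]{HHbook}; the result is treated as a citation rather than something proved in the paper. Your inductive argument on the structure of $\phi$ is the standard route and is essentially what one finds in the cited source, so in that sense your proposal is correct and aligned with the intended proof.

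One comment on emphasis: you locate the main difficulty correctly in the existential step and in verifying that the guard $\gamma_i$ carves out exactly the tuples $t\in{\sf C}^n(\Mo)$ with $t\equiv_i s$. However, your appeal to $m$--flatness for ``coherence under rearrangements'' is somewhat misplaced. For the equivalence as stated, with $\phi\in\L(\A)^n$ and the packed translation adding a single $n$--ary guard at each quantifier, no commutation of cylindrifiers is actually invoked in the induction: each $\exists x_i$ is treated one at a time, and the guard is a single atom in the top relation $1^{\Mo}$, whose extension is $V$. What makes point (a) go through is rather that $V=1^{\Mo}$ is the $n$--hyperedge relation of the Gaifman hypergraph and is closed under the cylindric substitutions and diagonals (because $\A\in\CA_n$ and $f$ is a homomorphism into $\wp(V)$), so that for $t\equiv_i s$ one has $t\in V$ together with the diagonal bookkeeping iff $\rng(t)$ is an $n$--clique. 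The $m$--flatness hypothesis is part of the ambient setting (it is what makes $\Mo$ a model of the clique guarded fragment in the first place, cf.\ Theorem~\ref{flat}), not an ingredient of this particular inductive equivalence.
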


In the sense of the previous Theorem, the {\it the clique guarded fragments}, which are the $n$--variable fragments of first order order with clique 
(locally) guarded semantics are an alternative formulation of {\it the $n$--variable 
packed fragments}  of first order logic \cite[\S 19.2.3]{HHbook}.\\

{\bf Summary of results on $\sf VT$:} In the coming table, for any finite $j$, `$j$-hyp' is short hand for infinite $j$--dimensional hyperbasis, 
and $j$--basis is short hand for $j$--dimensional relational basis. ${\sf VT}(l, m)$ 
for $n\leq l<m\leq \omega$ and $\VT(\omega, \omega)$ are defined as before.   
In particular, in the table $l<\omega$.
\vskip3mm
\begin{tabular}{|l|c|c|c|c|c|c|}    \hline
					
                                                                          $\VT(n, \omega)$&no, \cite{ANT} and Theorem \ref{can}\\

                                                               \hline
                                                                          $\VT(n, n+3)$& no,  Theorem \ref{can} \\

                                                               \hline
                                                                          $\VT(n, n+2)_f$&no, if there is $\R$ with $n$--blur and no  $n+2$-hyp, Theorem \ref{main}\\

                                                               \hline
                                                                          $\VT(l, \omega)$&no, $\mathfrak{E}_{f(l)}(2,3)$ has strong $l$-blur, and no $\omega$-hyp, Theorem \ref{main}, \cite{ANT}\\

                                                                           \hline
                                                                          $\VT(l, m)_f, l\leq m-1$ &no, if there exists $\R$ with strong $l$-blur, and no $m$-hyp, Theorem \ref{main} \\

                                                                           \hline
                                                                          $\VT(l, m), l\leq m-1$ &no, if there exists $\R$ with strong $l$-blur, and no $m$-bases, Theorems \ref{main}, \ref{OTT2}\\

                                                                           \hline
                                                                          $\VT(\omega, \omega)$&yes, $\sf VT$ for $L_{\omega, \omega}$.\\

                                                                          \hline

\end{tabular}

\subsection{Squareness  versus flatness in terms of decidability}

Let $2<n<m<\omega$. If an algebra $\A$ has an 
$m$--square representation,
then the algebra neatly embeds into another $m$--dimensional algebra $\B$, but $\B$ is not necessarily a $\CA_m$ for  it may fail commutativity of cylindrifiers. 
This discrepancy in the formed dilations in case of $m$--flatness and $m$--squareness blatantly manifests itself in
a very important property.
The precarious condition of commutativity of cylindrifiers in the formed dilation in case of $m$--flatness when $m\geq n+3$,
makes this clique guarded fragment {\it strongly undecidable.}

It is decidable to tell for $2<n<m<\omega$,  whether an $n$--dimensional finite algebra has an $m$--square representation in polynomial time, cf. \cite[Corollary, 12.32]{HHbook}, 
which is not true for $n$--flatness when $n=3$ and $m\geq n+3$.
Let us formulate the latter result and some related ones for three dimensions.  

\begin{theorem} \label{decidability} Assume that  $m\geq 6$. Then it is undecidable to tell
whether a finite algebra in  $\CA_3$ has an $m$--flat representation, and the variety
$\bold S{\sf Nr}_3\CA_m$ cannot be finitely axiomatizable
in $k$th order logic for any positive $k$.
\end{theorem}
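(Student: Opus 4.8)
The plan is to reduce both assertions to a single combinatorial fact: that deciding, for a finite $\A\in\CA_3$, whether $\A\in\bold S\Nr_3\CA_m$ is an undecidable problem when $m\geq 6$. Granting this, the first assertion is immediate from item (1) of Theorem \ref{flat}. Since a finite algebra coincides with its canonical extension ($\A=\A^+$), that Theorem gives that $\A$ has an $m$-flat representation if and only if $\A\in\bold S\Nr_3\CA_m$. Thus the $m$-flat representation problem for finite $\CA_3$s and the finite membership problem for $\bold S\Nr_3\CA_m$ are literally the same decision problem, and undecidability of one is undecidability of the other.

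For the undecidability itself I would argue by an effective reduction from relation algebras. Hirsch and Hodkinson \cite{HHbook} established that it is undecidable whether a finite relation algebra possesses an $(m-1)$-dimensional hyperbasis, equivalently an $(m-1)$-flat representation by \cite[Theorem 13.46]{HHbook}, for $m-1\geq 5$. Given a finite relation algebra $\R$, I would construct, uniformly and computably, a finite $\A(\R)\in\CA_3$ so that $\R$ has an $(m-1)$-dimensional hyperbasis if and only if $\A(\R)$ has an $m$-flat representation. The natural candidate for $\A(\R)$ is built from the basic-matrix and neat-reduct machinery relating $\bold S\Ra\CA_m$ to $\bold S\Nr_3\CA_m$, exploiting that a relation algebra behaves like a ``dimension $3\tfrac12$'' object lying between $\CA_3$ and $\CA_4$; this half-dimension is precisely what turns the relation-algebra threshold $m-1\geq 5$ into the cylindric threshold $m\geq 6$. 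Since the reduction $\R\mapsto\A(\R)$ is computable and preserves yes/no answers, the undecidability on the relation-algebra side transfers to finite $\CA_3$s.

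The second assertion now follows by a standard computability argument. Fix a positive integer $k$ and suppose, toward a contradiction, that $\bold S\Nr_3\CA_m$ were axiomatized by a finite set $\Sigma$ of $k$th order sentences. For a fixed finite algebra $\A$, evaluating any fixed $k$th order sentence is a terminating computation: each quantifier, of order $1,2,\ldots,k$, ranges over a finite domain (elements of $\A$, subsets of $A$, subsets of those, and so on, all finite because $A$ is finite). Hence one could decide $\A\in\bold S\Nr_3\CA_m$ by checking whether $\A\models\sigma$ for each of the finitely many $\sigma\in\Sigma$, contradicting the undecidability established above. As $k$ was arbitrary, $\bold S\Nr_3\CA_m$ admits no finite axiomatization in $k$th order logic for any positive $k$.

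The main obstacle is the construction of the reduction $\R\mapsto\A(\R)$ in the second paragraph: getting the dimension bookkeeping exactly right so that the relation-algebra bound matches the claimed cylindric bound $m\geq 6$, and verifying that the construction preserves the existence of the relevant relativized representation in both directions. The underlying relation-algebra undecidability is itself deep, ultimately encoding an undecidable tiling or halting problem, but here it may be imported as a black box from \cite{HHbook}. A self-contained alternative would be to encode a suitable tiling problem directly into a $\CA_3$ atom structure, using the failure of commutativity of cylindrifiers in $m$-flat (as opposed to $m$-square) dilations as the combinatorial engine that makes the flat problem strictly harder than the decidable square problem of \cite[Corollary 12.32]{HHbook}.
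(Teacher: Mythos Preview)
Your proposal follows essentially the same route as the paper: reduce the $\CA_3$ problem to the known undecidability for finite relation algebras from \cite[Theorem 18.13, Corollaries 18.14--18.16]{HHbook}, via a computable construction $\R\mapsto\A(\R)$, and then derive non--finite--axiomatizability in $k$th order logic from the resulting undecidability by the decision-procedure argument you give (which matches the paper's exactly).

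Two points where the paper is more specific than your sketch. First, the reduction you call ``the main obstacle'' is precisely Hodkinson's construction in \cite{AUU}, which associates to every atomic relation algebra $\R$ an atomic $\A\in\CA_3$ with $\R\subseteq\Ra\A$; this is the black box the paper invokes, and it is what makes the transfer work. Second, your dimension bookkeeping is off: with Hodkinson's construction one has $\R\in\bold S\Ra\CA_m$ if and only if $\A(\R)\in\bold S\Nr_3\CA_m$ for the \emph{same} $m$, not with a shift by one. The threshold $m\geq 6$ on the $\CA_3$ side comes directly from the threshold in the relation-algebra undecidability result in \cite{HHbook}, not from a ``half-dimension'' offset. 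Your suggested $(m-1)$-hyperbasis formulation for $\R$ would not match the equivalence the construction actually delivers; you were right to flag this as the delicate step, but the resolution is simpler than you anticipated once the right construction is named.
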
\begin{proof}  This can be proved by lifting the analogous results for relation algebras
\cite[Theorem 18.13, Corollaries 18.14, 18.15, 18.16]{HHbook}. One uses the construction of Hodkinson in \cite{AUU}
which associates recursively to every atomic relation algebra $\sf R$, an atomic  $\A\in \CA_3$
such that ${\sf R}\subseteq \sf Ra\A$, 
the latter is the relation algebra reduct of $\A$, cf. \cite[Definition 5.3.7, Theorem 5.3.8]{HMT2}.

The idea for the second part on non--finite axiomatizability is that the existence of any such finite axiomatization in 
$k$th order logic for any positive $k$,  
gives a decision procedure for telling whether a finite algebra is 
in $\bold S{\sf Nr}_3\CA_m$
or not \cite{HHbook} which is impossible as just shown.
\end{proof}

\begin{theorem}\label{decidability2} Let $m\geq 6$. Then there are finite $\CA_3$s that have infinite $m$--flat representations, but do not
have finite ones, equivalently 
they {\it do not have a finite $m$--dimensional hyperbasis.}
\end{theorem}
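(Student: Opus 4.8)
The plan is to read the statement off Theorem~\ref{decidability} by separating, recursion--theoretically, the finite representations from the infinite ones. Fix $m\geq 6$ and work inside the recursively coded collection of finite $\CA_3$s. Let $K$ be the set of (codes of) finite $\A\in\CA_3$ having an $m$--flat representation, and let $K'\subseteq K$ be those having a \emph{finite} $m$--flat representation. By Theorem~\ref{flat}(1), since a finite algebra coincides with its canonical extension, $\A\in K\iff \A$ has an $m$--dimensional hyperbasis (possibly infinite) $\iff \A\in\bold S\Nr_3\CA_m$; and by the dictionary recorded in the statement (cf.\ \cite[Chapter~13]{HHbook}), $\A\in K'\iff\A$ has a \emph{finite} $m$--dimensional hyperbasis. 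Thus the whole theorem reduces to the strict inclusion $K'\subsetneq K$: any finite $\A\in K\setminus K'$ is a $\CA_3$ having an $m$--flat representation all of whose $m$--flat representations are infinite, that is, exactly the desired algebra.

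Next I would locate the two classes in the arithmetical hierarchy. The class $K'$ is recursively enumerable: a finite $m$--dimensional hyperbasis is a finite object (finitely many hypernetworks, each on a node set $\subseteq m$, with edges labelled from the finite set $\At\A$ and finitely many hyperlabels) whose defining clauses are local finitary conditions, so one may effectively enumerate the candidate finite hyperbases by size and test each one. The class $K$ is co--r.e.: by Theorem~\ref{flat}(1) it is membership in the variety $\bold S\Nr_3\CA_m$, which is recursively (although, by Theorem~\ref{decidability}, not finitely) axiomatizable, so non--membership is witnessed by the failure of one of a recursively enumerable list of equations and is therefore r.e. Now Theorem~\ref{decidability} asserts that membership in $K$ is undecidable; being co--r.e.\ but not recursive, $K$ cannot be r.e. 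Were $K'=K$, the class $K$ would inherit recursive enumerability from $K'$ -- a contradiction. Hence $K'\subsetneq K$, which is the required conclusion.

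The delicate points, and the place where care is needed, are precisely the two effectivity claims of the middle paragraph: one must check that the search for a finite hyperbasis is genuinely effective (bounding, for a fixed number of hypernetworks, the number of hyperlabels that need be considered, so that only finitely many candidates arise at each size), and that $\bold S\Nr_3\CA_m$ is recursively axiomatizable so that its complement among finite algebras is r.e. Both are standard; all of the genuinely algebraic content -- the recursive assignment of a $\CA_3$ to each atomic relation algebra underlying Theorem~\ref{decidability}, and the finite/infinite hyperbasis--versus--representation translation -- is already supplied by Theorems~\ref{decidability} and~\ref{flat} together with \cite{HHbook}.
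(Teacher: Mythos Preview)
Your proposal is correct and follows essentially the same approach as the paper: both arguments exploit that the complement of $K=\{\A\in\CA_3\text{ finite}:\A\in\bold S\Nr_3\CA_m\}$ is r.e.\ (via a recursive axiomatization), that $K'$ (finite algebras with a \emph{finite} $m$--hyperbasis) is r.e.\ (by effective search), and that $K'=K$ would therefore make $K$ decidable, contradicting Theorem~\ref{decidability}. Your recursion--theoretic packaging (naming $K$ as co--r.e.\ and invoking that an undecidable co--r.e.\ set cannot be r.e.) is slightly more explicit than the paper's ``run both enumerations in parallel'' phrasing, but the content is the same.
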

\begin{proof}
To see why, assume for contradiction that every finite algebra in $\bold S{\sf Nr}_3\CA_m$ has a finite
$m$--dimensional hyperbasis. 
We claim that there is an algorithm that decides membership in $\bold S{\sf Nr}_3\CA_m$ for finite algebras which we know 
is impossible:
\begin{itemize}
\item Using a recursive axiomatization of $\bold S\sf Nr_3\CA_m$ (exists), recursively enumerate all isomorphism types of
finite $\CA_3$s that are not in $\bold S{\sf Nr}_3\CA_m.$

\item Recursively enumerate all finite algebras in $\bold S{\sf Nr}_3\CA_m$.
For each such algebra, enumerate all finite sets of $m$--dimensional hypernetworks over $\A$,
using $\N$ as hyperlabels, and check  to
see if it is a hyperbasis. When a hypebasis is located specify $\A$.
This recursively enumerates  all and only the finite algebras in $\bold S{\sf Nr}_3\CA_m$.
Since any finite $\CA_3$ is in exactly one of these enumerations, the process will decide
whether or not it is in  ${\bold S}{\sf Nr}_3\CA_m$ in a finite time.
\end{itemize}
We have shown that there are finite algebras that have infinite $m$--flat representations, but do not
have finite ones (this cannot happen with $m$--squareness).
\end{proof}
\begin{theorem}
The set of isomorphism types of algebras in $\bold S\Nr_3\CA_n$ with
only infinite flat representations is not recursively enumerable.
\end{theorem}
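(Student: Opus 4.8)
The plan is to realise the set in question as the set-theoretic difference $\mathcal{F}\setminus\mathcal{F}_{\mathrm{fin}}$ of a set $\mathcal{F}$ that is \emph{not} recursively enumerable and a set $\mathcal{F}_{\mathrm{fin}}$ that \emph{is} recursively enumerable, and then to contradict the assumption that the difference is recursively enumerable. Throughout I work with isomorphism types of finite algebras (the only setting in which recursive enumerability is meaningful here) and I assume $n\geq 6$, as in Theorems \ref{decidability} and \ref{decidability2}. Let $\mathcal{F}$ be the set of isomorphism types of finite $\A\in\bold S\Nr_3\CA_n$, let $\mathcal{F}_{\mathrm{fin}}\subseteq\mathcal{F}$ consist of those $\A$ additionally possessing a finite $n$--dimensional hyperbasis (equivalently, by Theorem \ref{decidability2}, a finite $n$--flat representation), and set $\mathcal{F}_{\mathrm{inf}}=\mathcal{F}\setminus\mathcal{F}_{\mathrm{fin}}$. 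Since by Theorem \ref{flat} every member of $\bold S\Nr_3\CA_n$ has an $n$--flat representation, a finite $\A$ lies in $\mathcal{F}_{\mathrm{inf}}$ exactly when it has $n$--flat representations but only infinite ones; thus $\mathcal{F}_{\mathrm{inf}}$ is precisely the set named in the theorem.

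First I would show that $\mathcal{F}$ is not recursively enumerable. The variety $\bold S\Nr_3\CA_n$ admits a recursive axiomatization (this is exactly the axiomatization invoked in the proof of Theorem \ref{decidability2}), so the complement of $\mathcal{F}$ among finite algebras is recursively enumerable: one enumerates the defining equations and, given a finite $\A$, searches for an equation failing in $\A$, accepting $\A$ once one is found. Consequently, were $\mathcal{F}$ itself recursively enumerable, membership of finite algebras in $\bold S\Nr_3\CA_n$ would be decidable, contradicting Theorem \ref{decidability}. Hence $\mathcal{F}$ is not recursively enumerable.

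Next I would verify that $\mathcal{F}_{\mathrm{fin}}$ \emph{is} recursively enumerable, by the semi--decision procedure already used in the proof of Theorem \ref{decidability2}: given a finite $\A$, enumerate all finite sets of $n$--dimensional hypernetworks over $\A$ (with hyperlabels drawn from $\N$) and test each for the hyperbasis conditions, accepting $\A$ as soon as a finite $n$--dimensional hyperbasis is located. By the equivalence recorded in Theorem \ref{decidability2} this halts precisely on those $\A$ having a finite $n$--flat representation, so it enumerates $\mathcal{F}_{\mathrm{fin}}$. Finally, since $\mathcal{F}=\mathcal{F}_{\mathrm{fin}}\sqcup\mathcal{F}_{\mathrm{inf}}$, if $\mathcal{F}_{\mathrm{inf}}$ were recursively enumerable then $\mathcal{F}$, being the union of two recursively enumerable sets, would be recursively enumerable too --- contradicting the previous step. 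Therefore $\mathcal{F}_{\mathrm{inf}}$ is not recursively enumerable. The only genuinely substantive inputs are the undecidability of finite membership (Theorem \ref{decidability}) together with the existence of a recursive axiomatization; everything else is bookkeeping with recursively enumerable sets, so I do not expect a real obstacle beyond correctly pinning down the finite--hyperbasis/finite--representation equivalence, which is already supplied by Theorem \ref{decidability2}.
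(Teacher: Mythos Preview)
Your proof is correct and follows essentially the same approach as the paper's: partition the finite algebras into those outside $\bold S\Nr_3\CA_n$ (recursively enumerable via the recursive axiomatization), those inside with a finite hyperbasis (recursively enumerable by the search procedure), and those inside with only infinite flat representations, and then observe that recursive enumerability of the third piece would make finite membership in $\bold S\Nr_3\CA_n$ decidable, contradicting Theorem~\ref{decidability}. Your write-up is in fact cleaner and more explicit than the paper's somewhat terse version, but the underlying argument is identical.
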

\begin{proof} let $C$ be the given set. The set $A$ of isomorphism types of finite
algebras is recursively enumerable, and so the set $B$ of isomorphism types of finite algebras
that are in $\bold S\Nr_3\CA_n$ having a finite relativized representation is also
recursively enumerable,  hence $A$ and $B$ together with the set
of isomorphism types of of algebras in $\bold S\Nr_3\CA_n$ with no finite recursive representation
are recursive $\iff$ $C$ is recursively enumerable.
\end{proof}
\subsection{Vaught's theorem for first order definable expansions of $L_n$}

Let ${\sf VT}(\omega, \omega)$ stand for 
the true statement:  `Vaught theorem holds for $L_{\omega, \omega}$.'
By ${\sf VT}(l, m)$ for $2<n\leq l\leq \omega$ and $n<m\leq \omega$, 
we mean either  ${\sf VT}(l, m)$, $l<m\leq \omega$, or 
${\sf VT}(\omega, \omega)$; otherwise it is undefined.

Fix $2<n<\omega$. We have seen that if $\R_k$ having a strong $k$--blur and no $k+1$--dimensional relational 
basis (as in Theorem \ref{OTT2}) 
exists for all $n<k< \omega$,  then for $n\leq l, m\leq \omega$, if {\it ${\sf VT}(l, m)$ is defined, then it is 
true $\iff\ l=\omega$ and $m=\omega$.}
Here the parameter $l$ {\it measures  the closeness} to $L_{\omega, \omega}.$

Viewed from a different angle, the statement 
$\Psi(l, \omega)$, as clarified in a moment, says that $\sf VT$, fails in any {\it first order definable expansion of $L_n$} (with respect to ordinary models)
when the newly added first order definable connectives are built up of at most $l$ variables. 

Such fragments of $L_{\omega, \omega}$, extending $L_n$, were initially approached  by Jonsson in the context of relation algebras, 
and  further studied by B\'iro, Givant, N\'emeti, Tarski, S\'agi and others. 
The original purpose was to tame unruly behaviour of $\sf RRA$ and $\RCA_n$ like non--finite axiomatizability, 
but Biro \cite{Biro} showed that such (finite first order definable expansions) are inadequate to achieve this aim. 
Call such a logic $\L_l$. The analogous expansions for the calculas of relations $\L^{\times}$ is approached in \cite{Biro, HHM}. 

Each new connective added to $L_n$ is definable by a first order formula using $\leq l$ variables with at most $n$ free. 
The formation rule of formulas is defined inductively the expected way.
For example if $c$ is an $n$--ary connective and $\phi_0,\ldots \phi_{n-1}$ are $n$ formulas, then
$c(\phi_0,\dots, \phi_{n-1})$ is a formula. 
The logics $\L_l$ and $\L_n$ 
have the same signature; {\it its only that $\L_l$ has more first order definable connectives.}
Assume without loss that the signature consists of a single binary relation $\bold R$.

Accordingly, the models of $\L_l$ are the same as the models of $L_n$.
If $\Mo=(M, R)$, with $R\subseteq M\times M$ is such a model,  then $\bold R$ is interpreted to be the same binary relation $R\subseteq M\times M$ 
in $\L_l$ and $L_n$. 
The semantics of a newly introduced (first order definable) connective is defined by its defining formula. 
For example if $\Psi$ defines the unary connective $c$, say, 
then the semantics of $c$ in a model $\Mo$, 
is (inductively) defined for $s\in {}^nM$ by:  
$c(\phi)[s]\iff (\Mo, \Psi^{\Mo})\models \phi[s]$, where $\Psi^{\Mo}$ is the set of all 
$n$--ary assignments satisfying $\Psi$ in $\Mo$, 
that is to say, $\Psi^{\Mo}=\{s\in {}^nM: \Mo\models \Psi[s]\}$.

Now we formulate an algebraic result implying that $\sf VT$ fails for any finite first order definable expansions of $L_n$ \cite{Basim}. 
We deviate from the notation in \cite{Basim} by writing $\RCA_n^+$
for a first order definable expansion of $\RCA_n$ instead of $\RCA_{n,t}$. The signature $t$ of the last class is only required to be an 
expansion of the signature of $\CA_n$, more often a finite expansion. 
The logic $\L_l$ is the algebraisable (in the standard Blok--Pigozzi sense) logic corresponding to
the variety $\RCA_n^+$ when the algebraic non-cylindric operations of $\RCA_n^+$, if any,  corresponds to the newly added 
first order definable connectives (if any) in $\L_l$, like ${\sf c}_i$ correspond to $\exists x_iR$ ($i<n$), 
using $l$ variables. 

The next Theorem is yet one more  manifestation of the recurrent phenomena 
that the failure of $\sf VT$ for an algebraisable logic  $\L_{\V}$  and failure of atom--canonicity for its algebraic counterpart $\V$ 
have the habit to go hand in hand: 
\begin{theorem}\label{firstorder}
Let $2<n<\omega$. Let ${\sf RCA}_{n}^+$ be a first order definable
expansion of ${\sf RCA}_n$ such that the non--cylindric operations are  first order definable
by formulas using only finitely many variables $l>n$.  If $\RCA_{n}^+$ is completely additive, 
then it  is not  atom--canonical.
\end{theorem}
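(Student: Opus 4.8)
The plan is to reduce the statement to the non--atom--canonicity of $\RCA_n$ for ordinary (i.e.\ $\omega$--square) representations, established on the path $(l,\omega)$ in Corollary \ref{fl}, and then to transport the witnessing algebra together with its completion across the expansion, the key point being that an $l$--variable first order definable operation can be computed inside an $l$--dimensional dilation. Fix the number of variables $l>n$ occurring in the defining formulas of the new connectives. By Corollary \ref{fl} together with Theorem \ref{main} (taking $m=\omega$), there is a countable atomic $\C\in \Nr_n\CA_l\cap \RCA_n$ with $\Cm\At\C\notin \RCA_n$; explicitly one may take $\C=\Nr_n{\sf split}_l(\R,J,E)$ for a Maddux algebra $\R=\mathfrak{E}_k(2,3)$ carrying a strong $l$--blur $(J,E)$, and if the given $l$ is too small for the blur to exist one passes to $l''=\max(l,2n-1)$ and uses $\Nr_n\CA_{l''}\subseteq \Nr_n\CA_l$. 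Write $\D\in\RCA_l$ for the associated $l$--dimensional representable dilation, so that $\C=\Nr_n\D$.

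The decisive step is to expand $\C$ to an algebra $\B$ in the signature of $\RCA_n^+$ that is still representable. Since each new connective is defined by a first order formula using at most $l$ variables and having at most $n$ free, its action on $n$--ary relations is computed by a fixed $l$--dimensional cylindric term; applying that term to elements of $\C=\Nr_n\D$ yields an element of $\D$, which, having at most $n$ free variables, is fixed by ${\sf c}_i$ for $n\le i<l$ and hence lies in $\Nr_n\D=\C$. Thus the new operations are well defined on $\C$, producing an expansion $\B$ with $\Rd_{ca}\B=\C$. Moreover the representation of $\C$ induced by a representation of $\D$ interprets every new connective exactly by its defining formula, so that representation is simultaneously a representation of $\B$ in the expanded signature; hence $\B\in \RCA_n^+$. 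As $\RCA_n^+$ is completely additive by hypothesis, $\B$ is completely additive and its \de\ completion is $\Cm\At\B$.

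It remains to see that $\Cm\At\B\notin \RCA_n^+$. Forgetting the new accessibility relations gives $\Rd_{ca}\At\B=\At\C$, and since forming complex algebras commutes with taking reducts, $\Rd_{ca}\Cm\At\B=\Cm\At\C$. If $\Cm\At\B$ were in $\RCA_n^+$, then restricting any representation of it to the cylindric signature would exhibit $\Rd_{ca}\Cm\At\B=\Cm\At\C$ as a member of $\RCA_n$, contradicting the choice of $\C$. Hence $\B\in \RCA_n^+$ is atomic and completely additive while $\Cm\At\B\notin \RCA_n^+$, so $\RCA_n^+$ is not atom--canonical. I expect the only genuine obstacle to lie in the well--definedness and representability of the expansion $\B$: one must check carefully that an $l$--variable first order definable operation is captured by a term of the dilation $\D$ and that this term preserves the neat reduct $\Nr_n\D$, which is precisely where the hypotheses `$l$ variables' and $\C\in \Nr_n\CA_l$ are used in tandem; complete additivity enters only to identify $\Cm\At\B$ with the minimal completion of $\B$.
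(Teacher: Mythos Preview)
Your proposal is correct and follows essentially the same approach as the paper: both take the Maddux algebra ${\mathfrak E}_k(2,3)$ with a strong $l$--blur (adjusting $l$ upward if needed), form $\C={\sf split}_n(\R,J,E)\in\Nr_n\CA_l\cap\RCA_n$ with $\Cm\At\C\notin\RCA_n$, expand $\C$ by interpreting the new connectives via their $l$--variable cylindric terms in the $l$--dimensional dilation (well defined precisely because $\C=\Nr_n\D$), and then pass to the completion using complete additivity and the reduct identity $\Rd_{ca}\Cm\At\B=\Cm\At\C$. The paper additionally pauses to observe that the expansion lacks a complete representation (yielding failure of $\sf VT$ for $\L_l$), whereas you proceed directly to non--atom--canonicity, which is all the theorem asks for.
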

\begin{proof}
We basically re-prove $\Psi(l, \omega)$ by choosing $l$ large enough `covering' the extra 
variables used in forming the expansion.   This already gives that $\sf VT$ fails for $\L_l$. We use the notation in Theorem \ref{ANT}.
One takes  $l=\mathfrak{n}+1$ where $\mathfrak{n}$ is the finite number of variables involved 
in defining the newly added connectives.
Then $\A={\sf split} _n(\mathfrak{E}_k, J, E)\cong {\mathfrak Nr}_n {\sf split}_l(\mathfrak{E}_k(2, 3), J, E)$ 
where $k$, the finite number of non--identity elements in $\mathfrak{E}(2, 3)$,
is specified as in \cite[Lemma 5.1]{ANT}:  $l\geq 2n-1$ and 
$k\geq (2n-1)l$, $k\in \omega$, and $(J, E)$ is the strong $l$--blur 
of $\mathfrak{E}_k(2, 3)$ which exists by the choice of $k$.

Similarly to the proof of Theorem \ref{ANT}, one proves that $\A\in \RCA_n\cap \Nr_n\CA_l$, $\A$  is countable 
and has no complete representation.
Without loss of generality,
we can assume that we have only one extra operation $f$ definable by a first order formula
$\phi$, say, using $n<k<\omega$ variables with at most $n$ free variables. Now $\phi$ defines a $\CA_{k}$ term $\tau(\phi)$
which, in turn, defines the unary operation $f$ on $\A$, via $f(a)=\tau(\phi)^{\B}(a)$.  This is well defined, in the sense that $f(a)\in \A$, because
$\A\in {\sf Nr}_n\CA_{\mathfrak{n}+1}$ and the first order formula $\phi$ defining $f$, has at most $n$ free variables. Call the expanded structure $\A^*(\in {\sf RCA}_{n}^+$).
If  there is an isomorphism $f:\A^*\to \B$, where $\B\in {\sf Cs}_{n}^+$, a simple $\RCA_{n}^+$, 
having base $M$   such that $\bigcup_{x\in \At\A^*} f(x)={}^nM$,
then $f$ gives a complete representation of  $\A$. This is a
contradiction and we are done.

For non-atom--canonicity, take $\A^*\in \RCA_{n}^+$ as in the first part. By the condition of complete additivity of the  
first order definable operations as in  the hypothesis, we get that 
$\RCA_{n}^+$ is a completely additive variety, so $\Cm\At\A^*$ is the \de\ completion of $\A^*$. 
But  we also know that $\Rd_{ca}\Cm\At\A^*=\Cm\At\A\notin \RCA_n$, 
{\it a fortiori}, $\Cm(\At\A^*)\notin \RCA_{n}^+$, completing the proof of the second required. 
\end{proof}

We immediately obtain:
\begin{theorem}\label{first} Let $2<n\leq l<\omega$. Then Vaught's Theorem fails for any first order definable extension of $L_n$ using $l$ variables.
\end{theorem}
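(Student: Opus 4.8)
The plan is to read off the failure of Vaught's Theorem directly from the algebraic content of Theorem \ref{firstorder}, using that $\L_l$ is the algebraizable logic whose equivalent algebraic semantics is the variety $\RCA_n^+$. Under this algebraization (in the Blok--Pigozzi sense), a countable consistent $\L_l$ theory $T$ corresponds to a countable algebra $\Fm_T\in\RCA_n^+$; the theory $T$ is complete and atomic exactly when $\Fm_T$ is simple and atomic (as a Boolean algebra); and an atomic model of $T$ induces, and is induced by, a complete representation of $\Fm_T$. This last point is the crux: an atomic model realizes only principal (isolated) types, so the homomorphism it defines sends the supremum of the atoms to the whole unit, which is precisely completeness of the representation.

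Second, I would take the algebra $\A^*\in\RCA_n^+$ manufactured in the proof of Theorem \ref{firstorder}. Choosing $l\geq 2n-1$ and $k\geq (2n-1)l$ so that the Maddux algebra $\mathfrak{E}_k(2,3)$ carries a strong $l$--blur $(J,E)$, one sets $\A={\sf split}_n(\mathfrak{E}_k(2,3),J,E)\cong\mathfrak{Nr}_n{\sf split}_l(\mathfrak{E}_k(2,3),J,E)$, so that $\A\in\RCA_n\cap\Nr_n\CA_l$ is countable, simple and atomic, yet has no complete representation; this is exactly the content of $\Psi(l,\omega)$ established in Corollary \ref{fl}. The membership $\A\in\Nr_n\CA_l$ guarantees that every newly added connective, being first order definable using $\leq l$ variables with at most $n$ free, is realized by a $\CA_l$--term on the $l$--dilation and hence acts \emph{within} $\A$; this yields the expansion $\A^*\in\RCA_n^+$ whose cylindric reduct is $\A$ and which, as a first order definable expansion, inherits the absence of a complete representation.

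Third, let $T$ be the $\L_l$ theory with $\Fm_T\cong\A^*$. Then $T$ is countable, complete and atomic. If $T$ had an atomic model $\Mo$, then $\Mo$ would furnish a complete representation of $\A^*$, and restricting to the cylindric reduct a complete representation of $\A$, contradicting the choice of $\A$. Hence $T$ is an atomic theory with no atomic model, which is exactly the failure of Vaught's Theorem for $\L_l$, as desired.

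The step I expect to require the most care is the second half of the correspondence in the first paragraph: verifying that a complete representation of the \emph{expanded} algebra $\A^*$ genuinely restricts to a complete representation of its cylindric reduct $\A$, and conversely that an atomic model of $T$ in the expanded signature produces a complete representation that honours the first order definable connectives. This is where the conditions $\A\in\Nr_n\CA_l$ and ``at most $n$ free variables'' do the work: they ensure the extra operations are term definable over a common $l$--dimensional dilation, so that any set algebra realizing $\A$ automatically interprets them correctly and passing to $\RCA_n^+$ creates no additional representability obligation. Everything else is the standard dictionary between atomic models and complete representations already invoked in the proofs of Theorems \ref{main} and \ref{firstorder}.
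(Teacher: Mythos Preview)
Your proposal is correct and follows essentially the same route as the paper. The paper presents Theorem \ref{first} as an immediate consequence of (the proof of) Theorem \ref{firstorder}: the algebra $\A^*\in\RCA_n^+$ constructed there is countable, simple and atomic with no complete representation, so the $\L_l$ theory $T$ with $\Fm_T\cong\A^*$ is atomic with no atomic model. Your write-up simply unpacks this inference explicitly, and your identification of the ``most care'' step---that membership $\A\in\Nr_n\CA_l$ is exactly what guarantees the $l$--variable connectives act within $\A$---matches the mechanism in the paper's proof of Theorem \ref{firstorder}. One small wrinkle: the Maddux construction needs the blow-up parameter at least $2n-1$, while the theorem allows any $n\leq l$; for small $l$ one should take the construction parameter to be $l'=\max(l,2n-1)$, obtaining $\A\in\Nr_n\CA_{l'}\subseteq\Nr_n\CA_l$, which still makes the $l$--variable operations well defined. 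The paper glosses over this in the same way.
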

Observe that here the number of formulas used can be infinite as long as the number of variable occuring in each does not exceed 
$l$ variables.
Theorem \ref{first} says that, like completeness via finite Hilbert style axiomatizations \cite{Biro}, 
$\sf VT$, {\it a fortiori} $\sf OTT$, cannot be rescued  by first order definable expansions using finitely many variables.

{\it Now take the special case when $\L_l$ is the first order definable expansion of $L_n$ by adding $\leq n$--ary connectives for every first order formula using $l$ variables
with free variables among the first $n$.}

We denote the statement `any  countable atomic $\L_l$ theory has an $m$--square atomic model' by ${\sf VT}(\L_l, m).$
Then ${\sf VT}(\L_l, m)$ is equivalent to ${\sf V}(l, m)$. 
To unify notation, denote $L_{\omega, \omega}$ by $\L_{\omega}$ 
and we identify ${\sf VT}(\L_{\omega}, \omega)$ with (the true) ${\sf VT}(\omega, \omega)$ ($\sf VT$ for $L_{\omega, \omega}$). 
The next theorem is a more succint reformulation of item (2) of Theorem \ref{OTT2}.
\begin{theorem}If for each $2<n<m<\omega$, there exist a finite relation algebra $\R_m$ having $m-1$ strong blur and no
$m$--dimensional relational basis (equivalently $m$--square representation), then
for $n\leq l<m\leq \omega$ and $l=m=\omega$, ${\sf V}(\L_l, m)$ holds $\iff$ $l=m=\omega$.
\end{theorem}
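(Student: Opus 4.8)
The plan is to reduce the statement to item~(2) of Theorem~\ref{OTT2}, exploiting the translation between the logic $\L_l$ and $L_n$ recorded in the paragraph preceding the statement (note that $\V(\L_l,m)$ and $\VT(\L_l,m)$ denote the same assertion). First I would fix the dictionary between $\L_l$ and $L_n$. Recall that $\L_l$ and $L_n$ share the same signature, hence the same models: an $\L_l$-model is just a structure $\Mo=(M,R)$, and each newly added connective coming from a first order formula $\psi$ using at most $l$ variables with free variables among $x_0,\dots,x_{n-1}$ is interpreted via $\psi^{\Mo}=\{s\in {}^nM:\Mo\models\psi[s]\}$. Thus the $\L_l$-definable subsets of ${}^nM$ are exactly the $L_n$-definable subsets using at most $l$ variables. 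On the algebraic side, for an atomic $\L_l$-theory $T$ with $\A=\Fm_T$ (viewed as an expansion of an $\RCA_n$), every such $\psi$ defines a $\CA_l$-term $\tau(\psi)$; since $\psi$ has free variables among the first $n$, this term, evaluated in any $l$-dilation $\D$ of $\A$ (so that $\A=\Nr_n\D$), stays inside $\Nr_n\D=\A$ and recovers the extra operation. This is precisely the mechanism used in the proof of Theorem~\ref{firstorder}, so the extra operations carry no independent information.

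From this dictionary I would read off the equivalence asserted in the text, namely $\VT(\L_l,m)\iff \V(l,m)$ for $n\le l<m\le\omega$, together with the identification $\VT(\L_\omega,\omega)=\VT(\omega,\omega)=\V(\omega,\omega)$ in the limiting case. The key point is the matching of the two data in $\Psi(l,m)$. ``Realizable in every $m$--square model'' means no $m$--square model omits the co-atom type, i.e.\ $T$ has no $m$--square atomic model; and ``no $l$--witness'' means the co-atom type cannot be isolated by an $L_n$--formula in $\le l$ variables, which under the dictionary says exactly that this type is non--isolated in $\L_l$. Hence a witness to $\neg\VT(\L_l,m)$ is a countable atomic $\L_l$-theory whose co-atom type is realizable everywhere yet admits no $\L_l$-witness, which is exactly an instance of $\Psi(l,m)=\neg\V(l,m)$; conversely any such $T$ furnishes a bad $\L_l$-theory.

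Finally I would invoke item~(2) of Theorem~\ref{OTT2}: under the standing hypothesis that for each $2<n<m<\omega$ there is a finite relation algebra $\R_m$ with an $m-1$ strong blur and no $m$--dimensional relational basis, $\VT$ fails for $L_n$ everywhere, i.e.\ the statement $(**)$ holds, namely for $n\le l<m\le\omega$ and $l=m=\omega$ one has $\V(l,m)$ holds $\iff l=m=\omega$. Combining this with the equivalence of the previous paragraph yields $\V(\L_l,m)$ holds $\iff \V(l,m)$ holds $\iff l=m=\omega$, as required.

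The main obstacle, and the only genuinely non-formal step, is verifying the dictionary of the first paragraph rigorously: that passing to the first order definable expansion neither creates new $m$--square atomic models (the extra operations being $\CA_l$-terms forced in the $l$-dilation) nor shortens the witnesses below $l$ variables. Everything else is a translation of the already-established $(**)$ of Theorem~\ref{OTT2} through this equivalence; since the present theorem is by design a reformulation of that item, no new combinatorial or game-theoretic input is needed.
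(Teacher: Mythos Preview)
Your proposal is correct and follows exactly the approach the paper takes: the paper explicitly states that this theorem ``is a more succinct reformulation of item (2) of Theorem~\ref{OTT2}'' via the asserted equivalence ${\sf VT}(\L_l,m)\iff {\sf V}(l,m)$, and you have supplied the dictionary argument that justifies this equivalence (which the paper leaves implicit). If anything, you have written out more detail than the paper does.
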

While the parameter $l$ measures how close we are to $L_{\omega, \omega}$,  $m$ measures the {\it `degree' of squareness} of permitted models. 
Oberve that the parameter $l$ cannot be infinite here, for if $\A\in {\sf Nr}_n\CA_{\omega}$ is 
countable and atomic, then it is completely representable \cite[Theorem 5.3.6]{Sayedneat}, so the non-principal type of co-atoms will be omitted in any complete representation of $\A$. 

{\it The underlying intuition here is that 
the closer we get to $L_{\omega, \omega}$ and $\omega$--squareness, the closer we get to $\sf VT$.
As long as $l$ and $m$ are still finite with $l<m$, we are not there yet; $\sf VT$ fails for the first order definable expansion of $L_n$
using $l$ variables, with respect to $m$--square models.}

Alternatively, one can view the first limit as $l\to \infty$ (while fixing $m=\omega$) algebraically using ultraproducts as follows. Fix $2<n<\omega$. For each $2<n\leq l<\omega$, 
let $\R_l$ be the finite Maddux algebra $\mathfrak{E}_{f(l)}(2, 3)$ with strong $l$--blur
$(J_l,E_l)$ and $f(l)\geq l$ as specified in the proof of theorem \ref{main}.  Let ${\cal R}_l={\sf split}(\R_l, J_l, E_l)\in \sf RRA$ and let $\A_l=\Nr_n{\sf split}_l(\R_l, J_l, E_l)\in \RCA_n$. 
Then $(\At{\cal R}_l: l\in \omega\sim n)$, and $(\At\A_l: l\in \omega\sim n)$ are sequences of weakly representable atom structures 
that are not strongly representable with a completely representable 
ultraproduct. 
The (complex algebra) sequences $(\Cm \At{\cal R}_l: l\in \omega\sim n)$, 
$(\Cm\At\A_l: l\in \omega\sim n$) are typical examples of what Hirsch and Hodkinson call `bad Monk (non--representable) algebras' 
converging to  `good (representable) one, namely their (non-trivial) ultraproduct.  
Also, for $2<n\leq k <m<\omega$, $\A_k=\Nr_k\A_m$. 
Such sequences witness the non--finite axiomatizability of the class 
representable agebras  and the elementary closure of the class completely representable ones, namely, 
the class of algebras satisfying the Lyndon conditions; these conditions (to be recalled below) are defined in \cite{HHbook2}. 
From the last paragraph, we get:
\begin{corollary}\label{maddux}
\begin{enumerate} 
\item Assume that $2<n<\omega$. Then the varieties ${\sf RCA}_n$ and $\sf RRA$, as well as any finite expansion of either, are not finitely axiomatizable.
Furthermore, in each case there is a sequence of algebras generated by a single $2$-dimensional element outside the given variety,
 whose ultraproduct (relative to any non principal ultrafilter on $\omega$) is in the variety. 
\item (Maddux) The set of equations using only one variable that holds in each of these varieties cannot be derived from a finite set of equations valid in 
the variety. 
\end{enumerate}
\end{corollary}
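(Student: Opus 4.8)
The plan is to run the standard ultraproduct (Monk) argument for non--finite axiomatizability on the two sequences isolated in the preceding paragraph. Fix a non--principal ultrafilter $U$ on $\omega$ and write $\alpha_l=\At\A_l$ and $\beta_l=\At{\cal R}_l$ for the two families of atom structures, where $\A_l=\Nr_n{\sf split}_l(\R_l,J_l,E_l)$ and ${\cal R}_l={\sf split}(\R_l,J_l,E_l)$ with $\R_l=\mathfrak E_{f(l)}(2,3)$. Treating the cylindric case first, I would establish two facts: (i) for each $l$, $\Cm\alpha_l=\Cm\At\A_l\notin\RCA_n$, and (ii) $\prod_U\Cm\alpha_l\in\RCA_n$. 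Fact (i) is exactly Corollary \ref{cp} (the \de\ completion of ${\sf split}_n(\R_l,J_l,E_l)$ is not representable). Granting (i) and (ii), suppose $\RCA_n$ were finitely axiomatizable, say $\RCA_n=\mathrm{Mod}(\sigma)$ for a single first order sentence $\sigma$ (the conjunction of the finitely many defining equations). Then $\neg\sigma$ is first order, so the class of its models is closed under ultraproducts; since each $\Cm\alpha_l\models\neg\sigma$ by (i), the ultraproduct theorem gives $\prod_U\Cm\alpha_l\models\neg\sigma$, contradicting (ii). The relation algebra case is identical with $\beta_l$ in place of $\alpha_l$ and Corollary \ref{cp} again supplying $\Cm\beta_l\notin\sf RRA$.

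The only substantial point is (ii), and the plan is to factor it through the atom structure. There is a natural injective homomorphism $\prod_U\Cm\alpha_l\hookrightarrow\Cm(\prod_U\alpha_l)$ sending $[(X_l)]_U$ to $\{[x_l]_U:\{l:x_l\in X_l\}\in U\}$; injectivity is a routine ultrafilter argument (splitting the index set according to which of two distinct classes is witnessed), and preservation of the cylindric operations is immediate from their first order definition on a complex algebra together with the ultraproduct theorem. By the preceding paragraph $\prod_U\alpha_l$ is completely, hence strongly, representable, so $\Cm(\prod_U\alpha_l)\in\RCA_n$; as $\RCA_n$ is a variety it is closed under $\bold S$, whence $\prod_U\Cm\alpha_l\in\RCA_n$, which is (ii). The strong representability of $\prod_U\alpha_l$ is the genuine core: it rests on the game/Lyndon--condition analysis, namely that the strong $l$--blur lets \pe\ survive the $l$--round approximations of the representability game for $\mathfrak E_{f(l)}(2,3)$, so that in the ultraproduct \pe\ wins the full $\omega$--rounded game and $\prod_U\alpha_l$ satisfies all the Lyndon conditions. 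For a finite completely additive first order definable expansion $\RCA_n^+$ I would expand every algebra in sight: the reduct of $\Cm\At\A^{*}_l$ is $\Cm\At\A_l\notin\RCA_n$, so $\Cm\At\A^{*}_l\notin\RCA_n^+$ exactly as in the proof of Theorem \ref{firstorder}, while the definable operations transfer through the embedding and the representation to give $\prod_U\Cm\At\A^{*}_l\in\RCA_n^+$; the same Monk argument then applies.

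For the ``furthermore'' clause and for Maddux's item (2), the extra ingredient is that each $\Cm\alpha_l$ (and each $\Cm\beta_l$) can be taken \emph{generated by a single $2$--dimensional element} $g_l$ coming from its $\Ra$--reduct, a feature I would verify to be inherited from the Maddux algebras $\mathfrak E_{f(l)}(2,3)$ and preserved under blowing up and neat reduction. Granting this, I would argue item (2) contrapositively. Suppose every one--variable equation valid in $\RCA_n$ (resp.\ $\sf RRA$) followed from a finite set $\Sigma$ of equations valid in the variety. Since $\prod_U\Cm\alpha_l\in\RCA_n$ satisfies $\Sigma$, the ultraproduct theorem gives $\Cm\alpha_l\models\Sigma$ for $U$--almost all $l$. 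On the other hand $\Cm\alpha_l\notin\RCA_n$, so some valid equation fails in it; substituting each variable by a term in the single generator $g_l$ (legitimate because $\Cm\alpha_l=\langle g_l\rangle$) yields a \emph{one--variable} valid equation $e_l$ that still fails in $\Cm\alpha_l$ under $g_l$, where $2$--dimensionality of $g_l$ keeps $e_l$ inside the intended one--variable fragment. But $\Sigma\vdash e_l$, so $\Cm\alpha_l\models\Sigma$ forces $\Cm\alpha_l\models e_l$, a contradiction.

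I expect the main obstacle to be the genuine content hidden in (ii): verifying that the ultraproduct atom structure $\prod_U\alpha_l$ is strongly representable (equivalently that \pe\ wins the $\omega$--rounded game on it), together with checking that the singly--generated, $2$--dimensional character of the $\Cm\alpha_l$ is exactly what places the separating equations in the one--variable fragment. The embedding $\prod_U\Cm\alpha_l\hookrightarrow\Cm(\prod_U\alpha_l)$ and the Monk argument itself are then essentially bookkeeping.
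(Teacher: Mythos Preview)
Your proposal is correct and follows essentially the same route as the paper. The paper gives no separate proof of this corollary beyond the sentence that it ``recovers Bir\'o's, Monk's and Maddux's classical results \ldots\ since algebras considered are generated by a single $2$--dimensional element; and they are based on relation algebras having the same properties,'' relying entirely on the preceding paragraph where the sequences $(\Cm\At\A_l)$ and $(\Cm\At{\cal R}_l)$ are declared to be bad Monk algebras converging to a good one. Your write-up supplies exactly the standard details behind that assertion: the embedding $\prod_U\Cm\alpha_l\hookrightarrow\Cm(\prod_U\alpha_l)$, strong representability of the ultraproduct atom structure, and the Maddux substitution argument for item~(2). One minor streamlining for item~(2): it is cleaner to observe directly that single generation forces $\Cm\alpha_l\not\models\Sigma$ for \emph{every} $l$ (via the one--variable $e_l$), which then contradicts \L o\'s applied to $\prod_U\Cm\alpha_l\models\Sigma$; your version reaches the same contradiction but routes through ``$U$--almost all $l$'' first.
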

The last corollary  recovers Biro's, Monk's and Maddux's classical results \cite{Biro, Monk, Maddux} 
on non--finite axiomatizability of $\sf RRA$s (representable relation algebras) and $\RCA_n$s, and any first order definable 
expansion of each, since algebras considered are generated by a 
single $2$--dimensional element; and they are based on relation algebras having the same properties, namely, the algebras (denoted above by) ${\cal R}_l$. 

For $2<n\leq l<m\leq \omega$, the statement $\Psi(l, m)$ is the negation of a special case of an Omitting Types Theorem which we define next:

\begin{definition}  Let $2<n\leq l<m\leq \omega$.  Let $T$ be an $L_n$ theory in a signature $L$ and $\Sigma$ be a set of $L$--formulas.

\begin{enumarab}

\item  We say that $T$ {\it $m$--omits $\Sigma$}, if there 
exists an injective homomorphism  $f:\A\to \wp(V)$ where $M=\bigcup_{s\in V}\rng(s)$ is an 
$m$--square representation of $\Fm_T$, and $\bigcap_{\phi\in \Sigma}f(\phi_T)=\emptyset$.

\item We say that $T$ {\it $l$-isolates $\Sigma$}, if there exists a formula $\phi$ using $l$ variables, such that
$\phi$ is consistent with $T$,  and
$T\models \phi\to \Sigma$. If not, we say that $T$ {\it $l$--locally} omits $\Sigma$.
 
\end{enumarab}
\end{definition}

Let $\lambda$ be a cardinal. 
Then ${\sf OTT}(l, m, \lambda)$ is the statement: 

{\it If $T$ is a countable $L_n$ theory, $\bold X=(\Gamma_i: i<\lambda)$ is family of types, 
such that  $T$, $l$--locally omits $\Gamma_i$ using at most $l$ variables for each $i\in \lambda$, 
then $T$,  $m$--omits $\Sigma_i$ for each $i<\lambda$.}

Let $n<2\leq l<m\leq \omega$.  Then it is not hard to see
that if $T$ is atomic, then  ${\sf OTT}(l, m, 1) \iff {\sf V}(l, m)$  by 
taking the single type consisting of co--atoms.  

\begin{remark} To prove that any $2<n<m<\omega$, there exists a $n$--variable type free valid formula schema that cannot be proved using $m-1$ variables, but can 
be proved using $m$ variables,
Hirsch,  Hodkinson and Maddux \cite{HHM} constructed for each such $m$  a finite relation algebra $\R_m$ such that $\R_m$ has an $m-1$ dimensional hyperbasis, but 
no $m$--dimensional hyperbasis.
To prove the weaker `flat version' of (**) one needs to construct, for each $2<n<m<\omega$, a finite relation algebra $\R_m$ having a strong $m-1$ blur, but no infinite 
$m$--dimensional hyperbasis. In this case blowing up and blurring $\R_m$ gives a(n infinite) relation algebra having an $m-1$ dimensional cylindric basis, 
{\it whose \de\ completion} has no $m$--dimensional hyperbasis. 
\end{remark}

\section{Vaught's Theorem in other modal fragments of $L_{\omega, \omega}$}

\subsection{Vaught's Theorem for $n$--variable guarded fragments, $\bold S5^n$ and $n$ products of uni--modal logics}

In this subsection, the reader is assumed to be familiar with basics of the correspondence theory between multi-modal logic and the theory of Boolean algebras with operators ($\sf BAO$s). 
We shall deal only with the case when the extra Boolean operations
are unary. The 
correspondence is established by forming quotient Tarski--Lindenbaum algebras.
We take it for granted that basic (semantical) notions in modal logic, such as 
Kripke frames, models based on Kripke frames, are known. 

The starting point of this duality is that algebraic terms correspond to modal formulas. By this identification we get:
$\F\models \phi \Longleftrightarrow  \Cm\F\models \phi=1$,
where $\F=(F, R_i)_{i\in I}$ is a relational structure, or {\it Kripke frame}
($I$ a non-empty indexing set), and $\Cm\F$ its {\it complex algebra} is an algebra having signature
$(f_i:i\in I)$ where each $f_i$ is a unary modality, in other words
an operator.  We often refer to the equation $\phi=1$ (when $\phi$ is viewed as a term) as the {\it algebraic translation} of the modal formula $\phi$. Every modal formula $\phi$
defines a formula in second order logic on Kripke frames, 
which we refer to as the {\it correspondant} of $\phi$.
Occasionally we identify atom structures of $\sf BAO$s with Kripke frames.

One can view certain modal logics as fragments of first order logic. But on the other hand,  one can also
turn the glass around giving $L_n$ a modal formalism, by viewing assignments as worlds, and 
existential quantifiers, the most prominent citizens of first order logic, as diamonds \cite{cml}. Here 
the worlds are not abstract entities
but have an inner structure. The worlds considered will be {\it sequences}. 

Let $U$ be a non--empty set. For $i<n$, define the binary relation $\equiv_i$ on ${}^nU$ as follows: For $s, t\in {}^nU$, 
$s\equiv_i t\iff s(j)=t(j)$ for all $j\neq i$. (The notation $\equiv_i$ will be used several times below often without further notice.)
The usual semantics for the existential quantifier $\exists x_i$ ($i<n$) now takes the following familar modal pattern (*) :
$$^n\Mo, s\models \exists x_i\phi \Longleftrightarrow (\exists s)(s\equiv _i t)\&{}^n\Mo, s\models \phi,$$
where ${}^n\Mo$ (for some $L_n$ structure $\Mo$) is viewed as the set of worlds, $s$ is a world and $\phi$ is an 
$L_n$ formula. Here Kripke frames are of the form $({}^n\Mo, \equiv_i, D_{ij})_{i, j<n}$ where $\equiv_i$ is the  binary accessibility 
relation defined as above (by replacing $U$ by $\Mo$), which is clearly an equivalence realtion for all $i<n$, 
and $D_{ij}$ the  a unary accessibility relation defined via $s\in D_{ij}\iff s(i)=s(j)$. If $\F$ is such a frame, then its complex algebra 
$\Cm\F$ is the {\it full} ${\sf Cs}_n$ with universe $\wp(^n\Mo)$. (The terminology {\it full} is used in \cite{HMT2}). 
Since $\RCA_n={\bf SP}\Cm \bold K$,  where $\bold K$ is the class of all such frames, 
then the variety of ${\sf BAO}$s corresponding to $L_n$ is $\RCA_n$.

Algebraically,  so--called {\it persistence properties} refer to closure of a variety
$\V$ under passage from a given algebra $\A\in \V$ to some `larger' algebra $\A^*$. 
Atom--canonicity is concerned with closure under forming \de\ completions (sometimes occuring in the literature under the name of {\it the minimal completions}), 
or if $\A\in \V$ is an atomic algebra, and $\V$ is completely additive, recall that $\A^*=\Cm\At\A$ is its \de\ completion.

Canonicity,  which is the most prominent persistence property in   
modal logic, the `large algebra' $\A^*$ is the canonical embedding algebra (or perfect) extension of $\A$, a complex algebra based on the 
{\it ultrafilter frame} of $\A$ whose underlying set is the set of all Boolean ultrafilters of $\A$. 
In modal logic, 
canonicity corresponds to the notion of a formula being {\it dpersistent} \cite[Definition 5.65, Proposition 5.85]{modal}.
A modal formula in $L_n$ 
is {\it canonical} if it is validated in the {\it canonical frame} of every normal modal logic containing $\phi$ \cite[Definition 4.30]{modal}. 
Algebraically, $\phi$ is canonical $\iff$ $\phi$ translates to an equation in the signature of $\RCA_n$ 
that is preserved under canonical extensions. 

An example of formulas that are both di-persistent and canonical (d-persistent) are the so-called {\it very simple Sahlqvist formulas} \cite[Theorem 5.90]{modal} which are, as the name 
suggests, instances of Sahlqvist formulas
\cite[Definition 3.51]{HHbook}.
From non-atom--canonicity and complete additivity of $\RCA_n$ as proved in \cite{Hodkinson};  
it follows that $\RCA_n$ is {\it not persistent} relative to \de\ completions; passing to a \de\ completion of an 
(atomic) $\RCA_n$
can get us out of the variety. Thus $\RCA_n$ cannot be axiomatized by Sahlqvist equations \cite{Venema}, cf. Theorems \ref{can} and \ref{sah}. Such equations 
are the algebraic translations of Sahlqvist formulas \cite[Definition 2.92]{HHbook}. 
So $L_n$ cannot be axiomatized by Sahlqvist formulas. On the other hand, it is not hard to see that $L_n$ is 
recursively enumerable. Nevertheless, it is difficult to find explicity (necessarily) infinite axiomatizations for it.

\begin{theorem}\label{bad}  Let $2<n<\omega$. 
\begin{enumerate}
\item As a fragment of $L_{\omega, \omega}$, $L_n$ is not finitely axiomatizable in any signature containing at least one binary relation symbol.  
\item As a multi-modal logic, any axiomatization of $L_n$ in a signature with infinitely many relation symbols each of arity $n$, 
must contain infinitely many 
propositional formulas.
\item $L_n$   cannot be axiomatized by any set of modal formulas having first order correspondents,
and, though canonical,  $L_n$ does not have a canonical axiomatization.  In particular, it does not have a Sahlqvist axiomatization. 
\item $\sf VT$ fails
for $L_n$ and its  clique guarded fragments with respect to $m$--square, {\it a fortiori}, $m$--flat models, for any finite $m\geq n+3$.
\end{enumerate}
\end{theorem}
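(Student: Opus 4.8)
The plan is to translate each of the four items into a statement about the variety $\RCA_n$ --- the variety of $\sf BAO$s algebraising $L_n$ recorded in the duality discussion above --- and then to harvest the algebraic results already in hand. For items (1) and (2) I would argue as follows. A finite first order axiomatisation of $L_n$ in a signature carrying a binary relation symbol algebraises to a finite equational basis for $\RCA_n$ whose equations are witnessed on a single two--dimensional generator, and a finite set of propositional modal formulas over infinitely many $n$--ary relation symbols is likewise nothing but a finite equational basis for $\RCA_n$. Both are forbidden by item (1) of Corollary \ref{maddux}: the sequence of algebras exhibited there, each generated by a single $2$--dimensional element, lies outside $\RCA_n$ while its ultraproduct lies inside, so the complement of $\RCA_n$ is not closed under ultraproducts and $\RCA_n$ is not finitely axiomatisable. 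The binary relation symbol demanded in (1) is exactly what is needed to name the $2$--dimensional generator syntactically, and (2) is merely the purely equational (propositional) reading of the same obstruction.

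For item (3) I would establish three persistence failures separately. First, \emph{no axiomatisation by modal formulas with first order correspondents is possible}: such an axiomatisation would render the class of frames validating $L_n$ --- which is precisely $\{\F:\Cm\F\in\RCA_n\}$, i.e. the class of strongly representable atom structures --- an elementary class. But taking $S_l=\At\A_l$ from the sequences of Corollary \ref{maddux}, each $\Cm S_l\notin\RCA_n$ while, for a non--principal ultrafilter $U$ on $\omega$, the natural embedding $\Cm(\prod_U S_l)\hookrightarrow\prod_U\Cm S_l$ together with closure of $\RCA_n$ under $\bold S$ yields $\Cm(\prod_U S_l)\in\RCA_n$; thus $\prod_U S_l$ \emph{is} strongly representable although no factor is, which by {\L}o\'s's theorem is incompatible with elementarity. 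Second, although $L_n$ \emph{is} canonical (since $\RCA_n$ is a canonical variety by Monk's theorem \cite{HMT2}), it admits no axiomatisation by canonical equations; this is the Hodkinson--Venema phenomenon for $\RCA_n$, which I would import from \cite{Venema}. Finally, since every Sahlqvist formula is simultaneously canonical and carries a first order correspondent, the `in particular' clause that $L_n$ has no Sahlqvist axiomatisation follows at once from either preceding point, and also directly from non--atom--canonicity of $\RCA_n$ (Theorem \ref{can}) via the fact that Sahlqvist equations are di--persistent.

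Item (4) is read off directly from Corollary \ref{fl}, which gives $\Psi(n,n+3)$ and hence $\Psi(n,m)$ for every finite $m\geq n+3$. Because every $m$--flat model is $m$--square, failure of $\sf VT$ over the wider class of $m$--square models is the stronger assertion and entails the $m$--flat version (equivalently $\Psi(n,m)\Rightarrow\Psi(n,m)_f$ by the last clause of Theorem \ref{main}). The transfer to the clique guarded fragments is via Theorem \ref{packed}, under which the $m$--square (clique guarded) semantics coincides with the packed fragment of $L_n$, so the statement carries over verbatim.

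The routine parts are (1), (2) and (4), each a near--immediate reading of an already proved corollary. The substance of the theorem resides in item (3). Here the deepest ingredient is the Hodkinson--Venema result \cite{Venema} that $\RCA_n$ is canonical yet not canonically axiomatisable, which I would invoke as a black box; and within the first order correspondent argument the one point genuinely requiring care is the embedding $\Cm(\prod_U S_l)\hookrightarrow\prod_U\Cm S_l$ of the complex algebra of an ultraproduct of atom structures into the ultraproduct of the complex algebras, since it is precisely this embedding that upgrades the representability of the ultraproduct supplied by Corollary \ref{maddux} to strong representability of $\prod_U S_l$, and thereby powers the {\L}o\'s argument against elementarity.
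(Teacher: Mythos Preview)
Your handling of items (1), (2) and (4) is essentially right and matches the paper's approach: all three are read off from Corollary \ref{maddux} and Corollary \ref{fl} respectively (the paper reproves (4) a bit more directly from Theorem \ref{can}, but your route via $\Psi(n,n+3)$ is equivalent).

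There is, however, a genuine gap in your argument for the first clause of item (3). You want to conclude that the class ${\sf Str}(\RCA_n)=\{\F:\Cm\F\in\RCA_n\}$ of strongly representable atom structures is not elementary, and you argue by exhibiting $S_l=\At\A_l$ with $S_l\notin{\sf Str}(\RCA_n)$ but $\prod_U S_l\in{\sf Str}(\RCA_n)$. This shows only that the \emph{complement} of ${\sf Str}(\RCA_n)$ is not closed under ultraproducts, and that is perfectly compatible with ${\sf Str}(\RCA_n)$ being elementary: the class of infinite structures is elementary while its complement (the finite structures) is not closed under ultraproducts. Your appeal to {\L}o\'s fails because ``$S_l\not\models T$'' only means each $S_l$ fails \emph{some} sentence $\sigma_l\in T$, possibly a different one for each $l$, so nothing forces $\prod_U S_l\not\models T$. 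At best your direction shows ${\sf Str}(\RCA_n)$ is not \emph{finitely} axiomatizable, which is too weak: item (3) must rule out axiomatizations by arbitrary (infinite) sets of formulas with first order correspondents.

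The correct direction --- which is what the paper invokes as a black box and what is actually proved in \cite{HHbook2} (and reproved for the ${\sf Df}$ case in Theorem \ref{complexity0} using Erd\H os graphs) --- runs the other way: one produces strongly representable atom structures whose ultraproduct is \emph{not} strongly representable, showing ${\sf Str}(\RCA_n)$ itself is not closed under ultraproducts. The Maddux-type sequences of Corollary \ref{maddux} simply do not give this; one needs the Monk-like algebras $\mathfrak M(\Gamma_\kappa)$ built over Erd\H os graphs of increasing girth and chromatic number. A minor additional point: the ``canonical but not canonically axiomatizable'' ingredient you attribute to \cite{Venema} is actually the Bulian--Hodkinson result \cite{b}; Venema's paper supplies the link between Sahlqvist axiomatizability and atom--canonicity, not the bare canonicity statement.
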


\label{ln}
\begin{proof} All properties, with the exception of $\sf VT$ to be dealt with separately, can be distilled from corollary \ref{maddux}, together with the following algebraic results: 
For $2<n<\omega$, the class of  strongly representable ${\sf CA}_n$ atom structures is {\it not elementary} 
and  any (necessarily infinite) equational  axiomatization of $\RCA_n$ 
must contain infinitely many non--canonical sentences \cite{b} and infinitely many variables \cite{Andreka}.

To show that $\sf VT$ fails for $L_n$, let $\A$ be a countable, simple and atomic ${\sf RCA}_n$, such that $\Cm\At\A$ is not representable; such an $\A$ exists by Theorem \ref{can}. 
Then $\A$ is not completely representable, for a complete 
representation of $\A$ induces a(n) (ordinary) representation of $\Cm\At\A$. 
Assume that $\A\cong \Fm_T$; for some atomic and complete theory $T$. Since $\A$ does not have a complete representation, then it does not have 
an atomic representation. So $T$ is an atomic theory 
with no atomic model. For the $m$-clique guarded fragment of $L_n$ one takes be a countable, simple and atomic ${\sf RCA}_n$, 
such that $\Cm\At\A\notin \bold S\Nr_n\CA_m$ which exists by Theorem \ref{can}, and proceeds like above using the same argument in Theorem \ref{main}.

Let $m\geq n+3$. To show that $\sf VT$ fails for the clique guarded fragment of $L_n$ with respect to $m$--square (flat) models, 
one takes $\A$ be a countable, simple and atomic ${\sf RCA}_n$, such that $\Cm\At\A$ is outside 
$\bold S\Nr_n{\sf D}_m (\bold S\Nr_n\CA_m)$ and again proceeds like above. 
\end{proof}

Any Sahlqvist equation (formula) is a canonical one  
but the converse is not true. 
The Sahlqvist correspondence theorem states that every Sahlqvist formula corresponds to a first order definable class of Kripke frames.
Sahlqvist's definition characterizes a decidable set of modal formulas with first-order correspondents. Since it is undecidable, by Chagrova's theorem, 
whether an arbitrary modal formula has a first-order correspondent \cite[Theorem 3.56]{modal}, there are formulas with first-order frame conditions 
that are not Sahlqvist \cite[Example 3.57]{modal}. 
Now from Theorems \ref{b2},  \ref{can} and \cite{Venema}, we immediately get:
\begin{theorem}\label{sah}
Let $2<n<\omega$. Then any variety between 
$\sf RRA$ and $\bold S\Ra\CA_{3+k}$, as well as any variety between ${\sf RCA}_n$ and $\bold S\Nr_n\CA_{n+k}$,  
are not Sahlqvist axiomatizable. 
\end{theorem}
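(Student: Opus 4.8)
The plan is to argue by contraposition, using the single structural fact that Sahlqvist axiomatizability forces atom--canonicity, and then invoking the failure of atom--canonicity already recorded in item (5) of Corollary \ref{sahlqvist}. Thus I would show that any variety axiomatized by Sahlqvist equations is closed under \de\ completions of its atomic members, and then note that none of the varieties in the two stated intervals enjoys this closure.

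First I would recall the relevant preservation result. By \cite{Venema}, every Sahlqvist equation is preserved under the passage from an atomic $\sf BAO$ $\A$ to the complex algebra $\Cm\At\A$ of its atom structure; that is, if $\A\models \epsilon$ for a Sahlqvist equation $\epsilon$, then $\Cm\At\A\models \epsilon$. Since $\RRA$, $\RCA_n$ and all the larger varieties named in the statement are completely additive (all operators involved being completely additive), the map $\A\mapsto \Cm\At\A$ on an atomic algebra is exactly its \de\ completion. Hence, if a variety $\bold V$ is axiomatized by a set $\Sigma$ of Sahlqvist equations and $\A\in \bold V$ is atomic, then $\Cm\At\A\models \sigma$ for every $\sigma\in \Sigma$, so $\Cm\At\A\in \bold V$; in short, Sahlqvist axiomatizable $\Rightarrow$ atom--canonical. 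The same implication reads, on the modal side, as the fact that Sahlqvist formulas are di--persistent, di--persistence being the modal counterpart of atom--canonicity.

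Now suppose, for contradiction, that some variety $\bold V$ with $\RRA\subseteq \bold V\subseteq \bold S\Ra\CA_{3+k}$ (respectively, $\RCA_n\subseteq \bold V\subseteq \bold S\Nr_n\CA_{n+k}$) admitted a Sahlqvist axiomatization. By the previous paragraph $\bold V$ would be atom--canonical. But item (5) of Corollary \ref{sahlqvist} --- distilled from the blow up and blur constructions of Theorems \ref{b2} and \ref{can} --- states precisely that each such variety fails to be atom--canonical: there is an atomic $\D$ in the lower endpoint $\RRA$ (resp. $\RCA_n$), hence in $\bold V$, whose \de\ completion $\Cm\At\D$ lies outside the upper endpoint $\bold S\Ra\CA_{3+k}$ (resp. $\bold S\Nr_n\CA_{n+k}$), and therefore outside $\bold V$. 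This contradicts atom--canonicity, so no such $\bold V$ is Sahlqvist axiomatizable, as required.

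The genuinely load--bearing step is the preservation statement quoted from \cite{Venema}; everything after it is a one--line contradiction, so the hard part is really making sure its hypotheses are met. The point to watch is complete additivity: it is what guarantees that the syntactic survival of Sahlqvist equations under $\Cm\At(\cdot)$ coincides with closure under \de\ completions, and hence with the atom--canonicity that Corollary \ref{sahlqvist} refutes. I would also remark that the endpoints match the ranges in which non--atom--canonicity was established ($3+k\geq 6$ and $n+k\geq n+3$ for $k\geq 3$), so the two ingredients remain compatible across the whole stated family of varieties.
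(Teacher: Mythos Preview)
Your proposal is correct and is essentially the paper's own argument: the paper simply writes ``from Theorems \ref{b2}, \ref{can} and \cite{Venema}, we immediately get'' Theorem \ref{sah}, which is exactly your line---Venema's preservation of Sahlqvist equations under passage to $\Cm\At(\cdot)$ turns a Sahlqvist axiomatization into atom--canonicity, and Theorems \ref{b2}, \ref{can} (equivalently Corollary \ref{sahlqvist}(5)) refute atom--canonicity for every variety in the stated intervals. Your added remark about complete additivity and the parameter ranges $k\geq 3$ is also in line with how the paper uses these results.
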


In what follows we study canonicity and Sahlqvist axiomatizablity 
for many $n$--dimensional multi--modal logics (other than $L_n$).  Like before, our investigations are algebraic. We start with a somewhat technical Lemma.
Fixing needed notation, let ${\sf (R)Df}_n$ denotes the class of (representable) diagonal free reducts of $\CA_n$s. 
The next Lemma is useful to transfer results from $\RCA_n$s to their diagonal free reducts.
It generalizes a result of Johnson \cite[Theorem 5.4.26]{HMT2}. 
Johnson's result is the special case when only finite intersections are allowed. Henceforth, we write $\Rd_{df}$ short hand for 
`diagonal free reduct'. If $\A\in \RCA_n$, then evidently $\Rd_{df}\A\in {\sf RDf}_n$. The next Lemma gives a sufficient condition for the converse (which need not happen in general) to hold:
\begin{lemma}\label{dfb} Let $2<n<\omega$. If $\A\in \CA_n$, is such $\Rd_{df}\A\in {\sf RDf}_n$,
and $\A$ is generated by $\{x\in \A: \Delta x\neq n\}$ using infinite intersections (together with the other cylindric operations) then
$\A\in \RCA_n$.
\end{lemma}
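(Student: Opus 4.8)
The plan is to begin with a diagonal--free representation and then \emph{reconstruct} the diagonals, using the low--dimensional generators to pin them down and Johnson's finitely generated theorem as a black box for the finitary core. Since $\Rd_{df}\A\in {\sf RDf}_n$, I would fix a diagonal--free representation $h:\Rd_{df}\A\hookrightarrow \wp(W)$, where $W\subseteq{}^nU$ is a disjoint union of cubes and $h$ respects the Boolean operations and every cylindrifier $\c_i$, but a priori sends the algebraic diagonal $\d_{ij}$ to an arbitrary set $E_{ij}=h(\d_{ij})$ rather than to the genuine set diagonal $\bar D_{ij}=\{s\in W: s_i=s_j\}$. The entire difficulty is to manufacture from $h$ a bona fide cylindric set representation $g$ of $\A$, i.e.\ one with $g(\d_{ij})=\bar D_{ij}\cap 1^{g}$; once this is achieved, $\A\in\RCA_n$ is immediate.

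Next I would isolate the finitary part and hand it to Johnson. Let $\A_0$ be the $\CA_n$--subalgebra of $\A$ generated by $G=\{x\in\A:\Delta x\neq n\}$ using only the finitary cylindric operations. Then $\Rd_{df}\A_0\subseteq\Rd_{df}\A$, so $\Rd_{df}\A_0\in {\sf RDf}_n$ by closure under subalgebras, and $\A_0$ is generated by its elements of dimension $\neq n$ under finite operations; hence the cited theorem of Johnson \cite[Theorem 5.4.26]{HMT2} applies verbatim and gives $\A_0\in\RCA_n$ together with a genuine cylindric representation $g_0$ in which all diagonals are correct. The mechanism I would reuse is that an element $x$ with $i\notin\Delta x$ is $\c_i$--invariant, and the $\CA$--axiom forcing ${\sf s}^i_j$ to be functional makes $E_{ij}$ meet each $\c_i$--block of $h(x)$ exactly along the true diagonal; low--dimensionality of every generator is precisely what permits the base to be corrected so that $\bar D_{ij}$ is realised throughout the generated subalgebra.

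It then remains to climb from $\A_0$ to $\A$, which is obtained by closing $\A_0$ under those infinite meets that exist in $\A$. Here I would \emph{not} try to extend the cylindric homomorphism $g_0$ naively, since infinite meets are not cylindric operations and $\c_i$ fails to commute with infinite intersection in a set algebra. Instead I would arrange that $g_0$ preserves the particular infima occurring in the generation, define $g'$ on $\A$ by $g'(a)=\bigcap_k g_0(a_k)$ whenever $a=\prod_k a_k$ with $a_k\in\A_0$ (iterating transfinitely), and then verify that the set $N\subseteq\A$ of elements on which $g'$ is well defined and simultaneously Boolean--, $\c_i$-- and diagonal--compatible is closed under complementation, finite joins, each $\c_i$, and --- crucially --- under the concrete set intersection of $\wp(W)$, hence under every infinite meet realised in $\A$. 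Since $N\supseteq\A_0\supseteq G$ and $N$ is closed under all the generating operations, $N=\A$, yielding the required $g=g'$; injectivity is handled by the same preservation property, which guarantees $\bigcap_k g_0(a_k)\neq\emptyset$ whenever $\prod_k a_k\neq 0$.

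The main obstacle is exactly this preservation step: a representable algebra need not be completely representable, so a representation $g_0$ of $\A_0$ preserving the relevant infima is not available for free, and this is the one place where the infinite--intersection hypothesis genuinely strengthens Johnson's finitary argument (in which finite meets are preserved by every homomorphism and the issue evaporates). I would resolve it by observing that one needs $g_0$ to preserve only those meets appearing in the generation of $\A$ from $\A_0$, and that such preservation can be forced by building $g_0$ over a sufficiently saturated base for the first order theory asserting the existence of the representation, exactly as complete representations are produced elsewhere in the paper (cf.\ the homogeneous base $\Mo$ in the proof of Theorem~\ref{can} and the $\omega$--saturation argument in Theorem~\ref{flat}). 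Checking that Johnson's diagonal correction survives this saturation, and that $N$ is genuinely closed under the concrete intersection, is the delicate bookkeeping that the full proof must carry out.
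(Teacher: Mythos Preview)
Your two--stage plan (represent the finitely generated $\A_0$ via Johnson, then extend to $\A$ through the infinite meets) is not the paper's argument, and the stage you yourself flag as the obstacle is a genuine gap. The lemma carries no countability hypothesis on $\A$; in the only application (Theorem~\ref{df}) it is invoked for $\Cm\At\A$, which is typically of size $2^{\aleph_0}$. Your proposed fix, forcing $g_0$ to preserve the relevant meets by passing to a saturated base, is an omitting--types manoeuvre: it can handle at most $<\mathfrak p$ (or $<\sf covK$) many non--principal meets at once, and there is no reason the family of meets occurring in the generation of $\A$ from $\A_0$ should be that small. Even granting preservation, your set $N$ must be shown closed under each $\c_i$, and this is precisely where ``$\c_i$ does not commute with $\bigcap$'' bites; the closure claim for $N$ is asserted, not argued.

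The paper bypasses all of this by never leaving the single $\Df$ representation. Following \cite[5.1.49--5.1.51]{HMT2}, one fixes the equivalence $R$ on the base $U$ that Johnson uses to collapse $h(\d_{ij})$ to a true diagonal, and lets
\[
E=\bigl\{X\in A:\ \forall s,t\in{}^nU\ \bigl((\forall i<n)\,s_iRt_i\ \Rightarrow\ (s\in X\Leftrightarrow t\in X)\bigr)\bigr\}.
\]
One checks (i) every element with $\Delta x\neq n$ lies in $E$, (ii) $E$ is a $\CA_n$--subalgebra of $\A$, and (iii) $E$ is closed under \emph{arbitrary} intersections, which is immediate since an intersection of $R$--saturated sets is $R$--saturated. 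Hence $E=\A$ by the generation hypothesis, and Johnson's quotient--of--the--base construction then yields a genuine cylindric representation of all of $\A$ at once. The point you missed is that the ``good'' property is formulated \emph{inside} $\A$ (via $R$--invariance of the represented set), not as a compatibility condition on a partially defined homomorphism; closure under infinite meets is then free, and no meet--preservation or saturation is needed.
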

\begin{proof}  Easily follows from \cite[Lemma 5.1.50, Theorem 5.1.51]{HMT2}.  Indeed asssume that $\A\in \CA_n$, $\Rd_{df}\A$ is a diagonal free  
cylindric set algebra (of dimension $n$) with base $U$,  and $R\subseteq U\times U$ are as in the hypothesis of \cite[Theorem 5.1.49]{HMT2}. 
Let $E=\{x\in A:  (\forall x, y\in {}^nU)(\forall i <n)(x_iR y_i\implies (x\in X\iff y\in X))\}$.
Then $\{x\in \A: \Delta x\neq n\}\subseteq E$ and $E\in \CA_n$ 
is closed under infinite intersections. The required follows.
\end{proof}
Using the previous Lemma we can prove:
\begin{theorem} \label{df} For $2<n<\omega$, ${\sf RDf}_n$ is not atom--canonical, hence not closed under \de\ completions. 
\end{theorem}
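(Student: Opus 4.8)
The plan is to transfer the non--atom--canonicity of $\RCA_n$ established in Theorem \ref{can} across the diagonal--free reduct functor, using Lemma \ref{dfb} as the bridge. Let $\A$ be the $\CA_n$--reduct of the simple, countable, atomic algebra furnished by Theorem \ref{can}, so that $\A\in\RCA_n$ while $\C:=\Cm\At\A\notin\RCA_n$. Since $\A\in\RCA_n$, forgetting the diagonals gives $\Rd_{df}\A\in{\sf RDf}_n$; moreover $\Rd_{df}\A$ is atomic with the same atoms as $\A$, and a routine check (the cylindrifier accessibility relations are untouched, only the diagonal distinguished elements are discarded) shows $\Cm\At\Rd_{df}\A=\Rd_{df}\Cm\At\A=\Rd_{df}\C$. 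Thus it suffices to prove $\Rd_{df}\C\notin{\sf RDf}_n$: for then the atomic representable $\Rd_{df}\A$ has a non--representable \de\ completion, witnessing the failure of atom--canonicity; and since cylindrifiers are completely additive, ${\sf RDf}_n$ is a completely additive variety, whence failure of atom--canonicity is precisely failure of closure under \de\ completions.

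To obtain $\Rd_{df}\C\notin{\sf RDf}_n$ I would invoke the contrapositive of Lemma \ref{dfb}: as $\C\in\CA_n$ and $\C\notin\RCA_n$, it is enough to verify that $\C$ is generated by $G=\{x\in\C:\Delta x\neq n\}$ using infinite intersections together with the remaining cylindric operations. This verification is the heart of the argument and the step I expect to cause the most work. The idea is that, because $\C=\Cm\At\A$ is complete and atomic, it suffices to exhibit each atom as an infinite meet of members of $G$. Recall that the atoms of $\C$ are (classes of) surjections $a:n\to M$ onto coloured graphs $M$ in the rainbow signature, with the reds split into $\omega$ copies. For each pair $i<j<n$ and each edge colour $c$ put
\[
E_{i,j,c}=\{\, M_a\in\At\A : M(a(i),a(j))=c \,\}\in\C,
\]
and, for each choice of $n-1$ coordinates and each admissible hyperlabel, let $F$ be the set of atoms carrying that label on the corresponding $(n-1)$--tuple. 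An edge constraint on coordinates $i,j$ is invariant under the cylindric accessibility relation in any dimension $k\notin\{i,j\}$, so ${\sf c}_kE_{i,j,c}=E_{i,j,c}$ and hence $\Delta E_{i,j,c}\subseteq\{i,j\}\neq n$; likewise each such $F$ satisfies $\Delta F\subseteq n\setminus\{k\}\neq n$ for its unique free coordinate $k$. Thus all these elements lie in $G$.

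The key combinatorial point is then that a coloured graph $M_a$ is completely determined by the colours of its edges and the labels of its hyperedges, so that
\[
\{M_a\}=\bigcap\{\, E_{i,j,c} : M(a(i),a(j))=c \,\}\;\cap\;\bigcap\{\, F : M_a\in F \,\},
\]
an intersection that is genuinely infinite precisely because distinguishing the $\omega$ copies of each red demands infinitely many edge constraints (this is exactly why Lemma \ref{dfb} is stated with infinite, rather than merely finite, intersections, Johnson's result \cite[Theorem 5.4.26]{HMT2} covering only the finite case). Consequently the complete subalgebra of $\C$ generated by $G$ under the Boolean operations, infinite meets and cylindrifiers contains every atom, and since $\C$ is atomic and complete it therefore contains every element; that is, $\C$ is generated by $G$ as required. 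Lemma \ref{dfb} now yields $\Rd_{df}\C\notin{\sf RDf}_n$, and the reduction above completes the proof. The only genuine risk in this plan lies in making the "determined by edges and hyperedges'' isolation of atoms fully rigorous at the level of equivalence classes of surjections; everything else is bookkeeping transferred verbatim from the rainbow construction of Theorem \ref{can}.
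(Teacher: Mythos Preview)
Your overall strategy matches the paper's exactly: take $\A$ and $\C=\Cm\At\A$ from Theorem \ref{can}, and invoke Lemma \ref{dfb} in contrapositive by showing that $\C$ is generated (allowing infinite meets/joins) by $G=\{x\in\C:\Delta x\neq n\}$. The difference is only in how you isolate each atom as a meet of elements of $G$.

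The paper uses a much shorter formula: for a rainbow atom $[a]$ one has $[a]=\prod_{i<n}{\sf c}_i[a]$, a meet of just $n$ elements, each with $\Delta({\sf c}_i[a])\subseteq n\setminus\{i\}$. The verification that $[b]\neq[a]\implies[b]\nleq\prod_i{\sf c}_i[a]$ is the same two--case analysis (edge label or $(n-1)$--tuple label differing) that you would need anyway to justify your $E_{i,j,c}\cap F$ description, but it avoids introducing those auxiliary sets and the attendant check that $\Delta E_{i,j,c}\subseteq\{i,j\}$. So your route works, but the paper's is cleaner and sidesteps exactly the ``genuine risk'' you flag at the end.

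One correction: your claim that the atom--isolating intersection is ``genuinely infinite because distinguishing the $\omega$ copies of each red demands infinitely many edge constraints'' is wrong. A single atom $M_a$ has only $\binom{n}{2}$ edges and $n$ hyperedges, so your displayed intersection is \emph{finite}. The place where infinitary operations are actually needed is not in isolating a given atom but in passing from the atoms to an arbitrary element of the complete atomic algebra $\C$ (an infinite join of atoms). Your final sentence gets this right, so the slip is cosmetic rather than fatal, but the stated reason for why Lemma \ref{dfb} must allow infinite intersections is not the one you give.
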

\begin{proof} Let $\A$ and $\Cm\At\A$ be as in the proof of theorem \ref{can}. It suffices to show by Lemma \ref{dfb} that $\Cm\At\A$ 
is generated by elements whose dimension sets have cardinality $<n$ using infinite unions. 
We show that for any rainbow atom $[a]$ $a:n\to \Gamma$, $\Gamma$ a coloured graph, that
$[a]=\prod_{i<n} {\sf c}_i[a]$. 
Clearly $\leq $ holds. Assume that $b:n\to \Delta$, $\Delta$ a coloured graph, and $[a]\neq [b]$. We show that $[b]\notin \prod_{i<n} {\sf c}_i[a]$  by which we 
will be done. Because $a$ is not equivalent to $b$, we have one of two possibilities;
either $(\exists i, j<n) (\Delta(b(i), b(j)\neq \Gamma(a(i), a(j))$ or
$(\exists i_1, \ldots, i_{n-1}<n)(\Delta(b_{i_1},\ldots, b_{i_{n-1}})\neq \Gamma(a_{i_1},\ldots, a_{i_{n-1}}))$. 

Assume the first possibility: Choose  $k\notin \{i, j\}$. This is possible because $n>2$.  Assume for contradiction that $[b]\in {\sf c}_k[a]$. 
Then $(\forall i, j\in n\setminus \{k\})(\Delta(b(i), b(j))=\Gamma(a(i) a(j)))$. By assumption  and the choice of
$k$,  we get that $(\exists i, j\in n\setminus k)(\Delta(b(i), b(j))\neq \Gamma(a(i), a(j)))$, contradiction.
For the second possibility, one chooses $k\notin \{i_1, \ldots i_{n-1}\}$ and proceeds like the first case deriving an analogous contradiction.
Plainly each ${\sf c}_i[a]$ has dimension set of cardinality $<n$. Thus the set $\{{\sf c}_i[a]: a \text { is an atom and $i<n$}\}$ 
is the required set of generators because it generates the atoms, 
and the atoms, in turn,  using infinite unions generate $\Cm\At\A$.
\end{proof}

Fix $2<n<\omega$. 
Consider Kripke frames of the form $({}^nU, \equiv_i)_{ij<n}$ ($U$ a non--empty set).
Then the Kripke complete multi-modal logic consisting of the set of modal formulas that are valid in every frame 
of the above form is just the more familiar multi-modal logic ${\bf S5}^n$.
So the logic ${\bf S5}^n$ is an $n$--modal logic 
which is basically a disguised (equivalent) form of $L_n$ without equality, briefly $L_n^{-=}$. 
From non--atom--canonicity ${\sf RDf}_n$, we obtain (like the proof in theorem \ref{bad}):

\begin{corollary}\label{S5} Vaught's theorem, hence the omitting types theorem, fail for ${\bf S5}^n$ with respect to the aforementioned (usual) Kripke semantics.
\end{corollary}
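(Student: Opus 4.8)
The plan is to transcribe, almost verbatim, the $\sf VT$ argument given for $L_n$ in Theorem \ref{bad}, with the representable diagonal free reducts ${\sf RDf}_n$ playing the role that $\RCA_n$ played there. The whole transfer rests on the identification recorded just above this corollary: ${\bf S5}^n$ is an equivalent disguise of $L_n^{-=}$, the equality free $n$--variable fragment, and the variety of ${\sf BAO}$s corresponding to it is exactly ${\sf RDf}_n$, obtained as the $\bf SP$--closure of the complex algebras of the Kripke frames $({}^nU, \equiv_i)_{i<n}$. Under this dictionary, a Kripke model of a theory is a representation of the associated algebra and an \emph{atomic} model corresponds to a \emph{complete} representation.

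First I would fix the witnessing algebra. Let $\A$ be the countable, simple and atomic $\Cs_n$ furnished by Theorem \ref{can} (concretely the cylindric reduct of ${\sf split}(\A_{n+1,n}, \r, \omega)$, as used in Corollary \ref{fl}), and put $\B = \Rd_{df}\A$. By Theorem \ref{df}, via Lemma \ref{dfb}, the \de\ completion $\Cm\At\B = \Rd_{df}\Cm\At\A$ lies outside ${\sf RDf}_n$. I would then check that $\B$ is still a countable, atomic and \emph{simple} ${\sf RDf}_n$: countability and atomicity pass to reducts, while, working in the set--algebra representation of $\A$ with unit ${}^n\Mo$, simplicity of the diagonal free reduct is automatic since ${\sf c}_0{\sf c}_1\cdots {\sf c}_{n-1}X = {}^n\Mo$ for every non--empty $X \subseteq {}^n\Mo$.

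Next I would convert non--atom--canonicity into the failure of complete representability, exactly as in Theorem \ref{bad}: since $\B$ is atomic, any complete representation of $\B$ would induce an ordinary representation of its \de\ completion $\Cm\At\B$, forcing $\Cm\At\B \in {\sf RDf}_n$ against the previous step; hence $\B$ has no complete representation. I would then cross to the logical side using the algebraisation of ${\bf S5}^n \cong L_n^{-=}$: being simple, $\B \cong \Fm_T$ for a countable complete theory $T$ formulated in $L_n^{-=}$, and atomicity of $\B$ says precisely that $T$ is atomic, its single non--isolated type being the type of co--atoms. An atomic model of $T$ would yield an atomic, hence complete, representation of $\B$; since none exists, $T$ is a countable complete atomic ${\bf S5}^n$ theory with no atomic model. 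This is the failure of Vaught's theorem, and since Vaught's theorem is the instance of $\sf OTT$ asserting that the non--isolated type of co--atoms can be omitted, $\sf OTT$ for ${\bf S5}^n$ fails as well.

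The main obstacle, as with all these transfers, is not the combinatorics — which is entirely outsourced to Theorems \ref{can} and \ref{df} — but keeping the ${\bf S5}^n$/algebra dictionary watertight: one must verify that the frames $({}^nU, \equiv_i)_{i<n}$ deliver exactly the representable $\Df_n$s, that simplicity of $\B$ is what licenses the passage to a single Tarski--Lindenbaum model $\Fm_T$, and, most delicately, that an atomic model of $T$ induces a \emph{complete} (and not merely an ordinary) representation of $\B$. Once the modal translation of quantifiers displayed above is in place, these verifications are routine, and everything else is a line--by--line echo of the proof of Theorem \ref{bad}.
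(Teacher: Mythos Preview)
Your proposal is correct and follows essentially the same approach as the paper, which simply says the result follows from non--atom--canonicity of ${\sf RDf}_n$ ``like the proof in theorem \ref{bad}''. If anything, you give more detail than the paper does --- in particular your explicit check that $\Rd_{df}\A$ remains simple, and your care about the atomic model/complete representation dictionary, fill in steps the paper leaves implicit.
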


We still fix $2<n<\omega$. We follow \cite{k} for terminology.
Many multi-modal logics can be considered as a combination of unimodal logics.
So in a sense any result on multi-modal logic sheds light on combining modal logics.
But dually, we can start with `the components' and form a modal logic
that somehow encompasses them or extends them; we seek a multimodal logic in which they
`embed'.
In such a process it is very natural to ask about {\it transfer results}, namely, these properties  of the components that transfer
to the combination, like axiomatizability (completeness),
and decidability that involves complexity of the satisfiability
problem. 

There are two versions depending on whether
the combination method is syntactic or semantical, namely, 
{\it fusions} and {\it products}, respectively.
In fusions the components do not interact which makes transfer results from the components to the fusion
easy to handle. In fact, the fusion of consistent modal logics is
a conservative extension
of the components, and the fusion of finitely many logics (this is well defined because the fusion operator is associative)
has the finite model property if each of its components does. Fusion also preserves decidability.
However, determining {\it degrees of complexity}
is quite intricate here. For example, it is not known {\it $\sf PSPACE$ or $\sf EXPTIME$ completeness} transfer under
formation  of fusions.

Products in modal logic can be seen as {\it orthogonal to guarding}.
Unlike fusions, product logics  are designed  semantically.
A product logic is {\it the multi-modal logic  of products of
Kripke complete frames}, so by definition it is also Kripke complete.

Special $n$-frames are the following
$n$-ary product frames.
\begin{definition} Given frames
$\F_0=(W_0, R_0) \ldots  \F_{n-1}=(W_{n-1}, R_{n-1})$
{\it their  product}
$\F_0\times \ldots \times \F_{n-1}$ is the relational structure
$(W_0\times \ldots \times W_{n-1}, \bar{R}_0,\ldots, \bar{R}_{n-1})_{i, j<n},$
where for each $i<n$, $R_i$ is the relation
$(u_0,\ldots u_{n-1})\bar{R}_i(v_0,\ldots, v_{n-1})$ if $u_iR_iv_i$ and
$u_k=v_k$ for $k\neq i$,  
\end{definition}

${\bf K}^n$ is the logic of $n$-ary product frames, of the form $(W_i, R_i)_{i<n}$ where for each $i<n$, $R_i$ is any any relation on $W_i$.
On the other hand, 
${\bf S5}^n$ can be regarded as the logic of $n$--ary product frames of the form 
$(W_i, R_i)_{i<n}$ such that for each $i<n$, $R_i$ is an equivalence relation. 
It is known that logics between $\bold K^n$ and ${\bf S5}^n$ 
are quite complicated, cf.  \cite{k} for a detailed overview. 
Theorem \ref{complexity} to be proved in a moment adds to their 
complexity.

We have seen in theorem \ref{AT} that $\sf VT$ holds for (i) when semantics are {\it guarded}, but it  fails for (ii) when semantics are {\it clique guarded}, cf. 
theorem \ref{can}.
We not know whether $\sf VT$ holds 
for $\bold K^n$ or for that matter any $\L\subsetneq {\bf S5}^n$ (containing $\bold K^n$), or not. 
A  strongly related question that is applicable in the present (modal context) is whether the multi-dimensional modal logic $\bold K^n$ is
Sahlqvist or not? So far, we know that 
${\bf S5}^n$ is {\it not Sahlqvist}.

On the other hand, it is known that modal languages can come to grips with 
a strong fragment of second order logic. Modal 
formulas translate to second order formulas, {\it their correspondants} 
on frames.  Some of these formulas can be {\it genuinely second order}; 
they are not equivalent to first order formulas. An example is the {\it McKinsey formula}: 
$\Box \Diamond p\to \Diamond \Box p$. This can be proved by showing that its correspondant violates 
the downward L\"owenheim- Skolem theorem  \cite[Example 3.11]{modal}.
The next theorem bears on the last two issues. 
For a class $\bold L$ of frames, let $\L(\bold L)$ be the class of modal formulas valid in $\bold L$. 
It is difficult to find explicity (necessarily) infinite axiomatizations for ${\bf S5}^n$ as well:

\begin{theorem}\label{complexity0} Let $2<n<\omega$. Then, like $L_n$, ${\bf S5}^n$  
cannot be axiomatized by any set of modal formulas having first order 
correspondents. In particular ${\bf S5}^n$ is not Sahlqvist.  Even more, 
${\bf S5}^n$ does not have a canonical axiomatization. 
\end{theorem}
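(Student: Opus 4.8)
The plan is to transfer the three clauses from the analogous statement for $L_n$ (item (3) of Theorem \ref{bad}) to ${\bf S5}^n$, exploiting the fact that ${\bf S5}^n$ is the disguised form of $L_n^{-=}$, so that the variety of $\sf BAO$s corresponding to ${\bf S5}^n$ is ${\sf RDf}_n$, and a Kripke frame $\F$ of the relevant form (equivalently, a ${\sf Df}_n$ atom structure) validates ${\bf S5}^n$ exactly when $\Cm\F\in {\sf RDf}_n$, i.e. exactly when $\F$ is \emph{strongly representable}. Thus the frame class of ${\bf S5}^n$ coincides with the class of strongly representable ${\sf Df}_n$ atom structures. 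The three inputs I need are the diagonal--free analogues of the inputs used in Theorem \ref{bad}: non--elementarity of the strongly representable atom structures, non--atom--canonicity, and the presence of infinitely many non--canonical equations in every axiomatization. The first drives the first--order correspondent clause, the second (already available as Theorem \ref{df}) yields failure of the Sahlqvist property, and the third yields failure of canonical axiomatizability.

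First I would argue the first--order correspondent clause. If ${\bf S5}^n$ were axiomatized by modal formulas $\{\phi_i\}$ each possessing a first--order correspondent $\chi_i$, then its frame class would equal the elementary class $\mathrm{Mod}(\{\chi_i\})$; hence it suffices to show that the class of strongly representable ${\sf Df}_n$ atom structures is \emph{not} elementary. I transfer the known non--elementarity in the $\CA_n$ case (used in Theorem \ref{bad}, see \cite{b}) via the Maddux sequence underlying Corollary \ref{maddux}: let $(\At\A_l)_{l}$ be the $\CA_n$ atom structures that are not strongly representable but whose ultraproduct is (completely, hence strongly) representable, and pass to the diagonal--free reducts $(\Rd_{df}\At\A_l)_l$. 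Since $\Cm(\Rd_{df}\At\A_l)=\Rd_{df}\Cm\At\A_l$ and reducts commute with ultraproducts, the reduct of the ultraproduct is still strongly representable; while for each $l$ the complex algebra $\Cm\At\A_l$ is generated by its elements of dimension $<n$ under infinite intersections, by the same product--of--cylindrifications argument as in the proof of Theorem \ref{df}, so Lemma \ref{dfb} gives $\Rd_{df}\Cm\At\A_l\in {\sf RDf}_n\Rightarrow \Cm\At\A_l\in\RCA_n$, which is false. Hence each $\Rd_{df}\At\A_l$ fails to be strongly representable, so the class of strongly representable ${\sf Df}_n$ atom structures is not closed under ultraproducts, and therefore not elementary. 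The Sahlqvist clause is then immediate: every Sahlqvist formula has a first--order correspondent, so a Sahlqvist axiomatization would contradict the clause just proved (alternatively, Sahlqvist formulas are di--persistent, contradicting the non--atom--canonicity of ${\sf RDf}_n$ from Theorem \ref{df}).

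Finally I would treat the no--canonical--axiomatization clause. Here ${\sf RDf}_n$ \emph{is} canonical (${\bf S5}^n$ is a canonical logic, as in the $L_n$ case), so the content is the genuinely stronger phenomenon of a canonical variety admitting no canonical axiomatization; this does not follow from the first clause, since canonical equations need not have first--order correspondents. I would re--run the argument behind \cite{b} in the diagonal--free signature: using the same witnessing family $(\Rd_{df}\Cm\At\A_l)$ of non--representable complex algebras lying over term algebras whose canonical extensions remain representable, one shows that no finite set of non--canonical equations can account for the non--representability, so every equational axiomatization of ${\sf RDf}_n$ must contain infinitely many non--canonical equations; equivalently ${\bf S5}^n$ has no canonical axiomatization.

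The main obstacle I anticipate is the non--elementarity transfer, and within it the verification that the diagonal--free reduct genuinely destroys representability for \emph{each} member of the Maddux sequence --- that is, checking that the generation hypothesis of Lemma \ref{dfb} (each atom equal to the product of its cylindrifications) really does hold for the matrix--based complex algebras $\Cm\At\A_l$, and not merely for the rainbow algebra treated in Theorem \ref{df}. A secondary technical burden is re--establishing the canonicity machinery relating canonical equations on term algebras to their complex algebras in the equality--free language, which is what makes the \cite{b}--style counting argument go through for ${\sf RDf}_n$.
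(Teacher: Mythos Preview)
Your overall architecture matches the paper's: identify the frame class of ${\bf S5}^n$ with the strongly representable ${\sf Df}_n$ atom structures, show this class is not elementary (giving the first clause and hence the Sahlqvist clause), and invoke \cite{b} for the no--canonical--axiomatization clause. However, your non--elementarity argument has a genuine gap in the direction of the ultraproduct.

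You take the Maddux sequence $(\At\A_l)_l$ from Corollary~\ref{maddux}, whose members are \emph{not} strongly representable while their ultraproduct \emph{is}, and then use Lemma~\ref{dfb} to show the same pattern persists in the diagonal--free reducts. But this only exhibits non--members whose ultraproduct is a member, i.e.\ it shows the \emph{complement} of the class is not closed under ultraproducts. That does not contradict elementarity: the class of infinite structures is elementary, yet an ultraproduct of finite structures can be infinite. To refute elementarity you need the opposite direction --- members of the class whose ultraproduct is \emph{not} a member.

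The paper supplies precisely this, using the Monk--like algebras $\mathfrak{M}(\Gamma)$ of \cite{HHbook2} and Erd\H{o}s' probabilistic graphs: for each finite $\kappa$ one has $\chi(\Gamma_\kappa)=\infty$, so $\mathfrak{M}(\Gamma_\kappa)\in\RCA_n$ and hence $\Rd_{df}\mathfrak{M}(\Gamma_\kappa)\in{\sf RDf}_n$ trivially; but the non--principal ultraproduct $\Gamma$ has $\chi(\Gamma)\leq 2$, so $\mathfrak{M}(\Gamma)\notin\RCA_n$, and then Lemma~\ref{dfb} (applied in the contrapositive, using that $\mathfrak{M}(\Gamma)$ is generated under infinite unions by elements of dimension $<n$) forces $\Rd_{df}\mathfrak{M}(\Gamma)\notin{\sf RDf}_n$. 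The generation hypothesis you correctly flagged as the main technical burden therefore has to be verified for the $\mathfrak{M}(\Gamma)$ family, not for the Maddux algebras. Your treatment of the remaining two clauses is in line with the paper, which cites \cite{b} directly for ${\sf RDf}_n$.
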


\begin{proof} Let $\bold L$ be the class of square frames for ${\bf S5}^n$.
Then $\L(\bold L)={\bf S5}^n$ \cite[p. 192]{k}. But the class of frames $\F$ valid in $\L(\bold L)$ coincides 
with the class of  {\it strongly representable ${\sf Df}_n$ atom structures} which  
is {\it not elementary} as proved in \cite{b}. This gives the first required result for ${\bf S5}^n$. With lemma \ref{dfb} at our disposal, 
a slightly different proof can be easily distilled from the construction adressing $\CA$s in \cite{HHbook2} or \cite{k2}. 
We adopt the construction in the 
former reference, using the Monk--like $\CA_n$s  
${\mathfrak M}(\Gamma)$, $\Gamma$ a graph, as defined in
\cite[Top of p.78]{HHbook2}. 
For a graph $\G$, let $\chi(\G)$ denote it chromatic number. 
Then it is proved in {\it op.cit} that 
for any graph $\Gamma$, ${\mathfrak M}(\Gamma)\in \RCA_n$ 
$\iff$ $\chi(\Gamma)=\infty$.
By Lemma \ref{dfb},
$\Rd_{df}{\mathfrak M}(\Gamma)\in {\sf RDf}_n\iff \chi(\Gamma)=\infty$,
because $\mathfrak{M}(\Gamma)$ 
is generated by the set $\{x\in {\mathfrak M}(\Gamma): \Delta x\neq n\}$ using infinite unions.

Now we adopt the argument in \cite{HHbook2}. Using Erdos' probabalistic graphs \cite{Erdos}, 
for each finite
$\kappa$, there is a finite graph $G_{\kappa}$ with
$\chi(G_{\kappa})>\kappa$ and with no cycles of length $<\kappa$. 
Let $\Gamma_{\kappa}$ be the disjoint union of the $G_{l}$ for
$l>\kappa$.  Then $\chi(\Gamma_{\kappa})=\infty$, and so
$\Rd_{df}\mathfrak{M}(\Gamma_{\kappa})$ is representable.
Now let $\Gamma$ be a non-principal ultraproduct
$\Pi_{D}\Gamma_{\kappa}$ for the $\Gamma_{\kappa}$s. For $\kappa<\omega$, let $\sigma_{\kappa}$ be a
first-order sentence of the signature of the graphs stating that
there are no cycles of length less than $\kappa$. Then
$\Gamma_{l}\models\sigma_{\kappa}$ for all $l\geq\kappa$. By
Lo\'{s}'s Theorem, $\Gamma\models\sigma_{\kappa}$ for all
$\kappa$. So $\Gamma$ has no cycles, and hence by $\chi(\Gamma)\leq 2$.
Thus $\mathfrak{Rd}_{df}\mathfrak{M}(\Gamma)$
is not representable.  
(Observe that the 
the term algebra $\Tm\At(\mathfrak{M}(\Gamma))$
is representable (as a $\CA_n$), 
because the class of weakly representable atom structures is elementary \cite[Theorem 2.84]{HHbook}.)
Since Sahlqvist formulas have first order correspondants, then ${\bf S5}^n$ is not Sahlqvist.
The second 
part is due to the fact that 
any (necessarily infinite) equational  axiomatization of ${\bf S5}^n$ contains 
infinitely many non--canonical equations \cite{b}.

\end{proof}

In products {\it the modalities interact}, and this very interaction obviously adds to its components.
In fusions such interactions
are simply non--existent.
In such a process negative properties persist. The reason basically is that
products reflect the interaction of modalities; which is to be blamed
for the negative result. If they miss on anything
then they miss only on the uni--dimensional aspects of modalities and
these do not really contribute to negative results.
Negative results are caused
by the interaction of modalities not by their uni--dimensional properties.

For example, in a product of two uni-modal logics, a precarious Church Rosser condition
on modalities is created via $\Diamond_i\Box_jp=\Box_j\Diamond_ip$  ($i,j <n$).
Indeed, it is
known that the theory of two commuting confluence
closure operators is undecidable.
Nevertheless, {\it commuting closure operations} alone can be harmless like in the case of
many cylindric--like algebras of dimension $2$,
but the interaction of the two modalities
expressed by the confluence is potentially harmful.
The product logic of two countable time flows is not even recursively enumerable, furthermore the modal logic of
$(\mathbb{N}, <)$ is undecidable.

Compared to fusions, there are very few general transfer results for products,  in fact here {\it the exact opposite occurs}.
Nice properties do not transfer, rather the lack of transfer is the norm, particularly
concerning finite axiomatizability and decidability.

Now addressing more logics, we formulate and prove the next `complexity' result which generalizes some results proved in the last theorem to more $n$-modal logics.
\begin{theorem}\label{complexity} Let $2<n<\omega$. Let $\L$ be any canonical logic between $\bold K^n$ and ${\bf S5}^n$. Then the following is true of $\L$:
\begin{enumerate}
\item  $\L$ is not finitely axiomatizable,
\item It is undecidable to tell whether a finite frame is a frame for $\L$. In particular, $\L$ is undecidable, 
\item $\L$ cannot be axiomatized by canonical formulas. In particular, $\L$ is not Sahlqvist.
\end{enumerate}
\end{theorem}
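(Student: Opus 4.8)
The plan is to bring all three assertions into the ``cylindric--type'' world --- varieties lying between ${\sf RDf}_n$ and ${\sf Df}_n$ --- by one canonical device, and then invoke the machinery already set up in Theorem \ref{complexity0}, Lemma \ref{dfb} and Corollary \ref{maddux}. Passing to algebras, a normal $n$--modal logic $\L$ with $\bold K^n\subseteq \L\subseteq {\bf S5}^n$ corresponds to a variety $\V_\L$ of $\sf BAO$s with ${\sf RDf}_n=\V_{{\bf S5}^n}\subseteq \V_\L\subseteq \V_{\bold K^n}$ (larger logics yield smaller varieties). First I would form $\L^+:=\L\oplus\{T,B,4\}$, adjoining to $\L$ the reflexivity, symmetry and transitivity axioms for each of the $n$ modalities. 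These are very simple Sahlqvist, hence canonical, formulas, so since $\L$ is canonical the variety $\V_{\L^+}=\V_\L\cap{\sf Df}_n$ is again canonical, and ${\sf RDf}_n\subseteq\V_{\L^+}\subseteq{\sf Df}_n$. The syntactic point is that a canonical (in particular Sahlqvist) axiomatization $\Sigma$ of $\L$ would give, together with the canonical set $\{T,B,4\}$, a canonical axiomatization of $\L^+$; likewise a finite axiomatization of $\L$ would give one of $\L^+$, and an algorithm deciding whether a finite frame is a frame for $\L$ would decide this for the $\sf Df_n$--frames, for which being a frame for $\L$ and for $\L^+$ coincide. So it suffices to prove the three assertions for $\L^+$ and pull them back.

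For $\L^+$ the engine is the Monk--Erd\H{o}s construction of Theorem \ref{complexity0}. Using Erd\H{o}s' probabilistic graphs one builds graphs $\Gamma_\kappa$ with $\chi(\Gamma_\kappa)=\infty$, so that $\Rd_{df}{\mathfrak M}(\Gamma_\kappa)\in{\sf RDf}_n\subseteq\V_{\L^+}$, whose non--principal ultraproduct $\Gamma=\prod_D\Gamma_\kappa$ has $\chi(\Gamma)\le 2$, whence $\Rd_{df}{\mathfrak M}(\Gamma)\notin{\sf RDf}_n$. For item (3), if $\L^+$ had a canonical axiomatization the associated class of atom structures $\mathsf{Str}\,\V_{\L^+}$ would be forced to be closed under ultraproducts, and the sequence $(\At{\mathfrak M}(\Gamma_\kappa))_\kappa$ with its limit $\At{\mathfrak M}(\Gamma)$ would contradict this exactly as in Theorem \ref{complexity0}, so every equational axiomatization of $\V_{\L^+}$ contains infinitely many non--canonical equations (cf. \cite{b}); pulling back, $\L$ is not axiomatizable by canonical formulas, in particular not Sahlqvist. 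For item (1) I would instead feed in the Maddux--Monk sequence of Corollary \ref{maddux}, a sequence of reducts lying outside ${\sf RDf}_n$ whose ultraproduct is representable, witnessing that $\V_{\L^+}$, hence $\L^+$ and $\L$, is not finitely axiomatizable. For item (2) I would lift the undecidability of Theorem \ref{decidability} through Hodkinson's recursive association of a $\CA_3$ (hence a $\sf Df_3$, hence an $n$--modal frame) to each relation algebra, so that deciding framehood for $\L^+$ would decide an undecidable algebraic membership problem.

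The main obstacle --- and the place where the hypothesis that $\L$ is canonical is indispensable --- is the uniformity over the \emph{a priori unknown} variety $\V_{\L^+}$. The non--representable $\Rd_{df}{\mathfrak M}(\Gamma)$ still lies in ${\sf Df}_n$, so knowing it is outside ${\sf RDf}_n$ does \emph{not} by itself place it outside the possibly strictly larger $\V_{\L^+}$; moreover the available embedding $\prod_D\Cm\Gamma_\kappa\hookrightarrow\Cm\Gamma$ runs the wrong way, and the canonicity machinery, via bounded morphisms of the ultrafilter frame of $\prod_D\Cm\Gamma_\kappa$ onto $\Gamma$, tends to \emph{include} $\Cm\Gamma$ in $\V_{\L^+}$ rather than exclude it. What must actually be shown is that the separating witnesses --- the non--representable complex algebras and the critical non--canonical equations produced by the Erd\H{o}s argument --- stay on the correct side of $\V_{\L^+}$ for \emph{every} canonical $\V_{\L^+}$ in the interval $[{\sf RDf}_n,{\sf Df}_n]$, equivalently that $\mathsf{Str}\,\V_{\L^+}$ fails to be closed under ultraproducts and that the equations of \cite{b} remain valid throughout this interval.

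I therefore expect the crux to be this interval--uniformity step, proved by a refinement of the \cite{b}/Theorem \ref{complexity0} analysis showing that the relevant equations are valid in all of ${\sf Df}_n$ (not merely in ${\sf RDf}_n$) and are genuinely non--canonical, combined with the canonicity of $\V_{\L^+}$; the remaining, more routine, work is the bookkeeping that bridges the gap between the product logic ${\bf S5}^n$ and the commutator $\bold K^n\oplus\{T,B,4\}$, which for $n\ge 3$ are distinct, so that the reduction $\L\mapsto\L^+$ does not silently collapse the problem. Once the uniformity is secured, items (1)--(3) follow for $\L^+$ from the cited results and transfer to $\L$ by the observations of the first paragraph.
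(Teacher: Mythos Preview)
Your reduction $\L\mapsto\L^+:=\L\oplus\{T,B,4\}$ is the right \emph{shape} of argument, but it does not reach ${\bf S5}^n$: for $n\ge 3$ the logic $\bold K^n\oplus\{T,B,4\}$ is strictly weaker than ${\bf S5}^n$, so $\V_{\L^+}$ lands somewhere strictly between ${\sf RDf}_n$ and ${\sf Df}_n$, and you are then forced into the ``interval--uniformity'' problem you yourself flag. Your proposed fix---showing that the Bulian--Hodkinson equations are valid in all of ${\sf Df}_n$---cannot work as stated: those equations axiomatize ${\sf RDf}_n$ inside ${\sf Df}_n$, so they are \emph{not} valid throughout ${\sf Df}_n$. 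The obstacle you identify is real and your workaround does not close it.

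The paper avoids this entirely by invoking a stronger reduction: Kurucz's theorem \cite[Theorem~2.2.7]{k} that ${\bf S5}^n$ is \emph{finitely axiomatizable over} any $\L$ with $\bold K^n\subseteq\L\subseteq{\bf S5}^n$, i.e.\ there is a finite $\Phi$ (not merely $\{T,B,4\}$) with $\L\oplus\Phi={\bf S5}^n$. With this in hand, items (1) and (3) are immediate: a finite axiomatization of $\L$ would yield one of ${\bf S5}^n$, contradicting non--finite--axiomatizability of ${\sf RDf}_n$; and if $\L$ were axiomatized by canonical formulas $\Sigma$, then $\Sigma\cup\Phi$ axiomatizes ${\bf S5}^n$ with only finitely many possibly non--canonical formulas, contradicting \cite{b}. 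Item (2) is not derived from Theorem~\ref{decidability} at all---it is the main result of \cite{k2}, quoted directly. So the missing ingredient is precisely Kurucz's finite--axiomatizability--over result; once you have it, no Erd\H{o}s--graph argument or uniformity analysis for intermediate varieties is needed.
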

\begin{proof}   
First item follows from that the variety ${\sf RDf}_n$ is not finitely axiomatizable and that ${\bf S5}^n$ 
is finitely axiomatizable over any (canonical) $\L$ as specified in the statement of the theorem \cite[Theorem 2.2.7]{k}.
Item (2) follows from the main result in \cite{k2}. 
Although the logic $\bold K^n$ has the finite model property,
it encodes the tiling problem and so it is undecidable. 
In fact, there are three dimensional
formulas that are valid in all higher dimensional finite products, but can be falsified on an infinite
frame. (Here validity in higher dimensions is meaningful, because if $n<m$ then the modal logic ${\bf K}^n$
embeds into ${\bf K}^m$.)
Furthermore, in \cite{k2} it is proved that it is undecidable to tell whether a finite frame
is a frame for $\L$,  and this gives
strong non--finite axiomatizability results, cf. Theorem \ref{decidability} and obviously implies undecidability.

For item (3), again we use that ${\bf S5}^5$ is finitely axiomatizable over $\L$ 
and any equational  axiomatization of ${\sf RDf}_n$ contains infinitely many non--canonical equations as proved in \cite{b}. Thus any axiomatization of $\L$ must 
contain infinitely many non--canonical equations (observe that this plainly implies item (1)). 
Since a Sahlqvist equation is canonical, then there is no 
Sahlqvist axiomatization of $\L$.
\end{proof}

Given a variey $\V$ of $\sf BAO$s, it is often desirable to find a concrete elementary (first order definable) class of atom stuctures $S$ that generates $\V$, in the sense that
${\bf HSP}\Cm S=\V$. This corresponds to a modal logic being {\it elementary generated} \cite{g}.
The most natural candidate is $\At\V$, {\it the class of atom structures of all atomic algebras in $\V$}, 
because $\At\V$ is elementary, when $\V$ happens to be completely additive \cite[Theorem 2.84]{HHbook}. But for $\RCA_n$, which is a 
completely additive variety, 
non--atom canonicity, proved in theorem \ref{can}, implies that $\At(\RCA_n)$ does not work. For a canonical variety $\V$, as is the case
with $\RCA_n$, the class $\Str\V=\{\F: \Cm\F\in \V\}$, plainly contained in $\At\V$,  always generates $\V$ in the {\it strong 
sense} that $\bold S\Cm\Str(\V)=\V$. So here the variety is obtained without the intervention 
of homomorphic images and products; hence the terminology strong.
In this case $\V$, being generated by a class of complex algebras is said to be
{\it complete} \cite{g}.  In particular $\RCA_n$ is a complete variety.  

But $\Str\V$ is not always elementary as is indeed the case with $\V=\RCA_n$ \cite{k2}, cf. Theorem \ref{complexity0}.
Nevertheless, there are other elementary subclasses of $\At(\RCA_n)$ that generates $\RCA_n$ in the strong sense (without the help of homomorphic images and products), 
making $L_n$  elementary generated. 
One such class is the class of atom structures satisfying the so--called Lyndon conditions \cite{HHbook2}. This implies that $\RCA_n$ is {\it canonical}, because
it is generated by a class of atom structures closed under ultraproducts \cite[Fact 2.86]{HHbook}, \cite[Theorem 4.50]{modal}.
\footnote{Whether every canonical variety is generated by a class of atom structures closed under ultraproducts
remained an open question for quite some time until answered negatively \cite{g2}. 
The question is attributed to Fine.}
We summarize (some of) the above in a (more) rigorous definition:
\begin{definition}
A variety $\V$ is {\it elementary generated} if there exists an elementary class $\sf K$ of atom structures such that 
${\bf HSP}\Cm\sf K=\V$. We say that $\V$ is elementary generated {\it in the strong sense} if there is an elementary class $\sf K$ such that
$\bold S\Cm \sf K=\V$. In the latter case, $\V$ is said to be {\it strongly complete}, while in the former case $\sf V$ is said to be {\it complete}.
The definition of the last two notions does not require 
that  $\sf K$ is elementary \cite{g}. 
We say that $V$ is {\it atomically generated} 
if it generated by its atomic members. 
\end{definition}

In Theorem \ref{can} using a rainbow construction we showed that $\bold S\Nr_n\CA_{n+k}$ is not 
atom--canonical, for any $k\geq 3$.
It is known that ${\sf Str}({\RCA}_n)$
is not elementary, a result of Hirsch and Hodkinson's \cite[Corollary 3.7.2]{HHbook2} proved using Monk-like algebras, cf. Theorem \ref{complexity0}. 
Fix finite $k>2$.  Then $\V_k={\sf Str}(\bold S\Nr_n\CA_{n+k})$ is not elementary 
$\implies   \V_k$ is not--atom canonical \cite[Theorem 2.84]{HHbook}.  
{\it But the converse implication, namely, 
$\V_k$ is not atom--canonical $\implies {\sf Str}(\V_k)$ not elementary does not hold in general.}
However, it is not hard to show that there has to be a finite $k<\omega$ such that ${\sf Str}(\V_k)$ is not elementary
as shown next; specifying such a $k$ is another story. The case $l=\omega$ in the statement of the next theorem is the limiting case obtained in \cite{HHbook2} and Theorem \ref{complexity0}.

\begin{theorem} For $2<n<\omega$, there is a finite $m\geq n+2$ such that the clique guarded fragments of $L_n$ with respect to $l$--flat models for any $m\leq l\leq \omega$,
is not axiomatizable by formulas with first order correspondances.
\end{theorem}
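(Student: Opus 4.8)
The plan is to pass to algebra and then to reduce everything to a single non--elementarity statement that can be verified uniformly in $l$. By Theorem \ref{flat}, an algebra $\A\in\CA_n$ has an $l$--flat representation if and only if $\A\in\bold S\Nr_n\CA_l$, so the clique guarded fragment of $L_n$ with respect to $l$--flat models is exactly the multi--modal logic whose variety of $\sf BAO$s is $\bold S\Nr_n\CA_l$, and the frames for this logic form the class $\Str(\bold S\Nr_n\CA_l)=\{\F:\Cm\F\in\bold S\Nr_n\CA_l\}$. Using the basic correspondence $\F\models\phi\iff\Cm\F\models(\phi=1)$ recalled in the excerpt, together with the fact that a formula with a first order frame correspondent defines an elementary class of frames, one sees that if the logic were axiomatized by a set of formulas all having first order correspondents, then $\Str(\bold S\Nr_n\CA_l)$ would be the class of frames satisfying the (possibly infinite) first order theory consisting of those correspondents, hence elementary. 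Thus it suffices to produce a finite $m\ge n+2$ with $\Str(\bold S\Nr_n\CA_l)$ \emph{not elementary} for every $m\le l\le\omega$.

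First I would recall the witness underlying Theorem \ref{complexity0} (equivalently \cite[Corollary 3.7.2]{HHbook2}). Using Erd\H{o}s probabilistic graphs one fixes finite graphs $G_\kappa$ with $\chi(G_\kappa)>\kappa$ and with no cycles of length $<\kappa$, sets $\Gamma_\kappa$ to be the disjoint union of the $G_l$ with $l>\kappa$, and lets $\Gamma=\prod_D\Gamma_\kappa$ be a non--principal ultraproduct. Since $\chi(\Gamma_\kappa)=\infty$, the Monk--like $\CA_n$ algebra $\mathfrak{M}(\Gamma_\kappa)$ is representable, so its atom structure lies in $\Str(\RCA_n)$; by Lo\'{s}'s theorem $\Gamma$ has no finite cycles, whence $\chi(\Gamma)\le 2$ and $\mathfrak{M}(\Gamma)$ is \emph{not} representable. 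As in \cite{HHbook2}, the construction $\mathfrak{M}(-)$ is a first order interpretation in the graph, so $\At\mathfrak{M}(\Gamma)\cong\prod_D\At\mathfrak{M}(\Gamma_\kappa)$; this exhibits a sequence inside $\Str(\RCA_n)$ whose ultraproduct leaves $\Str(\RCA_n)$.

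Next I would locate the finite level. Since $\Cm\At\mathfrak{M}(\Gamma)=\mathfrak{M}(\Gamma)\notin\RCA_n=\bold S\Nr_n\CA_\omega=\bigcap_{k<\omega}\bold S\Nr_n\CA_{n+k}$ (the neat embedding theorem together with the approximation of $\RCA_n$ by finite dimensions, \cite{HMT2}), there is a finite $k_0$ with $\mathfrak{M}(\Gamma)\notin\bold S\Nr_n\CA_{n+k_0}$. Put $m=\max(n+2,\,n+k_0)$. The two monotonicities $\bold S\Nr_n\CA_{l_2}\subseteq\bold S\Nr_n\CA_{l_1}$ whenever $l_1\le l_2$, and $\Str(P)\subseteq\Str(Q)$ whenever $P\subseteq Q$, now do the remaining work. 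For every $l$ with $m\le l\le\omega$ we have on the one hand $\At\mathfrak{M}(\Gamma_\kappa)\in\Str(\RCA_n)\subseteq\Str(\bold S\Nr_n\CA_l)$ for all $\kappa$, and on the other hand $\At\mathfrak{M}(\Gamma)\notin\Str(\bold S\Nr_n\CA_m)\supseteq\Str(\bold S\Nr_n\CA_l)$, so the ultraproduct $\At\mathfrak{M}(\Gamma)=\prod_D\At\mathfrak{M}(\Gamma_\kappa)$ falls outside $\Str(\bold S\Nr_n\CA_l)$. Hence $\Str(\bold S\Nr_n\CA_l)$ is not closed under ultraproducts, so not elementary, for all $m\le l\le\omega$; by the reduction of the first paragraph this yields the theorem.

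The hard part, and the reason the argument yields only \emph{existence} of $m$, is the passage from ``$\mathfrak{M}(\Gamma)\notin\RCA_n$'' to ``$\mathfrak{M}(\Gamma)\notin\bold S\Nr_n\CA_{n+k_0}$ for some finite $k_0$''. This is precisely where the converse to \cite[Theorem 2.84]{HHbook} fails: non--atom--canonicity of a single $\bold S\Nr_n\CA_{n+k}$ (established in Theorem \ref{can}) does not by itself force $\Str$ to be non--elementary at that dimension, so no particular level can be singled out from this reasoning alone, and one knows $k_0$ exists only because $\RCA_n$ is the intersection of its finite--dimensional approximants. The pleasant point that makes the proof short is that, once $m$ is fixed, the \emph{same} Monk--graph ultraproduct simultaneously witnesses non--elementarity at every level $l\in[m,\omega]$, so no separate construction is required for each individual $l$.
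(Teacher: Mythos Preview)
Your proof is correct and follows essentially the same route as the paper: reduce to showing $\Str(\bold S\Nr_n\CA_l)$ is not elementary, take the Erd\H{o}s--graph Monk algebras $\mathfrak{M}(\Gamma_\kappa)$ from Theorem~\ref{complexity0}, use $\RCA_n=\bigcap_k\bold S\Nr_n\CA_{n+k}$ to locate a finite level at which the ultraproduct fails, and then invoke monotonicity to cover all $l\ge m$. Your write-up is in fact a bit more careful than the paper's in two places --- you make explicit why $\At\mathfrak{M}(\Gamma)\cong\prod_D\At\mathfrak{M}(\Gamma_\kappa)$ (via the first order interpretation), and you spell out the reduction from ``axiomatizable by formulas with first order correspondents'' to ``$\Str$ is elementary'' --- but the underlying argument is the same.
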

\begin{proof} Like before, our proof is algebraic. It suffices to show, also like before,  that there is an $m\geq n+2$, such that the class ${\sf Str}(\bold S\Nr_n\CA_m)$ is not elementary.
Let $(\A_i: i\in \omega)$ be a sequence of (strongly) representable $\CA_n$s with $\Cm\At\A_i=\A_i$
and $\A=\Pi_{i/U}\A_i$ is not strongly representable with respect to any non-principal ultraflter $U$ on $\omega$.
Such algebras exist by the proof of Theorem \ref{complexity0}. Hence $\Cm\At\A\notin \bold S\Nr_n\CA_{\omega}=\bigcap_{i\in \omega}\bold S\Nr_n\CA_{n+i}$, 
so $\Cm\At\A\notin \bold S\Nr_n\CA_{l}$ for all $l>k$, for some $k\in \omega$, $k>n$. 
But for each such $l$, $\A_i\in \bold S\Nr_n\CA_l(\subseteq {\sf RCA}_n)$, 
so $(\A_i:i\in \omega)$ is a sequence of algebras such that $\Cm\At(\A_i)\in \bold S\Nr_n\CA_{l}$ $(i\in I)$, but 
$\Cm(\At(\Pi_{i/U}\A_i))=\Cm\At(\A)\notin \bold S\Nr_n\CA_l$, for all $l\geq k$.
\end{proof}

We show, using a known celebrated (algebraic) result proved in \cite{AT}, that a certain guarded fragment of $L_n$ enjoys a Vaught's theorem. Let $n$ be a finite ordinal $>1$. 
Consider now Kripke frames of the form $({}V, \equiv_i, D_{ij})_{i,j<n}$ where $V\subseteq {}^nU$ is a ${\sf D}_n$ unit ($U$ a non--empty set)
where for $i<j<n$, $\equiv_i$ and $D_{ij}$ are defined 
like in Kripke frames of $L_n$
restricted to the set $V$ of worlds.
The Kripke complete multi-modal logic $(\L_n$) consisting of the set of modal formulas that are valid in every frame of the above form 
admits a finite Hilbert style complete axiomatization and enjoys a Vaught's theorem. 
Note that $\L_n$ is a {\it guarded} fragment of $L_n$ in the sense of \cite{v}. We shall see that when we guard semantics negative properties formulated for $L_n$ in theorem 
\ref{ln} vanish.
Since ${\sf D}_n$ is the variety of modal 
algebras corresponding
to $\L_n$, that is, ${\sf D}_n ={\bf SP}\Cm\bold L^n$, it suffices to prove the next algebraic result. 

\begin{theorem}\label{AT} \cite{AT} Let $2<n<\omega$. Then the variety ${\sf D}_n$ is finitely axiomatizable and 
every atomic algebra in ${\sf D}_n$ is completely representable. Furthermore, ${\sf D}_n$ is canonical, atomically generated, 
atom--canonical, thus elementary generated in the strong sense. 
\end{theorem}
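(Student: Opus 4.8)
The plan is to separate the statement into its deep analytic core—finite axiomatizability together with a (complete) representation theorem—and the formal consequences (canonicity, atom--canonicity, atomic and strong elementary generation) that cascade once the core is secured. For the core I would first exhibit an explicit finite equational base for the variety ${\sf D}_n={\bf SP}\Cm\bold L^n$ attached to the frames $(V,\equiv_i,D_{ij})$ with $V\subseteq{}^nU$: take the usual $\CA_n$ axioms but \emph{delete} full commutativity of cylindrifiers $\c_i\c_j x=\c_j\c_i x$, retaining only the guarded/restricted commutativity together with the \emph{merry-go-round} identities governing the interaction of the substitution operators $\s^i_j$ with the $\c_i$. The substance of the first half of the theorem is that these finitely many equations axiomatize exactly the representable ${\sf D}_n$--set algebras; I would prove the nontrivial (representation) direction by a step-by-step construction that builds the relativized unit $V\subseteq{}^nU$ one tuple at a time, maintaining throughout the defining closure condition $s\in V\Rightarrow s\circ[i|j]\in V$.

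The heart of the matter, and the step I expect to be the main obstacle, is the complete representability of the atomic members. Given an atomic $\A\in{\sf D}_n$ I would run the $\omega$-rounded atomic game of Definition \ref{game} (or its network/hyperbasis incarnation used in Theorem \ref{flat}) and argue that \pe\ has a \ws. The decisive gain over the $\CA_n$ situation is that relativization removes the commutativity obstruction: because the unit $V$ need not exhaust ${}^nU$, every cylindrifier demand of \pa\ can be answered on a fresh \emph{guarded} tuple, so \pe\ is never forced into the inconsistent red clique that defeats her under the cone-bombardment strategy of Theorem \ref{can}. Passing to an $\omega$-saturated base (equivalently, amalgamating the limit of the play) then yields a representation $f$ with $\bigcup_{x\in\At\A}f(x)=1$, i.e. a complete one; this is precisely the relativized analogue of item (3) of Theorem \ref{flat}, now with the obstruction absent. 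The work here is bookkeeping of defect sets and the closure conditions, but no new obstruction arises.

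With the core in place the remaining assertions follow formally. Atom--canonicity is immediate: a complete representation of atomic $\A$ restricts to an ordinary representation of $\Cm\At\A$ (send a complete ultrafilter to the union of the images of the atoms below it), so $\Cm\At\A\in{\sf D}_n$. Canonicity follows because the chosen equational base consists of canonical (indeed Sahlqvist-type) identities—the merry-go-round and guarded-commutativity equations are preserved under canonical extensions—whence $\A\in{\sf D}_n\Rightarrow\A^+\in{\sf D}_n$; atomic generation is then immediate, since every $\A$ embeds into its atomic canonical extension $\A^+$, which stays in the variety.

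Finally, elementary generation in the strong sense is a consequence of these two persistence properties plus complete additivity. Since ${\sf D}_n$ is completely additive, its class of atom structures $\At{\sf D}_n$ is elementary \cite[Theorem 2.84]{HHbook}; atom--canonicity gives $\At\A\in\Str{\sf D}_n$ for every atomic $\A$, so $\Str{\sf D}_n=\At{\sf D}_n$ is elementary, and canonicity yields the strong generation $\bold S\Cm\Str({\sf D}_n)={\sf D}_n$ exactly as recorded in the discussion preceding the theorem. Hence ${\sf D}_n$ is elementary generated in the strong sense, completing the plan; the genuinely hard labor is confined to the finite axiomatization and the two representation arguments, while the persistence and generation clauses are essentially soft corollaries of them.
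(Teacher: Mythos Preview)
Your proposal is correct and follows essentially the same architecture as the paper: the analytic core (finite axiomatization and complete representability of atomic members) is imported from \cite{AT, mlq}, and the persistence and generation clauses are then derived formally. One minor divergence worth noting: you obtain atom--canonicity as a corollary of complete representability (a complete representation of atomic $\A$ induces an ordinary one of $\Cm\At\A$), whereas the paper derives \emph{both} canonicity and atom--canonicity in one stroke from the fact that the axioms $\Sigma$ of \cite{AT} are positive in the wider sense, hence Sahlqvist, invoking \cite[Theorems 2.77, 2.80]{HHbook}. Your route is equally valid and arguably more self-contained once complete representability is in hand; the paper's route is shorter since it avoids appealing to complete representability for this step. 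The argument for strong elementary generation via $\At\V$ is identical in both.
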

\begin{proof}
\cite{mlq}. 
The first part is proved in \cite{AT} and differently using games in \cite{mlq}. 
Assume that $\A\models {\sf Mod}\Sigma$, where $\Sigma$ is the axiomatization given in \cite{AT}. Then $\Sigma$   
is positive in the wider sense,  hence Sahlqivst,  so ${\sf D}_n$ is 
canonical and atom--canonical \cite[Theorems 2.77, 2.80]{HHbook}.
Let $\V={\sf D}_n$.
Being atomically generated in the strong sense 
follows from  canonicity and atom-canonicity using the following reasoning.
Take the class  of all atom structures of atomic algebras in $\V$, namely, the class 
$\At\V$. Then by complete addtivity of $\V$, the class $\At\V$ is elementary 
\cite[Theorem 2.84]{HHbook}. 
We show that $\At\V$ strongly generates $\V$. 
Assume that $\A\in \V$. By canonicity $\A^+\in \V$, so $\At\A^+\in \At\V$. But 
plainly $\A^+=\Cm\At\A^+$, thus $\A^+\in \Cm\At\V$. Since $\A$ embeds into $\A^+$, we get that 
$\A\in \bold S\Cm\At\V$. We have proved that $\V\subseteq \bold S\Cm\At\V$. The opposite inclusion follows from 
atom-canonicity and we are done.   
\end{proof}
Unlike $L_n$, we have:
\begin{theorem}\label{good}
The logic $\L_n$  is complete, decidable, has the finite model property,  
the Craig and Beth definablity property, and enjoys a $\sf VT$.  
\end{theorem}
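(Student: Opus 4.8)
The plan is to derive each of the five assertions from Theorem~\ref{AT} together with the standard logic--algebra dictionary, treating completeness, the finite model property and decidability as essentially formal consequences, obtaining $\sf VT$ from complete representability, and isolating Craig interpolation (and through it Beth definability) as the one genuinely delicate point. First, for completeness: by Theorem~\ref{AT} the variety ${\sf D}_n$ is finitely axiomatizable by a Sahlqvist (indeed positive) set of equations $\Sigma$, and ${\sf D}_n={\bf SP}\Cm\bold L^n$. Since $\L_n$ is the algebraisable logic whose equivalent algebraic semantics is ${\sf D}_n$, the finite Hilbert--style calculus whose axioms are the modal transcriptions of $\Sigma$ is sound for the frame class $\bold L^n$; canonicity of ${\sf D}_n$ (again Theorem~\ref{AT}) forces the finitely axiomatised variety to coincide with the variety generated by $\Cm\bold L^n$, which is precisely completeness of $\L_n$ with respect to its intended Kripke semantics.

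For the finite model property and decidability, I would show that ${\sf D}_n$ is generated by its finite members --- equivalently that $\L_n$ has the FMP --- either by invoking the FMP of the guarded fragment, of which $\L_n$ is a fragment in the sense of \cite{v}, or by a direct filtration/mosaic argument on the relativised $n$--square units, the relevant mosaics being exactly the finite pieces manipulated in the game--theoretic proof of Theorem~\ref{AT} given in \cite{mlq}. Once the FMP is in hand, decidability is automatic: the recursive axiomatisation from Theorem~\ref{AT} lets one enumerate the theorems, while the FMP lets one enumerate the finite refuting frames in $\bold L^n$, and the two semidecision procedures together decide validity.

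The Vaught theorem is the payoff of the complete--representability half of Theorem~\ref{AT}, and it marks the sharp contrast with $L_n$ recorded in Theorem~\ref{bad}. Given a countable atomic $\L_n$--theory $T$, the Tarski--Lindenbaum algebra $\Fm_T$ is a countable atomic member of ${\sf D}_n$, hence completely representable by Theorem~\ref{AT}; the base of a complete representation is then an atomic model of $T$ omitting the type of co--atoms, so $T$ has an atomic model. This is exactly the step that collapses for $L_n$: there the \de\ completion $\Cm\At\Fm_T$ need not be representable, whereas ${\sf D}_n$ is atom--canonical.

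The main obstacle is Craig interpolation, since it does not follow formally from the properties listed in Theorem~\ref{AT}. Algebraically, Craig interpolation for $\L_n$ is equivalent to the super--amalgamation property of ${\sf D}_n$, and Beth definability to epimorphisms being surjective, the latter being implied by the former; so Beth would come for free once interpolation is secured. I would establish super--amalgamation by a step--by--step (back--and--forth) construction that amalgamates two complete representations of atomic algebras over a common complete subalgebra, using precisely the complete representations supplied by Theorem~\ref{AT} to glue the two $n$--square units along their shared part, and using canonicity to reduce the general case to the atomic one by passing to canonical extensions. The hard point is controlling the relativised ($n$--square) units so that the amalgam remains inside ${\sf D}_n$ and the interpolant lies in the common signature; this is exactly where the local, clique--guarded character of the ${\sf D}_n$ semantics, as opposed to full $\omega$--square semantics, is what makes the glueing go through.
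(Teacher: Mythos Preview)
Your treatment of completeness, the finite model property, decidability, and $\sf VT$ is essentially the paper's: for $\sf VT$ you use complete representability from Theorem~\ref{AT} exactly as the paper does, and for the remaining three the paper simply writes ``the rest is known \cite{v}'', so your more explicit sketch is filling in a deferred citation rather than diverging.

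The one genuine difference is Craig interpolation. The paper does \emph{not} argue via a step--by--step amalgamation of complete representations. Instead it exploits the \emph{form} of the axiomatisation: since ${\sf D}_n$ (respectively ${\sf G}_n$) is axiomatised by Sahlqvist equations that are positive in the wider sense, their first--order correspondents on ${\sf Str}({\sf D}_n)$ are Horn formulas, hence clausifiable, and therefore the class of frames is closed under finite \emph{zigzag products}. One then invokes \cite[Lemma 5.2.6]{marx} to conclude that the variety has the super--amalgamation property, which is the algebraic equivalent of Craig interpolation; Beth follows. This is a purely structural argument that avoids the ``hard point'' you correctly identify --- controlling the relativised unit in a hand--built amalgam --- by outsourcing it to a general categorical lemma about zigzag closure. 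Your direct back--and--forth route is in principle workable for these relativised classes and has the virtue of being self--contained, but as you note the delicate part (keeping the glued unit inside ${\sf D}_n$ while ensuring the interpolant lives in the common signature) is precisely what is \emph{not} worked out in your sketch, whereas the paper's route sidesteps it entirely.
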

\begin{proof} For Craig interpolation, we give an outline of the idea:
We know that ${\sf G}_n$ is axiomatized by a set of Sahlqvist  equations, 
so is canonical.
The first order correspondents of this set of positive equations translated to the class
$\bold L={\sf Str}({\sf G}_n)=\{\F: \Cm\F\in {\sf G}_n\}$ will be {\it Horn formulas},
hence {\it clausifiable} and so $\bold L$ is closed under finite {\it zigzag products}. By
\cite[Lemma 5.2.6, p.107]{marx}, ${\sf G}_n$ has the super amalgamation property
which is the algebraic equivalent of the 
Craig interpolation property. (Undefined terminology can be found in the same 
referred to lemma of {\it op.cit}.)
Now we show that the logic $\L_n$ enjoys a $\sf VT$.
Let $T$ be an atomic $\L_n$ theory. We assume without loss that $T$ is complete.
Hence $\Fm_T\in \bold I{\sf D}_n$ is an atomic simple algebra. 
Let $\Mo$ be the base of a complete representation of $\A$. Then $\Mo$ is an  atomic model of $T$.
The rest is known \cite{v}.
\end{proof}

We prove (for a change) 
some positive properties for $L_n$ and its clique guarded fragments.  
Let ${\sf LCA}_n$ denote the elementary class of ${\sf RCA}_n$s satisfying the Lyndon conditions. 
We stipulate that $\A\in {\sf LCA}_n\iff$ $\A$ is atomic and $\At\A$ satifies the Lyndon conditions \cite[Definition 3.5.1]{HHbook2}.
By an equivalent definition $\A\in {\sf LCA}_n$ $\iff$ \pe\ has a \ws\ in the atomic game $G_k(\At\A)$ for all $k\in \omega$. The \ws\ in $G_k$ 
is coded in a first order sentence, namely, the  $k$th Lyndon condition. It is easy to verify 
that ${\sf LCA}_n$ coincides with the elementary closure of ${\sf CRCA}_n$.
The reasoning in the last part of the proof of Theorem \ref{AT} shows that, for {\it any completely additive variety $\V$}, 
both canonicity and atom--canonicity implies elementary generation. 
However, the conditions (of canonicity and atom-canonicity) are not necessary; $\RCA_n$ is not atom-canonical \cite{Hodkinson}, 
but it is elementary generated as shown next. 

In the following Theorem, our formulation (and proofs) are algebraic. Like before by $\omega$--flat and $\omega$--square 
representations, we understand ordinary representations.

\begin{theorem}\label{square} Let $2<n<m\leq \omega$.
\begin{enumerate}
\item There are at least two elementary classes that generate 
$\RCA_n$ (equivalently the variety of  algebras having $\omega$--flat representations) in the strong sense,

\item ${\bf El}\Nr_n\CA_{\omega}\subsetneq {\sf LCA}_n$. Furthermore, 
for any elementary class $\sf K$ between ${\bf El}\Nr_n\CA_{\omega}$ and ${\sf LCA}_n$, ${\sf RCA}_n$ is elementary generated by 
$\At\sf K$,

\item  The class $\RCA_{n, m,s}$ ($\RCA_{n, m,f}$) of algebras having $m$-square (flat) representations is a variety that is elementary 
generated in the strong sense, atomically generated, canonical, but not atom-canonical for $m\geq n+3$.

\end{enumerate}
\end{theorem}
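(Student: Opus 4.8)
The plan is to reduce all three items to the neat--embedding characterisations of Theorem~\ref{flat}, combined with canonicity and the games of Lemma~\ref{n}, treating items (1)--(2) as the limiting case $m=\omega$. First I would record the algebraic identifications: by Theorem~\ref{flat}(1) the algebras with $m$--flat representations are exactly $\RCA_{n,m,f}=\bold S\Nr_n\CA_m$, and by Theorem~\ref{flat}(2) those with $m$--square representations are $\RCA_{n,m,s}=\bold S\Nr_n{\sf D}_m=\bold S\Nr_n{\sf G}_m$; for $m=\omega$ the neat embedding theorem gives $\RCA_n=\bold S\Nr_n\CA_\omega$. All three are varieties (the flat one by standard neat--reduct theory, the square one because $m$--square representability is preserved under $\bold H,\bold S,\bold P$ via Theorem~\ref{flat}(2)), which settles the first clause of item~(3). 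Canonicity of each is read off the last paragraph of the proof of Lemma~\ref{n}: if $\C\in\bold S\Nr_n\CA_m$ then $\C^+\in\bold S_c\Nr_n\CA_m\subseteq\bold S\Nr_n\CA_m$, and the identical computation, using that ${\sf D}_m$ is itself canonical, gives canonicity of $\bold S\Nr_n{\sf D}_m$.

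For \emph{atomic generation} I would use canonicity directly: given $\A$ in one of these varieties $\V$, its canonical extension $\A^+$ is complete, atomic and completely additive, so $\A^+=\Cm\At\A^+\in\V$ and $\A\hookrightarrow\A^+$; hence $\V$ is generated by its atomic members. The substance is \emph{elementary generation in the strong sense}, i.e. an elementary class $\sf K$ of atom structures with $\bold S\Cm{\sf K}=\V$. Since $\V$ is not atom--canonical (see below), one cannot take $\sf K=\At\V$, nor $\{\F:\Cm\F\in\V\}=\Str\V$, which by the preceding theorem of the excerpt is not elementary. Instead I would let $\sf K$ be the class of atom structures on which \pe\ has a \ws\ in the finite game $G^m_k$ (finitely many rounds, $m$ nodes) for every $k<\omega$; each such condition is first order, so $\sf K$ is elementary. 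The inclusion $\V\subseteq\bold S\Cm{\sf K}$ is the easy half: for $\A\in\V$, Theorem~\ref{flat}(1) gives that $\A^+$ has an $m$--dimensional hyperbasis, which directly supplies \pe\ with a \ws\ in $\bold G^m(\At\A^+)$ and a fortiori in every $G^m_k(\At\A^+)$; thus $\At\A^+\in{\sf K}$ and $\A\hookrightarrow\A^+=\Cm\At\A^+\in\Cm{\sf K}$.

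The reverse inclusion $\bold S\Cm{\sf K}\subseteq\V$, i.e. $\Cm\F\in\V$ for every $\F\in{\sf K}$, is what I expect to be the main obstacle, precisely because the ``good'' atom structures are not elementary. Here I would pass to an $\omega$--saturated elementary extension $\F^*\succeq\F$; winning all finite games together with saturation upgrades \pe's strategies to a \ws\ in the $\omega$--round game on $\F^*$, so the atomic algebra $\Cm\F^*$ acquires an $m$--dimensional hyperbasis and hence, by Theorem~\ref{flat}(3), $\Cm\F^*\in\bold S_c\Nr_n(\CA_m\cap{\bf At})\subseteq\V$. To descend back to $\F$ I would invoke the standard correspondence--theoretic facts that the canonical extension of $\Cm\F$ is the complex algebra of the ultrafilter frame of $\F$, and that this ultrafilter frame is a bounded morphic image of any $\omega$--saturated elementary extension $\F^*$; applying the (contravariant) complex--algebra functor yields an embedding $(\Cm\F)^+\hookrightarrow\Cm\F^*$, so $(\Cm\F)^+\in\bold S\V=\V$ and therefore $\Cm\F\in\V$. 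The whole argument transfers to the square varieties $\bold S\Nr_n{\sf D}_m$ with ``hyperbasis'' replaced by ``relational basis'' and $G^m_k$ by the corresponding basis games, using the square clauses of Theorem~\ref{flat} and item~(1) of Lemma~\ref{i}.

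Finally, items~(1) and~(2) are the specialisation $m=\omega$, where the games $G^m_k$ become the $k$th Lyndon conditions and $\sf K$ becomes $\At({\sf LCA}_n)$. Here the inclusion $\RCA_n\subseteq\bold S\Cm{\sf K}$ is even more transparent: every $\A\in\RCA_n$ embeds into a full generalized set algebra $\wp(V)\in\Nr_n\CA_\omega\subseteq{\bf El}\Nr_n\CA_\omega$, which is completely representable and equals $\Cm\At\wp(V)$, so $\At\wp(V)$ satisfies the Lyndon conditions and lies in $\At{\sf K}$ for every admissible $\sf K$; the reverse inclusion is the classical fact that a Lyndon atom structure has representable complex algebra, by the saturation argument above. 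Elementarity of $\At{\sf K}$ for every $\sf K$ with ${\bf El}\Nr_n\CA_\omega\subseteq{\sf K}\subseteq{\sf LCA}_n$ follows because all these atom--structure classes coincide with the first--order Lyndon class, the nontrivial containment being that each Lyndon atom structure carries an algebra elementarily equivalent to a neat reduct. The strict inclusion ${\bf El}\Nr_n\CA_\omega\subsetneq{\sf LCA}_n$ I would obtain from Theorem~\ref{iii} (identifying ${\sf LCA}_n$ with ${\bf El}\bold S_c\Nr_n\CA_\omega$) by exhibiting a completely representable algebra that is a complete subalgebra of a neat reduct but is not elementarily equivalent to any member of $\Nr_n\CA_\omega$, as furnished by \cite{bsl2,Sayedneat}; this gives the two distinct elementary generating classes demanded in item~(1). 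Non--atom--canonicity for $m\geq n+3$ is then immediate: Theorem~\ref{can} produces an atomic $\D\in\RCA_n\subseteq\V$ with $\Cm\At\D\notin\bold S\Nr_n\CA_{n+3}$, and Corollary~\ref{fl} gives the corresponding $\Cm\At\B\notin\bold S\Nr_n{\sf D}_{n+3}$ for the square case, so neither variety is closed under \de\ completions.
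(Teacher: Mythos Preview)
Your overall architecture is close to the paper's, but there is a genuine gap in item~(1) and a confused passage in item~(2).

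For item~(1) you never actually produce a \emph{second} elementary class of atom structures. You propose to extract two classes from the strict inclusion ${\bf El}\Nr_n\CA_\omega\subsetneq{\sf LCA}_n$, but a few lines earlier you assert that ``all these atom--structure classes coincide with the first--order Lyndon class''; if that were so, $\At{\bf El}\Nr_n\CA_\omega$ and $\At{\sf LCA}_n$ would be the \emph{same} class, not two. In fact the witness $\B$ from \cite{SL} that separates the algebra classes has $\At\B=\At\A$ with $\A\in\Nr_n\CA_\omega$, so it does \emph{not} separate the atom--structure classes. The paper gets its second class by an entirely different route: it introduces ${\sf FOAS}_n$, the class of atom structures whose first--order closure algebra $\F({\bf At})$ is representable, and argues that ${\sf LCAS}_n\subsetneq{\sf FOAS}_n$ because the non--elementary class ${\sf SRAS}_n$ lies strictly between them. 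You would need either this construction or some substitute.

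Relatedly, your claim in item~(2) that $\At{\sf K}$ is elementary (and equals ${\sf LCAS}_n$) for every elementary $\sf K$ in the stated range is not justified; the paper does not claim this and does not need it. Its chain of inclusions
\[
\RCA_n={\bf SP}{\sf FCs}_n\subseteq{\bf SP}\Cm\At\Nr_n\CA_\omega\subseteq{\bf SP}\Cm\At{\sf K}\subseteq{\bf SP}\Cm{\sf LCAS}_n\subseteq\RCA_n
\]
only uses ${\sf FCs}_n\subseteq\Nr_n\CA_\omega$ and $\At{\sf LCA}_n={\sf LCAS}_n$, not any coincidence of intermediate atom--structure classes.

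Your treatment of item~(3) is essentially correct but methodologically different from the paper's in one place. For the ``hard'' inclusion $\bold S\Cm{\sf K}\subseteq\V$ you pass to an $\omega$--saturated $\F^*\succeq\F$, win the $\omega$--round game there, and descend via the classical bounded--morphic--image map from $\F^*$ onto the ultrafilter frame of $\Cm\F$, obtaining $(\Cm\F)^+\hookrightarrow\Cm\F^*$. This is a perfectly valid correspondence--theoretic argument. The paper instead works on the algebra side throughout: for $\A\in\V$ it takes an $\omega$--saturated model of the first--order theory asserting an $m$--square representation, shows this yields a \emph{complete} $m$--square representation of $\A^+$, whence $\At\A^+\in{\sf LCAS}_n^{m,s}$ and $\A\subseteq\A^+=\Cm\At\A^+$. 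The paper's route avoids the frame--theoretic detour but is less explicit about the reverse inclusion; your route makes that inclusion explicit at the cost of importing more modal--logic machinery. Two small corrections to your version: winning $G_\omega^m$ on $\F^*$ yields an $m$--dimensional \emph{basis} (square case), not a hyperbasis, so quote Theorem~\ref{flat}(2) rather than (3); and closure of $\RCA_{n,m,s}$ under $\bold H$ is not a formal consequence of Theorem~\ref{flat}(2) alone---the paper gives a short direct argument via relativisation of $\A^+$ to $z=-\sum\ker h$.
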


\begin{proof} Throughout the proof fix $2<n<\omega$.  For elementary generation, we know that the elementary class $\At\RCA_n$ does not work because 
$\RCA_n$ is not atom--canonical \cite{Hodkinson}, cf. Theorem \ref{can}, 
and that the class ${\sf Str}\RCA_n$ does not work either, because, sure enough a generating class,
${\sf Str}\RCA_n$  is not  an elementary one in the first place \cite{HHbook2}, cf. Theorem \ref{complexity0}.  So we have to look elsewhere.
For $\RCA_n$, one takes instead $\bold K={\sf LCAS}_n$, 
the class of atom structures satisfying the Lyndon conditions.
One proves that $\RCA_n=\bold S\Cm{\sf LCAS}_n$ exactly like the relation algebra case \cite{HHbook}. 
(The idea is the same idea used in item (3)).

We give a different class elementary generating ${\sf RCA}_n$. For an atom structure $\bf At$, let $\F(\bf At)$ be the subalgebra of $\Cm\bf At$
consisting of all sets of atoms in $\bf At$ of the form $\{a\in {\bf At}: {\bf At}\models \phi(a, \bar{b})\}(\in \Cm {\bf At})$,
for some first order formula $\phi(x, \bar{y})$ of the signature
of $\bf At$  and some tuple $\bar{b}$ of atoms.  It is easy to check that $\F(\bf At)$ is indeed 
a subalgebra of $\Cm \bf At$;  and that
$\Tm{\bf At}\subseteq \F(\bf At)\subseteq \Cm \bf At$; cf. \cite[item (3), p. 456]{HHbook} for the analogous definition for 
relation algebras. Let ${\sf FOAS}_n$  be the class of all atom structures whose first order algebra is representable. Then it can be proved, similarly 
to the $\RA$ case that  
${\sf LCAS}_n\subseteq {\sf SRAS}_n\subseteq {\sf FOAS}_n\subsetneq {\sf WRAS}_n$, where ${\sf SRSA}_n$ and ${\sf WCAS}_n$ are
the classes of strongly and weakly representable atom structures of dimension $n$, respectively as defined in \cite{HHbook2}. 
Then $\RCA_n= \bold S\Cm{\sf LCAS}_n\subseteq \bold S\Cm{\sf FOAS}_n\subseteq {\sf RCA}_n$. 
One way to show that ${\sf LCAS}_n\subsetneq {\sf FOAS}_n$ is that these two classes are elementary and are 
separated by the non-elementary class ${\sf SRAS}_n$,
that is, the non-elementary class 
${\sf SRCA}_n$, cf. \cite{HHbook2, k2},  lies (strictly) in between. 

We prove item (2): It suffices to show that the class of atomic algebras in 
${\sf Nr}_n\CA_{\omega}$ is contained in the class of atomic algebras whose atom structures are in
${\sf LCAS}_n$, since the last class is elementary. This follows from lemma \ref{n}, 
since if $\A\in \Nr_n\CA_{\omega}$ is atomic, then \pe\ has a \ws\ in $F^{\omega}(\At\A)$, hence in $G_{\omega}(\At\A)$, {\it a fortiori}, \pe\ has a \ws\ 
in $G_k(\At\A)$ for all $k<\omega$. By  definition of the Lyndon conditions, we are done.
Now we show that $\At{\bf El}\Nr_n\CA_{\omega}$ generates $\RCA_n$.
Let ${\sf FCs}_n$ denote the class of {\it full} $\Cs_n$s, that is ${\sf Cs}_n$s  
having universe $\wp(^nU)$
($U$ non--empty set). 
First we show that ${\sf FCs}_n\subseteq \Cm\At\Nr_n\CA_{\omega}$.
Let $\A\in {\sf FCs}_n$.  Then $\A\in \Nr_n\CA_{\omega}$, hence $\At\A\in \At\Nr_n\CA_{\omega}$ 
and $\A=\Cm\At\A\in \Cm\At\Nr_n\CA_{\omega}$.
The required now follows from the following chain of inclusions: 

$\RCA_n={\bf SP}{\sf FCs}_n\subseteq {\bf SP}\Cm\At(\Nr_n\CA_{\omega})\subseteq {\bf SP}\Cm\At({\bf El}\Nr_n\CA_{\omega})\subseteq 
{\bf SP}\Cm \At{\sf K}\subseteq {\bf SP}\Cm{\sf LCAS}_n\subseteq  {\sf RCA}_n.$

By Lemma \ref{n}, we have $\Nr_n\CA_{\omega}\subseteq {\sf LCA}_n$, and since the last class is elementary, then 
${\bf El}\Nr_n\CA_{\omega}\subseteq {\sf LCA}_n$.
To show strictness of the last inclusion, let $V={}^n\Q$ and let ${\A}\in {\sf Cs}_n$ have universe $\wp(V)$.
Then clearly $\A\in {\sf Nr}_{n}\CA_{\omega}$. To see why,  let 
$W={}^{\omega}\Q$ and let $\D\in {\sf Cs}_{\omega}$ have universe $\wp(W)$.
Then the map $\theta: \A\to \wp(\D)$ defined via $a\mapsto \{s\in W: (s\upharpoonright \alpha)\in a\}$, 
is an injective homomorphism from $\A$ into $\mathfrak{Rd}_{n}\D$ that is onto 
$\mathfrak{Nr}_{n}\D$.
Let $y$ denote the following $n$--ary relation:
$y=\{s\in V: s_0+1=\sum_{i>0} s_i\}.$ Let $y_s$ be the singleton containing $s$, i.e. $y_s=\{s\}$
and ${\B}=\Sg^{\A}\{y,y_s:s\in y\}.$ It is shown in \cite{SL} that $\{s\}\in \B$, for all $s\in V$. 
Now $\B$ and $\A$ having same top element $V$, share the same atom structure, namely, the singletons, so $\B\subseteq_ d \A$ 
and $\Cm\At\B=\A$. Furthermore, plainly $\A, \B\in {\sf CRCA}_n$; the identity maps establishes a complete representation for both, 
since $\bigcup_{s\in V}\{s\}=V$. 
Now $\B\in {\sf CRCA}_n\subseteq {\sf LCA}_n$, 
and as proved in \cite{SL}  
$\B\notin {\bf  El}\Nr_{n}{\sf CA}_{n+1}$, then $\B$ witnesses the required strict inclusion.

We prove item (3):  We start with squareness.
That ${\sf RCA}_{n, m,s}$ is closed under $\bold S$ and $\bold P$ is straightforward.
We check closure under homomorphic images.
Assume that $\A$ has an $m$--square representation 
and let $h:\A\to \B$  be surjective. We want to show that $\B$ has an $m$--square representation, too.
We have $\A^+$ has an $m$--dimensional basis $\cal M$.   
Let $K= ker(h)$  and let $z=-\sum^{\A^+}K$. (The last sum exists, because $\A^+$ is complete).
Let $\D$ be the relativization of $\A^+$ to $z.$
Then $\D$ is complete and atomic.
Define  $g :\A^+\to \D$ by $a\mapsto a\cdot z.$ Then $\B$ embeds in $\D$ via $b\mapsto g(a)$, 
for any $a\in h^{-1}[b]$.
Hence  $\{N\in {\cal M}: N(\bar{x})\in \D\}$ 
is a basis for $\D$, and since $\B$ (up to isomorphism) is a subalgebra of $\D$, 
we get that $\B$ has an $m$--square representation, because $\D$ does.

For elementary generation, we are done with case $m=\omega$.
Accordingly, assume that $2<n<m<\omega$. Let ${\sf CRCA}_n^{m,s}$ be the class of $\CA_n$s having complete $m$--square representations.
We first specify 
the elementary closure of ${\sf CRCA}_n^{m,s}$ using games. 
Define the class ${\sf LCA}_n^{m,s}$ as follows: 
$\A\in {\sf LCA}_n^{m,s} \iff \A$ is atomic 
and \pe\ has a \ws\ in $G_k^m(\At\A)$ for all $k<\omega$. 
It is not hard to show that the last condition, which is an `$m$ approximation'  to a Lyndon condition 
can be coded in a first order sentence. 

For brevity, denote the elementary class $\At{\sf LCA}_n^{m,s}$ by ${\sf LCAS}_n^{m,s}$. 
We now show elementary generation in the strong sense by proving that  ${\sf RCA}_{n,m,s}=\bold S\Cm {\sf LCAS}_n^{m,s}.$
Assume that $\A$ has an $m$--square representation, 
then $\A^+$ has a complete $n$--square representation \cite{HHbook}. 
This can be proved, using ideas of Hirsch and Hodkinson, 
by taking an $\omega$--saturated model of the consistent first order theory stipulating the 
existence of an $m$--square representation for $\A$, as the base of the complete $m$--
square representation for $\A^+$. 
In more detail, let $\Mo$ be an $\omega$--saturated model,
of this theory.
One  defines an injective complete embedding $h: \A^+\to \wp(1^{\Mo}).$
First note that the set $f_{\bar{x}}=\{a\in A: a(\bar{x})\}$
is an ultrafilter in $\A$, whenever $\bar{x}\in \Mo$ and $\Mo\models 1(\bar{x}).$
Now $\A^+=\Cm(\Uf\A)$, where $\Uf\A$ is the ultrafilter atom structure (frame) of $\A$ based on its Stone space.
For $S\subseteq \Uf\A$, 
let $h(S)=\{\bar{x}\in 1^{\Mo}: f_{\bar{x}}\in S\}.$
We check only injectivity using saturation. For the (ideas used in) rest of  the proof the reader is referred to \cite[Corollary 13.18]{HHbook}.
It suffices to show that for any ultrafilter $F$ of $\A$ 
which is an atom in $\A^+$, we have
$h(\{F\})\neq 0$.
Let $p(\bar{x})=\{a(\bar{x}): a\in F\}$. Then this type is finitely satisfiable.
Hence by $\omega$ saturation $p$ is realized in $\Mo$ by $\bar{y}$, say.
Now $\Mo\models 1(\bar{y})$ and $F\subseteq f_{\bar{x}}$,
since these are both ultrafilters, equality holds.
It follows that $\At\A^+\in \At({\sf CRCA}_n^{m,s})\subseteq {\sf LCA}_n^{m,s}$.
By $\A\subseteq \A^+=\Cm\At\A^+$, 
and $\Cm\At\A^+\in \Cm{\sf LCAS}_n^{m,s}$, we are done. 

For atom generation, we need to show that, for $m<\omega$, $\RCA_{n, m,s}=\bold S{\sf CRCA}_n^{m,s}$.  
One side is obvious since ${\sf CRCA}_n^{m,s}\subseteq \RCA_{n, m,s}$ and the last class is a variety. 
Now we show $\subseteq$. Let $\A\in \RCA_{n, m, s}$. Then $\A\subseteq \A^+$ and $\A^+\in {\sf CRCA}_{n}^{m,s}$. 
The case $m=\omega$ is entirely 
analogous by observing that for any $\A\in \RCA_n$, 
$\A^+\in {\sf CRCA}_n$ 
and that all algebras in $\CRCA_n$ are atomic. We leave the rest to the reader. 
Non-atom--canonicity is proved in Theorem \ref{n}, and canonicity for $\bold S\Nr_n{\sf D}_m$ follows from that
if $\B\in {\sf D}_m$  and $\A\subseteq \Nr_n\B$, 
then $\A^+\subseteq \Nr_n\B^+$.

Now we consider flatness: The class ${\sf RCA}_{n, m, f}$ coincides with the variety $\bold S\Nr_n\CA_m$, hence it is a variety. 
Let ${\sf CRCA}_n^{m, f}$ be the class of $\CA_n$s 
having complete infinitary $m$--flat representations, equivalently complete $m$--smooth representations. 
Elementary generation follows by taking the class of atom structures $\bold K={\bf El}{\sf CRCA}_n^{m, f}$ 
and proceeding like above:
If $\A\in\RCA_{n, m, f}$ then $\A^+\in {\sf CRCA}_n^{m,f}$ so $\A\subseteq \A^+=\Cm\At\A^+\in \bold S\Cm {\sf CRCA}_n^{m,f}\subseteq
\bold S\Cm {\bf El}{\sf CRCA}_n^{m,f}$. The opposite inclusion is obvious.
Non atom--canonicity follows from Theorem \ref{n}.
For being atomically  generated, one proves, 
like above,  that $\RCA_{n, m, f}=\bold S\bold L$ where $\bold L$ is the class of (atomic) $\CA_n$s having complete 
$m$--flat representations.
\end{proof}

Not every modal logic is determined by a class of discrete frames. In fact, completeness for discrete frames, unlike completeness with respect to general Kripke frames,
is a non--trivial property called {\it di--completeness}. If a Kripke incomplete logic is di--complete, then the logic in question is still 
reasonably well-behaved.
It is easy to show that ${\sf RDf}_n$ is atomically generated, too. Indeed, using a saturation argument like in the last proof, it can be shown 
that if $\A\in {\sf RDf}_n$, then $\A^+$ is completely representable (as a ${\sf Df}_n$). 
Baring in mind that di--completeness corresponds to atom generation, then from the previous theorem and the last observation, 
together with Theorem \ref{n}, and \cite[Theorem 2]{Andreka}, we get:
\begin{corollary}\label{cl}
For $2<n<m\leq \omega$, the clique guarded fragments of $L_n$ with respect to $m$--square models and with respect to $m$--flat models
are elementary generated, di--complete,  but are not Sahlqvist for $m\geq n+3$.  
Furthermore, for  $m\geq n+2$ such fragments, together with ${\bf S5}^n$,  are not  finitely axiomatizable. In case of $m$--flatness, 
and ${\bf S5}^n$, it is undecidable to tell whether a finite frame is a frame for the logic at hand, hence
the last two multi--modal logics are undecidable.  
\end{corollary}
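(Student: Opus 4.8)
The plan is to read each clause off the algebraic results already established, transporting them to the modal side through the standard dictionary between a variety $\V$ of $\sf BAO$s and its multi--modal logic, under which \emph{elementary generation of $\V$} corresponds to the logic being elementary generated and \emph{atomic generation of $\V$} corresponds to di--completeness. For the $m$--square and $m$--flat clique guarded fragments the relevant varieties are, by Theorem \ref{flat}, $\RCA_{n,m,s}=\bold S\Nr_n{\sf D}_m$ and $\RCA_{n,m,f}=\bold S\Nr_n\CA_m$; for ${\bf S5}^n$ the relevant variety is ${\sf RDf}_n$.

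First I would settle elementary generation and di--completeness. For the two clique guarded fragments these are verbatim the content of Theorem \ref{square}(3): for every $2<n<m\leq\omega$ both $\RCA_{n,m,s}$ and $\RCA_{n,m,f}$ are elementary generated in the strong sense and atomically generated. For ${\bf S5}^n$ I would invoke the observation recorded immediately before the statement, that the $\omega$--saturation argument of the proof of Theorem \ref{square} yields $\A^+$ completely representable (as a ${\sf Df}_n$) for every $\A\in{\sf RDf}_n$; this gives atomic generation of ${\sf RDf}_n$ (hence di--completeness), and elementary generation follows by taking the elementary class $\bold K={\bf El}\,{\sf CRDf}_n$ and running the same inclusion chain $\A\subseteq\A^+=\Cm\At\A^+\in\Cm\bold K$ together with $\bold S\Cm\bold K\subseteq{\sf RDf}_n$ as in the square and flat cases.

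Next I would handle the negative axiomatizability clauses. That the two fragments are not Sahlqvist for $m\geq n+3$ follows from the non--atom--canonicity of $\bold S\Nr_n\CA_m$ and $\bold S\Nr_n{\sf D}_m$ proved in Theorem \ref{can}: a Sahlqvist equation is d--persistent and in particular atom--canonical, so a variety that fails atom--canonicity admits no Sahlqvist axiomatization (Theorem \ref{sah}, \cite{Venema}); for ${\bf S5}^n$ this is exactly Theorem \ref{complexity0}. Non--finite axiomatizability for $m\geq n+2$ I would derive from the non--finite axiomatizability of the varieties $\bold S\Nr_n\CA_m$, $\bold S\Nr_n{\sf D}_m$ and of ${\sf RDf}_n$, the last established inside Theorem \ref{complexity}; the underlying mechanism, namely the witnessing sequences of non--representable algebras with a representable ultraproduct of Corollary \ref{maddux} and \cite[Theorem 2]{Andreka}, sits below the pertinent dimension bound and so separates each variety from any finite equational base. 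For the undecidability clause I would use that deciding whether a finite algebra has an $m$--flat representation is impossible (Theorem \ref{decidability}, obtained by lifting the relation algebra case through the $\RA\to\CA_3$ construction of \cite{AUU}), which is precisely the problem of deciding frame--membership for the $m$--flat fragment; together with the strong undecidability recorded after Theorem \ref{decidability} this yields undecidability of the logic itself. For ${\bf S5}^n$ the corresponding undecidability of frame--membership, and hence of the logic, is item (2) of Theorem \ref{complexity}, drawn from \cite{k2}.

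I expect the only genuine friction to be the bookkeeping of the modal--algebraic translation rather than any new mathematics: one must make precise that the ``frames'' for the clique guarded and $m$--flat fragments are the $m$--dimensional (hyper)bases and relativized representations of Definitions \ref{basis} and \ref{relrep}, not ordinary Kripke frames, and verify that under this reading ``elementary generated'' and ``di--complete'' are exactly what the algebraic statements deliver. Once the characterizations of Theorem \ref{flat} are in hand this verification is routine, so the entire weight of the corollary is carried by Theorems \ref{square}, \ref{can}, \ref{sah}, \ref{complexity0}, \ref{complexity}, \ref{decidability} and Corollary \ref{maddux}.
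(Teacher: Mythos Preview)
Your proposal is correct and follows essentially the same route as the paper's own proof, which is an extremely terse sentence listing Theorem \ref{square} (``the previous theorem''), the observation on atomic generation of ${\sf RDf}_n$ (``the last observation''), the non--atom--canonicity result, and \cite[Theorem 2]{Andreka}; you have simply unpacked which cited result supports which clause and made the modal--algebraic translation explicit. The one place where you add content beyond the corollary's demands is elementary generation and di--completeness for ${\bf S5}^n$ itself: the corollary only asserts these for the clique guarded fragments, with ${\bf S5}^n$ entering solely in the non--finite--axiomatizability and undecidability clauses, so that part of your argument, while correct, is not required.
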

 
Theorems \ref{square} and \ref{can} alert us to the fact that the notions of di-persistence and di-completeness are distinct. 
In fact, the two notions are distinct both ways. Relatively easy examples are the following: 
The Van Benthem formula 
is di-persistent but axiomatizes a di-incomplete logic.  Conversely, the Church--Rosser formula $\Diamond \Box p\to \Box\Diamond p$
is Kripke complete, hence di-complete, but is not di-persistent.

\begin{remark} 
Square Tarskian semantics can be seen as a {\it limiting case} of  (i) relativized (guarded) semantics, (ii) clique guarded semantics,
(iii) products. (We know that in (i) $\sf VT$ holds, while in (ii) and (iii) $\sf VT$ fails.)

In the first case, namely (i),  the limit is taken on a varying set of worlds (states) $V$ approaching the square
$^nU$. The most severe {\it relativization} of states, is that one takes $V$ to be an arbitrary set of $n$-ary sequences without any closure conditions.
Tarskian semantics is the limiting case when $V={}^nU$.
In   the semantics we dealt with in theorem \ref{AT}, we took the set of states in Kripke frames to be 
top element of  ${\sf D}_n$s. Completely analogous results hold by taking the set of worlds in Kripke frames 
to be ${\sf G}_n$ units instead of ${\sf D}_n$ units, giving another (richer) guarded fragment of $L_n$.
Here the domain of Kripke frames is required to be 
closed under  permuting sequences, as well.

Let $2<n<m\leq \omega$. In the second case (ii) viewed semantically, the ordinal $m$, which is the parameter that measures the degree of `squarness' or `flatness', is allowed to grow;
getting closer and closer to  a {\it genuine} $\omega$--flat representation. 
If $\Mo$ is the base of  an $m$-flat representation of $\A\in \CA_n$, then  witnesses of cylindrifiers 
are allowed `more space' as $m$ gets larger  measured by the $n$--Gaifmann hypergraph ${\sf C}^n(\Mo)$. 
Syntactically more and more degrees of freedom or dimensions
are created, so that an algebra $\A\in \CA_n$ has an $m$--flat representation
$\iff$ $\A$ {\it neatly embeds} into a $\CA_m$ having top element ${\sf C}^n(\Mo)$. This a `truncated' neat embedding theorem; at the limit we get Henkin's classical 
neat embedding therem, namely, 
$\RCA_n=\bold S\Nr_n\CA_{\omega}$.
 
In the last case of products, {\it the accessibility relations along the components are changed} approaching those of ${\bf S5}^n$. In ${\bf S5}^n$, recall that,  
all the accessibility relations along the components are the universal one.
\end{remark}

\section{Positive omitting types theorems for $L_n$}

In this section, unless otherwise explicitly indicated, $n$ is  finite and $>2$.
We prove positive omitting types theorems for $L_n$. 
Now we turn to proving omitting types theorems for certain (not all) $L_n$ theories. 
But first an algebraic  definition of omitting a given family of types:
\begin{definition}\label{definition} Let $\lambda$ be a cardinal. 
If $\A\in {\sf RCA}_{n}$ and $\bold X=(X_i: i<\lambda)$ is  family of subsets of $\A$, we say that {\it $\bold X$ is omitted in $\C\in {\sf Gs}_{n}$}, 
if there exists an isomorphism 
$f:\A\to \C$ such that $\bigcap f(X_i)=\emptyset$ for all $i<\lambda$.  When we want to stress the role of $f$, 
we say that $\bold X$ is omitted in $\C$ via $f$. 
If $X\subseteq \A$ and $\prod X=0$, 
then we refer to $X$ as a {\it non-principal type} of $\A$.
\end{definition}

We further need to recall certain cardinals that play a key role in (positive) omitting types theorems for $L_{\omega, \omega}$.
Let $\sf covK$ be the cardinal used in \cite[Theorem 3.3.4]{Sayed}.
The cardinal $\mathfrak{p}$  satisfies $\omega<\mathfrak{p}\leq 2^{\omega}$
and has the following property:
If $\lambda<\mathfrak{p}$, and $(A_i: i<\lambda)$ is a family of meager subsets of a Polish space $X$ (of  which Stone spaces of countable Boolean algebras are examples)  
then $\bigcup_{i\in \lambda}A_i$ is meager. For the definition and required properties of $\mathfrak{p}$, witness \cite[p. 3, pp. 44-45, corollary 22c]{Fre}. 

Both cardinals $\sf cov K$ and $\mathfrak{p}$  have an extensive literature.
It is consistent that $\omega<\mathfrak{p}<\sf cov K\leq 2^{\omega}$ \cite{Fre},
so that the two cardinals are generally different, but it is also consistent that they are equal; equality holds for example in the Cohen
real model of Solovay and Cohen.  Martin's axiom implies that  both cardinals are the continuum.

To prove the main result in this section on positive omitting types theorems, we need the following Lemma due to Shelah:
\begin{lemma} \label{sh} Assume that $\lambda$ is an infinite regular cardinal. 
Suppose that $T$ is a first order theory,
$|T|\leq \lambda$ and $\phi$ is a formula consistent with $T$,  then there exist models $\Mo_i: i<{}^{\lambda}2$, each of cardinality $\lambda$,
such that $\phi$ is satisfiable in each,  and if $i(1)\neq i(2)$, $\bar{a}_{i(l)}\in Mo_{i(l)}$, $l=1,2,$, $\tp(\bar{a}_{l(1)})=\tp(\bar{a}_{l(2)})$,
then there are $p_i\subseteq \tp(\bar{a}_{l(i)}),$ $|p_i|<\lambda$ and $p_i\vdash \tp(\bar{a}_ {l(i)})$ ($\tp(\bar{a})$ denotes the complete type realized by
the tuple $\bar{a}$)
\end{lemma}
\begin{proof} \cite[Theorem 5.16, Chapter IV]{Shelah}.
\end{proof}
In $L_{\omega, \omega}$ an atomic model for a countable  atomic theory  (which exists by $\sf VT$) 
omits all non--principal types. The last item in the next theorem is the $`L_n$ version (expressed algebraically)' of  this property.
In the theorem $n<\omega$:
\begin{theorem}\label{i} Let $\A\in \bold S_c\Nr_n\CA_{\omega}$ be countable.  Let $\lambda< 2^{\omega}$ and let 
$\bold X=(X_i: i<\lambda)$ be a family of non-principal types  of $\A$.
Then the following hold:
\begin{enumerate}
\item If $\A\in \Nr_n\CA_{\omega}$ and the $X_i$s are maximal non--principal ultrafilters,  then $\bold X$ can be omitted in a ${\sf Gs}_n$.
\item Every subfamily of $\bold X$ of cardinality $< \mathfrak{p}$ can be omitted in a ${\sf Gs}_n$; in particular, every countable 
subfamily of $\bold X$ can be omitted in a ${\sf Gs}_n$. Furthermore, if $\A$ is simple, then every subfamily 
of $\bold X$ of cardinality $< \sf covK$ can be omitted in a ${\sf Cs}_n$, 
\item If $\A$ is atomic,  with countably many atoms, 
then any family of non--principal types can be omitted in an atomic ${\sf Gs}_n$; in particular, 
$\bold X$ can be omitted in an atomic ${\sf Gs}_n$; if $\A$ is simple, we can replace ${\sf Gs}_n$ by 
${\sf Cs}_n$.
\end{enumerate}
\end{theorem}
\begin{proof}

For the first item we assume that $\A$ is simple (a condition that can be easily removed). We have $\prod ^{\B}X_i=0$ for all $i<\kappa$ because,
$\A$ is a complete subalgebra of $\B$.  that is if $S\subseteq \A$ and $y\in A$
is such that $\sum ^{\A}S=y$, then $\sum^{\B}S=y$.

The condition that $\A\in \Nr_n\CA_{\omega}(\subsetneq \bold S_c\Nr_n\CA_{\omega})$ is crucial to prove that $\A$ is a complete subalgebra of $\B$.
To see why, assume that $S\subseteq \A$ and $\sum ^{\A}S=y$, and for contradiction that there exists $d\in \B$ such that
$s\leq d< y$ for all $s\in S$. Then, assuming that $A$ generates $\B$, we can infer that $d$ uses finitely many dimensions in $\omega\sim n$, 
$m_1,\ldots, m_n$, say.
Now let $t=y\cdot -{\sf c}_{m_1}\ldots {\sf c}_{m_n}(-d)$.
We claim that $t\in \A=\Nr_{n}\B$ and $s\leq t<y$ for all $s\in S$. This contradicts
$y=\sum^{\A}S$.

The first required follows from the fact that $\Delta y\subseteq n$ and that all indices in $\omega\sim n$ that occur in $d$
are cylindrified.  In more detail, put $J=\{m_1, \ldots, m_n\}$ and let 
$i\in \omega\sim n$, then
$${\sf c}_{i}t={\sf c}_{i}(-{\sf c}_{(J)} (-d))={\sf c}_{i}-{\sf c}_{(J)} (-d)$$
$$={\sf c}_{i} -{\sf c}_{i}{\sf c}_{(J)}( -d)
=-{\sf c}_{i}{\sf c}_{(J)}( -d)
=-{\sf c}_{(J)}( -d)=t.$$
We have shown that ${\sf c}_it=t$ for all $i\in \omega\sim n$, thus $t\in \Nr_{n}\B=\A$. If $s\in S$, we show that $s\leq t$. We know that $s\leq y$. Also $s\leq d$, so $s\cdot -d=0$.
Hence $0={\sf c}_{m_1}\ldots {\sf c}_{m_n}(s\cdot -d)=s\cdot {\sf c}_{m_1}\ldots {\sf c}_{m_n}(-d)$, so
$s\leq -{\sf c}_{m_1}\ldots {\sf c}_{m_n}( -d)$, hence  $s\leq t$ as required. 

We finally check that $t<y$. If not, then
$t=y$ so $y \leq -{\sf c}_{m_1}\ldots {\sf c}_{m_n}(-d)$ and so $y\cdot  {\sf c}_m\ldots {\sf c}_{m_n}(-d)=0$.
But $-d\leq {\sf c}_m\ldots {\sf c}_{m_n}(-d)$,  hence $y\cdot -d\leq y\cdot  {\sf c}_m\ldots {\sf c}_{m_n}(-d)=0.$
Hence $y\cdot -d =0$ and this contradicts that $d<y$. We have proved that $\sum^{\B}X=1$ showing that $\A$ is indeed a complete subalgebra of $\B$.

Since $\B$ is a locally finite,  we can assume 
that $\B=\Fm_T$ for some countable consistent theory $T$.
For each $i<\kappa$, let $\Gamma_i=\{\phi/T: \phi\in X_i\}$.

Let ${\bold F}=(\Gamma_j: j<\kappa)$ be the corresponding set of types in $T$.
Then each $\Gamma_j$ $(j<\kappa)$ is a non-principal and {\it complete $n$-type} in $T$, because each $X_j$ is a maximal filter in $\A=\mathfrak{Nr}_n\B$.
(*) Let $(\Mo_i: i<2^{\omega})$ be a set of countable
models for $T$ that overlap only on principal maximal  types; these exist by lemma \ref{sh}.

Asssume for contradiction that for all $i<2^{\omega}$, there exists
$\Gamma\in \bold F$, such that $\Gamma$ is realized in $\Mo_i$.
Let $\psi:{}2^{\omega}\to \wp(\bold F)$,
be defined by
$\psi(i)=\{F\in \bold F: F \text { is realized in  }\Mo_i\}$.  Then for all $i<2^{\omega}$,
$\psi(i)\neq \emptyset$.
Furthermore, for $i\neq j$, $\psi(i)\cap \psi(j)=\emptyset,$ for if $F\in \psi(i)\cap \psi(j)$, then it will be realized in
$\Mo_i$ and $\Mo_j$, and so it will be principal.

This implies that $|\bold F|=2^{\omega}$ which is impossible. Hence we obtain a model $\models T$ omitting $\bold X$
in which $\phi$ is satisfiable. The map $f$ defined from $\A=\Fm_T$ to ${\sf Cs}_n^{\Mo}$ (the set algebra based on $\Mo$ \cite[4.3.4]{HMT2})
via  $\phi_T\mapsto \phi^{\Mo},$ where the latter is the set of $n$--ary assignments in
$\M$ satisfying $\phi$, omits $\bold X$. Injectivity follows from the facts that $f$ 
is non--zero and $\A$ is simple. 

For (2), we can assume that $\A\subseteq_c \Nr_n\B$, $\B\in \Lf_{\omega}$. 
We work in $\B$. Using the notation on \cite[p. 216 of proof of Theorem 3.3.4]{Sayed} replacing $\Fm_T$ by $\B$, we have $\bold H=\bigcup_{i\in \lambda}\bigcup_{\tau\in V}\bold H_{i,\tau}$
where $\lambda <\mathfrak{p}$, and $V$ is the weak space ${}^{\omega}\omega^{(Id)}$,  
can be written as a countable union of nowhere dense sets, and so can 
the countable union $\bold G=\bigcup_{j\in \omega}\bigcup_{x\in \B}\bold G_{j,x}$.  
So for any $a\neq 0$,  there is an ultrafilter $F\in N_a\cap (S\setminus \bold H\cup \bold G$)
by the Baire category theorem. This induces a homomorphism $f_a:\A\to \C_a$, $\C_a\in {\sf Cs}_n$ that omits the given types, such that
$f_a(a)\neq 0$. (First one defines $f$ with domain $\B$ as on p.216, then restricts $f$ to $\A$ obtaining $f_a$ the obvious way.) 
The map $g:\A\to \bold P_{a\in \A\setminus \{0\}}\C_a$ defined via $x\mapsto (g_a(x): a\in \A\setminus\{0\}) (x\in \A)$ is as required. 
In case $\A$ is simple, then by properties of $\sf covK$, $S\setminus (\bold H\cup \bold G)$ is non--empty,  so
if $F\in S\setminus (\bold H\cup \bold G)$, then $F$ induces a non--zero homomorphism $f$ with domain $\A$ into a $\Cs_n$ 
omitting the given types. By simplicity of $\A$, $f$ is injective.

For the last item if $\A\in {\bold S}_n\Nr_n\CA_{\omega}$ is atomic and has countably many atoms, 
then any complete representation of $\A$, equivalently, an atomic representation of $\A$, equivalently, a representation of $\A$ 
omitting the set of co--atoms is as required.  This complete representation exists by \cite[Theorem 5.3.6]{Sayed}.
If $\A$ is simple and completely representable, 
then it is completely represented  
by a ${\sf Cs}_n$, and we are done. 
\end{proof}
$\sf LCRA$ is the class of $\sf RA$s satisfying the Lyndon conditions defined analogously to $\sf LCA_n$.
The next Theorem will be used several times below. We assume familiarity with constructing atomic 
relation algebras by specifying forbidden triples of atoms, see e.g.  \cite{Maddux, HHbook}.

\begin{theorem}\label{bsl} Let $\kappa$ be an infinite cardinal. Then there exists an atomless $\C\in \CA_{\omega}$ such that the relation algebra $\R=\Ra\C$ is atomic and satisfies
that $|\R|=2^{\kappa}$  and $\R\in {\sf LCRA}$, but $\R$ is not completely representable. For all $2<n<\omega$, $\B=\Nr_n\C$ is atomic, $|\B|=2^{\kappa}$, $\B\in {\sf LCA}_n$, 
but $\B$ is not completely representable, too. Furthermore, for any such $n$, $\R=\Ra\Nr_n\C$. 
In particular, $\bold S_c\Ra\CA_{\omega}\nsubseteq\sf CRRA$, $\bold S_c\Nr_n\CA_{\omega}\nsubseteq {\sf CRCA}_n$  
and $\sf CRRA$ and ${\sf CRCA}_n$ are not elementary \cite{HH}.
\end{theorem}
\begin{proof}
Let $\kappa$ be an infinite cardinal. 
We specify the atoms and forbidden triples. The atoms are $\Id, \; \g_0^i:i<2^{\kappa}$ and $\r_j:1\leq j<
\kappa$, all symmetric.  The forbidden triples of atoms are all
permutations of $({\sf Id}, x, y)$ for $x \neq y$, \/$(\r_j, \r_j, \r_j)$ for
$1\leq j<\kappa$ and $(\g_0^i, \g_0^{i'}, \g_0^{i^*})$ for $i, i',
i^*<2^{\kappa}.$  In other words, we forbid all the monochromatic triangles.
Write $\g_0$ for $\set{\g_0^i:i<2^{\kappa}}$ and $\r_+$ for
$\set{\r_j:1\leq j<\kappa}$. Call this atom
structure $\alpha$. 
Let $\R=\Tm(\alpha)$. Then it is proved in \cite{bsl} that $\R$ has no complete representation.
Let $S$ be the set of all atomic $\R$-networks $N$ with nodes
$\kappa$ such that $\{\r_i: 1\leq i<\kappa: \r_i \text{ is the label
of an edge in $N$}\}$ is finite.
Then $S$ is an amalgamation class, that is for all $M, N
\in S$ if $M \equiv_{ij} N$,  there is $L \in S$ with
$M \equiv_i L \equiv_j N$.
So $\Ca(S)\in \CA_\omega$.
 Now let $X$ be the set of finite $\R$-networks $N$ with nodes
$\subseteq\kappa$ such that:
(1) each edge of $N$ is either (a) an atom of
$\A$ or (b) a cofinite subset of $\r_+=\set{\r_j:1\leq j<\kappa}$ or (c)
a cofinite subset of $\g_0=\set{\g_0^i:i<2^{\kappa}}$ and

(2)  $N$ is `triangle-closed', i.e. for all $l, m, n \in \nodes(N)$ we
have $N(l, n) \leq N(l,m);N(m,n)$.  That means if an edge $(l,m)$ is
labelled by $\sf Id$ then $N(l,n)= N(m,n)$ and if $N(l,m), N(m,n) \leq
\g_0$ then $N(l,n)\cdot \g_0 = 0$ and if $N(l,m)=N(m,n) =
\r_j$ (some $1\leq j<\omega$) then $N(l,n)\cdot \r_j = 0$.
For $N\in X$ let $N'\in\Ca(S)$ be defined by:
$\set{L\in S: L(m,n)\leq
N(m,n) \mbox{ for } m,n\in \nodes(N)}.$
For $i\in \omega$, let $N\restr{-i}$ be the subgraph of $N$ obtained by deleting the node $i$.
If $N\in X, \; i<\omega$, then it is proved in \cite{bsl} that $\cyl i N' =
(N\restr{-i})'$. 

Let $X' = \set{N':N\in X} \subseteq \Ca(S)$.
Then the subalgebra of $\Ca(S)$ generated by $X'$ is obtained from
$X'$ by closing under finite unions.
Let $\C$ be the subalgebra of $\Ca(S)$ generated by $X'$.
Then we claim that $\B = \Ra(\C)$.
To see why, each element of $\R$ is a union of a finite number of atoms,
possibly a co-finite subset of $\g_0$ and possibly a co-finite subset
of $\r_+$.  Clearly $\A\subseteq\Ra(\C)$.  Conversely, each element
$z \in \Ra(\C)$ is a finite union $\bigcup_{N\in F}N'$, for some
finite subset $F$ of $X$, satisfying $\cyl i z = z$, for $i > 1$. Let $i_0,
\ldots, i_k$ be an enumeration of all the nodes, other than $0$ and
$1$, that occur as nodes of networks in $F$.  Then, $\cyl
{i_0} \ldots
\cyl {i_k}z = \bigcup_{N\in F} \cyl {i_0} \ldots
\cyl {i_k}N' = \bigcup_{N\in F} (N\restr{\set{0,1}})' \in \R$.  So $\Ra(\C)
\subseteq \A$.
$\R$ is relation algebra reduct of $\C\in\CA_\omega$ but has no complete representation.
Let $\B=\mathfrak{Nr}_n \C$ ($2<n<\omega$). Then
$\B\in \Nr_n\CA_{\omega}$, is atomic, but has no complete representation, for a complete representation of $\B$ induces one of $\R$. 

We show that the $\omega$--dilation $\C$ is atomless. 
For any $N\in X$, we can add an extra node 
extending
$N$ to $M$ such that $\emptyset\subsetneq M'\subsetneq N'$, so that $N'$ cannot be an atom in $\C$.

We further show that $\B\in {\bf El}{\sf CRCA}_n$ reproving that ${\sf CRCA}_n$ is not elementary \cite{HH}. Since $\B\in \Nr_n\CA_{\omega}$, 
then by Lemma \ref{n}, \pe\ has a \ws\ in $G_{\omega}(\At\B)$, hence in 
$G_k(\At\B)$ for all finite
$k$, so using ultrapowers followed by an elementary chain argument, we get that $\B\equiv \C$,
where $\C$ is a countable and completely representable $\CA_n$, 
hence  $\B\in {\bf El}{\sf CRCA}_n$ and we are done. The same technique works for $\RA$s using $\R$.
Since ${\sf LCA}_n={\bf El}{\sf CRCA}_n$, that $\sf LCRA={\bf El}{\sf CRRA}$, we are done. 
\end{proof}
For an ordinal $\alpha$, let $\PEA_{\alpha}$ denotes the class of $\alpha$--dimensional polyadic equality algebras as 
defined in \cite[Definition 5.4.1]{HMT2}, For $\alpha$ an infinite ordinal, $n<\alpha$, and $\B\in \PEA_{\alpha}$, the {\it $n$-neat reduct of $\B$}, in symbols 
$\mathfrak{Nr}_n\B$, is defined like the $\CA$ case, it is straightforward to show that $\mathfrak{Nr}_n\B\in \QEA_n$.
Let $2<n<\omega$. Theorem \ref{bsl} shows that there atomic algebras in $\Nr_n\CA_{\omega}$ lacking 
complete representations. But the $\omega$--dilation constructed therein (denoted by $\C$) is atomless.
What if we require that the $\omega$--dilation $\D$ say,  is atomic, can we in this case completely represent $\Nr_n\D$? 
The next theorem  gives an affirmative answer in case the $\omega$--dilation is a $\PEA_{\alpha}$ for any infinite $\alpha$. But first a lemma:
\begin{lemma}\label{join} Assume that $\A$ and $\D$ are Boolean algebras such  
that $\A\subseteq_c \D$. If $\D$ is atomic, then $\A$ is atomic.
\end{lemma}

To prove the theorem, for simplicity of notation we choose $\alpha=\omega$.
\begin{theorem}\label{pa} If $2<n<\omega$ and $\D\in \PEA_{\omega}$ is atomic, then
any complete subalgebra of ${\mathfrak Nr}_n\D$ is completely representable (as a $\QEA_n$).  
\end{theorem}\begin{proof}
We identify notationally set algebras with their domain. 
Assume that  $\A\subseteq_c{\Nr}_n\D$, where $\D\in \PEA_{\omega}$ is atomic. 
We want to completely represent $\A$.  Let $c\in \A$ be non--zero. We will find  a homomorphism $f:\A\to \wp(^nU)$ 
such that $f(c)\neq 0$, and $\bigcup_{y\in Y}f(y)={}^nU$, whenever $Y\subseteq \A$ satisfies $\sum^{\A}Y=1$.

Assume for the moment  (to be proved in a while) that $\A\subseteq_c \D$. Then  $\A$ is atomic, because $\D$ is. For brevity, let $X=\At\A$. 
Let $\mathfrak{m}$ be the local degree of $\D$, $\mathfrak{c}$ its effective cardinality 
and let $\beta$ be any cardinal such that $\beta\geq \mathfrak{c}$
and $\sum_{s<\mathfrak{m}}\beta^s=\beta$; such notions are defined in \cite{DM}.

We can assume that $\D=\Nr_{\omega}\B$, with $\B\in \PEA_{\beta}$\cite[Theorem 5.4.17]{HMT2}. 
 For any ordinal $\mu\in \beta$, and $\tau\in {}^{\mu}\beta$,  write $\tau^+$ for $\tau\cup Id_{\beta\setminus \mu}(\in {}^\beta\beta$).
Consider the following family of joins evaluated in $\B$,
where $p\in \D$, $\Gamma\subseteq \beta$ and
$\tau\in {}^{\omega}\beta$:
(*) $ {\sf c}_{(\Gamma)}p=\sum^{\B}\{{\sf s}_{{\tau^+}}p: \tau\in {}^{\omega}\beta,\ \  \tau\upharpoonright \omega\setminus\Gamma=Id\},$ and (**):
$\sum {\sf s}_{{\tau^+}}^{\B}X=1.$
The first family of joins exists \cite[Proof of Theorem 6.1]{DM}, and the second exists, 
because $\sum ^{\A}X=\sum ^{\D}X=\sum ^{\B}X=1$ and $\tau^+$ is completely additive, since
$\B\in \PEA_{\beta}$. 

The last equality of suprema follows from the fact that $\D=\Nr_{\omega}\B\subseteq_c \B$ and the first
from the fact that $\A\subseteq_c \D$. We prove the former, the latter is exactly the same replacing
$\omega$ and $\beta$, by $n$ and $\omega$, respectivey, proving that $\Nr_n\D\subseteq_c \D$, hence $\A\subseteq_c \D$.  

We prove that $\Nr_{\omega}\B\subseteq_c \B$. The proof is entirely analogous to the proof in the first item of Theorem \ref{i} 
except that we use infinite cylindrifications (in the signature of $\PEA_{\omega}$).
In more detail, assume that $S\subseteq \D$ and $\sum ^{\D}S=1$, and for contradiction, that there exists $d\in \B$ such that
$s\leq d< 1$ for all $s\in S$. Let  $J=\Delta d\setminus \omega$ and take  $t=-{\sf c}_{(J)}(-d)\in {\D}$.
Then  ${\sf c}_{(\beta\setminus \omega)}t={\sf c}_{(\beta\setminus \omega)}(-{\sf c}_{(J)} (-d))
=  {\sf c}_{(\beta\setminus \omega)}-{\sf c}_{(J)} (-d)
=  {\sf c}_{(\beta\setminus \omega)} -{\sf c}_{(\beta\setminus \omega)}{\sf c}_{(J)}( -d)
= -{\sf c}_{(\beta\setminus \omega)}{\sf c}_{(J)}( -d)
=-{\sf c}_{(J)}( -d)
=t.$
We have proved that $t\in \D$.
We now show that $s\leq t<1$ for all $s\in S$, which contradicts $\sum^{\D}S=1$.
If $s\in S$, we show that $s\leq t$. By $s\leq d$, we have  $s\cdot -d=0$.
Hence by ${\sf c}_{(J)}s=s$, we get $0={\sf c}_{(J)}(s\cdot -d)=s\cdot {\sf c}_{(J)}(-d)$, so
$s\leq -{\sf c}_{(J)}(-d)$.  It follows that $s\leq t$ as required. Assume for contradiction that 
$1=-{\sf c}_{(J)}(-d)$. Then ${\sf c}_{(J)}(-d)=0$, so $-d =0$ which contradicts that $d<1$. We have proved that $\sum^{\B}S=1$,
so $\D\subseteq_c \B$.

Let $F$ be any Boolean ultrafilter of $\B$ generated by an atom below $a$. We show that $F$
will preserve the family of joins in (*) and (**).
We next use a simple topological argument. 
One forms nowhere dense sets in the Stone space of $\B$ 
corresponding to the aforementioned family of joins 
as follows:

The Stone space of (the Boolean reduct of) $\B$ has underlying set,  the set of all Boolean ultrafilters
of $\B$. For $b\in \B$, let $N_b$ be the clopen set $\{F\in S: b\in F\}$.
The required nowhere dense sets are defined for $\Gamma\subseteq \beta$, $p\in \D$ and $\tau\in {}^{\omega}\beta$ via:
$A_{\Gamma,p}=N_{{\sf c}_{(\Gamma)}p}\setminus \bigcup_{\tau:\omega\to \beta}N_{{\sf s}_{\tau^+}p}$, 
and $A_{\tau}=S\setminus \bigcup_{x\in X}N_{{\sf s}_{\tau^+}x}.$
The principal ultrafilters are isolated points in the Stone topology, so they lie outside the nowhere dense sets defined above.
Hence any such ultrafilter preserve the joins in (*) and (**). 
Fix a principal ultrafilter $F$ preserving (*) and (**) with $a\in F$. 
For $i, j\in \beta$, set $iEj\iff {\sf d}_{ij}^{\B}\in F$.

Then by the equational properties of diagonal elements and properties of filters, it is easy to show that $E$ is an equivalence relation on $\beta$.
Define $f: \A\to \wp({}^n(\beta/E))$, via $x\mapsto \{\bar{t}\in {}^n(\beta/E): {\sf s}_{t\cup Id_{\beta\sim n}}^{\B}x\in F\},$
where $\bar{t}(i/E)=t(i)$ ($i<n$) and $t\in {}^n\beta$. 
Then ity s not hard to check that  $f$ is a well--defined homomorphism (from (*)) and that $f$ is complete 
such that $f(c)\neq 0$. The last follows by observing that $Id\in f(c)$.
(Let $V={}^\beta\beta^{(Id)}$. To show that $f$ is well defined, it suffices to show  that for all $\sigma, \tau\in V$, 
if $(\tau(i), \sigma(i))\in E$ for all $i\in \beta$,   then 
for any $x\in \A$, ${\sf s}_{\tau}x\in F\iff {\sf s}_{\sigma}x\in F.$)  

We show that the non--zero homomorphism 
$f$ is an atomic, hence, a complete representation.
 By construction, for every $s\in {}^{n}(\beta/E)$, 
there exists $x\in X(=\At\A)$, such that ${\sf s}_{s\cup Id_{\beta\sim n}}^{\B}x\in F$, 
from which we get the required, namely, that  
$\bigcup_{x\in X}f(x)={}^n(\beta/E).$
\end{proof}

The next Theorem uses the full power of Theorem \ref{sh} addressing omitting types in possibly uncountable theories.
The Theorem is stated without proof in \cite{Sayed}, cf. \cite[Theorem 3.2.9]{Sayed}.
\begin{theorem}\label{mark} Let $\kappa$ be a regular infinite cardinal and $n<\omega$. Assume that $\A\in \Nr_n\CA_{\omega}$ with $|A|\leq {\kappa}$, 
that $\lambda$ is a cardinal $< 2^{\kappa}$, and that $\bold X=(X_i: i<\lambda)$ is a family of non-principal types  of $\A$.
 If the $X_i$s are maximal non--principal ultrafilters of $\A$,  then $\bold X$ can be omitted in a ${\sf Gs}_n$.
Furthermore, the condition of maximality cannot be dispensed with.
\end{theorem}
\begin{proof} From Lemma \ref{sh} using the same reasoning in item (1) of Theorem \ref{i}.
The second part follows from the construction in Theorem \ref{bsl} where an atomic algebra  $\B\in \Nr_n\CA_{\omega}$ 
with uncountably many atoms that is not completely representable is  given. This implies that the maximality condition 
cannot be dispensed with; else the set  of co--atoms of $\B$ call it $X$ will be a non--principal type that cannot be omitted, 
because any ${\sf Gs}_n$ omitting $X$ yields a complete representation of 
$\B$, witness too  the last paragraph in \cite{Sayed}.  
\end{proof}

\section{Complete representations}

For finite $n>2$, the existence of a countable atomic $\A\in \RCA_n$ lacking  a complete representation implies that Vaught's theorem fails for $L_n$.  
Accordingly, we devote this section to studing the algebraic (semantical) notion of complete representations.
We characterize the elementary closure of the class ${\sf CRCA}_n$ of completely representable $\CA_n$s using neat embeddings, 
and we study first order definability of 
several classes strongly related to the (non--elementary) class ${\sf CRCA}_n$ \cite{HH}.

Fix a finite ordinal $n>2$. For a class $\bold K$ (recall that) ${\bf El}\bold K$ denotes its elementary closure. 
Following \cite{HMT2}, ${\sf Cs}_n$ denotes the class of {\it cylindric set algebras of dimension $n$}, and ${\sf Gs}_n$ 
denotes the class of {\it generalized cylindric set algebra of dimension $n$}; $\C\in {\sf Gs}_n$, if $\C$ has top element
$V$ a disjoint union of cartesian squares,  that is $V=\bigcup_{i\in I}{}^nU_i$, $I$ is a non-empty indexing set, $U_i\neq \emptyset$  
and  $U_i\cap U_j=\emptyset$  for all $i\neq j$. The operations of $\C$ are defined like in cylindric set algebras of dimension $n$ 
relativized to $V$. 
Recall that:

\begin{definition} An algebra $\A\in {\sf CRCA}_n$ $\iff$ there exists $\C\in {\sf Gs}_n$, and an isomorphism $f:\A\to \C$ such that for all $X\subseteq \A$, 
$f(\sum X)=\bigcup_{x\in X}f(x)$, whenever $\sum X$ exists in $\A$. In this case, we say that $\A$ is {\it completely representable via $f$.}
\end{definition}
It is known that $\A$ is completely representable via $f:\A\to \C$, where $\C\in {\sf Gs}_n$ has top element $V$ say 
$\iff$ $\A$ is atomic and $f$ is {\it atomic} in the sense that 
$f(\sum \At\A)=\bigcup_{x\in \At\A}f(x)=V$ \cite{HH}.
  
We often identify set algebras with their domain referring to an injection $f:\A\to \wp(V)$ ($\A\in \CA_n$) as a complete representation  of $\A$ via $f$, or simply a complete representation of $\A$, 
where $V$  is a ${\sf Gs}_n$ unit.
Recall that $\bold S_c$ denotes the operation of forming {\it complete} subalgebras and that we write $\A\subseteq_c \B$ if $\A\in \bold S_c\{\B\}$.  
For a Boolean algebra $\A$ and $a\in \A$, $\Rl_a\A$ is the Boolean with universe $\{x\in \A: x\leq a\}$ and Boolean operations those of $\A$ 
relativized to the universe. 
We write $\A\subseteq_d \B$ if $\A$ is dense in $\B$,
and we let $\bold S_d$ denote the operation of forming {\it dense subalgebras}.

For a class $\sf K$ of $\sf BAO$s, recall that $\sf K\cap \bf At$ denotes the class of atomic algebras in $\K$.
Let ${\sf FCs}_{n}=\{\A\in {\sf Cs}_{n}: A=\wp(^nU) \text { some non--empty set $U$} \}.$ 
 \begin{theorem}\label{iii}
For $2<n<\omega$ the following hold:
\begin{enumerate}
\item ${\sf CRCA}_{n} \subseteq \bold S_c{\Nr}_{n}(\CA_{\omega}\cap {\bf At})\cap {\bf At}\subseteq \bold S_c{\Nr}_{n}\CA_{\omega}\cap \bf At,$
\item If  $\A\in {\sf CRCA}_{n}$, then \pe\ has  a \ws\ in $G_{\omega}(\At\A)$ and $\bold G^{\omega}(\At\A),$ 
\item All  reverse inclusions and implications in the previous two items hold, 
if algebras considered have countably many atoms,
\item At least two classes in the first item are distinct. Non of all these classes 
is elementary, but their elementary closure coincides with ${\sf LCA}_n$, 
\item ${\sf CRCA}_{n}={\bf S_c P}{\sf FCs}_{n}$,
\item  ${\Nr}_{n}\CA_{\omega}\cap {\bf At}\nsubseteq {\sf CRCA}_{n}$, $\Nr_{n}\CA_{\omega}\cap {\bf At}\subsetneq\bold S_c\Nr_{n}\CA_{\omega}\cap {\bf At}$ 
and  ${\sf CRCA}_n\subsetneq \bold S_c\Nr_{n}\CA_{\omega}\cap {\bf At}$,  
\item Neither of the classes ${\sf CRCA}_n$ and $\bold S_d\Nr_n\CA_{\omega}$ are contained in each other. Furthermore, 
$\Nr_n\CA_{\omega}\subsetneq \bold S_d\Nr_n\CA_{\omega}\subsetneq \bold S_c\Nr_n\CA_{\omega}$,

\end{enumerate}
\end{theorem}
\begin{proof}
1.   Let $\A\in {\sf CRCA}_n$. Assume that $\Mo$ is the base of a complete representation of $\A$, whose
unit is a generalized cartesian space,
that is, $1^{\Mo}=\bigcup {}^nU_i$, where $^{n}U_i\cap {}^{n}U_j=\emptyset$ for distinct $i$ and $j$, in some
index set $I$, that is, we have an isomorphism $t:\B\to \C$, where $\C\in {\sf Gs}_{n}$ 
has unit $1^{\Mo}$, and $t$ preserves arbitrary meets carrying
them to set--theoretic intersections.
For $i\in I$, let $E_i={}^{n}U_i$. Take  $f_i\in {}^{\omega}U_i$ 
and let $W_i=\{f\in  {}^{\omega}U_i^{(f_i)}: |\{k\in \omega: f(k)\neq f_i(k)\}|<\omega\}$.
Let ${\C}_i=\wp(W_i)$. Then $\C_i$ is atomic; indeed the atoms are the singletons. 
 
Let $x\in \mathfrak{Nr}_{n}\C_i$, that is ${\sf c}_ix=x$ for all $n\leq i<\omega$.
Now if  $f\in x$ and $g\in W_i$ satisfy $g(k)=f(k) $ for all $k<n$, then $g\in x$.
Hence $\mathfrak{Nr}_{n}\C_i$
is atomic;  its atoms are $\{g\in W_i:  \{g(i):i<n\}\subseteq U_i\}.$
Define $h_i: \A\to \mathfrak{Nr}_{n}\C_i$ by
$h_i(a)=\{f\in W_i: \exists a'\in \At\A, a'\leq a;  (f(i): i<n)\in t(a')\}.$
Let $\D=\bold P _i \C_i$. Let $\pi_i:\D\to \C_i$ be the $i$th projection map.
Now clearly  $\D$ is atomic, because it is a product of atomic algebras,
and its atoms are $(\pi_i(\beta): \beta\in \At(\C_i))$.  
Now  $\A$ embeds into $\mathfrak{Nr}_{n}\D$ via $J:a\mapsto (\pi_i(a) :i\in I)$. If $x\in \mathfrak{Nr}_{n}\D$,
then for each $i$, we have $\pi_i(x)\in \mathfrak{Nr}_{n}\C_i$, and if $x$
is non--zero, then $\pi_i(x)\neq 0$. By atomicity of $\C_i$, there is an $n$--ary tuple $y$, such that
$\{g\in W_i: g(k)=y_k\}\subseteq \pi_i(x)$. It follows that there is an atom
of $b\in \A$, such that  $x\cdot  J(b)\neq 0$, and so the embedding is atomic, hence complete.
We have shown that $\A\in \bold S_c{\sf Nr}_{n}\CA_{\omega}\cap \bf At$, and since $\A$ is atomic because $\A\in {\sf CRCA}_n$
we are done with the first inclusion.  The second inclusion is straightforward since $\CA_{\omega}\cap {\bf At}\subseteq \CA_{\omega}$.

2.   \cite[Theorem 3.3.3]{HHbook2}. Follows too from the first item taken together with lemma \ref{n}.

3.  Follows by observing that the class ${\sf CRCA}_n$ coincides with the class 
$\bold S_c\Nr_{n}\CA_{\omega}$ on atomic algebras having countably many atoms, cf. \cite[Theorem 5.3.6]{Sayedneat}, taken together with \cite[Theorem 3.3.3]{HHbook2}. Strictly speaking, 
in \cite{Sayedneat} it is shown that the two classes $\CRCA_n$ and $\bold S_c\Nr_n\CA_{\omega}$ 
coincide on countable atomic algebras. One can show that they coincide on the larger class of atomic agebras having countably many atoms by observing that if
$\A$ is an atomic algebra having countably many atoms, then $\Tm\At\A$ is countable 
and $\Tm\At\A\in {\sf CRCA}_n\iff \A\in {\sf CRCA}_n$ because an algebra is completely representable $\iff$ it is atomic and its atom structure is completely representable.

4. To show that non of the classes in the first item is elementary, let $\D$ be an atomic 
$\RCA_n$ with countably many atoms that is not completely representable, but is elementary equivalent
to some $\B\in {\sf CRCA}_n$. Such algebras exist; see e.g.  \cite{HH}. 
Another such algebra is the algebra $\C_{\N^{-1}, \N}$ used in Theorem \ref{iiii}.  It is the case that $\C_{\N^{-1}, \N}\notin {\sf CRCA}_n$, 
because $\C_{\N^{-1}, \N}\notin \bold S\Nr_n\CA_{n+3}\supseteq \bold S_c\Nr_n\CA_{\omega}\supseteq {\sf CRCA}_n$. Furthermore, $\C\in {\bf El}{\sf CRCA}_n$. 
Then $\D$ is not in any of the aforementiond classes because it has countably many atoms, and by the first item $\B$ is in all 
three classes, proving the required. 

We show, as claimed, that all the given classes coincide with ${\sf LCA}_n$. 
Assume that $\A\in {\sf LCA}_n$.
Take a countable elementary subalgebra $\C$ of $\A$.
Since ${\sf LCA}_{n}$ is elementary, then $\C\in {\sf LCA}_{n}$, so for 
$k<\omega$, \pe\ has a \ws\ $\rho_k$,  in $G_k(\At\C)$. 
Let $\D$ be a non--principal ultrapower of $\C$.  Then \pe\ has a \ws\ $\sigma$ in $G_{\omega}(\At\D)$ --- essentially she uses
$\rho_k$ in the $k$'th component of the ultraproduct so that at each
round of $G_{\omega}(\At\D)$,  \pe\ is still winning in co--finitely many
components, this suffices to show she has still not lost. Now one can use an
elementary chain argument to construct countable elementary
subalgebras $\C=\A_0\preceq\A_1\preceq\ldots\preceq\ldots \D$ in the following way.
One defines  $\A_{i+1}$ to be a countable elementary subalgebra of $\D$
containing $\A_i$ and all elements of $\D$ that $\sigma$ selects
in a play of $G_{\omega}(\At\D)$ in which \pa\ only chooses elements from
$\A_i$. Now let $\B=\bigcup_{i<\omega}\A_i$.  This is a
countable elementary subalgebra of $\D$, hence necessarily atomic,  and \pe\ has a \ws\ in
$G_{\omega}(\At\B)$, so $\B$ is completely representable.
Thus $\A\equiv \C\equiv \B$, hence $\A\in {\bf El}{\sf CRCA}_n$. We have shown that ${\sf LCA}_n\subseteq {\bf El}\sf CRCA_n.$

If $\A\in \bold S_c{\sf Nr}_n\CA_{\omega}\cap {\bf At}$, then by lemma \ref{n}, 
\pe\ has a \ws\ in $F^{\omega}(\At\A)$, hence in $G_{\omega}(\At\A)$, {\it a fortiori}, in  $G_k(\At\A)$ for all $k<\omega$, 
so $\A\in {\sf LCA}_{n}.$
Since ${\sf LCA}_n$ is elementary,  we get that ${\bf El}(\bold S_c{\sf Nr}_n\CA_{\omega}\cap {\bf At})\subseteq {\sf LCA}_n$.
But ${\sf CRCA}_n\subseteq \bold S_c{\sf Nr}_n\CA_{\omega}\cap \bf At$,  hence 
${\sf LCA}_n={\bf El}{\sf CRCA}_n\subseteq {\bf El}(\bold S_c{\sf Nr}_n\CA_{\omega}\cap {\bf At})\subseteq {\sf LCA}_{n}$. 

Now $\bold S_c{\Nr}_{n}\CA_{\omega}\cap {\bf At}\subseteq {\bf El}\bold S_c{\Nr}_{n}\CA_{\omega}\cap \bf At$,
and the latter class is elementary (if $\bold K$ is elementary, then $\bold K\cap \bf At$ is elementary), 
so ${\bf El}(\bold S_c{\Nr}_{n}\CA_{\omega}\cap \bf At)\subseteq {\bf El}\bold S_c{\Nr}_{n}\CA_{\omega}\cap \bf At.$
Conversely, if $\C$ is in the last class, then $\C$ is atomic and $\C\equiv \D$, for some $\D\in \bold S_c{\Nr}_{n}\CA_{\omega}$.
Hence $\D$ is atomic, so $\D\in  \bold S_c{\Nr}_{n}\CA_{\omega}\cap \bf At$, 
thus $\C\in  {\bf El}(\bold S_c{\Nr}_{n}\CA_{\omega}\cap \bf At)$.
We have shown that 
${\bf El}\bold S_c{\Nr}_{n}\CA_{\omega}\cap {\bf At}={\bf El}(\bold S_c{\sf Nr}_n\CA_{\omega}\cap {\bf At})={\sf LCA}_{n}={\bf El}{\sf CRCA}_{n}$. 

Finally, by Lemma \ref{n}, $\bold S_c{\Nr}_{n}(\CA_{\omega}\cap {\bf At})\cap {\bf At}\subseteq {\sf LCA}_{n}$, so
${\bf El}\bold S_c[{\Nr}_{n}(\CA_{\omega}\cap {\bf At})\cap {\bf At}]\subseteq {\sf LCA}_{n}$. The other inclusion follows from 
${\sf CRCA}_{n}\subseteq \bold S_c{\Nr}_{n}(\CA_{\omega}\cap \bf At)\cap \bf At$, so 
${\sf LCA}_{n}={\bf El}\CRCA_{n}\subseteq {\bf El}[\bold S_c{\Nr}_{n}(\CA_{\omega}\cap \bf At)\cap \bf At]$.   
We have shown that all classes coincide with ${\sf LCA}_{n}$, 
which is the elementary closure of ${\sf CRCA}_{n}$, and we are done. 

5.  The inclusion $\subseteq$ is straightforward.
Conversely, assume that $\A\subseteq_c \bold P_{i\in I}\wp({}^{n}U_i).$
Then  $\B=\bold P_{i\in I}\wp({}^{n}U_i)\cong \wp(V)$, where $V$ is the disjoint
union of the $^{n}U_i$, is clearly completely representable. Then  
since $\A\subseteq_c \B$,  then $\A$ is completely representable, too. 
To see why, suppose that $f:\B\to \wp(V)$ establishes a complete representation of
$\B$. 
We claim that $g=f\upharpoonright \A$ is a complete representation of $\A$. 
Let $X\subseteq \A$ be such that $\sum^{\A}X=1$. 
Then by $\A\subseteq_c \B$, we have  $\sum ^{\B}X=1$. Furthermore, for all $x\in X(\subseteq \A)$ we have $f(x)=g(x)$, so that 
$\bigcup_{x\in X}g(x)=\bigcup_{x\in X} f(x)=V$, since $f$ is a complete representation, 
and we are done. 

6. First $\nsubseteq$ follows from Theorem \ref{bsl}.
Second $\subsetneq$ follows from the construction in \cite{SL} recalled in Part 1.
Last $\subsetneq$ follows from the first two parts in this item together with the inclusions in the first item.

7. That $\bold S_d\Nr_n\CA_{\omega}\cap {\bf At}\nsubseteq {\sf CRCA}_n$ follows from Theorem \ref{bsl}. 
To show that, conversely  ${\sf CRCA}_n\nsubseteq \bold S_d\Nr_n\CA_{\omega}\cap {\bf At}$,  we 
slighty modify the construction in \cite[Lemma 5.1.3, Theorem 5.1.4]{Sayedneat} lifted to any finite $n>2$. 
The algebras $\A$ and $\B$ constructed in {\it op.cit} satisfy that
$\A\in {\sf Nr}_n\CA_{\omega}$, $\B\notin {\sf Nr}_n\CA_{n+1}$ and $\A\equiv \B$.
As they stand, $\A$ and $\B$ are not atomic, but it 
can be  fixed that they are to be so giving the same result, by interpreting the uncountably many tenary relations in the signature of 
$\Mo$ defined in \cite[Lemma 5.1.3]{Sayedneat}, which is the base of $\A$ and $\B$ 
to be {\it disjoint} in $\Mo$, not just distinct.  The construction is presented this way in \cite{IGPL}, where (the equivalent of) 
$\Mo$ is built in a 
more basic step-by--step fashon.

We work with $2<n<\omega$ instead of only $n=3$. The proof presented in {\it op.cit} lift verbatim to any such $n$.
Let $u\in {}^nn$. Write $\bold 1_u$ for $\chi_u^{\Mo}$ (denoted by $1_u$ (for $n=3$) in \cite[Theorem 5.1.4]{Sayedneat}.) 
Now we have $\A, \B$ are atomic $\RCA_n$s such 
$\A\in {\sf Nr}_n\CA_{\omega}$, $\B\notin {\bf El}{\sf Nr}_n\CA_{n+1}$ and $\A\equiv \B$.
So $\At\B\notin \At{\bf El}\Nr_n\CA_{n+1}(\supseteq \At{\bf El}\Nr_n\CA_{\omega}$). 
Also $\B\in {\sf CRCA}_n$ because
$\B \in {\sf Gs}_n$ (the class of generalized set algebras of dimension $n$)  and  
$\bigcup \At\B=\bigcup_{u\in{}^nn}\bigcup \At\B_u=\bigcup_{u\in {}^nn}\bold 1_u= 1^{\B}.$ Thus the identity may establishes a complete representation of $\B$.
It follows that $\At\B$ satisfies the 
Lyndon conditions.

We have 
$\A\in {\sf Nr}_n\CA_{\omega}$, $\B\notin {\sf Nr}_n\CA_{n+1}$, 
$\A\equiv \B$.
and $\A$ and $\B$ are atomic. Both algebras are based on the model ${\sf M}$.
Denote by $\A_u$ the Boolean algebra $\Rl_{\bold 1_u}\A=\{x\in \A: x\leq \bold 1_u\}$ 
and similarly  for $\B$, writing $\B_u$ short hand  for the Boolean algebra $\Rl_{\bold 1_u}\B=\{x\in \B: x\leq \bold 1_u\}.$
Using that $\Mo$ has quantifier elimination we get using the same argument in \cite[Theorem 5.1.4]{Sayedneat} 
that $\A\in \Nr_n\CA_{\omega}$.  The property that $\B\notin \Nr_n\CA_{n+1}$ is also still maintained.

To see why consider the substitution operator $_{n}{\sf s}(0, 1)$ (using one spare dimension) as defined in the proof of \cite[Theorem 5.1.4]{Sayedneat}.
Assume for contradiction that 
$\B=\Nr_{n}\C$, with $\C\in \CA_{n+1}.$ Let $u=(1, 0, 2,\ldots, n-1)$. Then $\A_u=\B_u$
and so $|\B_u|>\omega$. The term  $_{n}{\sf s}(0, 1)$ acts like a substitution operator corresponding
to the transposition $[0, 1]$; it `swaps' the first two co--ordinates.
Now one can show that $_{n}{\sf s(0,1)}^{\C}\B_u\subseteq \B_{[0,1]\circ u}=\B_{Id},$ 
so $|_{n}{\sf s}(0,1)^{\C}\B_u|$ is countable because $\B_{Id}$ was forced by construction to be 
countable. But $_{n}{\sf s}(0,1)$ is a Boolean automorpism with inverse
$_{n}{\sf s}(1,0)$, 
so that $|\B_{Id}|=|_{n}{\sf s(0,1)}^{\C}\B_u|>\omega$, contradiction.

Take the cardinality $\kappa$ specifying the signature of $\Mo$ to be $2^{2^{\omega}}$ and assume for contradiction that  
$\B\in \bold S_d\Nr_n\CA_{\omega}\cap \bf At$. 
Then $\B\subseteq_d \mathfrak{Nr}_n\D$, for some $\D\in \CA_{\omega}$ and $\mathfrak{Nr}_n\D$ is atomic. For brevity, 
let $\C=\mathfrak{Nr}_n\D$. Then $\B_{Id}\subseteq_d \Rl_{Id}\C$; the last algebra is the Boolean algebra with universe 
$\{x\in \C:  x\leq Id\}$.
Since $\C$ is atomic,  then $\Rl_{Id}\C$ is also atomic.  
Using the same reasoning as above, we get that $|\Rl_{Id}\C|>2^{\omega}$ (since $\C\in \Nr_n\CA_{\omega}$). 
By the choice of $\kappa$, we get that $|\At\Rl_{Id}\C|>\omega$. 
By $\B\subseteq_d \C$, we get that $\B_{Id}\subseteq_d \Rl_{Id}\C$, and 
that $\At\Rl_{Id}\C\subseteq \At\B_{Id}$, so $|\At\B_{Id}|\geq |\At\Rl_{Id}\C|>\omega$.   
But by the construction of $\B$, we have  $|\B_{Id}|=|\At\B_{Id}|=\omega$,   
which is a  contradiction and we are done.
The algebra $\B$ so constructed is atomic and 
is outside  $\bold S_d\Nr_n\CA_{\omega}$. Furthermore,  as proved above, $\B\in {\sf CRCA}_n$.

The algebra $\B$ in item (6) witnesses the strictness of the first inclusion while the algebra denoted also by $\B$ in this item  
witnesses the strictness of the second inclusion.
\end{proof}
Our next (and final) Theorem addresses (non-) first order definability of several classes of $\CA_n$s 
related to the classes $\Nr_n\CA_k$ $(k>n)$and ${\sf CRCA}_n$. 
\begin{theorem}\label{iiii}
For any class $\bold K$ such that $\Nr_n\CA_{\omega}\cap {\sf CRCA}_n\subseteq \bold K\subseteq \bold S_c\Nr_n\CA_{n+3}$, $\bold K$ is not elementary.
In particular, for $k\geq 3$, $\Nr_n\CA_{n+k}$ and ${\sf CRCA}_n$ are not elementary \cite{IGPL, HH}.
\end{theorem} 
\begin{proof}
We use the rainbow like algebra $\C_{\N^{-1}, \N}$ defined in Theorem \ref{fl} to prove $\Psi(n, n+3)_f$.
We proved in {\it op.cit} that \pa\ has a \ws\ in $\bold G^{n+3}(\At\C_{\N^{-1}, \N})$, implying  
by Lemma \ref{n}, that $\C_{\N^{-1}, \N}\notin \bold S_c{\sf Nr}_n\CA_{n+3}$.
For a start, we show the non-first order definabillity of any class $\bold K$ such that $\bold S_c\Nr_n\CA_{\omega}\cap {\sf CRCA}_n\subseteq \bold K\subseteq \bold S_c\Nr_n\CA_{n+3}$.
Towards this end, we prove that \pe\ has a \ws\ in $G_k(\At\C_{\N^{-1}, \N})$ for all $k\in \omega$. 
The \ws\ of \pe\ is similar but not identical to his \ws\ implemented in \cite{HH} played on $G_k(\At\CA_{\omega, \omega})(= G_k(\At\CA_{\N, \N})$).

Let $0<k<\omega$. We proceed inductively. Let $M_0, M_1, \ldots, M_r$, $r<k$ be the coloured graphs at the start of a play of $G_k$ just before round $r+1$.
Assume inductively, that \pe\ computes a partial function $\rho_s:\N^{-1}\to \N$, for $s\leq r:$

(i) $\rho_0\subseteq \ldots \rho_t\subseteq\ldots\subseteq\ldots  \rho_s$ is (strict) order preserving; if $i<j\in \dom\rho_s$ then $\rho_s(i)-\rho_s(j)\geq  3^{k-r}$, where $k-r$
is the number of rounds remaining in the game,
and 
$$\dom(\rho_s)=\{i\in \N^{-1}: \exists t\leq s, \text { $M_t$ contains an $i$--cone as a subgraph}\},$$

(ii) for $u,v,x_0\in \nodes(M_s)$, if $M_s(u,v)=\r_{\mu,k}$, $\mu, k\in \N$, $M_s(x_0,u)=\g_0^i$, $M_s(x_0,v)=\g_0^j$,
where $i,j\in \N^{-1}$ are tints of two cones, with base $F$ such that $x_0$ is the first element in $F$ under the induced linear order,
then $\rho_s(i)=\mu$ and $\rho_s(j)=k$.

For the base of the induction \pe\ takes $M_0=\rho_0=\emptyset.$ 
Assume that $M_r$, $r<k$  ($k$ the number of rounds) is the current coloured graph and that \pe\ has constructed $\rho_r:\N^{-1}\to \N$ to be a finite order preserving partial map
such conditions (i) and (ii) hold. We show that (i) and (ii) can be maintained in a 
further round.
We check the most difficult case. Assume that $\beta\in \nodes(M_r)$, $\delta\notin \nodes(M_r)$ is chosen by \pa\ in his cylindrifier move,
such that $\beta$ and $\delta$ are apprexes of two cones having
same base and green tints $p\neq  q\in \N^{-1}$. 
Now \pe\ adds $q$ to $\dom(\rho_r)$ forming $\rho_{r+1}$ by defining the value $\rho_{r+1}(p)\in \N$ 
in such a way to preserve the (natural) order on $\dom(\rho_r)\cup \{q\}$, that is maintaining property (i).
Inductively, $\rho_r$ is order preserving and `widely spaced' meaning that the gap between its elements is
at least $3^{k-r}$, so this can be maintained in a further round.

Now \pe\  has to define a (complete) coloured graph 
$M_{r+1}$ such that $\nodes(M_{r+1})=\nodes(M_r)\cup \{\delta\}.$ 
In particular, she has to find a suitable 
red label for the edge $(\beta, \delta).$
Having $\rho_{r+1}$ at hand she proceeds as follows. Now that $p, q\in \dom(\rho_{r+1})$, 
she lets $\mu=\rho_{r+1}(p)$, $b=\rho_{r+1}(q)$. The red label she chooses for the edge $(\beta, \delta)$ is: (*)\ \  $M_{r+1}(\beta, \delta)=\r_{\mu,b}$.
This way she maintains property (ii) for $\rho_{r+1}.$  Next we show that this is a \ws\ for \pe. 

We check consistency of newly created triangles proving that $M_{r+1}$ is a coloured graph completing the induction. 
Since $\rho_{r+1}$ is chosen to preserve order, no new forbidden triple (involving two greens and one red) will be created.
Now we check red triangles only of the form $(\beta, y, \delta)$ in $M_{r+1}$ $(y\in \nodes(M_r)$). 
We can assume that  $y$ is the apex of a cone with base $F$ in $M_r$ and green tint $t$, say,
and that $\beta$ is the appex of the $p$--cone having the same base. 
Then inductively by condition (ii), taking $x_0$ to be the first element of $F$, and taking  
the nodes $\beta, y$, and the tints $p, t$, for $u, v, i, j$,  respectively, we have by observing that 
$\beta, y\in \nodes(M_r)$, $\beta, y\in \dom(\rho_r)$ and $\rho_r\subseteq \rho_{r+1}$, 
the following:  
$M_{r+1}(\beta,y)=M_{r}(\beta, y)=\r_{\rho_{r}(p), \rho_{r}(t)}=r_{\rho_{r+1}(p), \rho_{r+1}(t)}.$
By  her strategy, we have  $M_{r+1}(y,\delta)=\r_{\rho_{r+1}(t), \rho_{r+1}(q)}$ 
and we know by (*) that $M_{r+1}(\beta, \delta)=\r_{\rho_{r+1}(p), \rho_{r+1}(q)}$. 
The triple $(\r_{\rho_{r+1}(p), \rho_{r+1}(t)}, \r_{\rho_{r+1}(t), \rho_{r+1}(q)}, \r_{\rho_{r+1}(p), \rho_{r+1}(q)})$
of reds is consistent and we are done with this case. 
All other edge labelling and colouring $n-1$ tuples in $M_{r+1}$ 
by yellow shades are  exactly like in \cite{HH}.

Using ultrapowers and an elementary chain argument like in  \cite[Theorem 3.3.5]{HHbook2}, 
one gets a countable algebra $\B$ 
such that $\B\equiv \C_{\N^{-1}, \N}$, and  \pe\ has a \ws\ in $G_{\omega}(\At\B)$. Then $\B$, being countable, 
is completely representable by \cite[Theorem 3.3.3]{HHbook2}. 
Let $\bold K$ be a class between ${\sf CRCA}_n$ 
and $\bold S_c\Nr_n\CA_{n+3}$. (It can be easily proved that ${\sf CRCA}_n\subseteq \bold S_c\Nr_n\CA_{\omega}\subseteq \bold S_c\Nr_n\CA_{n+3}$ so `between' here is valid).
Then $\C_{\N^{-1}, \N}\notin \bold S_c\Nr_n\CA_{n+3}(\supseteq \bold K)$ as shown during the proof of $\Psi(n, n+3)_f$, $\B\equiv \C_{\N^{-1}, \N}$, 
and $\B\in {\sf CRCA}_n(\subseteq \bold K)$.

But we can do better. One can define  
an $\omega$--rounded (atomic) game $\bold H_{\omega}(\alpha)$  
such that 
if $\alpha$ is a countable atom structure, and 
\pe\ has a \ws\ in $\bold H_{\omega}(\alpha)$, then 
any algebra $\F$ having atom structure $\alpha$ is completely representable, $\Cm\alpha\in \Nr_n\CA_{\omega}$ 
and $\alpha\in \At\Nr_{\alpha}\CA_{\omega}.$ In fact, there will exist a complete $\D\in \CA_{\omega}$ 
such that $\Cm\alpha\cong \Nr_n\D$ and $\alpha\cong \At\Nr_n\D$. 

Now we apply the new game $\bold H$ to the rainbow algebra $\C_{\N^{-1}, \N}$ based on the ordered structures $\N^{-1}$ and $\N$
used above.
With some (slightly more) effort one can prove that \pe\ can win 
the (stronger) game $\bold H_k(\At\C_{\N^{-1}, \N})$ which is the game $\bold H$ truncated to $k$ rounds (on the same $\C_{\N^{-1}, \N}$ based on $\N^{-1}$ and $\N$) for all $k<\omega$.  
Using 
ultrapowers followed by an elementary chain argument,  it follows \pe\ has a \ws\ in $\bold H(\alpha)$ for a countable atom structure $\alpha$, such
that $\C_{\N^{-1}, \N}\equiv \Tm\alpha$. 
Thus $\alpha\in \At(\Nr_n\CA_{\omega})$, any atomic $\F\in \CA_n$ 
having atom structure $\alpha$ is completely representable, 
and  $\Cm\alpha\in \Nr_n\CA_{\omega}$. So $\Tm\alpha\subseteq_d \Cm\alpha\in \bold {\sf Nr}_n\CA_{\omega}$, $\Tm\alpha\in {\sf CRCA}_n$
and  $\C_{\N^{-1}, \N}\notin \bold S_c{\sf Nr}_n\CA_{n+3}$. 
Now $\C_{\N^{-1}, \N}\notin \bold S_c\Nr_n\CA_{n+3}$, $\C\equiv \Tm\alpha$ and  $\Cm\alpha\in \Nr_n\CA_{\omega}$. 
So if $\bold K$ is a class betwen $\bold S_d\Nr_n\CA_{\omega}$ and $\bold S_c\Nr_n\CA_{n+3}$, then $\C\notin \bold K$, $\Tm\alpha\in \bold K$ 
and $\C\equiv \Tm\alpha$, hence $\bold K$ 
is not first order definable.

We need to remove the $\bold S_d$, that is, consider classes between $\Nr_n\CA_{\omega}$ and (the larger) $\bold S_d\Nr_n\CA_{\omega}$.  One might be 
tempted to think that we already did. However, this is not true because $\At\D\in \At(\Nr_n\CA_m)$ 
does not imply that  $\D\in \Nr_n\CA_m$, even if the \de\ completion of $\D$ is in $\Nr_n\CA_m$, which was the case with $\Tm\alpha$. 
Indeed, the algebra $\B$ used in item (6) alerts us to the fact that although $\alpha\in \At(\Nr_n\CA_{\omega})$ and $\Cm\alpha$ the \de\ completion of $\Tm\alpha$ is in 
$\Nr_n\CA_{\omega}$,  we do not guarantee that $\Tm\alpha$ itself is in $\Nr_n\CA_{\omega}$. 
 
So we are not done yet.
We still have some work to do.
For this purpose, we need the intervention of the construction used in item 7 in the proof of Theorem \ref{iii}. 
We use  
the (modified) algebras $\A$ and $\B$ that  we know  
satisfy: $\A\in {\sf Nr}_n\CA_{\omega}$, $\B\notin \bold S_d{\sf Nr}_n\CA_{n+1}$, $\A\equiv \B$
and both $\A$ and $\B$ are atomic. 

So far,  we have excluded any first order definable class between 
the two classes $\bold S_d\Nr_n\CA_{\omega}\cap \CRCA_n$ and $\bold S_c\Nr_n\CA_{n+3}$. 
So hoping for a contradiction, we  
assume that there is a class 
$\bold M$ between $\Nr_n\CA_{\omega}\cap {{\sf CRCA}_n}$ and $\bold S_d\Nr_n\CA_{\omega}\cap \CRCA_n$ that is first order definable. 
Then ${\bf El}(\Nr_n\CA_{\omega}\cap {{\sf CRCA}_n})\subseteq \bold M\subseteq \bold S_d\Nr_n\CA_{\omega}\cap \CRCA_n$. 
We have, $\B\equiv \A$, and $\A\in \Nr_n\CA_{\omega}\cap {{\sf CRCA}_n}$, hence $\B\in {\bf El}(\Nr_n\CA_{\omega}\cap {\sf CRCA}_n)\subseteq \bold S_d\Nr_n\CA_{\omega}\cap \CRCA_n$. 
But we know that $\B$  
is in fact outside $\bold S_d\Nr_n\CA_{n+1}\supseteq \bold S_d\Nr_n\CA_{\omega}\supseteq \bold S_d\Nr_n\CA_{\omega}\cap {\bf At}\supseteq \bold S_d\Nr_n\CA_{\omega}\cap \CRCA_n$,  
getting the hoped for contradiction,
and consequently the required.  

\end{proof}

\end{document}